\def\@tocline#1#2#3#4#5#6#7{\relax
  \ifnum #1>\c@tocdepth 
  \else
    \par \addpenalty\@secpenalty\addvspace{#2}%
    \begingroup \hyphenpenalty\@M
    \@ifempty{#4}{%
      \@tempdima\csname r@tocindent\number#1\endcsname\relax
    }{%
      \@tempdima#4\relax
    }%
    \parindent\z@ \leftskip#3\relax \advance\leftskip\@tempdima\relax
    \rightskip\@pnumwidth plus4em \parfillskip-\@pnumwidth
    #5\leavevmode\hskip-\@tempdima
      \ifcase #1
       \or\or \hskip 1em \or \hskip 2em \else \hskip 3em \fi%
      #6\nobreak\relax
    \dotfill\hbox to\@pnumwidth{\@tocpagenum{#7}}\par
    \nobreak
    \endgroup
  \fi}
 \numberwithin{equation}{section}
\def\M{{\mathcal{M}}}
\def\ve{\varepsilon}
\def\wt{\widetilde}
\def\DS{\mathop\textup{DS}}
\def\DL{\mathop\textup{DL}}
\def\DMO{\mathop\textup{DMO}}
\def\DDMO{\mathop\textup{DDMO}}
\def\pv{\mathop\textup{pv}}
\def\I{\mathop\textup{I}}
\def\II{\mathop\textup{II}}
\def\Rn1{\mathbb{R}^{n+1}}
\DeclareMathOperator{\diam}{diam}
\def\Lip{\mathrm{Lip}} 						
\def\adj{\mathop\mathrm{adj}} 					
\def\dist{\textup{dist}} 						
\def\supp{\mathop\mathrm{supp}}					
\def\loc{\mathop\mathrm{loc}}						
\renewcommand{\div}{\mathop\mathrm{div }}			
\def\Xint#1{\mathchoice
{\XXint\displaystyle\textstyle{#1}}%
{\XXint\textstyle\scriptstyle{#1}}%
{\XXint\scriptstyle\scriptscriptstyle{#1}}%
{\XXint\scriptscriptstyle\scriptscriptstyle{#1}}%
\!\int}
\def\XXint#1#2#3{{\setbox0=\hbox{$#1{#2#3}{\int}$ }
\vcenter{\hbox{$#2#3$ }}\kern-.58\wd0}}
\def\avint{\Xint-}
\def\grad{\nabla}
\newcommand{\Rd}{\color{red}}
\newcommand{\Bl}{\color{blue}}
\theoremstyle{plain}
\newtheorem{theorem}{Theorem}
\newtheorem{corollary}[theorem]{Corollary}
\newtheorem{lemma}[theorem]{Lemma}
\newtheorem{proposition}[theorem]{Proposition}
\theoremstyle{definition}
\newtheorem{definition}[theorem]{Definition}
\newtheorem{remark}[theorem]{Remark}
\numberwithin{equation}{section}
\numberwithin{theorem}{section}
  \DeclareFontFamily{U}{mathb}{\hyphenchar\font45} 
\DeclareFontShape{U}{mathb}{m}{n}{
      <5> <6> <7> <8> <9> <10> gen * mathb
      <10.95> mathb10 <12> <14.4> <17.28> <20.74> <24.88> mathb12
      }{}
\DeclareSymbolFont{mathb}{U}{mathb}{m}{n}
\DeclareMathSymbol{\toitself}      {3}{mathb}{"FD}  
\def\HH{\mathcal{H}}
\newcommand{\vv}{\vspace{2mm}}
\newcommand{\vvv}{\vspace{4mm}}
\newcommand{\dv}{\mathop{\rm div}}
\def\R{\mathbb{R}}
\begin{document}

\title[Layer potentials for operators with Dini mean oscillation-type coefficients]{$L^2$-boundedness of gradients of single layer potentials\\ for  elliptic operators  with coefficients of \\Dini mean oscillation-type}

\author[A. Molero]{Alejandro Molero}
\address{Departament de Matem\`atiques, Universitat Aut\`onoma de Barcelona, 08193 Bellaterra, Catalonia.}
\email{amolero@mat.uab.cat}

\author[M. Mourgoglou]{Mihalis Mourgoglou}
\address{Departamento de Matem\'aticas, Universidad del Pa\' is Vasco (UPV/EHU), Barrio Sarriena s/n 48940 Leioa, Spain and\\
Ikerbasque, Basque Foundation for Science, Bilbao, Spain.}
\email{michail.mourgoglou@ehu.eus}

\author[C. Puliatti]{Carmelo Puliatti}
\address{Departamento de Matem\'aticas, Universidad del Pa\' is Vasco (UPV/EHU), Barrio Sarriena s/n 48940 Leioa, Spain.}
\email{carmelo.puliatti@ehu.eus}

\author[X. Tolsa]{Xavier Tolsa}
\address{ICREA, Passeig Llu\'is Companys 23 08010 Barcelona, Catalonia; Departament	de Matem\`atiques, Universitat Aut\`onoma de Barcelona, 08193 Bellaterra, Catalonia; and Centre de Recerca Matem\`atica, 08193 Bellaterra, Catalonia.}
\email{xtolsa@mat.uab.cat}

\subjclass[2020]{42B37, 42B20, 35J15, 28A75, 28A75, 33C55}
\thanks{
	A.M. was supported by the predoctoral grant BES-2017-081272  and was partially supported by the grant MTM-2016-77635-P of the Ministerio de Econom\'ia y Competitividad (Spain). M.M. was supported  by IKERBASQUE and partially supported by the grant PID2020-118986GB-I00 of the Ministerio de Econom\'ia y Competitividad (Spain), and by  IT-1247-19 (Basque Government). C.P. was supported by the grant IT-1247-19 (Basque Government) and partially supported by PID2020-118986GB-I00 (Ministerio de Econom\'ia y Competitividad, Spain) and PGC2018-094522-B-I00 (Ministerio de Ciencia e Innovaci\'on, Spain). X.T. is supported by the European Research Council (ERC) under the European Union's Horizon 2020 research and innovation programme (grant agreement 101018680) and Mar\'ia de Maeztu Program for Centers and Units of Excellence (CEX2020-001084-M). He is also partially supported by the grant PID2020-114167GB-I00 of the Ministerio de Econom\'ia y Competitividad (Spain).}
\keywords{Riesz transform, Layer potentials,  second order elliptic equations, Dini mean oscillation,  David–Semmes problem, uniform rectifiability, rectifiability}

\newcommand{\mih}[1]{\marginpar{\color{red} \scriptsize \textbf{Mi:} #1}}
\newcommand{\car}[1]{\marginpar{\color{blue} \scriptsize \textbf{Carmelo:} #1}}
\newcommand{\xavi}[1]{\marginpar{\color{green} \scriptsize \textbf{Xavi:} #1}}
\maketitle

\begin{abstract}
We consider a uniformly elliptic operator $L_A$ in divergence form associated with an $(n+1)\times(n+1)$-matrix  $A$ with real, merely bounded, and possibly non-symmetric coefficients. If 
\[
	\omega_{A}(r)=\sup_{x\in \mathbb{R}^{n+1}} \avint_{B(x,r)} \Big|A(z)-\avint_{B(x,r)}A \Big|\, dz,
\] 
then, under  suitable Dini-type assumptions on $\omega_A$, we prove the following: if $\mu$ is a compactly supported Radon measure in $\mathbb{R}^{n+1}$, $n \geq 2$,   and
\(
	T_\mu f(x)=\int \nabla_x\Gamma_A (x,y)f(y)\, d\mu(y)
\)
denotes the gradient of the single layer potential associated with $L_A$, then 
\[
	1+ \|T_\mu\|_{L^2(\mu)\to L^2(\mu)}\approx 1+ \|\mathcal R_\mu\|_{L^2(\mu)\to L^2(\mu)},
\]
where $\mathcal R_\mu$ indicates the $n$-dimensional Riesz transform. This allows us to provide a direct generalization of some deep geometric results, initially obtained for $\mathcal R_\mu$, which were recently extended to  $T_\mu$ associated with $L_A$ with H\"older continuous coefficients. In particular, we show the following:
\begin{enumerate}
	\item If $\mu$ is an $n$-Ahlfors-David-regular measure on $\mathbb{R}^{n+1}$ with compact support, then $T_\mu$ is bounded on $L^2(\mu)$  if and only if $\mu$ is uniformly $n$-rectifiable.
		\item Let $E\subset \mathbb{R}^{n+1}$ be compact and $\mathcal H^n(E)<\infty$. If $T_{\mathcal H^n|_E}$ is bounded on  $L^2(\mathcal H^n|_E)$, then $E$ is $n$-rectifiable.
	\item If $\mu$ is a non-zero measure on $\mathbb{R}^{n+1}$ such that $\limsup_{r\to 0}\tfrac{\mu(B(x,r))}{(2r)^n}$ {is positive and finite} for $\mu$-a.e. $x\in \mathbb{R}^{n+1}$ and $\liminf_{r\to 0}\tfrac{\mu(B(x,r))}{(2r)^n}$ vanishes for $\mu$-a.e. $x\in \mathbb{R}^{n+1}$, then the operator $T_\mu$ is not bounded on $L^2(\mu)$.
\item Finally, we prove that if $\mu$ is a Radon measure on $\mathbb R^{n+1}$ with compact support which satisfies a proper set of local conditions at the level of a ball $B=B(x,r)\subset \mathbb R^{n+1}$ such that $\mu(B)\approx r^n$ and $r$ is small enough, then a significant portion of the support of $\mu|_B$ can be covered by a uniformly $n$-rectifiable set. These assumptions include a flatness condition, the $L^2(\mu)$-boundedness of $T_\mu$ on a large enough dilation of $B$, and the smallness of  the mean oscillation of $T_\mu$ at the level of $B$.
\end{enumerate}
\end{abstract}

\tableofcontents
\section{Introduction}
The aim of this paper is to extend and provide a unified approach to several recent results on the connection of the $L^2$-boundedness of gradients of single-layer potentials associated with an elliptic operator in divergence form defined on a set  $E$ and the geometry of $E$. The importance of these operators stems from their role in the study of boundary value problems and free boundary problems for harmonic and  elliptic measure, as well as the study of analytic capacity (see for instance \cite{AHMMMTV16}, \cite{AGMT17}, \cite{AMT17a}, \cite{AMT20}, \cite{AMTV19}, \cite{BH16}, \cite{GMT18}, \cite{KS11}, \cite{MT20}, \cite{MT21}, \cite{PT20}, \cite{To03} and the references therein). 

The investigation of geometric properties of singular integrals has produced many important results starting with Calder\'on's proof in \cite{Ca77} of the boundedness of Cauchy transform on Lipschitz graphs with small Lipschitz constant. A prototypical example of a singular integral operator is the \textit{Riesz transform,} which is the higher dimensional analogue of the Cauchy transform. If $\mu$ is a Radon measure on $\Rn1$, $n\geq 1$, its associated ($d$-dimensional) Riesz transform is defined as
\[
\mathcal R^d_\mu f(x)=\int \frac{x-y}{|x-y|^{d+1}}\, d\mu(y),\qquad \text{ for } f\in L^1_{\loc}(\mu),
\]
whenever the expression above makes sense. For $\delta>0$ we define the \textit{$\delta$-truncated Riesz transform} as
\[
\mathcal R^d_{\mu,\delta}f(x)\coloneqq \int_{|x-y|>\delta} \frac{x-y}{|x-y|^{d+1}}f(y)\, d\mu(y)
\]
and if $f\equiv 1$ on $\Rn1$, we use the notation $\mathcal R^d\mu (x)= \mathcal R^d_\mu 1(x)$ and $\mathcal R^d_\delta\mu(x)=\mathcal R^d_{\mu,\delta}1(x)$. We say that $\mathcal R^d_\mu$ is \textit{bounded on} $L^2(\mu)$ if $\mathcal R^d_{\mu,\delta}$ is bounded on  $L^2(\mu)$ uniformly on $\delta>0$. In this case, we write
\[
\|\mathcal R^d_\mu\|_{L^2(\mu)\to L^2(\mu)}\coloneqq \sup_{\delta>0}\|\mathcal R^d_{\mu,\delta}\|_{L^2(\mu)\to L^2(\mu)}.
\]

\vv

Given $x\in\mathbb{R}^{n+1}$ and $r>0,$ we denote by $B(x,r)$ the open ball of center $x$ and radius $r$.
We say that a non-negative Borel measure has \textit{growth of degree $d$} or, for brevity, $d$-growth, and we write $\mu \in M^d_+( \Rn1)$, if there exists $c_0>0$ such that
\[
	\mu\bigl(B(x,r)\bigr)\leq c_0 r^{d}\qquad \text{ for all }\, x\in \Rn1, \,\,r>0,
\]
 Any such measure is in fact a Radon measure. Measures with polynomial growth are crucial for the study of singular integrals; for instance, if $\mu$ is a non-negative measure on $\Rn1$ without atoms and its associated Riesz transform $\mathcal R^d_\mu$ is bounded on $L^2(\mu)$, then $\mu\in M^d_+(\Rn1)$ (see \cite[p. 56]{Da92}, where this is proved for more general singular integral operators, and also Lemma \ref{lem:T_bounded_polynomial_growth}). 
 
A Borel measure $\mu$ is said \textit{$d$-Ahlfors-David regular} (also abbreviated by $d$-AD-regular) if there exists $C>0$ such that
\[
C^{-1}r^d\leq \mu\big(B(x,r)\big)\leq Cr^d \,\qquad \text{ for all }x\in\supp\mu, \,0<r<\diam(\supp \mu).
\]
If $\mathcal H^d$ stands for the $d$-dimensional Hausdorff measure in $\Rn1$, we say that a set $E\subset \Rn1$ is \textit{$d$-AD-regular} if $\mathcal H^d|_E$ is a $d$-AD-regular measure. 

A set $E\subset \Rn1$ is called \textit{$d$-rectifiable} if there exists a countable family of Lipschitz maps $f_j\colon \mathbb R^d\to \Rn1$ such that
\[
\mathcal H^d\Bigl(E\setminus \bigcup_jf_j(\mathbb R^d) \Bigr)=0.
\]
A measure $\mu$ is \textit{$d$-rectifiable} if it vanishes outside a $d$-rectifiable set $E$ and it is absolutely continuous with respect to $\mathcal H^d|_E$.

We say that a set $E\subset \Rn1$ is \textit{uniformly $d$-rectifiable} if it is $d$-AD regular and there exist $\theta,M>0$ such that for all $x\in E$ and all $r>0$ there is a Lipschitz mapping $g$ from the ball $B_d(0,r)\subset\mathbb R^d$ to $\Rn1$ with $\Lip(g)\leq M$ such that
\[
\mathcal H^d\big(E\cap B(x,r)\cap g(B_d(0,r))\big)\geq \theta r^d.
\]
We also  say that a measure $\mu$ is \textit{uniformly $n$-rectifiable} if it is $d$-AD-regular and it vanishes outside of a uniformly $d$-rectifiable set.

The notion of uniform rectifiability of a set $E$ was introduced by  David and  Semmes in their seminal works  \cite{DS91, DS93} as the optimal geometric property that $E$ should have  so that operators in a pretty general subclass of singular integral operators are $L^2(\HH^n|_E)$-bounded. They proved in \cite{DS91} that a $d$-AD-regular measure $\mu$ on $\Rn1$ is uniformly $d$-rectifiable if and only if \textit{all} the singular integral operators with smooth and anti-symmetric convolution-type kernel are bounded on $L^2(\mu)$. They also raised the question, commonly referred to as \textit{David and Semmes' problem}, if the $L^2(\mu)$-boundedness of the $d$-Riesz transform $\mathcal R^d_\mu$ associated with a $d$-AD-regular measure $\mu$ implies its uniform $d$-rectifiability.

A positive answer to this question was first provided in the planar case $d=n=1$ by Mattila, Melnikov, and Verdera in \cite{MMV96}, who used the  connection of the Cauchy transform with the so-called Menger curvature of a measure. However, their method cannot be generalized to higher dimensions.
More recently, Nazarov and Volberg along with the fourth named author proved in \cite{NToV14} the analogous result in the case $d=n$ for any integer $n\geq 1$  using a different set of delicate techniques (we will often refer to it as the $1$-codimensional case).
We point out that the full David-Semmes' conjecture is still open for $d$-AD-regular measures of dimension $d=2,\ldots, n-2$.

\vv
The $n$-dimensional Riesz transform in $\Rn1$ has a natural generalization to the context of elliptic PDEs.
Let $A(\cdot)=(a_{ij})_{i,j\in \{1,\ldots,n+1\}}$ be an $(n+1)\times(n+1)$-matrix whose entries $a_{ij}$ are measurable real-valued functions in $L^\infty(\Rn1)$. We say that $A$ is \textit{uniformly elliptic} if there exists $\Lambda>0$ such that
\begin{align}
	\langle A(x)\xi,\xi\rangle &\geq \Lambda^{-1}|\xi|^2\qquad \text{ for all } \xi\in \Rn1\text{ and a.e. }x\in \Rn1,\label{eq:ellip1_scalar}\\
	\langle A(x)\xi,\eta\rangle &\leq \Lambda |\xi||\eta|\qquad \text{ for all } \xi,\eta\in \Rn1 \text{ and a.e. }x\in \Rn1.\label{eq:ellip2_scalar}
\end{align}
We consider the second order equation in divergence form
\begin{equation}\label{eq:snd_order_diff_eq}
	L_A u(x)\coloneqq -\div(A(\cdot)\nabla u(\cdot))(x)=0,\qquad x\in \Rn1,
\end{equation}
to be understood in the sense of distributions. If $A$ is a uniformly elliptic matrix with bounded measurable coefficients, the operator $L_A$ has a \textit{fundamental solution} $\Gamma_A(x,y)$ which, if $\delta_y$ is the Dirac  mass at $y$, satisfies \(L_{A}\Gamma_A(\cdot,y)=\delta_y\) in the sense of distributions.
For the construction of the fundamental solution associated with $L_A$ we refer to \cite{HK07}.

For a non-negative Radon measure $\mu$ on $\Rn1$ we define the \textit{gradient of the single layer potential}
\begin{equation}\label{eq:single layer}
T_\mu f(x)\coloneqq \int \nabla_1 \Gamma_A(x,y)f(y)\, d\mu(y),\qquad \text{ for }f\in L^1_{\loc}(\mu), 
\end{equation}
to be interpreted in the sense of the truncations
\[
T_{\mu,\delta} f(x)\coloneqq \int_{|x-y|>\delta} \nabla_1 \Gamma_A(x,y)f(y)\, d\mu(y),\qquad \text{ for }f\in L^1_{\loc}(\mu).
\]
For $f\equiv 1$ we use the notations $T_\delta\mu\coloneqq T_{\mu,\delta}1$ and $T\coloneqq T_{\mu}1$. We also denote \[
\|T_\mu\|_{L^2(\mu)\to L^2(\mu)}\coloneqq \sup_{\delta>0}\|T_{\mu,\delta}\|_{L^2(\mu)\to L^2(\mu)}.
\]

Furthermore, we observe that in the case $A\equiv Id$, it readily follows by definition that $L_A=-\Delta$ and so $\nabla_1\Gamma_{Id}$ equals the Riesz kernel up to a dimensional multiplicative constant.

Under the sole  assumption that the entries of $A$ are in $L^\infty$, the kernel $\nabla_1\Gamma_A(\cdot,\cdot)$ does not necessarily satisfy local $L^\infty$ estimates, let alone a modulus of  continuity, and so it is not necessarily of Calder\'on-Zygmund type. We need to impose some additional regularity conditions on $A$ for this to happen. For instance, an adequate framework is provided by matrices with \textit{H\"older continuous entries}. Many important geometric results that are known for the Riesz transform, such as the $1$-codimensional version of the David-Semmes' problem, have been successfully generalized by Conde-Alonso, Prat, and the last three named authors (see  \cite{CMT19}, \cite{PPT21}, \cite{Pu19}). For more details we refer to the discussion of the corollaries of Theorem \ref{theorem:bound_L2_norm_operators}.

\vv
In the present paper, we are concerned with elliptic operators whose coefficients  may have a Lebesgue measure zero set of points of discontinuity. Namely, we will assume that they are of \textit{Dini mean oscillation-type}.

\vv

Let $\kappa \geq 1$.	We say that a function $\theta\colon[0,\infty]\to [0,\infty]$ is $\kappa$-\textit{doubling} if
\begin{equation}\label{eq:dini1_new}
 \theta (t) \leq \kappa \,\theta (s)\, \qquad \text{ for }\,  \frac{1}{2}t\leq s \leq t\,  \text{ and } \, t>0.
\end{equation}
%

We denote by $\mathcal L^d$ the Lebesgue measure on $\R^d$ and for a set $E\subset \Rn1$, we will also use the notation $\mathcal L^{n+1}(E)=|E|.$ When we write integrals, we often prefer the more compact and standard notation $d\mathcal L^d (x) =dx$.

\vv

We say that a $\kappa$-doubling function $\theta$ belongs to the class {$\DS(\kappa)$} ({\it Dini in small scales}), if it is $\mathcal L^1$-measurable and
\begin{equation}\label{eq:sDini}
	\int_0^1 \theta(t)\,\frac{dt}{t}<\infty.
\end{equation}
Given $d>0$, we say that $\theta$ belongs to the class $\DL_{d}(\kappa)$ ($d$-{\it Dini in large scales}) if it is $\mathcal L^1$-measurable and 
\[
\int_1^\infty \theta(t)\,\frac{dt}{t^{d+1}}<\infty.
\]
We remark that, if $0<d_1\leq d_2$, then $\DL_{d_1}(\kappa)\subset \DL_{d_2}(\kappa)$. Moreover, for $\theta\in \DS(\kappa)$ we define
\begin{equation}\label{eq:def_mathfrak_I}
	\mathfrak I_{\theta}(r)\coloneqq \int_0^r \theta (t)\, \frac{dt}{t}, \qquad r>0
\end{equation}
and, for $d>0$ and $\theta\in \DL_d(\kappa)$,
\begin{equation}\label{eq:def_mathfrak_L}
	\mathfrak L^d_{\theta}(r)\coloneqq r^{d}\int_r^\infty \theta(t)\, \frac{dt}{t^{d+1}} , \qquad r>0
\end{equation}	

\vv
{For $x\in \Rn1$, $r>0$, and an $(n+1)\times (n+1)$-matrix $A$ we denote
	\[
		\bar A_{x,r}\coloneqq \avint_{B(x,r)}A \coloneqq \frac{1}{|B(x,r)|}\int_{B(x,r)}A(y)\, dy
	\]
	and, for $p\geq 1$, define its \textit{mean oscillation} function $\omega_{A}\colon [0, \infty) \to [0, \infty)$ as
	\[
		\omega_{A}(r)\coloneqq \sup_{x\in \R^{n+1}} \avint_{B(x,r)}\bigl|A(z)-\bar A_{x,r}\bigr|\, dz.
	\]
By  \cite[p.495]{Li17}\footnote{The doubling property was proved in \cite{Li17} for slightly different Dini moduli of oscillation, but a minor variant of that argument works also under a Dini mean oscillation assumption (see also the use of the doubling property in \cite[p.424]{DK17}).},  there exists a dimensional constant  $\kappa$ such that $\omega_{A}$ satisfies \eqref{eq:dini1_new}.   

We say that an $(n+1)\times (n+1)$-matrix $A \in \DMO_s$ (resp. $A \in \DMO_\ell$)  if $ \omega_A \in \DS(\kappa)$ (resp. $ \omega_A \in \DL_{n-1}(\kappa)$).	We also say that $A \in \DDMO_s$ if $A \in \DMO_s$ and $\mathfrak I_{ \omega_A}$ satisfies \eqref{eq:sDini}, i.e., 
\begin{equation}\label{eq:logdini}
		\int_0^1 \int_0^r  \omega_A(t)\, \frac{dt}{t}\frac{dr}{r}=-\int_0^1  \omega_A(t)\,\log{t} \,\frac{dt}{t}<+ \infty.	
\end{equation}
	Finally, we define 
	\[
		\widetilde \DMO\coloneqq \textup{DDMO}_s \cap \textup{DMO}_\ell.
	\]
	The acronym $\DMO$ (resp. $\DDMO$) stands for \textit{Dini mean oscillation} (resp. \textit{double Dini mean oscillation}), and the subscripts in $\DMO_s$ and $\DMO_\ell$ indicate that the associated Dini condition is required at small and large scales respectively. Due to \eqref{eq:logdini}, we may also use the terminology  \textit{log-Dini mean oscillation} instead of  double Dini mean oscillation.
	
	\vv
	{Furthermore, $\widetilde \DMO$ includes the class of matrices with  $\alpha$-H\"older continuous coefficients for $\alpha\in(0,1)$. Indeed, if there exists $C_h>0$ such that 
		\begin{equation}\label{eq:Holder}
		|a_{ij}(x)-a_{ij}(y)|\leq C_h |x-y|^\alpha, \qquad \text{ for all }i,j\in\{1,\ldots,n+1\},\, x,y\in\Rn1,
		\end{equation}
		then $\omega_A(t)\lesssim t^\alpha$ and so $A\in \widetilde \DMO$. {Our condition even includes matrices that satisfy \eqref{eq:Holder} for $\alpha \in (0,1)$  when  $|x-y| \lesssim 1$ and $(n-1-\alpha)$ when $|x-y| \gtrsim 1$}.} In fact, it is clear that if $A$ is uniformly continuous with a Dini modulus of continuity then it is of Dini mean oscillation.	In the converse direction, as proved in \cite[Appendix A]{HwK20}, if $A$ is of Dini mean oscillation, then it agrees (Lebesgue) almost everywhere with a uniformly continuous function with modulus of continuity $\mathfrak I_{\omega_A}$. {However, as we are mostly interested in sets with Lebesgue measure zero, we highlight that we cannot assume that $A$ is uniformly continuous and thus more delicate arguments are required.}
	
	A variant of the example in \cite[p.~418]{DK17} shows that the condition $A\in \widetilde\DMO$ is strictly stronger than requiring the matrix $A$ to be H\"older continuous. Indeed, if we define the matrix $a_{ij}(x)=\delta_{ij}$ for $|x|>1$ and 
	\[
	a_{ij}(x)\coloneqq \delta_{ij} \left(1 + (-\ln|x|)^{-\gamma -1} \right),\qquad \text{ for }0<|x|\ll 1, 0<\gamma<1/2,
	\]
then, as remarked in \cite[p.~418]{DK17}, we have that $\omega_A(r)\approx (-\ln r)^{-\gamma -2}$ for $r\ll 1$. Since $ \omega_A$ is an increasing function, $A\in \DDMO_s$ but its modulus of continuity does not  satisfy the double Dini condition.

	The $\DMO_s$ assumption on $A$  guarantees that $\nabla_1\Gamma(\cdot,\cdot)$ is locally of Calder\'on-Zygmund type, see Lemma \ref{lem:estim_fund_sol}. Indeed, this is possible because of the work of Dong and Kim \cite{DK17}  who proved that, under this hypothesis, weak solutions of $L_Au=0$ are continuously differentiable providing also  local $L^\infty$ and regularity estimates for $\nabla u$.	
		We highlight that one of the crucial technical difficulties  in \cite{DK17} is that the modulus of oscillation $\omega_A$ is not monotone as it would be the case if one used  $\widetilde \omega_A(r)\coloneqq \sup_{0<\rho\leq r}\omega(\rho)$. The  proof of the regularity theorem of Dong and Kim is significantly easier for $\widetilde \omega_A$. Note  that if $A$ is a compactly supported perturbation of the identity matrix $Id$, then $ \omega_A(r)\to 0$ as $r \to \infty$ but $\widetilde \omega_{A}(r)$ does not\footnote{We would like to thank Seick Kim for bringing those facts  to our attention motivating us to improve on a previous version of our results where we had used $\widetilde \omega_A$.}.
		
	\vv

	Let us now state the main result of the paper.
	\begin{theorem}\label{theorem:bound_L2_norm_operators}
		Let  $A$ be a uniformly elliptic matrix satisfying  $A\in \widetilde \DMO$ and let $\mu\in M^n_+(\Rn1)$ with compact support, $n\geq 2$. If  $T_\mu$ is the associated operator given by \eqref{eq:single layer},   it holds that
		\begin{equation}\label{eq:maineq}
			1 + \|\mathcal R_\mu\|_{L^2(\mu)\to L^2(\mu)}\approx 1 + \|T_\mu\|_{L^2(\mu)\to L^2(\mu)},
		\end{equation}
		where the implicit constant depends on $n, \Lambda, c_0$, and $\diam(\supp\mu)$. In fact, the following is true: for every $\epsilon>0$, there exists $R$ small enough  such that if $\diam (\supp \mu) \leq R$ then 
		\begin{align}\label{eq:maineq-small_R-T}
			\|\mathcal R_\mu\|_{L^2(\mu)\to L^2(\mu)} &\leq  \epsilon +C' \|T_\mu\|_{L^2(\mu)\to L^2(\mu)}\\
				\|T_\mu\|_{L^2(\mu)\to L^2(\mu)} &\leq  \epsilon +C' \|\mathcal R_\mu\|_{L^2(\mu)\to L^2(\mu)},\label{eq:maineq-small_T-R}
		\end{align}
		for some $C'>0$ depending on $n, \Lambda$, and $c_0$.
	\end{theorem}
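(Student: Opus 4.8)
The plan is to establish first the local comparisons \eqref{eq:maineq-small_R-T}--\eqref{eq:maineq-small_T-R} for measures supported in a small ball, and then deduce \eqref{eq:maineq} by a standard localization; we may assume that at least one of the two operator norms is finite, as otherwise there is nothing to prove. Throughout we use the Calder\'on--Zygmund kernel estimates for $\nabla_1\Gamma_A$ provided by Lemma \ref{lem:estim_fund_sol}: under $A\in\widetilde\DMO$ these give, at all scales, $|\nabla_1\Gamma_A(x,y)|\lesssim|x-y|^{-n}$ together with a modulus of smoothness governed by $\omega_A$ and $\mathfrak I_{\omega_A}$. We also use the classical bounds for the constant-coefficient fundamental solution, namely $|\nabla_x\nabla_z\Gamma_{A_0}(x,z)|\lesssim_\Lambda|x-z|^{-n-1}$ and the cancellation (mean value zero over spheres centered at $x$) of $\nabla_x\nabla_z\Gamma_{A_0}(x,\cdot)$.

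Assume then that $\supp\mu\subset B(x_0,R)$ with $R$ small, and freeze the coefficients by setting $A_0\coloneqq\bar A_{x_0,4R}$, a constant matrix of ellipticity $\lesssim\Lambda$; write $T^{A_0}_\mu$ for the associated constant-coefficient operator. The heart of the argument is the quantitative comparison
\[
\|T_\mu-T^{A_0}_\mu\|_{L^2(\mu)\to L^2(\mu)}\;\lesssim_{n,\Lambda,c_0}\;\mathfrak I_{\omega_A}(CR)+\int_0^{CR}\mathfrak I_{\omega_A}(t)\,\frac{dt}{t},
\]
whose right-hand side is finite, and tends to $0$ as $R\to0$, \emph{precisely} because $A\in\DDMO_s$, i.e.\ \eqref{eq:logdini} holds. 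To prove it, one starts from $L_{A_0}\bigl(\Gamma_A(\cdot,y)-\Gamma_{A_0}(\cdot,y)\bigr)=\div\bigl((A-A_0)\nabla_1\Gamma_A(\cdot,y)\bigr)$, convolves with $\Gamma_{A_0}$, integrates by parts and differentiates in the first variable, obtaining
\[
\nabla_1\bigl(\Gamma_A-\Gamma_{A_0}\bigr)(x,y)=-\int\nabla_x\nabla_z\Gamma_{A_0}(x,z)\,\bigl(A(z)-A_0\bigr)\,\nabla_1\Gamma_A(z,y)\,dz .
\]
One then decomposes the integral according to the location of $z$. Near the diagonal $|z-x|\sim|z-y|\sim|x-y|$ the average of $|A-A_0|$ is bounded by a dyadic telescoping, giving $\omega_A(C|x-y|)+\mathfrak I_{\omega_A}(CR)$; in the regions $|z-x|\ll|x-y|$, $|z-y|\ll|x-y|$, and---crucially---the intermediate range $|x-y|\ll|z-x|\lesssim R$, the pointwise bound $\|A-A_0\|_\infty\lesssim\mathfrak I_{\omega_A}(CR)$ is too weak (it would leave a kernel of size $\approx|x-y|^{-n}$, which is not bounded on measures of $n$-growth), so instead one uses the cancellation of $\nabla_x\nabla_z\Gamma_{A_0}$ against the slowly varying factor $(A(z)-A_0)\nabla_1\Gamma_A(z,y)$, whose oscillation over a dyadic annulus of radius $r$ is $\lesssim r^{-n}\bigl(\omega_A(r)+\mathfrak I_{\omega_A}(r)\bigr)$ up to harmless multiples of $\mathfrak I_{\omega_A}(R)$. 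Summing over scales shows that $\nabla_1(\Gamma_A-\Gamma_{A_0})(x,y)$ is dominated by $|x-y|^{-n}$ times a \emph{Dini} modulus of total mass $\mathfrak I_{\omega_A}(CR)+\int_0^{CR}\mathfrak I_{\omega_A}(t)\,dt/t$, and a Schur test together with the $n$-growth of $\mu$ bounds the corresponding operator norm by this mass, uniformly in the truncation. This is the step where essentially all the difficulty lies, and it is here that the \emph{double} Dini condition \eqref{eq:logdini}, rather than plain $\DMO$, is the natural hypothesis; the large-scale condition $A\in\DMO_\ell$ serves only to ensure that Lemma \ref{lem:estim_fund_sol} applies at the scales involved.

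It remains to compare $T^{A_0}_\mu$ with $\mathcal R_\mu$. Since for a constant matrix $\Gamma_{A_0}$ depends only on its symmetric part $A_0^{\mathrm{sym}}$ and $n\geq2$, one has the explicit formula
\[
\nabla_1\Gamma_{A_0}(x,y)=c_n(\det A_0^{\mathrm{sym}})^{-1/2}\,(A_0^{\mathrm{sym}})^{-1/2}\,\frac{(A_0^{\mathrm{sym}})^{-1/2}(x-y)}{|(A_0^{\mathrm{sym}})^{-1/2}(x-y)|^{n+1}},
\]
so the linear change of variables $x\mapsto(A_0^{\mathrm{sym}})^{-1/2}x$ identifies $T^{A_0}_\mu$, up to multiplication by a matrix bounded above and below in terms of $n$ and $\Lambda$, with the singular integral operator of ``tilted Riesz'' kernel $\frac{(A_0^{\mathrm{sym}})^{-1/2}(x-y)}{|(A_0^{\mathrm{sym}})^{-1/2}(x-y)|^{n+1}}$ acting on the \emph{same} measure $\mu$. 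The resulting comparability
\[
\|T^{A_0}_\mu\|_{L^2(\mu)\to L^2(\mu)}\approx_{n,\Lambda}\|\mathcal R_\mu\|_{L^2(\mu)\to L^2(\mu)},
\]
with constants uniform over the ellipticity class, follows from the case of H\"older (in particular constant) coefficients already treated in \cite{CMT19,PPT21,Pu19}. Combining this with the comparison of the previous paragraph and choosing $R$ small enough that the error is $\leq\epsilon$ gives \eqref{eq:maineq-small_R-T} and \eqref{eq:maineq-small_T-R}, since $\|T_\mu\|\leq\|T^{A_0}_\mu\|+\epsilon\leq C'\|\mathcal R_\mu\|+\epsilon$ and $\|\mathcal R_\mu\|\leq C'\|T^{A_0}_\mu\|\leq C'\|T_\mu\|+C'\epsilon$.

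Finally, \eqref{eq:maineq} follows by localization. Fix $\epsilon=1$ and let $R_1$ be the corresponding radius; cover $\supp\mu$ by a family $\{B_i\}$ of balls of radius $R_1$ with bounded overlap. For $f\in L^2(\mu)$ one splits $T_\mu(f\one_{3B_i})$ into its part near $6B_i$, bounded via \eqref{eq:maineq-small_T-R} applied to $\mu$ restricted to $6B_i$ together with the trivial inequality $\|\mathcal R_{\mu|_{6B_i}}\|\leq\|\mathcal R_\mu\|$, plus a tail part, bounded on $L^2(\mu)$ using the decay $|x-y|^{-n}$ and the $n$-growth of $\mu$ (here Lemma \ref{lem:estim_fund_sol} at large scales, hence $A\in\DMO_\ell$, is used); summing over $i$ and using the bounded overlap of $\{3B_i\}$ yields $\|T_\mu\|_{L^2(\mu)\to L^2(\mu)}\lesssim1+\|\mathcal R_\mu\|_{L^2(\mu)\to L^2(\mu)}$ with a constant depending on $n$, $\Lambda$, $c_0$ and $\diam(\supp\mu)$. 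The reverse inequality is proved identically with the roles of $T_\mu$ and $\mathcal R_\mu$ interchanged. This patching of local $L^2(\mu)$-bounds is standard for Calder\'on--Zygmund operators relative to measures of polynomial growth, the only point being that the constants degrade solely through the number of dyadic scales between $R_1$ and $\diam(\supp\mu)$.
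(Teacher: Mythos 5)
The overall outline—localize, freeze coefficients, compare with a constant‑coefficient operator, compare that with the Riesz transform, patch—matches the paper in spirit, but the crucial step is wrong as you state it, and this is not a fixable detail but the central difficulty of the result.

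Your key claim is that after freezing at a \emph{fixed} matrix $A_0=\bar A_{x_0,4R}$, the kernel $\nabla_1(\Gamma_A-\Gamma_{A_0})(x,y)$ is $|x-y|^{-n}$ times a Dini modulus whose total mass is $\lesssim \mathfrak I_{\omega_A}(CR)+\int_0^{CR}\mathfrak I_{\omega_A}(t)\,dt/t$, so that a Schur test finishes. This does not hold. Freezing at a fixed scale $\sim R$ rather than at the $|x-y|$-adapted scale produces, in every region of the $z$-integral (in particular for $z$ near $x$), a residual of size $\gtrsim |\bar A_{x,|x-y|/2}-A_0|\,|x-y|^{-n}\gtrsim \mathfrak I_{\omega_A}(CR)\,|x-y|^{-n}$: a small Calder\'on--Zygmund kernel, not a Dini kernel, hence outside the reach of the Schur test. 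The cancellation argument you sketch does not remove this: the oscillation of $(A(z)-A_0)\nabla_1\Gamma_A(z,y)$ over a sphere of radius $r$ centered at $x$ has a full-size contribution $\approx \mathfrak I_{\omega_A}(R)\,r^{-n}$ coming from the oscillation of $\nabla_1\Gamma_A(\cdot,y)$ itself at unit relative scale, which is \emph{not} small; the "harmless multiples of $\mathfrak I_{\omega_A}(R)$" you dismiss are exactly the term that survives the dyadic summation and forces a kernel $\mathfrak I_{\omega_A}(R)\,|x-y|^{-n}$. This is precisely why Main Lemma I (Lemma~\ref{lem:main_lemma}) of the paper does \emph{not} yield a pure Schur bound but instead the estimate
\[
\|T_{\nu,\delta}-\omega_n^{-1}\mathcal R_{\nu,\delta}\|_{L^2(\nu)\to L^2(\nu)}\lesssim \mathfrak I_{\tau_A}(\ell(Q)) + \widehat\tau_A(\ell(Q)) + \mathfrak I_{\omega_A}(\ell(Q))^{1/2}\,\|\mathcal R_{\nu,\delta}\|_{L^2(\nu)\to L^2(\nu)},
\]
with a term proportional to $\|\mathcal R_{\nu,\delta}\|$ on the right. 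The paper achieves this by freezing at the $|x-y|$-adapted average $\bar A_{x,|x-y|/2}$ (Lemma~\ref{lem:main_pw_estimate}), passing through $\bar A_{x,\delta/2}$, and then treating the leftover smooth homogeneous odd kernel by spherical harmonics decomposition (Lemma~\ref{lem:estimate_norm_K3}) together with the recent result of \cite[Corollary 1.4]{To21}, which controls the $L^2(\mu)$-norm of such a convolution operator by $\|\mathcal R_\mu\|$. Without that ingredient there is no way to close the argument; this is also why the theorem is phrased as $1+\|\mathcal R_\mu\|\approx 1+\|T_\mu\|$ rather than as a Schur-type perturbation statement.

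Two further points. First, the assertion that $\|T^{A_0}_\mu\|\approx_{n,\Lambda}\|\mathcal R_\mu\|$ for a constant elliptic matrix $A_0$ "follows from the case of H\"older (in particular constant) coefficients already treated in \cite{CMT19,PPT21,Pu19}" is incorrect: none of those papers proves an operator-norm comparability of this form, and the statement is new in the present paper. What it amounts to is the bi-Lipschitz (here linear) invariance of the $L^2(\mu)$-boundedness of the codimension-one Riesz transform, which rests on \cite[Corollary 1.3]{To21} and \cite{DT21}. Second, the absorption of the $\|\mathcal R\|$-term on the right into the left-hand side needs an a priori qualitative finiteness of $\|\mathcal R_\nu\|$; the paper secures this by passing to the smoothed measures $\nu_\varepsilon$ (Lemmas~\ref{lemma:approx1}--\ref{lemma:approx2}), and your argument would need something analogous.
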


	The role of the $\widetilde\DMO$-condition on the matrix $A$ in Theorem \ref{theorem:bound_L2_norm_operators} can be better understood if we relate it to the technical framework of the recent works in the H\"older continuous setting. Indeed, one of the key methods of \cite{CMT19} and the subsequent papers consists in using a proper pointwise estimate of the difference $\nabla_1\Gamma_A(x,y)-\nabla_1\Gamma_{A(x)}(x,y)$, often referred to as \textit{frozen coefficients method} which was  proved in \cite{KS11}. This approach is particularly important because it allows to reduce the study of the operator $T_\mu$ to the gradient of the single layer potential associated with a uniformly elliptic equation with \textit{constant coefficients}, which in turn coincides with the Riesz transform modulo a linear change of variables (that depends on $x$ though).
	
	Hence, a crucial difficulty in the proof of Theorem \ref{theorem:bound_L2_norm_operators} is the identification of the right substitute of the frozen coefficients method which adapts to the mean oscillation setting. This issue is resolved in Lemma \ref{lem:main_pw_estimate}, where we estimate the difference of $\nabla_1\Gamma_A(x,y)$ and $\nabla_1\Gamma_{\bar A_{x,r}}(x,y)$, for $r\coloneqq |x-y|/2$.
	The bound depends on the scale $R>0$ such that $x,y\in B(0,R)$ and it involves the quantity
	\begin{equation}\label{eq:defin_tau_A}
		r^{-n}\,	\tau_A(r) \coloneqq r^{-n}\,	 \left( \mathfrak I_{\omega_A}(r) + \mathfrak L^n_{\omega_A}(r)\right)= \frac{1}{r^n}\int_0^r 	\omega_A(t)\, \frac{dt}{t} +  \int_r^\infty \omega_A(t)\, \frac{dt}{t^{n+1}}
	\end{equation}
	and the term
	\[
		R^{-n}\widehat \tau_A(R)\coloneqq R^{-n}\left( \mathfrak I_{\omega_A}(R)+\mathfrak L^{n-1}_{\omega_A}(R) \right)= \frac{1}{R^{n}}\int_0^R\omega_A(t)\, \frac{dt}{t}+\frac{1}{R}\int_R^\infty \omega_A(t) \, \frac{dt}{t^n}.
	\]
	Observe that the $\widetilde \DMO$ assumption implies that $\mathfrak I_{\tau_A}(1)<\infty$ and  $\widehat \tau_A(R)<\infty$ for any $R>0$.
	Moreover, if $A$ is $\alpha$-H\"older continuous, $\tau_A(r)\lesssim r^\alpha$.
	
	In the proof of Lemma \ref{lem:main_pw_estimate} we use a slight variation of the result of Dong and Kim \cite{DK17} (see Theorem \ref{theorem:dong_kim}) and make some delicate PDE estimates obtaining a  sharp bound in terms of $\tau_A$ and the term $R^{-n}\widehat \tau_A(R)$.	 Since we allow the implicit constant in \eqref{eq:maineq} to depend on the $\diam (\supp\mu)$, picking up the term $R^{-n}\widehat \tau_A(R)$ not only is it harmless,  but it is a term that can become  small  if the support of the measure has small enough diameter. Its importance will become evident in the proof of Corollary  \ref{cor:elliptic_GSTo}.
	
	One of the main difficulties is that we do not have scale invariant estimates and in large scales this creates a significant complication. This stands in contrast to the case of H\"older continuous and periodic coefficients, where scale invariant local $L^\infty$ estimates for the gradient of a solution are at our disposal, which makes things work smoothly in scales much  larger than $R$.  Let us  highlight that, in the present manuscript, we do not require any periodicity assumption on the matrix. In fact, we fill a gap in the use of \cite[Lemma 2.2]{KS11} even for H\"older continuous matrices in the previous works \cite{CMT19}, \cite{PPT21}, \cite{Pu19}, and \cite{BMR21}, where \cite[Lemma 2.2]{KS11} was invoked for non-periodic matrices without any additional justification. To be precise, the bound of \eqref{eq:3rlem} is the missing component. Another  obstacle when working with elliptic operators $L_A$ associated with non-constant matrices is that the kernel $\nabla_1\Gamma(\cdot, \cdot)$ is not anti-symmetric, which, in principle, is rather inconvenient when dealing with its associated single layer potential. 
	
	\vv
	Our strategy to prove Theorem \ref{theorem:bound_L2_norm_operators}  consists in using the frozen coefficients type method  in order to bound the $L^2$-operator norm of the difference of the $\delta$-truncated gradient of the single layer potential and the $\delta$-truncated Riesz transform at the level of a cube in terms of the operator norm of $\mathcal R_\mu$.  This is a three-step perturbation argument:
	\begin{enumerate}
	\item  The first step is the comparison of $\nabla_1\Gamma_A$ with $\nabla\Theta(\cdot;\bar A_{x,|x-y|/2})$ that has already been described above (see   Lemma \ref{lem:main_pw_estimate}). The dependence of the second kernel on both $x$ and $y$ requires  an additional step.
	\item The second step is to compare $\nabla\Theta(\cdot;\bar A_{x,|x-y|/2})$ with $\nabla\Theta(\cdot;\bar A_{x,\delta/2})$, where $\delta$ is the level of the truncation of the single layer potential (see Lemmas \ref{lem:lem_1_prep_main_lemma} and  \ref{lem:lem_2_prep_main_lemma}). This is crucial since it allows us to reduce case to a smooth and odd kernel which is homogeneous of degree $-n$ and  independent of the $y$ variable (it is the variable with respect to which we integrate the kernel to construct the integral operator). 
	\item The third and final step is the estimate of the difference between $\nabla\Theta(\cdot;\bar A_{x,\delta/2})$ and the normalized Riesz kernel. Here we assume that our measure is supported on a cube $Q$ centered at $x_Q$ and $x, y \in Q$. Modulo a change of variables argument, we can assume that the average of $A$ over a ball centered at $x_Q$ with radius comparable to the side-length of the cube is the identity matrix. Contrarily to the previous case, we want  to compare $\nabla\Theta(\cdot;\bar A_{x,\delta/2})$ with $\nabla\Theta(\cdot;\bar A_{x_Q,M \ell(Q)})$ moving up from $\delta$ to a higher scale $\ell(Q)$.  Pure PDE estimates do not give satisfactory upper bounds and hence, inspired by the approach of \cite[Section 1]{MiT99}, we study $\nabla\Theta(\cdot;\bar A_{x,\delta/2})-\nabla \Theta(\cdot; Id)$ via the method of \textit{spherical harmonics expansion} (see Lemma \ref{lem:estimate_norm_K3}).
\end{enumerate}
	
	More specifically, in the latter step, we prove suitable bounds on the coefficients of the expansion again via PDE estimates. However, we also need proper estimates for the operator norm of singular integrals associated with harmonic polynomials in terms of the norm of the Riesz transform.
	In order to accomplish this, our argument also relies  on some powerful results  which have been recently proved by the last named author in \cite{To21} (this paper relaxed the assumptions of the main theorem of its companion paper by Dabrowski and the last named author \cite{DT21},  and provided an extension of \cite{To05} to higher dimensions). In particular, that work characterizes non-atomic  Radon measures in $\Rn1$ 	 whose Riesz transform is $L^2$-bounded. From that result it was possible to derive the invariance of the $L^2$-boundedness of the Riesz transform under bilipschitz transformations of the measure.
	Furthermore, as proved in \cite[Corollary 1.4]{To21}, if $\mu$ is a measure in $\Rn1$ with no point masses, and $\mathcal T_{K,\mu}$ is the singular integral operator of convolution-type formally defined as
	\[
	\mathcal T_{K,\mu} f(x)=\int K(x-y)f(y)\, d\mu(y)\, \qquad\text{ for } f\in L^1_{\loc}(\mu),
	\]
	where $K$ is antisymetric and satisfies
	\[
	|\nabla^j K(x)|\lesssim \frac{1}{|x|^{n+j}}, \qquad x\in \Rn1\setminus \{0\},\,  0\leq j\leq 2,
	\]
	then
	\[
	\|\mathcal T_{K,\mu}\|_{L^2(\mu)\to L^2(\mu)}\leq C \|\mathcal R_\mu\|_{L^2(\mu)\to L^2(\mu)}.
	\]
	
	\vvv
	
	Theorem \ref{theorem:bound_L2_norm_operators} has some direct and important applications, which we present below. 
	If $\mu$ is a non-zero Borel measure on $\Rn1$ and $s\in (0,n+1]$, we define its \textit{upper $s$-dimensional density}
	\[
	\Theta^{*,s}(x,\mu)\coloneqq \limsup_{r\to 0} \frac{\mu(B(x,r))}{(2r)^s} \qquad \text{ for }x\in \Rn1
	\]
	and its \textit{lower $s$-dimensional density}
	\[
	\Theta^s_*(x,\mu)\coloneqq \liminf_{r\to 0}\frac{\mu(B(x,r))}{(2r)^s}\qquad \text{ for }x\in \Rn1.
	\]
	
	One of the key tools used in the solution of the codimension-$1$ David-Semmes' problem in \cite{NToV14} is a variational technique partially inspired  by a previous argument of Eiderman, Nazarov and Volberg in \cite{ENV12}. The main result of \cite{ENV12} is that, for $n\leq s<n+1$, if a measure $\mu$ on $\Rn1$ is such that $0<\Theta^{*,s}(x,\mu)<\infty$ for $\mu$-almost every $x$ and $\Theta^s_*(x,\mu)=0$ $\mu$-almost everywhere, then its $s$-dimensional Riesz transform $\mathcal R^s_\mu$ is \textit{not} bounded on $L^2(\mu)$. This was  recently generalized  in the case $s=n$ by Conde-Alonso together with the second and fourth named authors in \cite{CMT19} for the gradient of the single layer potential $T_\mu$ associated with a H\"older continuous matrix $A$ (see also \cite{BMR21} for a version of \cite{CMT19} for Schr\"odinger operators). Since  H\"older continuous matrices belong to $\widetilde \DMO$, Theorem \ref{theorem:bound_L2_norm_operators} allows us to obtain an alternative approach to \cite[Theorem A]{CMT19}, and to extend it to a more general class of elliptic equations.
	
	\begin{corollary}\label{theorem:ENV_elliptic}
Let	$A$ be a uniformly elliptic matrix satisfying  $A\in \widetilde \DMO$ and let $\mu$ be a non-zero measure on $\Rn1$, $n\geq 2$, such that $0<\Theta^{*,n}(x,\mu)<\infty$ and $\Theta^n_*(x,\mu)=0$ for $\mu$-a.e. $x\in \Rn1$. Then the associated operator $T_\mu$ given by \eqref{eq:single layer} is not bounded on $L^2(\mu)$.
	\end{corollary}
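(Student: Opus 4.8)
The plan is to deduce Corollary \ref{theorem:ENV_elliptic} from Theorem \ref{theorem:bound_L2_norm_operators} together with the known result of Eiderman--Nazarov--Volberg \cite{ENV12} in the case $s=n$ (that is, if $0<\Theta^{*,n}(x,\mu)<\infty$ and $\Theta^n_*(x,\mu)=0$ $\mu$-a.e., then $\mathcal R^n_\mu$ is unbounded on $L^2(\mu)$). The equivalence \eqref{eq:maineq} is stated for measures $\mu \in M^n_+(\Rn1)$ with compact support, so the first task is a localization/reduction to that setting. First I would argue that we may assume $\mu$ is compactly supported and has $n$-growth. For compact support: since $T_\mu$ being bounded on $L^2(\mu)$ is a property that localizes — indeed if $T_\mu$ were bounded on $L^2(\mu)$ then $T_{\mu|_B}$ would be bounded on $L^2(\mu|_B)$ for every ball $B$ with a uniform bound, because the kernel $\nabla_1\Gamma_A(x,y)$ is, by Lemma \ref{lem:estim_fund_sol}, locally of Calder\'on--Zygmund type (so the ``off-diagonal'' contributions are controlled using only the $n$-growth of $\mu$) — it suffices to derive a contradiction on a single ball $B$ of small enough radius on which $\mu|_B \neq 0$ and still satisfies $0<\Theta^{*,n}(x,\mu|_B)<\infty$, $\Theta^n_*(x,\mu|_B)=0$ $\mu|_B$-a.e. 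Shrinking $B$ if necessary we can moreover guarantee $\diam(\supp \mu|_B) \leq R$ for the threshold $R$ furnished by the ``in fact'' part of Theorem \ref{theorem:bound_L2_norm_operators}. For $n$-growth: by the density hypothesis $\Theta^{*,n}(x,\mu)<\infty$ $\mu$-a.e., so by a standard argument (see \cite[p.~56]{Da92} or Lemma \ref{lem:T_bounded_polynomial_growth}) we may restrict to a subset of positive measure on which $\mu$ has $n$-growth with a fixed constant $c_0$, and this restriction inherits both density conditions; alternatively, since the conclusion we want is that $T_\mu$ is \emph{not} bounded, we could simply note that if $T_\mu$ is bounded then $\mu \in M^n_+$ automatically by Lemma \ref{lem:T_bounded_polynomial_growth} and then apply the theorem.

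With these reductions in place, the argument is short. Suppose, for contradiction, that $T_\mu$ is bounded on $L^2(\mu)$; after the above localization we have a non-zero measure $\nu = \mu|_B$ with compact support, $\nu \in M^n_+(\Rn1)$, $\diam(\supp\nu)\leq R$, with $0<\Theta^{*,n}(x,\nu)<\infty$ and $\Theta^n_*(x,\nu)=0$ for $\nu$-a.e.\ $x$, and with $\|T_\nu\|_{L^2(\nu)\to L^2(\nu)}<\infty$. Applying \eqref{eq:maineq-small_R-T} (with, say, $\epsilon = 1$), we conclude
\[
	\|\mathcal R_\nu\|_{L^2(\nu)\to L^2(\nu)} \leq 1 + C' \|T_\nu\|_{L^2(\nu)\to L^2(\nu)} < \infty,
\]
so the $n$-dimensional Riesz transform $\mathcal R_\nu = \mathcal R^n_\nu$ is bounded on $L^2(\nu)$. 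But $\nu$ has no point masses (as $\Theta^{*,n}(x,\nu)<\infty$ $\nu$-a.e.\ forces $\nu(\{x\})=0$ for $\nu$-a.e.\ $x$, and a non-zero finite sum of atoms would violate the lower-density hypothesis anyway), so the Eiderman--Nazarov--Volberg theorem in the case $s=n$ applies to $\nu$ and gives that $\mathcal R^n_\nu$ is \emph{not} bounded on $L^2(\nu)$ --- a contradiction. Hence $T_\mu$ is not bounded on $L^2(\mu)$.

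The main obstacle, and the only place requiring genuine care, is the localization step: one must verify that the $L^2(\mu)$-boundedness of $T_\mu$ really does pass to $T_{\mu|_B}$ on $L^2(\mu|_B)$ with control on the operator norm, and that the two density hypotheses are preserved under restriction to a ball (the latter is immediate since densities at a point $x\in B^\circ$ depend only on $\mu$ near $x$). The boundedness transfer is standard for Calder\'on--Zygmund operators once Lemma \ref{lem:estim_fund_sol} is invoked: writing $T_{\mu|_B} = \mathbf 1_B T_\mu(\mathbf 1_B \,\cdot\,)$ up to the usual truncation bookkeeping, the commutator-type error terms are handled by the size estimate $|\nabla_1\Gamma_A(x,y)|\lesssim |x-y|^{-n}$ together with the $n$-growth of $\mu$. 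One should also make sure the diameter of $\supp\mu|_B$ can be made smaller than the threshold $R$ of Theorem \ref{theorem:bound_L2_norm_operators} while keeping $\mu|_B\neq 0$ and the densities unchanged, which is clear by choosing $B$ centered at a point of $\supp\mu$ with small radius. Everything else is a direct citation of the previously established Theorem \ref{theorem:bound_L2_norm_operators} and of \cite{ENV12}.
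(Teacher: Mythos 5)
The proposal is correct and follows essentially the same route as the paper: argue by contradiction, obtain $n$-polynomial growth from Lemma \ref{lem:T_bounded_polynomial_growth}, apply Theorem \ref{theorem:bound_L2_norm_operators} to transfer the $L^2$-boundedness to the Riesz transform, and contradict \cite{ENV12}. The one place you go beyond what the paper actually writes is the localization to a compactly supported measure: the corollary does not assume $\mu$ has compact support, yet both Lemma \ref{lem:T_bounded_polynomial_growth} and Theorem \ref{theorem:bound_L2_norm_operators} require it (their constants depend on $\diam(\supp\mu)$), so this reduction is genuinely needed and the paper's three-line proof silently glosses over it. Your justification of the restriction step is more elaborate than necessary, though: since $T_{\mu|_B,\delta}g = T_{\mu,\delta}(g\chi_B)$ exactly, one gets $\|T_{\mu|_B,\delta}\|_{L^2(\mu|_B)\to L^2(\mu|_B)} \le \|T_{\mu,\delta}\|_{L^2(\mu)\to L^2(\mu)}$ directly with no Calder\'on--Zygmund structure, no commutator error terms, and no a priori growth assumption --- which is fortunate, since the growth is only established afterwards via Lemma \ref{lem:T_bounded_polynomial_growth}, so invoking it in the localization would have been circular. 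You should also pick the ball $B$ with $\mu(\partial B)=0$ (possible since at most countably many concentric spheres carry positive $\mu$-mass) so that $\mu|_B$ inherits the two density hypotheses $\mu|_B$-a.e.\ without boundary issues.
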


	Another important application of our main theorem is the elliptic version of the David and Semmes problem in codimension $1$. Via a decomposition in spherical harmonics, it was proved in \cite{CMT19} that, if $A$ is H\"older continuous, $T_\mu$ is bounded on $L^2(\mu)$ on uniformly $n$-rectifiable measures $\mu$ with compact support. The converse implication was obtained by Prat, Puliatti, and Tolsa in \cite{PPT21}, via a non-trivial adaptation of the scheme of \cite{NToV14}. In particular, we remark that their proof relies on a delicate reflection argument for the matrix across hyperplanes, and it is not clear how to adapt it to the context of uniformly elliptic matrices with Dini mean oscillation. Nevertheless, Theorem \ref{theorem:bound_L2_norm_operators} readily shows the following:
	\begin{corollary}\label{theorem:DS_elliptic}
Let	$A$ be a uniformly elliptic matrix satisfying  $A\in \widetilde \DMO$ and let $\mu$ be an $n$-AD-regular measure on $\Rn1$, $n\geq 2$, with compact support.  If $T_\mu$ is the associated operator  given by \eqref{eq:single layer}, then  $T_\mu$ is bounded on $L^2(\mu)$ if and only if $\mu$ is uniformly $n$-rectifiable.
	\end{corollary}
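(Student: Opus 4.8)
The plan is to obtain this corollary as an essentially immediate consequence of Theorem \ref{theorem:bound_L2_norm_operators}, reducing both implications to the already established theory of the $n$-dimensional Riesz transform in codimension $1$. The first point to record is that the hypotheses of Theorem \ref{theorem:bound_L2_norm_operators} are met: an $n$-AD-regular measure $\mu$ with compact support automatically lies in $M^n_+(\Rn1)$, since the upper AD-regularity bound furnishes an admissible growth constant $c_0$, and moreover $\diam(\supp\mu)<\infty$. Hence \eqref{eq:maineq} applies and gives that $T_\mu$ is bounded on $L^2(\mu)$ if and only if $\mathcal R_\mu$ is bounded on $L^2(\mu)$, with comparison constants depending only on $n$, $\Lambda$, the AD-regularity constant of $\mu$, and $\diam(\supp\mu)$.

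For the forward implication I would argue as follows. If $\mu$ is uniformly $n$-rectifiable, then by the theorem of David and Semmes \cite{DS91} every sufficiently regular odd singular integral operator --- in particular the $n$-dimensional Riesz transform $\mathcal R_\mu$ --- is bounded on $L^2(\mu)$; thus $\|\mathcal R_\mu\|_{L^2(\mu)\to L^2(\mu)}<\infty$, and \eqref{eq:maineq} yields $\|T_\mu\|_{L^2(\mu)\to L^2(\mu)}<\infty$. Note that routing the argument this way bypasses entirely the spherical harmonics computation used for H\"older continuous $A$ in \cite{CMT19}.

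For the converse, suppose $T_\mu$ is bounded on $L^2(\mu)$. Then \eqref{eq:maineq} gives $\|\mathcal R_\mu\|_{L^2(\mu)\to L^2(\mu)}<\infty$, and since $\mu$ is $n$-AD-regular, the resolution of the codimension-$1$ David--Semmes problem by Nazarov, Tolsa, and Volberg \cite{NToV14} forces $\mu$ to be uniformly $n$-rectifiable. This completes the proof.

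There is no genuine obstacle at this stage: all of the analytic difficulty has already been absorbed into Theorem \ref{theorem:bound_L2_norm_operators}, and what remains is only the bookkeeping verification that an $n$-AD-regular, compactly supported $\mu$ satisfies the hypotheses ($\mu\in M^n_+(\Rn1)$, compact support, $n\geq 2$) of that theorem. The conceptual point worth emphasizing is that passing through the operator-norm equivalence \eqref{eq:maineq} avoids the hyperplane-reflection scheme of \cite{PPT21}, which is precisely the ingredient of the H\"older continuous proof that does not seem to adapt to matrices of Dini mean oscillation-type.
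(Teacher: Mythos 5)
Your argument coincides with the paper's own: both implications are obtained by transferring $L^2(\mu)$-boundedness between $T_\mu$ and $\mathcal R_\mu$ via \eqref{eq:maineq} (after noting that $n$-AD-regularity gives $\mu\in M^n_+(\Rn1)$), and then invoking the David--Semmes theorem for one direction and the Nazarov--Tolsa--Volberg solution of the codimension-$1$ problem for the other. No discrepancy; this is precisely the proof given in the paper.
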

	The combination of \cite{NToV14}, \cite{ENV12}, and a covering argument of Pajot allowed Nazarov, Volberg, and the fourth named author to prove in \cite{NToV14b} that if $E\subset\Rn1$ is such that $\mathcal H^n(E)<\infty$ and $\mathcal R_{\mathcal H^n|_E}$ is bounded on $L^2(\mathcal H^n|_E)$, then the set $E$ is $n$-rectifiable. Its elliptic analogue  for second order elliptic operators in divergence form associated with H\"older continuous matrices was obtained in \cite{PPT21}. We generalize  this result as well.
	\begin{corollary}\label{theorem:NTVpubmat_elliptic}
	Let	$A$ be a uniformly elliptic matrix satisfying  $A\in \widetilde \DMO$ and let $E\subset \Rn1$, $n\geq 2$, be a compact set with $\mathcal H^n(E)<\infty$. If $T$ is the associated operator given by \eqref{eq:single layer} and $T_{\mathcal H^n|_E}$ is  bounded  on $L^2(\mathcal H^n|_E)$, then $E$ is $n$-rectifiable.
	\end{corollary}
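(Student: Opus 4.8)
The plan is to deduce the statement directly from Theorem \ref{theorem:bound_L2_norm_operators} together with the known rectifiability result for the Riesz transform, namely the theorem of Nazarov, Volberg, and Tolsa from \cite{NToV14b}: if $E\subset\Rn1$ is compact with $\mathcal H^n(E)<\infty$ and $\mathcal R_{\mathcal H^n|_E}$ is bounded on $L^2(\mathcal H^n|_E)$, then $E$ is $n$-rectifiable. So the whole task reduces to transferring the $L^2$-boundedness hypothesis from $T_{\mathcal H^n|_E}$ to $\mathcal R_{\mathcal H^n|_E}$.

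First I would set $\mu\coloneqq \mathcal H^n|_E$. Since $E$ is compact and $\mathcal H^n(E)<\infty$, $\mu$ is a compactly supported finite Radon measure. The key point that needs checking before Theorem \ref{theorem:bound_L2_norm_operators} applies is that $\mu\in M^n_+(\Rn1)$, i.e.\ that $\mu$ has $n$-growth. This is exactly the content of the remark after the definition of $M^d_+$ (combined with Lemma \ref{lem:T_bounded_polynomial_growth}): if $T_\mu$ is bounded on $L^2(\mu)$ and $\mu$ has no atoms, then $\mu$ has growth of degree $n$; and $\mathcal H^n|_E$ with $\mathcal H^n(E)<\infty$ has no atoms. (Alternatively, one may note that $\mathcal H^n|_E$ automatically has upper $n$-growth at $\mu$-a.e.\ point by standard density bounds and then localize, but invoking Lemma \ref{lem:T_bounded_polynomial_growth} is cleanest.) Once $\mu\in M^n_+(\Rn1)$ is established, Theorem \ref{theorem:bound_L2_norm_operators} gives
\[
	1 + \|\mathcal R_\mu\|_{L^2(\mu)\to L^2(\mu)}\approx 1 + \|T_\mu\|_{L^2(\mu)\to L^2(\mu)},
\]
with implicit constant depending only on $n$, $\Lambda$, $c_0$, and $\diam(\supp\mu)\le\diam(E)$. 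In particular, the hypothesis that $T_{\mathcal H^n|_E}$ is bounded on $L^2(\mathcal H^n|_E)$ forces $\|\mathcal R_\mu\|_{L^2(\mu)\to L^2(\mu)}<\infty$.

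Having transferred the boundedness, I would then apply \cite{NToV14b} directly to conclude that $E$ is $n$-rectifiable, which finishes the proof. I do not expect any serious obstacle here: unlike Corollaries \ref{theorem:ENV_elliptic} and \ref{theorem:DS_elliptic}, this statement is a one-line consequence of the main theorem plus an external black box, so the only genuinely delicate point is the verification that the $n$-growth hypothesis needed to invoke Theorem \ref{theorem:bound_L2_norm_operators} is automatic — and that is handled by the non-atomicity of $\mathcal H^n|_E$ and Lemma \ref{lem:T_bounded_polynomial_growth}. One should also make sure the $n\ge2$ restriction inherited from Theorem \ref{theorem:bound_L2_norm_operators} is carried along in the statement, which it is.
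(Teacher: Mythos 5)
Your proposal is correct and takes essentially the same approach as the paper: the paper's proof is the one-line remark that the corollary is a direct consequence of Theorem~\ref{theorem:bound_L2_norm_operators} and the main result of \cite{NToV14b}, and you simply make the implicit details explicit. In particular, your observation that the $n$-growth hypothesis of Theorem~\ref{theorem:bound_L2_norm_operators} must be verified — via non-atomicity of $\mathcal H^n|_E$ and Lemma~\ref{lem:T_bounded_polynomial_growth} for small scales, with finiteness of $\mathcal H^n(E)$ handling large scales — is exactly the right way to fill the gap the paper leaves unstated.
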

	
	The main advantage of Theorem \ref{theorem:bound_L2_norm_operators} is that its application gives alternative and more direct proofs  of \cite{CMT19},   \cite{PPT21}, and \cite{Pu19} via \cite{ENV12}, \cite{NToV14},  \cite{NToV14b}, and \cite{GT18}, which readily extend to uniformly elliptic matrices in $\widetilde \DMO$. 
	
	{
		Moreover, if we further assume that the matrix $A$ is H\"older continuous, Corollaries \ref{theorem:DS_elliptic} and \ref{theorem:NTVpubmat_elliptic} are crucial tools in order to prove a rectifiability result for elliptic measure in the context of a non-variational one-phase problem (see \cite[Theorem 1.3]{PPT21}) which generalizes \cite{AHMMMTV16}. For more details we refer to \cite[Section 12]{PPT21}.
	}
	
	\vv
	
	Finally, we also extend the main result of Girela-Sarri\'on and the fourth named author \cite{GT18} as well as its elliptic analogue of the third named author in \cite{Pu19}.
		{Let $\mu$ be a Radon measure on $\Rn1$. For a ball $B\subset \Rn1$ of radius $r(B)$ {and an integer $N>0$}, we denote
		{
		\begin{align}
		\Theta_\mu(B)&=\frac{\mu(B)}{r(B)^n},\notag\\
		\alpha_A(t)&=t+ t^\beta + \omega_A(t),\qquad t>0, \, \beta\in (0,1]\label{eq:def_alpha_A}\\
		P_{\gamma, \mu}(B)&\coloneqq  \sum_{j\geq 0} 2^{-{\gamma j}}\Theta_{\mu}(2^j B), \qquad \gamma\in (0,1]\notag\\
	\mathcal P^N_{\omega,\mu}(B)& \coloneqq \sum_{j \geq N} \alpha_A(2^{-j}) \Theta_\mu(2^j B).\label{eq:N-poisson}
\end{align}
	}
		Given an $n$-dimensional plane $L$ in $\Rn1$ we denote
		\[
		\beta^L_{\mu,1}(B)=\frac{1}{r(B)^n}\int_B\frac{\dist(x,L)}{r(B)}\, d\mu(x)\qquad\text{ and }\qquad \beta_{\mu,1}(B)=\inf_L \beta^L_{\mu,1}(B),
		\]
		where the infimum is taken over all hyperplanes.
		Finally, for a set $E\subset\Rn1$ with $\mu(E)>0$ and $f\in L^1_{\loc}(\mu)$ we write
		\[
		m_{E}(f,\mu)=\frac{1}{\mu(E)}\int_E f\, d\mu.
		\]
		
		Let $M(\Rn1)$ be the space of real Borel measures, endowed with the total variation norm $\|\cdot\|$: for $\mu\in M(\Rn1)$ we indicate by $|\mu|$ its variation and by $\|\mu\|\coloneqq |\mu|(\Rn1)$.
		
		\vv
		For our application, we have to determine whether $T_{\mu,\varepsilon}f$ converges pointwise $\mu$-almost everywhere for $\varepsilon\to 0$. In case it does, we denote the limit as
		\[
			\pv T_\mu f(x)=\lim_{\varepsilon\to 0}T_{\mu,\varepsilon}f(x)
		\]
		and we refer to it as the \textit{principal value} of the integral $T_\mu f(x)$. The existence of principal values for gradients of single layer potentials can be proved in our framework via a minor variant of the arguments of \cite[Theorem 1.1]{Pu19}: one can study separately the case of rectifiable measures and that of measures with zero density, which can be both analyzed via the frozen coefficients method of Lemma \ref{lem:main_pw_estimate}. Ultimately, this implies the following result.
		\begin{proposition}\label{theorem_pv_layer_pot}
			Let $\mu$ be a Radon measure on $\Rn1$, $n\geq 2$, with compact support and with growth of degree $n$.
			Let $A$ be a uniformly elliptic matrix satisfying $A\in \widetilde\DMO$ and assume that its associated gradient of the single layer potential $T_\mu$ is bounded on $L^2(\mu)$. Then the following holds:
			\begin{enumerate}
				\item for $1\leq p<\infty$ and all $f\in L^p(\mu)$, $\pv T_\mu f(x)$ exists for $\mu$-a.e. $x\in\Rn1$.
				\item for all $\nu\in M(\Rn1)$, $\pv T\nu(x)$ exists for $\mu$-a.e. $x\in\Rn1$.
			\end{enumerate}
		\end{proposition}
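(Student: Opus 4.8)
The plan is to follow the strategy outlined in the statement: split the analysis according to the Lebesgue decomposition of $\mu$ with respect to the density, treating the rectifiable part and the zero-density part separately, and in each case use the frozen coefficients estimate of Lemma \ref{lem:main_pw_estimate} to reduce to the corresponding known result for the Riesz transform. First I would reduce part (1) to the case $p=1$ and $f\geq 0$ by a standard density argument: since $T_\mu$ is bounded on $L^2(\mu)$, the maximal truncated operator $T_{\mu,*}f=\sup_{\varepsilon>0}|T_{\mu,\varepsilon}f|$ is bounded on $L^p(\mu)$ for $1<p<\infty$ (by Cotlar's inequality, available because $\nabla_1\Gamma_A$ is locally Calder\'on--Zygmund by Lemma \ref{lem:estim_fund_sol}, together with the growth hypothesis $\mu\in M^n_+$) and is of weak type $(1,1)$; hence the set of $f$ for which the principal value exists $\mu$-a.e.\ is closed in $L^p(\mu)$, and it suffices to check existence on a dense class, e.g.\ Lipschitz functions with compact support, and even there one may localize. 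Part (2) then follows from part (1) applied to the Radon--Nikodym densities after decomposing $\nu=\nu_{ac}+\nu_s$ with respect to $\mu$, noting that the part of $\nu$ singular to $\mu$ contributes a kernel that is smooth off $\supp\mu$ where we evaluate, so no principal value issue arises there.

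For the main point, existence of $\pv T_\mu f$, write $\mu=\mu|_{Z}+\mu|_{Z^c}$ where $Z=\{x:\Theta^{n}_*(x,\mu)=0\}$. On $Z^c$, since $\mu\in M^n_+$ and $\mathcal R_\mu$ (equivalently $T_\mu$, by Theorem \ref{theorem:bound_L2_norm_operators}) is bounded on $L^2(\mu)$, one invokes the structure theory: $\mu|_{Z^c}$ is $n$-rectifiable (this is where the results behind Corollary \ref{theorem:NTVpubmat_elliptic}, i.e.\ \cite{NToV14b} together with \cite{To21}, enter), so it suffices to establish the principal value on a rectifiable measure. On a rectifiable piece, at $\mu$-a.e.\ point $x$ the measure has an approximate tangent $n$-plane, and by Lemma \ref{lem:main_pw_estimate} the kernel $\nabla_1\Gamma_A(x,y)$ differs from $\nabla\Theta(x-y;\bar A_{x,|x-y|/2})$ by an error whose radial integral converges (this uses precisely $\mathfrak I_{\tau_A}(1)<\infty$ and $\widehat\tau_A(R)<\infty$, guaranteed by $A\in\widetilde\DMO$); moreover, as in step (2) of the strategy, $\bar A_{x,|x-y|/2}$ can be replaced by $\bar A_{x,\varepsilon/2}$ at the cost of a summable error, again via Lemmas \ref{lem:lem_1_prep_main_lemma} and \ref{lem:lem_2_prep_main_lemma}, and $\bar A_{x,\varepsilon/2}\to A(x)$ as $\varepsilon\to0$ with modulus $\mathfrak I_{\omega_A}$. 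Thus the existence of $\pv T_\mu f(x)$ is reduced to the existence of the principal value of a constant-coefficient kernel $\nabla\Theta(\cdot;A(x))$, which by the linear change of variables that diagonalizes $A(x)$ is the Riesz transform; and principal values of the Riesz transform with respect to an $n$-rectifiable measure exist $\mu$-a.e.\ by the classical result (see \cite{To21} and the references therein). On the zero-density part $\mu|_Z$, one argues directly: for $\mu|_Z$-a.e.\ $x$ one has $\Theta^n_*(x,\mu)=0$, and combining this with the frozen coefficients reduction one shows that the truncated integrals $T_{\mu,\varepsilon}f(x)$ form a Cauchy net; the argument mirrors that of \cite[Theorem 1.1]{Pu19} for the zero-density case, the only modification being the replacement of the H\"older-type bound $\tau_A(r)\lesssim r^\alpha$ by the Dini bound on $\tau_A$, which still yields a convergent series $\sum_j \tau_A(2^{-j})$.

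The main obstacle is the passage through the structure theory on $Z^c$: one must be sure that the characterization of measures with $L^2$-bounded Riesz transform from \cite{To21}, together with the rectifiability conclusion of \cite{NToV14b}, applies after transferring the $L^2$-boundedness of $T_\mu$ to that of $\mathcal R_\mu$ via \eqref{eq:maineq}, and that no compact-support or AD-regularity hypothesis is secretly needed beyond $\mu\in M^n_+$ with compact support, which we have. The remaining technical care lies in making the error terms in the three-step frozen coefficients reduction summable uniformly enough to interchange the limit $\varepsilon\to0$ with the series; this is exactly the role of the $\widetilde\DMO$ hypothesis and is controlled by the quantitative bounds of Lemma \ref{lem:main_pw_estimate}. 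None of this requires ideas beyond those already developed for Theorem \ref{theorem:bound_L2_norm_operators} and in \cite{Pu19}, which is why the proof is only sketched.
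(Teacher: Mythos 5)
Your overall strategy matches the paper's: the paper itself only sketches a proof, delegating to a variant of \cite[Theorem 1.1]{Pu19} in which one splits the measure into a rectifiable piece (positive lower density) and a zero-lower-density piece, and treats both via the frozen-coefficients estimate of Lemma \ref{lem:main_pw_estimate}. Your treatment of part (1) and of the two pieces is consistent with that. Two points, however, deserve attention, and the first is a genuine error as written.

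For part (2) you claim that the component of $\nu$ singular to $\mu$ ``contributes a kernel that is smooth off $\supp\mu$ where we evaluate, so no principal value issue arises there.'' This is false: a measure $\nu_s\perp\mu$ is concentrated on a $\mu$-null set, but that set can (and in interesting cases does) accumulate at $\mu$-almost every point of $\supp\mu$, so the kernel is not bounded near $x$ on $\supp\nu_s$ and the truncated integrals $T_\varepsilon\nu_s(x)$ are genuinely singular integrals. The correct mechanism is again a density argument: Besicovitch's differentiation theorem gives $\lim_{r\to0}|\nu_s|(B(x,r))/\mu(B(x,r))=0$ for $\mu$-a.e.\ $x$, which combined with $\mu\in M^n_+(\Rn1)$ shows $|\nu_s|$ has vanishing $n$-dimensional lower density at $\mu$-a.e.\ $x$; one then runs the same Cauchy-net argument as for $\mu|_Z$. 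This is in fact how \cite[Theorem 1.1]{Pu19} handles the signed-measure case, and without it part (2) does not follow from part (1). A second, smaller issue: you write $\bar A_{x,\varepsilon/2}\to A(x)$, but $A$ is merely an $L^\infty$ matrix and need not be pointwise defined at $\mu$-a.e.\ $x$ (the relevant $x$ typically lies in a Lebesgue-null set, a difficulty the paper explicitly flags). The fix is that $A\in\DMO_s$ forces $\{\bar A_{x,\varepsilon}\}_{\varepsilon>0}$ to be Cauchy for \emph{every} $x$, with modulus $\mathfrak I_{\omega_A}$ (cf.\ the discussion around \cite[Appendix A]{HwK20}), and the resulting limit is the canonical continuous representative of $A$; that is the constant matrix you should be diagonalizing, and once stated this way the step is justified.
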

		\vv
	}
	Finally, we state the local quantitative rectifiability criterion for Radon measures which generalizes \cite{GT18} and \cite{Pu19}. {We refer to the introductions of the aforementioned articles for a detailed discussion of the result and the role of all the hypotheses. } The result is stated in the form of \cite[Corollary 3.2]{AMT17a}.
	
	{
	\begin{corollary}\label{cor:elliptic_GSTo}
		Let	$A$ be a uniformly elliptic matrix satisfying  $A\in \widetilde \DMO$ and let $\mu$ be a Radon measure with compact support in $\Rn1$, $n\geq 2$. Let $B\subseteq \Rn1$ be an open ball with $\mu(B)>0$ and let $C_0,C'_0>0$. Denote by $T_\mu$ the gradient of the single layer potential associated with $L_A$ and $\mu$, and let $\beta$ be as in Lemma \ref{lem:estim_fund_sol}. Suppose that $\mu$ and $B$ are such that, for some positive real numbers $\tau$, $\delta$, and $\lambda$, and a positive integer  $N$, the following properties hold:
		\begin{enumerate}
			\item $r(B)\leq\lambda$.
			\item We have $\mathcal P^0_{\omega,\mu}(B)\leq C_0 \Theta_\mu(B)$, $\mathcal P^N_{\omega,\mu}(B)\leq C_0 \mathfrak I_{\alpha_A}(2^{-N})\Theta_\mu(2^NB)$, and it holds $\Theta_\mu(B(x,r))\leq C_0 \Theta_\mu(2^NB)$ for all $x\in B$ and $0<r\leq 2^Nr(B)$.
			\item $T_{\mu|_{2^{N} B}}$ is bounded on $L^2(\mu|_{2^{N} B})$ and it holds $\|T_{\mu|_{2^{N} B}}\|_{L^2(\mu|_{2^{N} B})\to L^2(\mu|_{2^{N} B})}\leq C'_0\Theta_\mu(2^NB)$.
			\item There exists some $n$-plane $L$ passing through the center of $B$ such that $\beta^L_{\mu,1}(B)\leq \,\delta\Theta_\mu(B).$
			\item We have
			\begin{equation}\label{eq:mean_osc_small_theorem}
				\int_B\big|T_\mu1(x)-m_{B}(T_\mu 1,\mu)\big|^2\, d\mu(x)\leq\,\tau\,\Theta_\mu(2^NB)^2\mu(B).
			\end{equation}
		\end{enumerate}
		Then there exist a choice of $\delta$ and $\tau$ small enough, possibly depending on $n, \Lambda, C_0, C'_0$, and $\diam(\supp \nu)$, a choice of $N=N(\tau, n, \Lambda, \diam(\supp \nu), C_0, C'_0)$ large enough, a choice of 
		$\lambda =\lambda(\tau, N, n, \Lambda, C_0,C'_0, \diam(\supp \nu))$ small enough such that $2^N \lambda$ is also sufficiently small, for which the following holds:  if $\mu$ satisfies (1)--(6), there exist a uniformly $n$-rectifiable set $\Gamma$ and $\theta \in (0,1)$ such that
		\[
		\mu(B\cap\Gamma)\geq \theta\mu(B).
		\]
		The UR constants of $\Gamma$ depend on all the constants above.
	\end{corollary}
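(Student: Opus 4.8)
The strategy is to reduce the statement to the quantitative rectifiability criterion for the Riesz transform of \cite{GT18}, in the form of \cite[Corollary 3.2]{AMT17a}, by using Theorem \ref{theorem:bound_L2_norm_operators} together with the frozen coefficients estimate of Lemma \ref{lem:main_pw_estimate} (and the subsequent perturbation lemmas) to transfer hypotheses (1)--(6) from $T_\mu$ to $\mathcal R_\mu$. First I would normalize: translating, we may assume the center of $B$ is the origin; multiplying $\mu$ by a positive constant, which leaves both (1)--(6) and the conclusion invariant, we may assume $\Theta_\mu(2^NB)=1$; and, applying the linear change of variables used in the third step of the proof of Theorem \ref{theorem:bound_L2_norm_operators} --- which sends $L_A$ to $L_{\tilde A}$ with $\tilde A$ uniformly elliptic with the same structural constants and sends $\mu$ to a bilipschitz image whose densities, $\beta$-numbers and Poisson-type sums are comparable --- we may assume $\bar A_{0,\,c\,2^Nr(B)}=Id$ for a suitable dimensional $c$. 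Undoing this map at the very end replaces the set $\Gamma$ we produce by a bilipschitz image, still uniformly $n$-rectifiable, and only distorts $B$ into a comparable ellipsoid, so the conclusion survives up to enlarging constants; I suppress it below.

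Set $\nu\coloneqq\mu|_{2^NB}$. By the density control in (2), $\nu\in M^n_+(\Rn1)$ with $n$-growth constant $\lesssim C_0\,\Theta_\mu(2^NB)$, and $\diam(\supp\nu)\le 2^{N+1}r(B)\le 2^{N+1}\lambda$. Fixing $\epsilon=1$ in \eqref{eq:maineq-small_R-T} and then requiring $2^{N+1}\lambda$ to be below the resulting threshold $R$, Theorem \ref{theorem:bound_L2_norm_operators} and hypothesis (3) give $\|\mathcal R_\nu\|_{L^2(\nu)\to L^2(\nu)}\le 1+C'\,\|T_\nu\|_{L^2(\nu)\to L^2(\nu)}\le 1+C'C_0'\lesssim(1+C_0')\,\Theta_\mu(2^NB)$, which is exactly the $L^2$-boundedness input for $\mathcal R_\mu$ required by \cite[Corollary 3.2]{AMT17a}. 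The $n$-growth and flatness hypotheses of that criterion follow at once from (2) and (4), as they only involve the measure $\mu$.

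The heart of the matter is transferring hypothesis (5). Writing $c_n$ for the dimensional constant with $\nabla_1\Gamma_{Id}(x,y)=c_n\,\frac{x-y}{|x-y|^{n+1}}$, I would prove that
\[
\Big(\int_B\big|T_\mu 1-c_n\mathcal R_\mu 1-m_B(T_\mu 1-c_n\mathcal R_\mu 1,\mu)\big|^2\,d\mu\Big)^{1/2}\le\eta(N,\lambda)\,\Theta_\mu(2^NB)\,\mu(B)^{1/2},
\]
where $\eta(N,\lambda)\to0$ as first $N\to\infty$ and then $\lambda\to0$; combined with (5) this bounds the $L^2(\mu|_B)$-oscillation of $c_n\mathcal R_\mu1$ over $B$ by $(\tau^{1/2}+2\eta(N,\lambda))\,\Theta_\mu(2^NB)\,\mu(B)^{1/2}$, which is as small as \cite[Corollary 3.2]{AMT17a} demands once $\tau$ is small and $N,\lambda$ are chosen accordingly. (Here $T_\mu1$ and $\mathcal R_\mu1$ are understood as principal values, which exist $\mu$-a.e.\ on $B$ by Proposition \ref{theorem_pv_layer_pot} applied to $\nu$ plus the absolute convergence of the far tails guaranteed by (2).) To establish the display, fix $x\in B\cap\supp\mu$ and split $\mu=\nu+(\mu-\nu)$. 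For the contribution of $\nu$, where $|x-y|\lesssim 2^Nr(B)$, I would run the three-step perturbation chain (Lemma \ref{lem:main_pw_estimate} and Lemmas \ref{lem:lem_1_prep_main_lemma}, \ref{lem:lem_2_prep_main_lemma}, \ref{lem:estimate_norm_K3}, via the Dong--Kim regularity of Theorem \ref{theorem:dong_kim}) with the normalization $\bar A_{0,c2^Nr(B)}=Id$, bounding $|\nabla_1\Gamma_A(x,y)-c_n\frac{x-y}{|x-y|^{n+1}}|$ by $|x-y|^{-n}\tau_A(|x-y|)$, plus an $\omega_A$-weighted sum over the scales between $|x-y|$ and $2^Nr(B)$, plus the large-scale term $(2^Nr(B))^{-n}\widehat\tau_A(2^Nr(B))$; integrating in $y$ against $\mu$ and using the density control in (2), this contributes at most a constant times $\big(\mathfrak I_{\tau_A}(2^N\lambda)+\widehat\tau_A(2^N\lambda)\big)\Theta_\mu(2^NB)$, which tends to $0$ as $2^N\lambda\to0$ by the $\widetilde\DMO$ hypothesis. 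For the contribution of $\mu-\nu$, where $|x-y|\gtrsim 2^Nr(B)$ and the kernel is smooth, I would not compare the two kernels but simply estimate the oscillation in $x$: by the $\DMO_s$ interior estimates (Lemma \ref{lem:estim_fund_sol}), $|\nabla_1\Gamma_A(x,y)-\nabla_1\Gamma_A(x',y)|\lesssim |x-x'|^\beta|x-y|^{-n-\beta}$ for $x,x'\in B$, and likewise for the Riesz kernel; summing over the dyadic annuli $2^jr(B)\le|x-y|<2^{j+1}r(B)$ with $j\gtrsim N$ and using $\mu(2^{j+1}B)\lesssim\Theta_\mu(2^jB)(2^jr(B))^n$ from (2), this telescopes to at most a constant times $\sum_{j\ge N}2^{-\beta j}\Theta_\mu(2^jB)\le\mathcal P^N_{\omega,\mu}(B)\le C_0\,\mathfrak I_{\alpha_A}(2^{-N})\,\Theta_\mu(2^NB)$, which tends to $0$ as $N\to\infty$. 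Squaring and integrating over $B$ against $\mu$ gives the display, with $\eta(N,\lambda)^2$ absorbing all these quantities.

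With hypotheses (1)--(6) now available for $\mathcal R_\mu$ at the ball $B$, \cite[Corollary 3.2]{AMT17a} (equivalently \cite{GT18}) produces a uniformly $n$-rectifiable set $\Gamma$ and $\theta\in(0,1)$, with constants depending only on $n,\Lambda,C_0,C_0'$, such that $\mu(B\cap\Gamma)\ge\theta\mu(B)$; undoing the normalizations of the first paragraph finishes the proof. The main obstacle is the oscillation transfer in the previous paragraph: one must deploy the frozen coefficients analysis developed for Theorem \ref{theorem:bound_L2_norm_operators} pointwise on $T_\mu1$ rather than at the level of operator norms, and match the error terms $\tau_A$, $\widehat\tau_A$ and the $\alpha_A$-weighted Poisson sums precisely to the quantities $\mathcal P^0_{\omega,\mu}(B)$ and $\mathcal P^N_{\omega,\mu}(B)$ of hypothesis (2) --- in particular checking that the far-field oscillation, which sees $\mu$ outside $2^NB$ where no $L^2$ control is available, is genuinely of Poisson type with the right decay. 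This is precisely why $r(B)\le\lambda$ with $2^N\lambda$ small, together with the sharp $\mathcal P^N_{\omega,\mu}$-decay in (2), are built into the hypotheses.
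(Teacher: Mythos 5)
Your overall plan matches the paper's: transfer the smallness in hypothesis (5) from $T_\mu$ to $\mathcal R_\mu$, note that the remaining hypotheses of \cite[Corollary~3.2]{AMT17a} involve only the measure $\mu$ (so nothing to transfer), and then invoke \cite{GT18}. The change of variables and the use of \eqref{eq:maineq-small_R-T} to get quantitative $L^2(\nu)$-boundedness of $\mathcal R_\nu$ are exactly what the paper does (via \eqref{eq:7.4}). However, where the paper packages the oscillation transfer in Main Lemma~II, which proceeds \emph{by duality} (pairing $\mathcal R_\mu 1$ against $\vec g\in L^2_0(\mu,Q;\Rn1)$, using $\mathcal R^*=-\mathcal R$, then splitting into the $T_\mu$-term, the near-field $\mathcal S_\mu|_{2^NQ}$-term bounded by Main Lemma~I, and the far-field $\mathcal S_\mu|_{(2^NQ)^c}$-term bounded by kernel regularity and $\mathcal P^N_{\omega,\mu}$), you attempt a direct $L^2(\mu|_B)$-estimate of $T_\mu 1-c_n\mathcal R_\mu 1$. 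That route is plausible, but it is not just a stylistic variant: the direct route works \emph{only if} you keep the right error terms, and it is precisely here that your write-up has a genuine gap.

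The gap is in your near-field estimate. You claim that for $y\in 2^NB$ one can bound
$\bigl|\nabla_1\Gamma_A(x,y)-c_n\tfrac{x-y}{|x-y|^{n+1}}\bigr|$ by $|x-y|^{-n}\tau_A(|x-y|)$ plus a Poisson-type tail, and that integrating this against $\mu$ yields a quantity of order $\bigl(\mathfrak I_{\tau_A}(2^N\lambda)+\widehat\tau_A(2^N\lambda)\bigr)\Theta_\mu(2^NB)$. This is not what the three-step perturbation gives. Steps~1 and~2 (Lemma~\ref{lem:main_pw_estimate}, Lemma~\ref{lem:lem_1_prep_main_lemma}) do produce kernels dominated by Dini-integrable majorants, and those parts behave as you say. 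But the step-3 kernel $\mathcal K^3_\Theta$ of \eqref{eq:definition_K_3}, which compares $\nabla_1\Theta(\cdot;\bar A_{x,\delta/2})$ with $\nabla_1\Theta(\cdot;\bar A_{\Omega_Q})$, is only bounded pointwise by $\mathfrak I_{\omega_A}(\ell(Q))\,|x-y|^{-n}$ (Lemma~\ref{lem:lem_2_prep_main_lemma}); the prefactor $\mathfrak I_{\omega_A}(\ell(Q))$ is a \emph{constant}, not a function of $|x-y|$ vanishing at the origin, so the kernel has a genuinely critical $|x-y|^{-n}$ singularity and integrating it against the $n$-dimensional measure $\nu$ diverges logarithmically in the truncation $\delta$. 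That is exactly why the paper develops the spherical-harmonics decomposition of Lemma~\ref{lem:estimate_norm_K3}, which converts $\mathcal K^3_\Theta$ into an \emph{operator norm} bound $\mathfrak I_{\omega_A}(\ell(Q))^{1/2}\,\|\mathcal R_\nu\|_{L^2(\nu)\to L^2(\nu)}$ rather than a pointwise or Schur-test bound. You cite Lemma~\ref{lem:estimate_norm_K3} in passing, but your final estimate omits the $\mathfrak I_{\omega_A}(2^N\lambda)^{1/2}\|\mathcal R_\nu\|$ term entirely, and your narrative ("bounding $|\nabla_1\Gamma_A(x,y)-c_n\frac{x-y}{|x-y|^{n+1}}|$ by...") suggests you believe a pointwise estimate suffices. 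It does not. To repair this, you should apply Main Lemma~I to the constant function $1$, keep all three error terms including $\mathfrak I_{\omega_A}(2^N\lambda)^{1/2}\|\mathcal R_\nu\|$, and then use your earlier a priori bound $\|\mathcal R_\nu\|\lesssim 1+C_0'$ (so the term is still $o(1)$ as $\lambda\to 0$). This couples your step~2 and step~3 in a way your present write-up does not acknowledge; it also makes clear that the a priori $L^2$-boundedness of $\mathcal R_\nu$ is not merely an extra hypothesis to feed into \cite{GT18} but is an essential input to the oscillation transfer itself. A smaller issue in the same vein: your far-field kernel-oscillation bound quotes only $|x-x'|^\beta|x-y|^{-n-\beta}$, whereas \eqref{eq:continuityGamma} also contains the term $\mathfrak I_{\omega_A}\bigl(\frac{|x-x'|}{|x-y|}\bigr)|x-y|^{-n}$; this has to be carried along and matched against the $\omega_A$-part of $\alpha_A$ in $\mathcal P^N_{\omega,\mu}$.
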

	}
	\vv
	{	{In the condition \eqref{eq:mean_osc_small_theorem} we identify $T_\mu 1$ with $\pv T_\mu 1$.}
		Moreover, the well-posedness of the expression on the left hand side of \eqref{eq:mean_osc_small_theorem} can be justified via the existence of principal values and the fact the measure $\mu$ has compact support. For the details we refer to \cite[Section 2.4]{GT18} (see also \cite[Section 3]{Pu19}).
		
		The proof of Corollary \ref{cor:elliptic_GSTo} consists in showing that, for $\lambda\ll 1$, the smallness in the mean oscillation assumption \eqref{eq:mean_osc_small_theorem} implies smallness for the analogous quantity associated to the Riesz transform. As it is more coherent with the notation of the rest of the paper, we will equivalently prove this for cubes in $\Rn1$; we also remark that Girela-Sarri\'on and Tolsa reduced the proof of their theorem to \cite[Main Lemma 3.1]{GT18}, which is formulated for cubes itself.
		
		{
		Finally, we remark that for $0<\beta<1$, $N\geq 1$, $\mu\in M^n_+(\Rn1)$, and a ball $B\subset \Rn1$ such that $2^N B$ satisfies the $P_{\beta,\mu}$-doubling condition
		\begin{equation}\label{eq:P_doubling_Holder}
			P_{\beta,\mu}(2^NB)\lesssim \Theta_\mu(2^NB),
		\end{equation}
		 elementary calculations show that
			\begin{equation*}
				\begin{split}
					\sum_{j\geq N} 2^{-j\beta}\Theta_\mu(2^jB)&=2^{-N\beta}\sum_{j\geq N}2^{-(j-N)\beta}\Theta_\mu \bigl(2^{j-N}(2^NB)\bigr)\\
					&=2^{-N\beta}\sum_{j\geq0} 2^{-j\beta}\Theta_\mu\bigl(2^j(2^NB)\bigr)\lesssim 2^{-N\beta} \Theta_\mu(2^NB).
				\end{split}
			\end{equation*}
		Thus, if $\omega_A(t)\lesssim t^\beta$, the assumption (2) in Corollary \ref{cor:elliptic_GSTo} is satisfied if we can guarantee that both $B$ and $2^NB$ are $P_{\beta,\mu}$-doubling in the sense of \eqref{eq:P_doubling_Holder}. 
		
 Previous versions of Corollary	\ref{cor:elliptic_GSTo} had been applied to solve the non-variational two-phase problem for harmonic measure and elliptic measure associated with an elliptic operator with H\"older continuous coefficients. In particular, those were needed to analyze the measures at the level of points of zero density. One of the main ingredients was \cite[Lemma 6.1]{AMT17a}, which shows that if $\mu$ is an $n$-dimensional measure, for any $\beta \in (0,1)$, there exists $M>0$ depending on $\beta$ and the dimension, such that for $\mu$-a.e. $x\in \Rn1$, there exists a sequence of $M$-$P_{\beta,\mu}$-doubling balls $B(x,r_i)$ with $r_i\to 0$ as $i\to \infty$.
 In fact, if $A$ is a $\beta$-H\"older continuous matrix, Corollary \ref{cor:elliptic_GSTo} is equivalent to \cite[Theorem 1.2]{Pu19} for the study of the  two-phase problem for the elliptic measure as in \cite{Pu19}. 
 Indeed, if $x_0$ is a point for which we may find a sequence of $M$-$P_{\beta,\mu}$-doubling balls, we fix $B_0=B(x_0,r_0)$ to be a ball with radius $r_0\leq \lambda$. Then, we pick the largest $M$-$P_{\beta,\mu}$-doubling ball $B=B(x_0,\tilde r)$ such that $0< \tilde r \leq 2^{-N} r_0$ for which, if $r_0/2 < 2^{N_0} \tilde r \leq r_0$ and $N \leq N_0$, it holds
 \[
 	P_{\beta,\mu}(2^{N_0} B)\leq M 2^{-{N_0}\beta}\Theta_\mu(2^{N_0}B).
 \]
 Hence, under these circumstances, Corollary \ref{cor:elliptic_GSTo} gives the desired property of  big pieces of uniformly $n$-rectifiable measure in $B$, equivalently to the main results of \cite{GT18} and \cite{Pu19}.
 
 Finally, let us mention that generalizing the one-phase and two-phase problems for elliptic measure as in \cite{AHMMMTV16, PPT21} and \cite{AMTV19, Pu19} to elliptic measures associated with $L_A$, $A\in \widetilde{\DMO}$, presents significant difficulties; for instance, the lack of a proper $T(1)$-theorem for suppressed kernels with such general modulus of continuity. Therefore, those problems should be treated separately.
 
 \vvv

	}
	
	\vv
	\subsection*{Structure of the paper.} In \textit{Section \ref{section:preliminaries_and_notation}} we present the general notation which we adopt in the paper and  recall some properties of Dini functions along with their relation to integral operators with proper reproducing kernels (see Lemma \ref{lemma:lemma_int_op_bd}).
	
	\textit{Section \ref{section:PDE_section}} contains the PDE bulk of the paper. In the first part we describe how the elliptic operator $L_A$ and the gradient of the single layer potential transform under a bilipschitz change of variables. Furthermore, in Lemma \ref{lemma:change_matrix_average} we introduce a specific linear map $S$ so that, given a ball $B\subset \Rn1$, the average of the symmetric part of the transformed matrix $\hat A$ equals the identity matrix on $S^{-1}(B)$. This turns out to be crucial for the proof of Main Lemma I, as it allows to compare  $\nabla_1\Gamma_A$ at the level of a given cube with the Riesz kernel effectively.
	In the second part of Section \ref{section:PDE_section} we adapt the arguments of \cite{DK17} in order to prove that $\nabla_1\Gamma_A$ can be interpreted locally as a Calder\'on-Zygmund kernel (see Lemma \ref{lem:estim_fund_sol}), {and we gather other auxiliary PDE lemmas. 
		
	Subsection \ref{subsec:three-steps} contains the previously described three-step perturbation argument: we deal with the frozen coefficients type estimate in Lemma \ref{lem:main_pw_estimate}, in Lemma \ref{lem:lem_1_prep_main_lemma} and Lemma \ref{lem:lem_2_prep_main_lemma} we prove pointwise bounds which allow us to compare $\nabla_1\Gamma_{\bar A_{x,|x-y|/2}}$ and $\nabla_1\Gamma_{\bar A_{x,\delta/2}}$ for  $\delta>0$, and finally in Lemma \ref{lem:estimate_norm_K3} we implement the techniques based on spherical harmonic decomposition in order to estimate the difference of $\nabla\Theta(\cdot;\bar A_{x,\delta/2})$ and $\nabla\Theta(\cdot;\bar A_{\Omega_Q})$, where $\bar A_{\Omega_Q}$ denotes the integral average of $A$ on a set at the level of al cube $Q$.
}
	
	\textit{Section \ref{section:main_lemma}} covers the proof of the two main lemmas. {
		In Main Lemma I we gather all the results of the previous section and estimate the $L^2(\mu)$-norm of the difference of $T_{\mu,\delta}$ and a proper normalization of $\mathcal R_{\mu,\delta}$. Main Lemma II is the main tool for the proof of Corollary \ref{cor:elliptic_GSTo}: under suitable hypotheses on the measure $\mu$, a duality argument and the local Calder\'on-Zygmund character of $\nabla_1\Gamma_A(x,y)$ allow us to transfer the smallness of the $L^2(\mu)$-mean oscillation of $T_\mu$ to the Riesz transform, which is the crucial step in order to invoke \cite{GT18}.}

	In \textit{Section \ref{section:approxmeasure}} we introduce and discuss the properties of an auxiliary measure $\nu_\varepsilon$, which we obtain as the convolution of the measure $\nu$ supported on a cube with a proper cut-off function. This guarantees that the measure $\nu_\varepsilon$ is absolutely continuous with respect to $\mathcal L^{n+1}$, which entails that the $L^2(\nu_\varepsilon)$-norm of the Riesz transform associated with $\nu_\varepsilon$ applied to a Lipschitz function with compact support is (qualitatively) finite. {This is needed in the last section, as it allows to absorb the norm of the Riesz transform in \eqref{eq:lem71a} in the left hand side of that expression.}
	
	The final \textit{Section \ref{section:final_section}} contains the proof of Theorem \ref{theorem:bound_L2_norm_operators} and its corollaries. In particular, we show how to prove those results combining the lemmas of Section \ref{section:main_lemma} and Section \ref{section:approxmeasure} via the change of variables introduced in Lemma \ref{lemma:change_matrix_average}. 
	
	\vv

\section{Preliminaries and notation}\label{section:preliminaries_and_notation}

\subsection*{General notation}

\begin{itemize}
\item For $\lambda>0$ and an open ball $B=B(x,r)$, we define its dilation $\lambda B\coloneqq B(x,\lambda r)$. Analogously, given a Euclidean cube $Q$ in $\Rn1$ with center $x_Q$ and side-length $\ell(Q)$, we denote by $\lambda Q$ the cube with center $x_Q$ and side-length $\lambda \ell(Q)$.
\item For $0<r\leq R<\infty$, we indicate 
\[
A(x,r,R)\coloneqq B(x,R)\setminus \overline{B(x,r)}= \{y\in \R^{n+1}: r<|x-y|<R\}.
\]
\item We denote by $\mathbb S^n=\partial B(0,1)$ the unit sphere in $\Rn1$, by $\sigma$ its surface measure, and we define $\omega_n\coloneqq \sigma(\mathbb S^n)$. 
\item Given $A\subset \R^d$, we denote by $\chi_A$ its characteristic function. 
\item We endow  the space of matrices $\mathbb R^{n_1\times n_2}$ with the norm $|A|\coloneqq \max_{i,j}|a_{ij}|,$ for $A=(a_{ij})_{i,j}\in \mathbb R^{n_1\times n_2}$.
\item We write $a\lesssim b$ if there is $C>0$ so that $a\leq Cb$, and $a\lesssim_t b$ to specify that the constant $C$ depends on
the parameter $t$. We write $a\approx b$ to mean $a\lesssim b\lesssim a$, and define $a\approx_t b$ similarly. 
\end{itemize}

\vv

\subsection*{Dini functions and integral operators}

\vv

Let $\theta$ be a $\kappa$-doubling function in the sense of \eqref{eq:dini1_new} for $\kappa>0$.
{For $\eta \in \left(0, \frac{1}{2}\right)$ denote by $N_\eta$ the positive integer such that $2^{-N_\eta-1} \leq  \eta < 2^{-N_\eta}$. Hence, if $r>0$ and $\eta r\leq t\leq r$ then
\begin{align*}
 \int_{\eta r}^r \theta(t) \,\frac{dt}{t} & \leq \sum_{\ell=0}^{N_\eta} \int_{2^{-\ell-1}r}^{2^{-\ell}r} \theta(t) \,\frac{dt}{t} \leq \kappa \sum_{\ell=0}^{N_\eta }  \theta(2^{-\ell-1}r) \leq  \kappa \sum_{\ell=0}^{N_\eta } \kappa^{N_\eta-\ell-1} \theta(2^{-N_\eta} r) \\
 &= \frac{\kappa^{N_\eta+1} -1}{\kappa-1}\theta(2^{-N_\eta} r) \leq \kappa  \frac{\kappa^{N_\eta+1} -1}{\kappa-1}\theta(\eta r)\eqqcolon C(\kappa, \eta)\, \theta(\eta r),
\end{align*}
where we used  that $\theta$ is  $\kappa$-doubling. Therefore,
\begin{equation*}
	\int_0^R \theta(t)\, \frac{dt}{t}=\sum_{j=0}^\infty \int_{\eta^{j+1} R}^{\eta^{j}R} \theta(t)\, \frac{dt}{t}\leq C(\kappa, \eta)\, \sum_{j=0}^\infty \theta (\eta^{j+1}R).
\end{equation*}
Moreover, if $r>0$ and $\eta r\leq t\leq r$ it holds
\begin{equation}\label{eq:mod_cont_sum_iteration}
	\begin{split}
		\theta(r) &= \theta(r)\, \avint_{2^{-N_\eta} r}^{r} dt \leq \kappa \frac{\theta(r)}{(1-2^{-N_\eta})r}\,\sum_{\ell=0}^{N_\eta-1} \frac{2^{-\ell}r}{\theta(2^{-\ell}r)} \int_{2^{-\ell-1}r}^{2^{-\ell }r} \theta(t) \frac{dt}{t} \\
		&\leq \frac{\kappa }{(1-2^{-N_\eta})}\,\sum_{\ell=0}^{N_\eta-1} \kappa^{\ell} 2^{-\ell}\int_{2^{-\ell-1}r}^{2^{-\ell }r} \theta(t) \frac{dt}{t} \lesssim \max(1,(\kappa/2)^{N_\eta}) \int_{\eta r}^r \theta(t) \,\frac{dt}{t}.
	\end{split}
\end{equation}
Thus, for $R>0$,
\begin{align}\label{eq:mod_cont_sum}
 \sum_{j=0}^\infty \theta(\eta^{j}R) &\lesssim  \sum_{j=0}^\infty  \max(1,(\kappa/2)^{N_\eta}) \int_{\eta^{j+1} R }^{\eta^j R} \theta(t) \,\frac{dt}{t}\\
 & = \max(1,(\kappa/2)^{N_\eta}) \int_0^R \theta(t)\, \frac{dt}{t}.\notag
\end{align}
}

In particular, $\theta$ belongs to $\DS(\kappa)$ if and only if the doubling property \eqref{eq:dini1_new} holds and $\sum_{j=0}^\infty \theta(2^{-j})<+\infty$. 
One can analogously show that, if $\theta$ verifies  \eqref{eq:dini1_new}, and $0<d\leq n$, we have
\begin{equation}\label{eq:mod_cont_sum_2}
	\sum_{k=1}^\infty \frac{\theta(2^kR)}{(2^kR)^{d}}\lesssim \int_R^\infty \theta(t)\, \frac{dt}{t^{d+1}}, \qquad \qquad R>0.
\end{equation}
{Moreover, by the doubling property of $\theta$,
\begin{equation}\label{eq:omega<dini}
\theta(r) \leq \kappa \int_{r/2}^r \theta(t) \, \frac{dt}{t} \leq \kappa\, \mathfrak I_{\theta}(r),\qquad r>0.
\end{equation}}
\vvv

{
\begin{lemma}\label{lem:mod_cont_large_DS1}
	Assuming that for  fixed $d>0$
	\[
			\mathfrak L^d_{\theta}(t)= t^{d}\int_t^\infty \theta(s)\, \frac{ds}{s^{d+1}} < +\infty \quad \textup{for any}\,\,t>0,
	\]
then $\mathfrak L^d_\theta$ is  a $2^d$-doubling  function. Moreover, it holds that 
\begin{enumerate}
\item If $\mathfrak I_\theta(1)<\infty$ and $\mathfrak L^d_{\theta}(1)<\infty$, then $\mathfrak L^d_\theta\in \DS\left( 2^d \right)$. 
\item If $\mathfrak I_{\mathfrak I_\theta}(1) <\infty$ and $\mathfrak I_{\mathfrak L^d_{\theta}}(1) <\infty$, then $\mathfrak L^d_{\mathfrak I_{\theta}} \in \DS \left( 2^d \right)$.
\end{enumerate}
\end{lemma}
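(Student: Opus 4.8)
The plan is to establish the three assertions — the $2^d$-doubling property of $\mathfrak L^d_\theta$, item (1), and item (2) — by reducing each to elementary estimates on the integrals $\int_t^\infty \theta(s)\,s^{-d-1}\,ds$ and the already-developed discretization machinery for $\kappa$-doubling functions in \eqref{eq:mod_cont_sum} and \eqref{eq:mod_cont_sum_2}.

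\textbf{Doubling.} First I would prove that $\mathfrak L^d_\theta$ is $2^d$-doubling directly from its definition. Given $t>0$ and $s$ with $\tfrac12 t\leq s\leq t$, write
\[
\mathfrak L^d_\theta(s)=s^d\int_s^\infty \theta(u)\,\frac{du}{u^{d+1}}\geq \Bigl(\tfrac t2\Bigr)^d\int_t^\infty \theta(u)\,\frac{du}{u^{d+1}}=2^{-d}\,\mathfrak L^d_\theta(t),
\]
using only $s\geq t/2$ for the prefactor and $s\leq t$ to shrink the domain of integration (the integrand is nonnegative). Hence $\mathfrak L^d_\theta(t)\leq 2^d\,\mathfrak L^d_\theta(s)$, which is \eqref{eq:dini1_new} with $\kappa=2^d$. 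Note this step does \emph{not} even use that $\theta$ is doubling; it only needs $\theta\geq 0$ and finiteness of the tail integral, which is the standing hypothesis.

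\textbf{Item (1).} By the characterization just recalled (i.e. $\psi\in\DS(\kappa)$ iff $\psi$ is $\kappa$-doubling and $\sum_j \psi(2^{-j})<\infty$), since $\mathfrak L^d_\theta$ is $2^d$-doubling it suffices to show $\sum_{j\geq 0}\mathfrak L^d_\theta(2^{-j})<\infty$. Expanding,
\[
\sum_{j\geq 0}\mathfrak L^d_\theta(2^{-j})=\sum_{j\geq 0}2^{-jd}\int_{2^{-j}}^\infty \theta(s)\,\frac{ds}{s^{d+1}}
=\sum_{j\geq 0}2^{-jd}\Bigl(\int_{2^{-j}}^1+\int_1^\infty\Bigr)\theta(s)\,\frac{ds}{s^{d+1}}.
\]
The contribution of $\int_1^\infty$ is $\bigl(\sum_j 2^{-jd}\bigr)\mathfrak L^d_\theta(1)<\infty$ since $\mathfrak L^d_\theta(1)<\infty$. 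For the piece $\int_{2^{-j}}^1 \theta(s)\,s^{-d-1}\,ds$, one exchanges the order of summation and integration (Tonelli): for fixed $s\in(0,1)$ the values of $j$ contributing are those with $2^{-j}\leq s$, i.e. $j\geq \log_2(1/s)$, so $\sum_{j:\,2^{-j}\leq s}2^{-jd}\lesssim_d s^{d}$. This turns the double sum/integral into $\lesssim_d \int_0^1 \theta(s)\,s^{-d-1}\cdot s^{d}\,ds=\int_0^1 \theta(s)\,\frac{ds}{s}=\mathfrak I_\theta(1)<\infty$. Combining the two pieces gives $\sum_j\mathfrak L^d_\theta(2^{-j})<\infty$, hence $\mathfrak L^d_\theta\in\DS(2^d)$.

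\textbf{Item (2).} Apply item (1) with $\theta$ replaced by $\mathfrak I_\theta$. One must check the two finiteness hypotheses of (1) in the primed notation: that $\mathfrak I_{\mathfrak I_\theta}(1)<\infty$ (which is exactly the stated hypothesis $\mathfrak I_{\mathfrak I_\theta}(1)<\infty$, playing the role of ``$\mathfrak I_{\theta'}(1)<\infty$'') and that $\mathfrak L^d_{\mathfrak I_\theta}(1)<\infty$ (which is the stated hypothesis $\mathfrak I_{\mathfrak L^d_\theta}(1)<\infty$ — here one uses Fubini/Tonelli to see $\mathfrak L^d_{\mathfrak I_\theta}(1)=\int_1^\infty \mathfrak I_\theta(s)\,s^{-d-1}\,ds$ is comparable to, or bounded by, $\mathfrak I_{\mathfrak L^d_\theta}(1)=\int_0^1 \mathfrak L^d_\theta(t)\,t^{-1}\,dt$ plus harmless terms; alternatively one checks directly $\int_1^\infty\mathfrak I_\theta(s)s^{-d-1}ds<\infty$ follows from the hypotheses). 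One also needs $\mathfrak I_\theta$ itself to be doubling so that item (1) applies — but \eqref{eq:omega<dini} together with the $\kappa$-doubling of $\theta$ readily yields that $\mathfrak I_\theta$ is doubling (indeed $\mathfrak I_\theta(t)-\mathfrak I_\theta(t/2)=\int_{t/2}^t\theta\,\frac{du}{u}\leq \kappa\,\theta(t/2)$ and one compares with $\mathfrak I_\theta(t/2)$ via \eqref{eq:omega<dini}). Then item (1) gives $\mathfrak L^d_{\mathfrak I_\theta}\in\DS(2^d)$, as claimed.

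\textbf{Main obstacle.} The only genuinely nontrivial point is the interchange of sum and integral in item (1) (and its analogue in item (2)) and keeping track that the hypothesis $\mathfrak I_{\mathfrak L^d_\theta}(1)<\infty$ really matches ``$\mathfrak L^d_{\mathfrak I_\theta}(1)<\infty$'' after a Fubini swap; this is a short computation but must be done carefully since it is the step that converts the two separate Dini-type hypotheses into the single summability condition. Everything else is a routine application of nonnegativity and the discretization estimates already in the paper.
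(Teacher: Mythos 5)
Your argument is correct and rests on the same Fubini mechanism as the paper's. The $2^d$-doubling bound is identical, and your item (1) is the paper's continuous Fubini computation for $\int_0^1\mathfrak L^d_\theta(t)\,\frac{dt}{t}$ carried out discretely via Tonelli; both are fine. For item (2) you reduce to (1) with $\theta$ replaced by $\mathfrak I_\theta$, whereas the paper directly establishes the identity
\[
\mathfrak L^d_{\mathfrak I_\theta}(r)=\frac1d\bigl(\mathfrak I_\theta(r)+\mathfrak L^d_\theta(r)\bigr)
\]
by Fubini and then integrates against $dr/r$. Your reduction works, but the step you leave vague (``comparable to, or bounded by, $\mathfrak I_{\mathfrak L^d_\theta}(1)$ plus harmless terms'') is in fact an exact equality: the same Fubini gives
\[
\mathfrak I_{\mathfrak L^d_\theta}(1)=\int_0^1 t^{d-1}\int_t^\infty\theta(s)\,\frac{ds}{s^{d+1}}\,dt=\frac1d\bigl(\mathfrak I_\theta(1)+\mathfrak L^d_\theta(1)\bigr)=\mathfrak L^d_{\mathfrak I_\theta}(1),
\]
so the hypotheses of (1) for $\mathfrak I_\theta$ coincide precisely with yours, and no absorption of error terms is needed. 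Finally, your concern that $\mathfrak I_\theta$ must itself be doubling for (1) to apply is unnecessary: the proof of (1) uses only nonnegativity, Fubini, and the already-established $2^d$-doubling of $\mathfrak L^d_\theta$; the $\kappa$-doubling of the underlying function never enters that argument.
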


\begin{proof}
  If $t/2 \leq s \leq t$, then 
	\begin{align*}
\mathfrak L^d_{\theta}(t)& = t^d \int_t^\infty\theta(r)\, \frac{dr}{r^{d+1}} \leq 2^d s^d \int_s^\infty\theta(r)\, \frac{dr}{r^{d+1}}\leq 2^d \mathfrak L^d_\theta(s).
	\end{align*}
	which proves that $\mathfrak L^d_\theta$ is $2^d$-doubling. Moreover, to show $(1)$, we use Fubini's theorem to get
	\begin{equation*}
		\begin{split}
			\int_0^1 \mathfrak L^d_\theta(t)\, \frac{dt}{t}&=\int_0^1 t^{d}\int_t^1\theta(s)\, \frac{ds}{s^{d+1}}\, \frac{dt}{t} + \int_0^1 t^{d}\int_1^\infty\theta(s)\, \frac{ds}{s^{d+1}}\, \frac{dt}{t}\\
			&=\frac{1}{d}\int_0^1 \theta(s)\, \frac{ds}{s} + \frac{1}{d}\int_1^\infty \theta(s)\, \frac{ds}{s^{d+1}}<\infty.
		\end{split}
	\end{equation*}
	To prove $(2)$, we apply Fubini's theorem and for $r>0$, it holds
\begin{equation}\label{eq:L_I_theta}
	\begin{split}
	\mathfrak L^d_{\mathfrak I_{\theta}}(r)=	r^d\int_r^\infty \mathfrak I_{\theta}(t)\, \frac{dt}{t^{d+1}}&=\frac{1}{d}\int_0^r \theta(t)\, \frac{dt}{t} + \frac{r^d}{d}\int_r^\infty \theta(t)\, \frac{dt}{t^{d+1}}\\
		&=\frac{1}{d}\bigl(\mathfrak I_\theta(r) + \mathfrak L^d_\theta(r)\bigr)<\infty.
	\end{split}
\end{equation}
	\end{proof}}
\vv

\begin{remark}\label{rem:rem_right_cont}
	If we assume that $\theta \in \DS(\kappa)$, the condition \eqref{eq:mod_cont_sum} implies that $\theta(\eta^{j}R)\to 0$ as $j\to \infty$ for all $R>0$. In particular, the previous lemma implies that if $\theta \in \DS(\kappa)\cap \DL_d(\kappa)$, then $\mathfrak L^d_\theta(\eta^{j}R)\to 0$ as $j\to \infty$ for all $R>0$.
\end{remark}

\vv
\vv

\begin{definition}\label{def:theta_d_kernel}
	Let $\theta$ be a $\kappa$-doubling function and $0<d\leq n+1$.
	We say that a function $K\colon \Rn1\times \Rn1\setminus\{(0,0)\}\to \R$ is a \textit{$(\theta,d)$-kernel} if it is continuous  and there exists $C>0$ such that
	\[
		|K(x,y)|\leq C \frac{\theta(|x-y|)}{|x-y|^{d}} \quad \text{ for }x\neq y.
	\]
\end{definition}

The latter estimate for {$(\theta,d)$-kernels} is directly connected with the Dini integral.
\begin{lemma}\label{lem:lem_estim_dini_integr_growth}
	 {Let $\theta\in \DS(\kappa)$.}
	 Let $0<d\leq n+1$, and assume that $\mu\in M^d_+(\Rn1)$ with $d$-growth constant $c_0>0$. For $\rho>0$ we have
	\begin{equation}\label{eq:lem_estim_dini_integr}
		\int_{B(x,\rho)}\frac{\theta(|x-z|)}{|x-z|^{d}}\, d\mu(z) \lesssim_{c_0,\kappa} \int^\rho_0 \theta(t)\, \frac{dt}{t},
	\end{equation} 
	and the right hand side of \eqref{eq:lem_estim_dini_integr} tends to $0$ as $\rho\to 0$.
\end{lemma}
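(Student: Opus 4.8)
The plan is to reduce the integral to a sum over dyadic annuli centred at $x$, control each annulus using the $d$-growth of $\mu$ together with the $\kappa$-doubling property of $\theta$, and then recognise the resulting series as (a constant times) the Dini integral $\mathfrak I_\theta(\rho)$.

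First I would record a preliminary observation: $\mu$ has no mass at $x$. Indeed, $\mu(B(x,r))\le c_0 r^d$ for all $r>0$, so letting $r\to 0$ gives $\mu(\{x\})=0$. Consequently, if we set $A_j\coloneqq\{z\in\Rn1: 2^{-j-1}\rho<|x-z|\le 2^{-j}\rho\}$ for $j\ge 0$, these sets are pairwise disjoint and $\mu\bigl(B(x,\rho)\setminus\bigcup_{j\ge 0}A_j\bigr)=0$, so it suffices to estimate $\sum_{j\ge 0}\int_{A_j}\tfrac{\theta(|x-z|)}{|x-z|^d}\,d\mu(z)$.

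Next I would estimate the $j$-th term. If $z\in A_j$ then $2^{-j-1}\rho\le |x-z|\le 2^{-j}\rho$, hence $2^{-j-1}\rho\in\bigl[\tfrac12|x-z|,\,|x-z|\bigr]$, and the doubling condition \eqref{eq:dini1_new} gives $\theta(|x-z|)\le\kappa\,\theta(2^{-j-1}\rho)$; also $|x-z|^{-d}\le(2^{-j-1}\rho)^{-d}$. Combining these with $\mu(A_j)\le\mu\bigl(B(x,2^{-j}\rho)\bigr)\le c_0(2^{-j}\rho)^d$ yields
\[
\int_{A_j}\frac{\theta(|x-z|)}{|x-z|^d}\,d\mu(z)\le\frac{\kappa\,\theta(2^{-j-1}\rho)}{(2^{-j-1}\rho)^d}\,c_0\,(2^{-j}\rho)^d=2^d\kappa c_0\,\theta(2^{-j-1}\rho),
\]
and therefore $\displaystyle\int_{B(x,\rho)}\frac{\theta(|x-z|)}{|x-z|^d}\,d\mu(z)\le 2^d\kappa c_0\sum_{j\ge 0}\theta(2^{-j-1}\rho)$.

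To close the argument I would bound the series by the Dini integral using \eqref{eq:omega<dini} with $r=2^{-j-1}\rho$, namely $\theta(2^{-j-1}\rho)\le\kappa\int_{2^{-j-2}\rho}^{2^{-j-1}\rho}\theta(t)\,\tfrac{dt}{t}$; since the intervals $(2^{-j-2}\rho,2^{-j-1}\rho]$, $j\ge 0$, are pairwise disjoint with union $(0,\rho/2]$, summing gives $\sum_{j\ge 0}\theta(2^{-j-1}\rho)\le\kappa\int_0^{\rho/2}\theta(t)\,\tfrac{dt}{t}\le\kappa\,\mathfrak I_\theta(\rho)$, which proves \eqref{eq:lem_estim_dini_integr} with implicit constant $2^d\kappa^2 c_0$ (one could equally invoke \eqref{eq:mod_cont_sum}). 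Finally, since $\theta\in\DS(\kappa)$ the integral in \eqref{eq:sDini} is finite, so $t\mapsto\theta(t)/t\in L^1((0,1))$ and $\int_0^\rho\theta(t)\,\tfrac{dt}{t}\to 0$ as $\rho\to 0$ by absolute continuity of the Lebesgue integral, giving the last assertion. The computation is entirely routine; the only minor points of care are the passage $\theta(|x-z|)\mapsto\theta(2^{-j-1}\rho)$ on each annulus via doubling and the remark that $\mu(\{x\})=0$, so there is no genuine obstacle.
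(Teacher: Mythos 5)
Your argument is correct and is essentially the paper's own proof: both decompose $B(x,\rho)$ into dyadic annuli, use the $d$-growth of $\mu$ and the $\kappa$-doubling of $\theta$ to bound each annular piece by a constant times $\theta$ at a dyadic radius, and then convert $\sum_j\theta(2^{-j}\rho)$ into $\mathfrak I_\theta(\rho)$ via \eqref{eq:mod_cont_sum} (equivalently \eqref{eq:omega<dini}). The only additions you make — the explicit remark that $\mu(\{x\})=0$ and the appeal to absolute continuity of the Lebesgue integral for the final limit — are harmless and merely spell out what the paper leaves implicit.
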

\begin{proof}
	The proof of \eqref{eq:lem_estim_dini_integr} follows from a standard estimate of the integral on dyadic annuli, the $d$-growth of $\mu$, and \eqref{eq:mod_cont_sum}. Indeed,
	\begin{equation*}
		\begin{split}
			&\int_{B(x,\rho)}\frac{\theta(|x-z|)}{|x-z|^{d}}\, d\mu(z)=\sum_{j=0}^\infty \int_{A(x,2^{-j-1}\rho, 2^{-j}\rho)}\frac{\theta(|x-z|)}{|x-z|^{d}}\, d\mu(z)\\
			&\qquad \qquad \lesssim \sum_{j=0}^\infty \frac{\theta(2^{-j}\rho)}{2^{-jd}\rho^{d}}\mu\bigl(B(x,2^{-j}\rho)\bigr)\lesssim \sum_{j=0}^\infty \theta (2^{-j}\rho)\approx \int^\rho_0 \theta(t)\, \frac{dt}{t}.\qedhere
		\end{split}
	\end{equation*}
\end{proof}

\begin{lemma}\label{lemma:lemma_int_op_bd}
	Let $0<d\leq n+1$ and let  $\mu$ be a measure with compact support in $\Rn1$ and $d$-growth with constant $c_0>0$. If $K$ is a $(\theta,d)$-kernel for {$\theta\in \DS(\kappa)$}, then its associated integral operator
	\[
	Tf(x)\coloneqq \int K(x,y)f(y)\, d\mu(y), \qquad x\in \Rn1
	\]
	is bounded on $L^2(\mu)$. More specifically, if $C>0$ is as in Definition \ref{def:theta_d_kernel} and $R\coloneqq \diam(\supp \mu)$, we have
	\begin{equation}\label{eq:bound_Schurs_Test}
		\|T\|_{L^2(\mu)\to L^2(\mu)}\lesssim_{\kappa, C}c_0 \int_0^R \theta(t)\, \frac{dt}{t}.
	\end{equation}
\end{lemma}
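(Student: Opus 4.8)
The plan is to prove the $L^2(\mu)$-boundedness of $T$ via Schur's test, exploiting the $(\theta,d)$-kernel bound together with the $d$-growth of $\mu$ and the Dini condition $\theta\in\DS(\kappa)$. Since $\mu$ has compact support with $R=\diam(\supp\mu)$, for every $x\in\supp\mu$ we have $\supp\mu\subset B(x,R)$, so all the relevant integrals are over balls of radius at most $R$.

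\textbf{Step 1: Reduce to a Schur-type estimate.} Recall that Schur's test says that if $|K(x,y)|\le H(x,y)$ with $\sup_x\int H(x,y)\,d\mu(y)\le M_1$ and $\sup_y\int H(x,y)\,d\mu(x)\le M_2$, then $T$ is bounded on $L^2(\mu)$ with $\|T\|_{L^2(\mu)\to L^2(\mu)}\le\sqrt{M_1M_2}$. I would apply this with $H(x,y)=C\,\theta(|x-y|)|x-y|^{-d}$, the bound furnished by Definition \ref{def:theta_d_kernel}.

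\textbf{Step 2: Control the two Schur integrals.} For fixed $x\in\supp\mu$,
\[
\int C\,\frac{\theta(|x-y|)}{|x-y|^d}\,d\mu(y)=\int_{B(x,R)}C\,\frac{\theta(|x-y|)}{|x-y|^d}\,d\mu(y)\lesssim_{c_0,\kappa}C\int_0^R\theta(t)\,\frac{dt}{t},
\]
which is exactly the content of Lemma \ref{lem:lem_estim_dini_integr_growth} applied with $\rho=R$ (using that $\supp\mu\subseteq B(x,R)$ so the integral over $\Rn1$ equals the integral over $B(x,R)$). The kernel bound $H$ is symmetric in $x$ and $y$, so the integral in the other variable is estimated identically, giving the same bound for $\sup_y\int H(x,y)\,d\mu(x)$. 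Here I use that both $\theta\in\DS(\kappa)$ (so the integral $\int_0^R\theta(t)\,dt/t$ is meaningful and finite, at least for $R$ up to $1$; for $R>1$ one splits at $1$ and uses that $\theta$ is $\kappa$-doubling hence locally bounded, or simply notes the statement's integral is over $[0,R]$) and the $d$-growth constant $c_0$ enters linearly through the dyadic-annuli estimate in Lemma \ref{lem:lem_estim_dini_integr_growth}.

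\textbf{Step 3: Conclude.} Combining Steps 1--2 with $M_1=M_2\lesssim_{\kappa,C}c_0\int_0^R\theta(t)\,dt/t$, Schur's test yields
\[
\|T\|_{L^2(\mu)\to L^2(\mu)}\le\sqrt{M_1M_2}\lesssim_{\kappa,C}c_0\int_0^R\theta(t)\,\frac{dt}{t},
\]
which is \eqref{eq:bound_Schurs_Test}. The main (and essentially only) obstacle is bookkeeping of the constants: one must check that the implicit constant in Lemma \ref{lem:lem_estim_dini_integr_growth} indeed depends only on $c_0$ and $\kappa$ and is linear in $c_0$, and that the passage through Schur's test does not introduce any dependence on $\diam(\supp\mu)$ beyond what already appears inside the Dini integral $\int_0^R\theta(t)\,dt/t$. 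Since the kernel estimate and the growth bound are both scale-by-scale (dyadic) and the Dini sum $\sum_j\theta(2^{-j}R)\approx\int_0^R\theta(t)\,dt/t$ by \eqref{eq:mod_cont_sum}, everything goes through cleanly.
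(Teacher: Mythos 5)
Your proposal is correct and follows the same route as the paper: Schur's test applied to the symmetric majorant $H(x,y)=C\,\theta(|x-y|)\,|x-y|^{-d}$, with both Schur integrals controlled by Lemma \ref{lem:lem_estim_dini_integr_growth} via integration over dyadic annuli, the $d$-growth of $\mu$, and \eqref{eq:mod_cont_sum}. The paper's proof is just a more terse version of exactly this argument.
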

\begin{proof}
	The lemma is a direct consequence of Schur's Test (see for instance \cite[Theorem 6.18]{Folland}) and Lemma \ref{lem:lem_estim_dini_integr_growth}. Indeed, for all $x\in \Rn1$, we have
	\[
	\int |K(x,y)|\, d\mu(y)\lesssim \int \frac{\theta(|x-y|)}{|x-y|^{d}}\, d\mu(y)\lesssim \int_0^{R} \theta(t)\, \frac{dt}{t}<+\infty
	\]
	and, analogously,
	\[
	\int |K(x,y)|\, d\mu(x)\lesssim \int_0^{R} \theta(t)\, \frac{dt}{t} \qquad \text{ for all }y\in\Rn1.\qedhere
	\]
\end{proof}

\vvv

\section{Change of variables and pointwise estimates for the gradient of the fundamental solution}\label{section:PDE_section}

\subsection{Change of variables}
Our arguments involve a change of variables with respect to a particular bilipschitz map, which will be specified later on. For this reason, we state some result concerning how the elliptic operator, its fundamental solution, and its associated gradient of the single layer potential change under such a transformation.

\begin{lemma}[see \cite{AGMT17},  Lemma 2.4 ]\label{lemma:cov_general}
	Let $A$ be a uniformly elliptic matrix with real entries and let $\phi\colon \Rn1\to \Rn1$ be a bilipschitz map. If we denote
	\[
	A_\phi\coloneqq |\det D(\phi)|D(\phi^{-1})(A\circ \phi)D(\phi^{-1})^T,
	\]
	where $D(\cdot)$ denotes the differential matrix, then $A_\phi$ is a uniformly elliptic matrix in $\Rn1$. 
	Furthermore, $u\colon \Rn1\to \R$ is a weak solution of $L_A u=0$ if and only if $\tilde u\coloneqq u\circ \phi$ is a weak solution of $L_{A_\phi}\tilde u=0$ in $\Rn1$.
\end{lemma}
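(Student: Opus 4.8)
The plan is to prove the two assertions in turn, each of which reduces to elementary linear algebra once the basic regularity of $\phi$ is recorded. Let $L\ge 1$ be a bilipschitz constant for $\phi$. By Rademacher's theorem $\phi$ is differentiable a.e., and at a.e.\ point $\|D\phi(x)\|\le L$, $\|(D\phi(x))^{-1}\|\le L$, whence $L^{-(n+1)}\le |\det D\phi(x)|\le L^{n+1}$. I use the standard convention that in the definition of $A_\phi$ the factor $D(\phi^{-1})$ is taken at $\phi(x)$, so that $A_\phi(x)=|\det D\phi(x)|\,(D\phi(x))^{-1}A(\phi(x))\big((D\phi(x))^{-1}\big)^{T}$ for a.e.\ $x$.

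For the uniform ellipticity of $A_\phi$, I would fix a point $x$ of differentiability and $\xi\in\Rn1$, set $\eta\coloneqq\big((D\phi(x))^{-1}\big)^{T}\xi$, and use $\langle M^{-1}v,\xi\rangle=\langle v,(M^{-1})^{T}\xi\rangle$ to obtain $\langle A_\phi(x)\xi,\xi\rangle=|\det D\phi(x)|\,\langle A(\phi(x))\eta,\eta\rangle$. Since $\xi=(D\phi(x))^{T}\eta$ gives $|\xi|\le L|\eta|$, the lower bound \eqref{eq:ellip1_scalar} for $A$ yields $\langle A_\phi(x)\xi,\xi\rangle\ge L^{-(n+1)}\Lambda^{-1}|\eta|^2\ge L^{-(n+3)}\Lambda^{-1}|\xi|^2$. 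For a second vector $\zeta\in\Rn1$ with $\eta'\coloneqq\big((D\phi(x))^{-1}\big)^{T}\zeta$ one has $|\eta|\le L|\xi|$, $|\eta'|\le L|\zeta|$, and then \eqref{eq:ellip2_scalar} gives $\langle A_\phi(x)\xi,\zeta\rangle\le L^{n+3}\Lambda\,|\xi|\,|\zeta|$. Thus $A_\phi$ is uniformly elliptic with constant $\widetilde\Lambda=L^{n+3}\Lambda$.

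For the equivalence of weak solutions, I would first recall that composition with the bilipschitz homeomorphism $\phi$ maps $W^{1,2}_{\loc}(\Rn1)$ into itself and obeys the a.e.\ chain rule $\nabla(u\circ\phi)(x)=(D\phi(x))^{T}(\nabla u)(\phi(x))$, so $\tilde u=u\circ\phi\in W^{1,2}_{\loc}(\Rn1)$; moreover $\psi\mapsto\psi\circ\phi$ is a bijection of the space of compactly supported Lipschitz functions onto itself, and testing the weak formulations against such functions is equivalent to the distributional definition. Then, for such a $\psi$ with $\tilde\psi=\psi\circ\phi$, inverting the chain rule to write $(\nabla u)(\phi(x))=\big((D\phi(x))^{-1}\big)^{T}\nabla\tilde u(x)$ (and likewise for $\psi$) and substituting $y=\phi(x)$, $dy=|\det D\phi(x)|\,dx$ via the area formula should give
\begin{align*}
\int \langle A(y)\nabla u(y),\nabla\psi(y)\rangle\,dy
&=\int |\det D\phi(x)|\,\big\langle (D\phi(x))^{-1}A(\phi(x))\big((D\phi(x))^{-1}\big)^{T}\nabla\tilde u(x),\nabla\tilde\psi(x)\big\rangle\,dx\\
&=\int \langle A_\phi(x)\nabla\tilde u(x),\nabla\tilde\psi(x)\rangle\,dx .
\end{align*}
Hence the first integral vanishes for all admissible $\psi$ if and only if the last one vanishes for all admissible $\tilde\psi$, which is exactly the claimed equivalence.

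I expect the only genuinely analytic ingredients — and hence the part to cite rather than to reprove — to be that composition with a bilipschitz map preserves $W^{1,2}_{\loc}$ and satisfies the a.e.\ chain rule, and that the area formula applies to Lipschitz changes of variables; these are classical (see the references in \cite{AGMT17}). With these at hand the computation above is purely algebraic, and the only bookkeeping is to discard the Lebesgue-null set of points where $\phi$ fails to be differentiable (irrelevant for the integrals) and to observe that $\{\psi\circ\phi:\psi\text{ compactly supported Lipschitz}\}$ exhausts the admissible test class, which is the bijectivity recorded above.
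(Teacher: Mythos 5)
Your proposal is correct and follows the standard change-of-variables argument for divergence-form operators, which is the same approach as in the cited Lemma 2.4 of \cite{AGMT17}; the paper itself supplies no proof but simply refers there. The only substantive ingredients beyond linear algebra — the a.e.\ chain rule for $W^{1,2}_{\loc}$ under bilipschitz composition, the area formula for Lipschitz maps, and the stability of the test class — are correctly identified and appropriately invoked rather than reproved.
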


\vv

\begin{lemma}[see \cite{PPT21}, Lemma 5.3]\label{lemma:change_var_fund_sol} Let $\phi\colon\Rn1\to\Rn1$ be a  bilipschitz map and let $A(\cdot)$ be a uniformly elliptic matrix with coefficients in $L^\infty(\Rn1)$. Let $\Gamma_A$ be the fundamental solution of $L_A=-\div(A\nabla\cdot).$ Set
	${A}_\phi\coloneqq  |\det\phi|D({\phi}^{-1})(A\circ\phi)D({\phi}^{-1})^T.$ Then
	\begin{equation}
		\Gamma_{{A}_\phi}(x,y)=\Gamma_A(\phi(x),\phi(y)) \qquad \text{ for }x,y\in\Rn1,
	\end{equation}
	and
	\begin{equation}
		\grad_1 \Gamma_{A_\phi}(x,y)=D(\phi)^T(x)\grad_1\Gamma_A(\phi(x),\phi(y)) \qquad \text{ for } x,y\in\Rn1.
	\end{equation}
\end{lemma}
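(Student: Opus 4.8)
The plan is to deduce both identities from the defining distributional equation for the fundamental solution together with its uniqueness. Since $n\ge 2$ we have $n+1\ge 3$, so by the construction of \cite{HK07} the fundamental solution $\Gamma_A(\cdot,y)$ is characterized as the unique function on $\Rn1\setminus\{y\}$ lying in the relevant local energy space, with the standard decay at infinity, that solves $L_A\Gamma_A(\cdot,y)=\delta_y$ in the distributional sense; the same characterization applies to $\Gamma_{A_\phi}$, because by Lemma \ref{lemma:cov_general} the matrix $A_\phi$ is again uniformly elliptic with bounded measurable entries. Thus it suffices to show that $\widetilde\Gamma(x,y):=\Gamma_A(\phi(x),\phi(y))$ solves $L_{A_\phi}\widetilde\Gamma(\cdot,y)=\delta_y$ and inherits the normalization, and then the first identity follows by uniqueness.

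First I would record the chain rule. Since $\phi$ is bilipschitz, it is differentiable a.e. with $D\phi(x)$ invertible a.e. and $|\det D\phi(x)|$ bounded above and below; composition with $\phi$ sends $W^{1,2}_{\loc}(\Rn1\setminus\{\phi(y)\})$ into $W^{1,2}_{\loc}(\Rn1\setminus\{y\})$, and for a.e. $x$
\[
\nabla_x\widetilde\Gamma(x,y)=D\phi(x)^T(\nabla_1\Gamma_A)(\phi(x),\phi(y)).
\]
This is exactly the second assertion of the lemma once the first is established, so the substance is the distributional equation.

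For that, I would test against $\psi\in C^\infty_c(\Rn1)$ and set $\widetilde\psi:=\psi\circ\phi^{-1}$, which is Lipschitz with compact support, so $\nabla\psi(x)=D\phi(x)^T(\nabla\widetilde\psi)(\phi(x))$ a.e. Writing $J:=D\phi(x)$ and reading $D(\phi^{-1})$ as $(D\phi)^{-1}$ after composition, so that $A_\phi(x)=|\det J|\,J^{-1}A(\phi(x))(J^{-1})^T$, the integrand of $\langle L_{A_\phi}\widetilde\Gamma(\cdot,y),\psi\rangle=\int A_\phi\nabla_x\widetilde\Gamma(\cdot,y)\cdot\nabla\psi$ collapses after the cancellations $(J^{-1})^TJ^T=I$ and $a\cdot(J^Tb)=(Ja)\cdot b$ to
\[
|\det J|\,\bigl(A(\phi(x))(\nabla_1\Gamma_A)(\phi(x),\phi(y))\bigr)\cdot(\nabla\widetilde\psi)(\phi(x)).
\]
The change of variables $z=\phi(x)$, $dz=|\det D\phi(x)|\,dx$, then turns the integral into $\int A(z)(\nabla_1\Gamma_A)(z,\phi(y))\cdot\nabla\widetilde\psi(z)\,dz=\langle L_A\Gamma_A(\cdot,\phi(y)),\widetilde\psi\rangle=\widetilde\psi(\phi(y))=\psi(y)$. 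Hence $L_{A_\phi}\widetilde\Gamma(\cdot,y)=\delta_y$.

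Finally I would check that $\widetilde\Gamma$ inherits the decay and normalization that pin $\Gamma_{A_\phi}$ down: since $|\phi(x)-\phi(y)|\approx|x-y|$ and $|\det D\phi|\approx 1$, the size and energy bounds for $\Gamma_A$ transfer verbatim to $\widetilde\Gamma$. By uniqueness $\widetilde\Gamma=\Gamma_{A_\phi}$, which is the first identity, and the gradient identity is the chain rule recorded above. The main obstacle is purely the low-regularity bookkeeping: justifying the a.e.\ chain rule and the area-formula change of variables for a merely bilipschitz $\phi$, checking that Lipschitz functions are admissible test functions in the weak formulation of $L_A\Gamma_A(\cdot,\phi(y))=\delta_{\phi(y)}$, and invoking the appropriate uniqueness statement for the fundamental solution with bounded measurable coefficients (where the restriction $n+1\ge 3$ enters). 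None of this is deep; alternatively one may simply cite \cite[Lemma 5.3]{PPT21}, whose proof is precisely this argument.
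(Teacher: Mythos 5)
The paper itself gives no proof of this lemma — it cites \cite[Lemma 5.3]{PPT21} — so there is no internal argument to compare against; your task here was to reconstruct that external proof. Your reconstruction is correct and is indeed the standard route: pull back the weak formulation by $\phi$, use the chain rule and the cancellation $(J^{-1})^T J^T = I$ to turn $\int A_\phi\,\nabla\widetilde\Gamma\cdot\nabla\psi$ into $\int A\,\nabla_1\Gamma_A\cdot\nabla\widetilde\psi$ after the area-formula change of variables, verify that $\widetilde\Gamma(x,y)=\Gamma_A(\phi(x),\phi(y))$ satisfies the same size/energy/decay bounds as a fundamental solution since $\phi$ is bilipschitz, and conclude by the uniqueness of the fundamental solution for $n+1\ge 3$ from \cite{HK07}; the gradient identity is then the a.e.\ chain rule. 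You also correctly flag the only delicate points — admissibility of Lipschitz test functions (handled by mollification since $\nabla_1\Gamma_A(\cdot,\phi(y))\in L^1_{\loc}$), the a.e.\ differentiability and Jacobian bounds for bilipschitz maps, and the uniqueness statement. This matches the argument in \cite{PPT21} in both structure and content.
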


\vv

\begin{lemma}[see \cite{PPT21}, Lemma 5.4]\label{lemma:potential_changed} Let $\phi\colon\Rn1\to\Rn1$ be a  bilipschitz map. {Let $\mu$ be a Radon measure on $\Rn1$ and $\phi\sharp\mu$ be its image measure.} For every $x\in\Rn1$ we have
	\begin{equation}
		T_\phi\mu(x)=D(\phi)^T(x)T{\phi\sharp\mu}(\phi(x)).
	\end{equation}
\end{lemma}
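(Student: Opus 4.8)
The plan is to deduce this directly from Lemma \ref{lemma:change_var_fund_sol} together with the change-of-variables formula for image measures. Recall that $T_\phi\mu(x)=\int \nabla_1\Gamma_{A_\phi}(x,y)\,d\mu(y)$, where $A_\phi$ is the matrix from Lemma \ref{lemma:cov_general}. The first step is to substitute the identity $\nabla_1\Gamma_{A_\phi}(x,y)=D(\phi)^T(x)\,\nabla_1\Gamma_A(\phi(x),\phi(y))$ supplied by Lemma \ref{lemma:change_var_fund_sol}. Since the matrix $D(\phi)^T(x)$ is independent of the integration variable $y$, it factors out of the integral, giving $T_\phi\mu(x)=D(\phi)^T(x)\int \nabla_1\Gamma_A(\phi(x),\phi(y))\,d\mu(y)$.

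The second step is to rewrite the remaining integral via the defining property of the pushforward measure: for any Borel function $g$ on $\Rn1$ one has $\int g(\phi(y))\,d\mu(y)=\int g(z)\,d(\phi\sharp\mu)(z)$, which we apply componentwise with $g(\cdot)=\nabla_1\Gamma_A(\phi(x),\cdot)$. This converts $\int \nabla_1\Gamma_A(\phi(x),\phi(y))\,d\mu(y)$ into $\int \nabla_1\Gamma_A(\phi(x),z)\,d(\phi\sharp\mu)(z)=T(\phi\sharp\mu)(\phi(x))$. Combining the two steps yields $T_\phi\mu(x)=D(\phi)^T(x)\,T(\phi\sharp\mu)(\phi(x))$, as desired.

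The one point that needs care — and the step I would be most explicit about — is that these potentials are understood as limits of the truncated operators. The truncation region $\{y:|x-y|>\delta\}$ defining $T_{\phi,\mu,\delta}1(x)$ does not map exactly onto $\{z:|\phi(x)-z|>\delta\}$, but only into comparable regions, because $\phi$ is merely bilipschitz. Accordingly, the computation above is first performed for the formal (non-truncated) integrals and then transferred to the operators in the regimes in which the lemma is actually used: either when $\nabla_1\Gamma_A(\phi(x),\cdot)\in L^1(\mu)$, where no truncation is needed, or when principal values exist, in which case the limits along $|x-y|>\delta$ and along $|\phi(x)-\phi(y)|>\delta$ coincide since the contribution of the intervening symmetric difference tends to $0$. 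Apart from this bookkeeping, the argument uses no new ingredients and is an immediate consequence of Lemma \ref{lemma:change_var_fund_sol}.
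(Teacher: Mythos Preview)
Your proof is correct and follows exactly the approach of \cite[Lemma 5.4]{PPT21}, which the present paper cites without reproducing the argument: substitute the identity $\nabla_1\Gamma_{A_\phi}(x,y)=D(\phi)^T(x)\nabla_1\Gamma_A(\phi(x),\phi(y))$ from Lemma~\ref{lemma:change_var_fund_sol} and then apply the change-of-variables formula for the pushforward measure. Your caveat about truncations is also appropriate and matches the discussion the paper gives immediately after the lemma, where the auxiliary operator $\widetilde T_{\phi,\delta}$ with truncation region $\{|\phi(x)-\phi(y)|>\delta\}$ is introduced precisely to make the identity exact at the truncated level.
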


\vv

 If $\phi$ and $\mu$ are as in the previous lemma, $\nu\coloneqq\phi^{-1}\sharp \mu$, and {for $\delta>0$} we define 
	\[
		\widetilde T_{\phi,\delta}\nu(x)\coloneqq \int_{|\phi(x)-\phi(y)|>\delta} \nabla_1 \Gamma_{A_\phi}(x,y)\, d\nu(y)
	\]
	then the arguments of \cite[Lemma 5.4]{PPT21} show that
	\[
		\widetilde T_{\phi,\delta}\nu(x)= D(\phi)^T(x)T_{\delta}{\mu}(\phi(x)),
	\]
	For $f\in L^1_{\loc}(\nu)$ we also denote \(\widetilde T_{\phi,\nu,\delta}f(x)\coloneqq \widetilde T_{\phi,\delta}(f\nu)(x).\)
	In particular, by \cite[Section 6, p. 740]{PPT21}, if $T_\mu$ is bounded on $L^2(\mu)$ then the operators $\widetilde T_{\phi,\nu,\delta}$ are bounded on $L^2(\nu)$ uniformly on $\delta>0$ and 
	\begin{equation}\label{eq:T_phi_delta_sim_T_delta}
		\|\widetilde T_{\textup{Id},\nu,\delta}\|_{L^2(\nu)\to L^2(\nu)}\approx \|T_{\phi^{-1}, \mu, \delta}\|_{L^2(\mu)\to L^2(\mu)},
	\end{equation}
	where the implicit constant depends on the bilipschitz constant of $\phi$. 	Moreover, if we set  
\[
	\widehat T_{ \phi, \mu,\delta}{f(x)}\coloneqq \int_{| \phi(x)-\phi(y)|> \delta} \nabla_1 \Gamma_A(x,y){f(y)}\,d\mu(y),\, \qquad { f\in L^1_{\loc}(\mu), x\in\Rn1},
\]
 we can prove the following lemma.

\begin{lemma}\label{lemma:change_of_truncations}
Let $A$, $T$, and $\mu$ be as in Theorem \ref{theorem:bound_L2_norm_operators}, and $L \colon\Rn1\to\Rn1$ be an invertible linear map. Then for any  $\delta>0$ 
	\begin{equation}\label{eq:change_of_truncations}
	1+	\|\widehat T_{L, \mu,\delta} \|_{L^2(\mu)\to L^2(\mu)} \approx 1+ \|T_{\mu,\delta}\|_{L^2(\mu)\to L^2(\mu)},
	\end{equation}
	where the implicit constant depends on $\|L\|_{\textup{op} }$, $n, \Lambda, c_0,$ and $\diam(\supp\mu)$.
	\end{lemma}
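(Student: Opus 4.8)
The plan is to reduce the truncation $\{|L(x)-L(y)|>\delta\}$ appearing in $\widehat T_{L,\mu,\delta}$ to the standard Euclidean truncation $\{|x-y|>\delta'\}$ at a comparable scale, up to error terms controlled by the local Calder\'on--Zygmund character of the kernel $\nabla_1\Gamma_A(\cdot,\cdot)$ (Lemma \ref{lem:estim_fund_sol}). Writing $\lambda_{\min}, \lambda_{\max}$ for the smallest and largest singular values of $L$, we have the inclusions
\[
	\{|x-y|>\lambda_{\min}^{-1}\delta\}\subseteq\{|L(x)-L(y)|>\delta\}\subseteq\{|x-y|>\lambda_{\max}^{-1}\delta\},
\]
so that $\widehat T_{L,\mu,\delta}f(x)-T_{\mu,\lambda_{\min}^{-1}\delta}f(x)$ is an integral over the annulus $A\big(x,\lambda_{\max}^{-1}\delta,\lambda_{\min}^{-1}\delta\big)$ of the kernel $\nabla_1\Gamma_A(x,y)$ against $f\,d\mu$. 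By the CZ bound $|\nabla_1\Gamma_A(x,y)|\lesssim |x-y|^{-n}$ from Lemma \ref{lem:estim_fund_sol} (valid once $|x-y|\lesssim 1$, which we may assume since $\supp\mu$ is bounded and the complementary region contributes a bounded kernel handled by Lemma \ref{lemma:lemma_int_op_bd}) together with the $n$-growth of $\mu$, this annular piece defines an operator bounded on $L^2(\mu)$ with norm $\lesssim_{\|L\|_{\mathrm{op}}, n,\Lambda,c_0}1$, uniformly in $\delta$; here one uses that the ratio $\lambda_{\min}/\lambda_{\max}$ is bounded below in terms of $\|L\|_{\mathrm{op}}$ and the determinant, or more directly one sums over the finitely many dyadic shells between $\lambda_{\max}^{-1}\delta$ and $\lambda_{\min}^{-1}\delta$ and applies Schur's test on each. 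This gives
\[
	\big\|\widehat T_{L,\mu,\delta}\big\|_{L^2(\mu)\to L^2(\mu)}\le \big\|T_{\mu,\lambda_{\min}^{-1}\delta}\big\|_{L^2(\mu)\to L^2(\mu)}+C,
\]
and symmetrically with the roles reversed, for a constant $C$ of the advertised dependence.

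It remains to compare $\|T_{\mu,\delta'}\|_{L^2(\mu)\to L^2(\mu)}$ for different values $\delta'$ in a fixed ratio; by the same annular argument (the symmetric difference of two Euclidean truncations at comparable scales is again an integral of a CZ kernel over a bounded number of dyadic shells), $\|T_{\mu,\delta'}\|_{L^2(\mu)\to L^2(\mu)}\le \|T_{\mu,\delta}\|_{L^2(\mu)\to L^2(\mu)}+C$ whenever $\delta\approx\delta'$ with constants depending only on the ratio and on $n,\Lambda,c_0,\diam(\supp\mu)$. Combining the two displays and adding $1$ to both sides to absorb the additive constant $C$ yields \eqref{eq:change_of_truncations} in one direction; exchanging $L$ and $L^{-1}$ (note $\widehat T_{L^{-1},\mu,\delta}$ plays the symmetric role and $\|L^{-1}\|_{\mathrm{op}}$ is controlled since $L$ is a fixed invertible linear map) gives the reverse inequality.

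The only genuine subtlety, and the step I would be most careful about, is that the pointwise kernel bound $|\nabla_1\Gamma_A(x,y)|\lesssim|x-y|^{-n}$ in Lemma \ref{lem:estim_fund_sol} comes with the $\widetilde\DMO$ (really $\DMO_s$) hypothesis and is only local, i.e. of the form $|x-y|^{-n}$ times a constant depending on the scale $R$ with $x,y\in B(0,R)$; since $\supp\mu$ is compact this scale is fixed, but one must make sure the constant $C$ in the error estimate is allowed to depend on $\diam(\supp\mu)$, which is exactly why that parameter appears in the statement. A second minor point is the need for the ``$1+$'' on both sides: the estimate is purely additive (we never multiply operator norms), so the equivalence $1+\|\widehat T_{L,\mu,\delta}\|\approx 1+\|T_{\mu,\delta}\|$ is the correct formulation and the additive constants are harmlessly absorbed.
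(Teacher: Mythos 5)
Your argument is correct, and it takes a genuinely different route from the paper's. The paper introduces a smooth cut-off $\psi$ with $\chi_{B(0,2)^c}\leq\psi\leq\chi_{B(0,1)^c}$, replaces each sharp truncation by the mollified version $\psi_\delta(L(x)-L(y))$ (resp.\ $\psi_\delta(x-y)$), shows each sharp/smooth difference is pointwise controlled by the centered maximal function $\mathcal M_\mu f$, and then uses the mean value theorem on $\psi_\delta$ to bound the difference of the two smooth truncations by $\|\nabla\psi\|_\infty\|L-\textup{Id}\|_{\textup{op}}$ times another integral over an annulus, again landing on $\mathcal M_\mu f$; the conclusion then follows from $L^2(\mu)$-boundedness of $\mathcal M_\mu$. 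You instead keep the sharp indicators, observe that the truncation regions are nested by $\lambda_{\min}|x-y|\leq|L(x)-L(y)|\leq\lambda_{\max}|x-y|$ so that the error is localized to a fixed annulus of radii comparable to $\delta$, and dispatch that annular operator by Schur's test. Both end in an additive estimate $\|\widehat T_{L,\mu,\delta}\|\leq\|T_{\mu,\delta}\|+C$, absorbed by the ``$1+$'' on both sides of \eqref{eq:change_of_truncations}. What each approach buys: the smooth-cut-off route avoids any explicit manipulation of annuli (one single maximal-function lemma handles all three error terms uniformly), whereas your direct approach is arguably more transparent about where the geometry of $L$ enters. One point worth being explicit about in your write-up: the Schur bound on the annular piece depends on $(\lambda_{\max}/\lambda_{\min})^n$, hence on both $\|L\|_{\textup{op}}$ and $\|L^{-1}\|_{\textup{op}}$ (you gesture at this via the determinant remark, and the paper's $M_1,M_2$ similarly involve $\|L\|_{\textup{op}}^{-1}$); in the paper's applications $L$ is $S$ or $S^{-1}$ with both norms controlled by $\Lambda^{1/2}$, so this is harmless, but for a general invertible $L$ the cleaner statement of the dependence is on $\|L\|_{\textup{op}}$ and $\|L^{-1}\|_{\textup{op}}$.
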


\begin{proof}
{Let $f\in L^2(\mu)$.} Let $\psi \in C^\infty_c(\Rn1 \setminus B(0,1) )$ be a  function such that $0 \leq \psi \leq 1$ in $\Rn1$, $\psi=1$ in $\Rn1 \setminus B(0,2)$, and $\| \nabla \psi\|_\infty \lesssim 1$, and we  define $\psi_\delta(\cdot)\coloneqq \psi(\frac{\cdot}{\delta})$. By standard estimates, we can prove that
\begin{equation*}
\begin{split}
\Big| \widehat T^\psi_{L,\mu,\delta}f(x)  &- \widehat T_{L,\mu,\delta}f(x)  \Big| \\
&\coloneqq\left| \int \psi_\delta\left(L(x)-L(y) \right) K(x,y) f(y)\, d\mu(y)-\widehat T_{L,\nu,\delta}f(x) \right|  \lesssim \mathcal{M}_\mu f(x),
\end{split}
\end{equation*}
where $\mathcal{M}_\mu$ is the centered Hardy-Littlewood maximal function with respect to $\mu$. Apparently the same estimate holds for $L=\textup{Id}$. Moreover, by the mean value theorem,  if $M_1\coloneqq\min\{\|L\|_{\textup{op}},1\}$, $M_2\coloneqq 2 \max\{\|L\|_{\textup{op}}^{-1},1\}$, it holds  that 
\begin{align*}
\left| \widehat T^\psi_{L,\mu,\delta}f(x) -\widehat T^\psi_{\textup{Id}, \mu,\delta}f(x)  \right| &\lesssim  \|\nabla \psi \|_{L^\infty} \|L-\textup{Id}\|_{\textup{op}} \int_{A(x, M_1\delta, M_2\delta)} \frac{|x-y| }{\delta} \frac{|f(y)|}{|x-y|^n}\,d\mu(y)\\
&\lesssim_{\|L\|_{\textup{op} }} \mathcal{M}_\mu f(x),
\end{align*}
and our result readily follows by triangle inequality, the $L^2(\mu)$-boundedness of $\mathcal{M}_\mu$, and the fact that $\widehat T_{\textup{Id}, \mu,\delta}f= T_{\mu,\delta}f$.
\end{proof}

We remark that, if $A$ is a uniformly elliptic matrix with real entries {and $A_s\coloneqq (A+A^T)/2$ is its symmetric part}, for every $x\in \Rn1$ and $r>0$ the matrix $(\bar A_s)_{x,r}$ is a symmetric and uniformly elliptic matrix with real entries. In particular, it admits a unique square root $\sqrt{(\bar A_s)_{x,r}}$, which is symmetric and uniformly elliptic, too.

\vv
A particularly useful change of variables is the one that transforms the symmetric part of the matrix at a given point into the identity (see \cite[Lemma 2.5]{AGMT17}).
A standard application of Lemma \ref{lemma:cov_general} and {change of variables} allows us to state the following adaptation to the context of the present paper. 
\begin{lemma}\label{lemma:change_matrix_average}
	Let $A$ be a uniformly elliptic matrix with real entries {and let $A_s$ be its symmetric part}. For a fixed point $x\in \Rn1$ and $r>0$, define $S =\sqrt{(\bar A_s)_{x,r}}$. If
	\begin{equation}\label{eq:change_var_matrices_oscillation}
		\hat{A}(\cdot)\coloneqq\frac{|\det S||S^{-1}(B(x,r))|}{|B(x,r)|}S^{-1}(A\circ S)(\cdot)S^{-1},
	\end{equation}
	then $\hat{A}$ is uniformly elliptic, $\avint_{S^{-1}(B(x,r))} \hat A_s=Id$ and $u$ is a weak solution of
	$L_A u = 0$ in $\Rn1$ if and only if $\tilde{u} = u \circ S$ is a weak solution of $L_{\hat{A}}\tilde{u}=0$ in $\Rn1.$
\end{lemma}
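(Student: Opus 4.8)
The plan is to verify the three assertions—uniform ellipticity of $\hat A$, the normalization $\avint_{S^{-1}(B(x,r))}\hat A_s = Id$, and the correspondence of weak solutions—by combining Lemma \ref{lemma:cov_general} with an explicit computation using the change of variables $\phi = S$, where $S = \sqrt{(\bar A_s)_{x,r}}$ is the (symmetric, uniformly elliptic) square root of the average of the symmetric part. First I would observe that $S$ is a linear map, so $D(S) = S$, $D(S^{-1}) = S^{-1}$, and $|\det D(S)| = |\det S|$ are all constant; hence applying Lemma \ref{lemma:cov_general} with $\phi = S$ gives that $A_S = |\det S|\, S^{-1}(A\circ S)S^{-1}$ (using $S^{-T} = S^{-1}$ by symmetry) is uniformly elliptic, and $u$ solves $L_A u = 0$ iff $u\circ S$ solves $L_{A_S}(u\circ S) = 0$. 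Since $\hat A$ differs from $A_S$ only by the positive scalar factor $|S^{-1}(B(x,r))|/|B(x,r)|$, it is also uniformly elliptic and weak solutions of $L_{A_S} v = 0$ and $L_{\hat A} v = 0$ coincide; this disposes of the first and third claims.

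For the normalization, the key step is a change-of-variables computation in the integral defining the average. Writing $\Omega := S^{-1}(B(x,r))$, the scalar prefactor in \eqref{eq:change_var_matrices_oscillation} is exactly $c := |\det S|\,|\Omega|/|B(x,r)| = 1/|\Omega| \cdot (|\det S|/|B(x,r)|)\cdot|\Omega|^2$; more usefully, by the linear change of variables $z = S(w)$ one has $|B(x,r)| = |\det S|\,|\Omega|$, so $c = |\det S|\,|\Omega|/|B(x,r)| = 1$ — wait, that would make $c=1$; rather, the point is that $|\det S|\,|\Omega| = |B(x,r)|$ gives $c = |\det S|\,|\Omega|/|B(x,r)|$, and one must instead track it through the averaged integral. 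Concretely,
\[
\avint_{\Omega}\hat A_s(w)\,dw = \frac{1}{|\Omega|}\int_\Omega \frac{|\det S|\,|\Omega|}{|B(x,r)|}\, S^{-1}A_s(S(w))S^{-1}\,dw = \frac{|\det S|}{|B(x,r)|}\int_\Omega S^{-1}A_s(S(w))S^{-1}\,dw,
\]
and the substitution $z = S(w)$, $dz = |\det S|\,dw$, turns this into
\[
\frac{1}{|B(x,r)|}\int_{B(x,r)} S^{-1}A_s(z)S^{-1}\,dz = S^{-1}\Big(\avint_{B(x,r)}A_s\Big)S^{-1} = S^{-1}(\bar A_s)_{x,r}\,S^{-1} = S^{-1}S^2 S^{-1} = Id,
\]
using $S^2 = (\bar A_s)_{x,r}$ and the fact that $\widehat{A_s} = (\hat A)_s$ since conjugation by the symmetric matrix $S^{-1}$ and multiplication by a scalar both commute with taking symmetric parts. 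This gives the normalization.

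I do not expect any genuine obstacle here: the statement is a bookkeeping exercise assembling Lemma \ref{lemma:cov_general}, the symmetry of $S$, and one linear change of variables. The only point requiring a small amount of care is checking that taking the symmetric part commutes with the operations defining $\hat A$ from $A$ — i.e. that $(\hat A)_s$ is obtained from $A_s$ by the same formula \eqref{eq:change_var_matrices_oscillation} — which follows because $(S^{-1}M S^{-1})^T = S^{-1}M^T S^{-1}$ for symmetric $S$, so $(S^{-1}(A\circ S)S^{-1})_s = S^{-1}((A\circ S)_s)S^{-1} = S^{-1}(A_s\circ S)S^{-1}$, and the scalar factor is irrelevant. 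With that identity in hand the computation above applies verbatim to $\hat A_s$.
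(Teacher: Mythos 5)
Your proof is correct and takes essentially the same route as the paper: invoke Lemma \ref{lemma:cov_general} (with $D(S)=S$, $S^T=S$) to get uniform ellipticity and the weak-solution correspondence for $A_S$ (and hence for its positive scalar multiple $\hat A$), then verify the normalization by the linear change of variables $z=Sw$, using $S^2=(\bar A_s)_{x,r}$. Your side remark that taking symmetric parts commutes with conjugation by the symmetric matrix $S^{-1}$ is a useful verification the paper leaves implicit; and your momentary ``wait, $c=1$'' observation is in fact \emph{correct} (since $|\det S|\,|S^{-1}(B(x,r))|=|B(x,r)|$ for any invertible linear $S$), which shows the formula \eqref{eq:change_var_matrices_oscillation} actually reduces to $\hat A=S^{-1}(A\circ S)S^{-1}$ — you needn't have backed away from it.
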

\begin{proof}
	In light of Lemma \ref{lemma:cov_general}, we only have to verify that $\avint_{S^{-1}(B(x,r))} \hat A_s=Id$. This follows from a change of variables and the definition of $S$. In particular, we have
	\begin{equation*}
		\begin{split}
			&\avint_{S^{-1}(B(x,r))} \hat A_s(y)\, dy=\frac{|\det S||S^{-1}(B(x,r))|}{|B(x,r)|}S^{-1}\Bigl(\avint_{S^{-1}(B(x,r))}A_s(Sy)\, dy \Bigr) S^{-1}\\
			&\qquad \qquad =\frac{|\det S||S^{-1}(B(x,r))|}{|B(x,r)|}S^{-1}\Bigl(\frac{1}{|S^{-1}(B(x,r))|}\int_{B(x,r)}A_s(z)|\det S^{-1}|\, dz \Bigr) S^{-1}\\
			&\qquad \qquad=S^{-1}\Bigl(\frac{1}{|B(x,r)|}\int_{B(x,r)}A_s(z)\, dz \Bigr) S^{-1}=Id.
		\end{split}
	\end{equation*}
	This concludes the proof of the lemma.
\end{proof}
\vv

The change of variables $S$ defined in the previous lemma is a bilipschitz function with bi-Lipschitz constant $\Lambda^{1/2}$, and it maps balls to ellipsoids. In particular, we have that $\Lambda^{-1/2}\leq |S|\leq \Lambda^{1/2}.$ For more details we refer to \cite[Section 2]{AGMT17}.
{We also remark that in the setting of Lemma \ref{lemma:change_matrix_average} we have that, {for $\zeta\in \Rn1$, $\rho>0$, and }denoting $\tilde \zeta\coloneqq S\zeta$, it holds
	\begin{equation}\label{eq:inclusion_balls_cov}
		B\bigl(\tilde \zeta,\Lambda^{-1/2}\rho/2\bigr)\subset S\bigl(B(\zeta,\rho)\bigr)\subset B\bigl(\tilde \zeta,2\Lambda^{1/2}\rho\bigr).
	\end{equation}
	Furthermore, we {equivalently} have
	\begin{equation}\label{eq:inclusion_B_SB2}
		B(\zeta,\rho)\subset S^{-1}\bigl(B(S\zeta, 2\Lambda^{1/2}\rho)\bigr)\subset B(\zeta,4\Lambda \rho).
	\end{equation}
	
}

The mean oscillation of the transformed matrix under the change of variables in \eqref{eq:change_var_matrices_oscillation} can be controlled according to the following lemma.
\begin{lemma}\label{lemma:comparison_moduli_change_of_variables}
	Let $A$ be a uniformly elliptic matrix with real entries, {$S$ and $\hat A$ be as in Lemma \ref{lemma:change_matrix_average}}, and define 
	\[
	\hat \omega_{\hat A}(\rho)\coloneqq \sup_{z\in \Rn1}\avint_{S^{-1}(B(\zeta,\rho))}\Bigl|\hat A(y)-\avint_{S^{-1}(B(\zeta,\rho))}\hat A\Bigr|\, dy\qquad \text{ for }\rho>0.
	\]
	Then we have that
	\begin{equation}\label{eq:ineq_hatomega_omega}
		\hat \omega_{\hat A}(\rho)\approx_{\Lambda}\omega_A(\rho)\approx_{\Lambda, \kappa}\omega_{\hat A}(\rho) \qquad \text{ for all }\qquad \rho>0,
	\end{equation}
	 {where $\Lambda$ is the uniform ellipticity constant of $A$.} In particular, if $A\in \DMO_s$  then ${\hat A} \in \DMO_s$ as well.
\end{lemma}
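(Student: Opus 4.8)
The plan is to prove the two comparabilities in \eqref{eq:ineq_hatomega_omega} one at a time. I would start by recording that the normalization constant in \eqref{eq:change_var_matrices_oscillation} equals $1$: since $S$ is linear, $|S^{-1}(B(x,r))|=|\det S|^{-1}|B(x,r)|$, so $\frac{|\det S|\,|S^{-1}(B(x,r))|}{|B(x,r)|}=1$ and $\hat A(y)=S^{-1}A(Sy)S^{-1}$ for every $y$. The only other algebraic input needed is that $S$ is symmetric with $\Lambda^{-1/2}Id\leq S\leq \Lambda^{1/2}Id$ as quadratic forms, so that $\|S\|_{\textup{op}},\|S^{-1}\|_{\textup{op}}\leq \Lambda^{1/2}$ and $|\det S|,|\det S^{-1}|\leq \Lambda^{(n+1)/2}$, together with the comparability $|M|\approx_n\|M\|_{\textup{op}}$ between the max-entry matrix norm and the operator norm.

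For the comparability $\hat\omega_{\hat A}(\rho)\approx_\Lambda\omega_A(\rho)$, I would fix $\zeta$ and $\rho$ and change variables $z=Sy$ in the integrals defining the $\zeta$-term of $\hat\omega_{\hat A}(\rho)$. Since $y\mapsto Sy$ maps $S^{-1}(B(\zeta,\rho))$ onto $B(\zeta,\rho)$ with constant Jacobian, the Jacobian factors cancel inside an average, so $\avint_{S^{-1}(B(\zeta,\rho))}\hat A=S^{-1}\bar A_{\zeta,\rho}S^{-1}$ and hence
\[
\avint_{S^{-1}(B(\zeta,\rho))}\Bigl|\hat A(y)-\avint_{S^{-1}(B(\zeta,\rho))}\hat A\Bigr|\,dy=\avint_{B(\zeta,\rho)}\bigl|S^{-1}(A(z)-\bar A_{\zeta,\rho})S^{-1}\bigr|\,dz .
\]
Bounding $|S^{-1}MS^{-1}|\lesssim_n\|S^{-1}\|_{\textup{op}}^2|M|\leq\Lambda|M|$ and, conversely, $|M|=|S(S^{-1}MS^{-1})S|\lesssim_n\Lambda|S^{-1}MS^{-1}|$, and then taking the supremum over $\zeta$, gives $\hat\omega_{\hat A}(\rho)\approx_\Lambda\omega_A(\rho)$. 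No doubling is used here.

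For $\hat\omega_{\hat A}(\rho)\approx_{\Lambda,\kappa}\omega_{\hat A}(\rho)$, I would first note that $\hat A$ is uniformly elliptic by Lemma~\ref{lemma:change_matrix_average}, so by \cite[p.~495]{Li17} applied to $\hat A$ its mean oscillation modulus $\omega_{\hat A}$ is $\kappa$-doubling (and hence so is $\hat\omega_{\hat A}$, up to a factor depending only on $\Lambda$, by the previous paragraph). Next, the ellipsoid $E\coloneqq S^{-1}(B(\zeta,\rho))$ is squeezed between concentric balls: relabelling the center and radius in \eqref{eq:inclusion_B_SB2} yields $B(S^{-1}\zeta,\rho/(2\Lambda^{1/2}))\subset E\subset B(S^{-1}\zeta,2\Lambda^{1/2}\rho)$. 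Writing $w\coloneqq S^{-1}\zeta$ and $\bar{\hat A}_w\coloneqq \avint_{B(w,2\Lambda^{1/2}\rho)}\hat A$, one then estimates
\[
\avint_E\Bigl|\hat A-\avint_E\hat A\Bigr|\leq 2\avint_E\bigl|\hat A-\bar{\hat A}_w\bigr|\leq\frac{2\,|B(w,2\Lambda^{1/2}\rho)|}{|E|}\,\omega_{\hat A}(2\Lambda^{1/2}\rho)\leq 2(4\Lambda)^{n+1}\omega_{\hat A}(2\Lambda^{1/2}\rho),
\]
and the $\kappa$-doubling of $\omega_{\hat A}$ lets one replace $\omega_{\hat A}(2\Lambda^{1/2}\rho)$ by $C(n,\Lambda,\kappa)\,\omega_{\hat A}(\rho)$; taking $\sup_\zeta$ gives $\hat\omega_{\hat A}(\rho)\lesssim_{\Lambda,\kappa}\omega_{\hat A}(\rho)$. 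The reverse inequality is entirely symmetric: by \eqref{eq:inclusion_balls_cov}, $B(\eta,\rho)$ is squeezed between $S^{-1}(B(S\eta,\Lambda^{-1/2}\rho/2))$ and $S^{-1}(B(S\eta,2\Lambda^{1/2}\rho))$, whose volume ratio is again $\lesssim_{\Lambda,n}1$, so the same elementary estimate bounds each $\eta$-term of $\omega_{\hat A}(\rho)$ by $C(n,\Lambda)\,\hat\omega_{\hat A}(2\Lambda^{1/2}\rho)\lesssim_{\Lambda,\kappa}\hat\omega_{\hat A}(\rho)$. This proves \eqref{eq:ineq_hatomega_omega}; and if $A\in\DMO_s$, then $\omega_A$ is $\kappa$-doubling with $\int_0^1\omega_A(t)\,\frac{dt}{t}<\infty$, so by \eqref{eq:ineq_hatomega_omega} the same holds for $\omega_{\hat A}$ (the doubling constant being dimensional), i.e. $\hat A\in\DMO_s$.

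The only point requiring care is the scale change in the last step: although $\omega_A$, and hence $\omega_{\hat A}$ and $\hat\omega_{\hat A}$, need not be monotone, the $\kappa$-doubling property \eqref{eq:dini1_new} still lets one pass between the comparable scales $\rho$ and $2\Lambda^{1/2}\rho$ at the cost of a constant depending only on $n,\Lambda,\kappa$, by iterating \eqref{eq:dini1_new} across the $\lceil\log_2(2\Lambda^{1/2})\rceil$ intervening dyadic blocks. Everything else reduces to the change of variables of the second paragraph and the trivial volume and norm bounds for $S$ recorded at the outset.
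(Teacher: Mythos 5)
Your proof is correct and follows essentially the same route as the paper: a change of variables $z=Sy$ for the first comparability in \eqref{eq:ineq_hatomega_omega}, and the ball-versus-ellipsoid sandwich of \eqref{eq:inclusion_balls_cov}--\eqref{eq:inclusion_B_SB2} combined with the $\kappa$-doubling property to pass between scales $\rho$ and $2\Lambda^{1/2}\rho$ for the second. Two small remarks: (i) you correctly observe that the normalization constant in \eqref{eq:change_var_matrices_oscillation} is \emph{exactly} $1$ since $|S^{-1}(B(x,r))|=|\det S|^{-1}|B(x,r)|$, whereas the paper's write-up only records that it is $\approx_\Lambda 1$; (ii) for the second comparability the paper bounds $\omega_{\hat A}(\rho)\leq\hat\omega_{\hat A}(2\Lambda^{1/2}\rho)\lesssim_\Lambda\omega_A(2\Lambda^{1/2}\rho)\lesssim_\kappa\omega_A(\rho)$, threading through the already-proved first comparability and the doubling of $\omega_A$, while you instead compare $\hat\omega_{\hat A}$ to $\omega_{\hat A}$ directly and invoke \cite[p.~495]{Li17} a second time to get the doubling of $\omega_{\hat A}$; both are valid and of the same difficulty.
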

\begin{proof}
{The upper bound in the first equality of \eqref{eq:ineq_hatomega_omega} is an easy consequence of the definition of $\hat A$, the change of variables formula, the uniform ellipticity of $A$ and the fact that $|S^{-1}|\approx_\Lambda 1\approx_\Lambda |\det S|$. In particular, for any ball $B\subset \Rn1$ of radius $r(B)$ we have
\begin{equation*}
	\begin{split}
		&\avint_{S^{-1}(B)}\Bigl|\hat A(y)- \avint_{S^{-1}(B)}\hat A\Bigr|\, dy \\
		&\qquad \approx_{\Lambda}\frac{1}{|S^{-1}(B)|^2}\int_{S^{-1}(B)}\Bigl|\int_{S^{-1}(B)}\bigl(S^{-1}A(Sz)S^{-1}-S^{-1}A(Sw)S^{-1}\bigr)\, dw\Bigr|\, dz\\
		&\qquad \lesssim_\Lambda \frac{1}{|B|^2}\int_{B}|S^{-1}|\Bigl|\int_{B}\bigl(A(z)-A(w)\bigr)\, dw\Bigr||S^{-1}|\, dz\lesssim_\Lambda\frac{1}{|B|}\int_B \Bigl|A(z)-\avint_B A\Bigr|\, dz.
	\end{split}
\end{equation*}
The proof of the lower bound is analogous.
}

	In order to prove the second estimate in \eqref{eq:ineq_hatomega_omega}, we observe that, given a ball $B\coloneqq B(\zeta,\rho)$ and denoting $ \tilde B\coloneqq B(S\zeta, 2\Lambda^{1/2}\rho)$, the inclusion \eqref{eq:inclusion_B_SB2}, the first estimate in \eqref{eq:ineq_hatomega_omega} and the doubling assumption yield
	\begin{equation*}
		\begin{split}
			\avint_{B}\Bigl|\hat A(y)-\avint_B\hat A\Bigr|\, dy&\leq \avint_{B}\Bigl|\hat A(y)-\avint_{S^{-1}(\tilde B)}\hat A\Bigr|\, dy + \Bigl|\avint_{S^{-1}(\tilde B)}\hat A - \avint_{B}\hat A\Bigr|\\
			&\lesssim_{\Lambda}\avint_{S^{-1}(\tilde B)}\Bigl|\hat A(y)-\avint_{S^{-1}(\tilde B)}\hat A\Bigr|\, dy \leq \hat \omega_{\hat A}\bigl(2\Lambda^{1/2}\rho\bigr)\lesssim_{\Lambda,\kappa}\omega_A(\rho).
		\end{split}
	\end{equation*}
	The converse inequality can be proved analogously.
\end{proof}
\vv
{	
We recall that, if $A_0$ is a uniformly elliptic matrix with constant coefficients, then 
\begin{equation}\label{eq:theta_sym}
	\Gamma_{A_0}(x,y)=\Gamma_{(A_0)_s} (x,y)\qquad \text{for all } x,y\in \Rn1, x\neq y.
\end{equation}}

Let $A\in \DMO_s$. The quantity $\mathfrak I_{ \omega_A}(r)$ defined in \eqref{eq:def_mathfrak_I} satisfies  $\mathfrak I_{ \omega_A}(2^{-j}r)\to 0$ for every $0<r<1$ by Remark \ref{rem:rem_right_cont}.  {We remark that it is not necessary to assume that $ \omega_A(r)$ vanishes as $r\to 0^+$ for this property to hold.}

\vv

\subsection{Estimates for the gradient of the fundamental solutions.}

The following theorem is an easy adaptation of one of the main results of \cite{DK17}.

\begin{theorem}\label{theorem:dong_kim}
	Let  $A$ be a uniformly elliptic matrix satisfying $A\in \DMO_s$. 
	{Let $0<\eta<1/2$, and set $N\coloneqq 3 (\tfrac{4}{3})^{N_\eta}$ for $N_\eta$ such that $2^{-N_\eta-1}\leq \eta< 2^{-N_\eta}.$} Assume that $g\colon B(0,N+1)\to \R^m$ is a function that satisfies the Dini mean oscillation condition
	\begin{equation}\label{eq:dinig_auxlem}
		\int_0^1\mathring\omega^{0,N}_g(t)\, \frac{dt}{t}<+\infty, \quad \text{ where } \quad \mathring\omega^{0,k}_g(t)\coloneqq  \  \sup_{w\in B(0,k)} \avint_{B(w,t)}|g(x)-\bar g_{w,t}|\, dx, \qquad k\geq 1.
	\end{equation}
	Let $u$ be a weak solution of
	\[
	\div (A(x)\nabla u)=\div g,\qquad \text{ in } B(0,N+1).
	\]
	{There exists $\eta$ such that} $u\in C^1\bigl(\overline{B(0,1)};\R^m\bigr)$.  Furthermore, it holds that
	\begin{equation}\label{eq:DK217}
		\|\nabla u\|_{L^\infty(B(0,2))}\lesssim \|\nabla u\|_{L^1(B(0,4))} + \int_0^1 \mathring\omega^{0,N}_g(t)\, \frac{dt}{t},
	\end{equation}
	and, for $x,y\in B(0,1)$ such that $|x-y|<1/2$,
	\begin{equation}\label{eq:DK219}
		\begin{split}
			&|\nabla u(x)-\nabla u(y)|\lesssim \|\nabla u\|_{L^1(B(0,4))}|x-y|^\beta \\
			&\qquad + \Biggl(\|\nabla u\|_{L^1(B(0,4))} + \int_0^1\mathring\omega^{0,N}_g(t)\, \frac{dt}{t}\Biggr)\int_0^{|x-y|}\mathring\omega^{0,N}_A(t)\, \frac{dt}{t} + \int_0^{|x-y|}\mathring\omega^{0,N}_g(t)\, \frac{dt}{t},
		\end{split}
	\end{equation}
	where $\beta>0$ and the implicit constants depend on $n$.
\end{theorem}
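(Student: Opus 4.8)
The statement is the localized interior $C^1$-estimate of Dong and Kim \cite{DK17}, and the plan is to re-run their Campanato-type perturbation argument while keeping careful track of the balls on which the coefficient $A$ and the datum $g$ are sampled, so that only the localized moduli $\mathring\omega^{0,N}_A$, $\mathring\omega^{0,N}_g$ enter and the conclusions are stated on $B(0,1)$ and $B(0,2)$. Fix $x_0\in B(0,2)$; for $r>0$ with $B(x_0,r)\subset B(0,N+1)$ let $w=w_{x_0,r}$ solve the \emph{frozen} constant-coefficient problem $\div\bigl(\bar A_{x_0,r}\nabla w\bigr)=0$ in $B(x_0,r)$, $w=u$ on $\partial B(x_0,r)$ — by \eqref{eq:theta_sym} the symmetric part $(\bar A_s)_{x_0,r}$ may be used in place of $\bar A_{x_0,r}$ without changing $w$ — and set $v\coloneqq u-w\in W^{1,2}_0\bigl(B(x_0,r)\bigr)$, so that $\div\bigl(\bar A_{x_0,r}\nabla v\bigr)=\div\bigl[(\bar A_{x_0,r}-A)\nabla u+(g-\bar g_{x_0,r})\bigr]$ with zero boundary values.

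For $w$ one has the classical decay of gradient mean oscillation for constant-coefficient equations, $\avint_{B(x_0,\rho)}\bigl|\nabla w-(\nabla w)_{x_0,\rho}\bigr|\, dx\lesssim(\rho/r)\avint_{B(x_0,r)}\bigl|\nabla w-(\nabla w)_{x_0,r}\bigr|\, dx$ for $0<\rho\le r$ (here $(\nabla u)_{x_0,s}\coloneqq\avint_{B(x_0,s)}\nabla u\, dx$). For $v$, the perturbation estimates of \cite{DK17} for the frozen equation bound $\avint_{B(x_0,\rho)}|\nabla v|\, dx$ by the size of the right-hand side: the coefficient part is treated by splitting $\nabla u=(\nabla u)_{x_0,r}+\bigl(\nabla u-(\nabla u)_{x_0,r}\bigr)$, so that $\avint_{B(x_0,r)}|(\bar A_{x_0,r}-A)\nabla u|\, dx\lesssim\omega_A(r)\,\avint_{B(x_0,r)}|\nabla u|\, dx+\phi_u(x_0,r)$ — no a priori $L^\infty$ bound on $\nabla u$ is needed, it is produced by the same iteration — while the $g$-part contributes $\mathring\omega^{0,N}_g(r)$. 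Combining these, one reaches the recursion, valid for $0<\rho\le r$,
\[
\phi_u(x_0,\rho)\ \lesssim\ \frac{\rho}{r}\,\phi_u(x_0,r)\ +\ \omega_A(r)\,\avint_{B(x_0,r)}|\nabla u|\, dx\ +\ \mathring\omega^{0,N}_g(r),\qquad \phi_u(x_0,s)\coloneqq\avint_{B(x_0,s)}\bigl|\nabla u-(\nabla u)_{x_0,s}\bigr|\, dx.
\]

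Now fix $\eta\in(0,1/2)$ small (depending on $n,\Lambda$) so that the absolute constant multiplying $\rho/r=\eta$ is at most $1/2$, and iterate along $r_j=\eta^jr_0$. As in \cite{DK17}, using the $\kappa$-doubling property \eqref{eq:dini1_new}, the summation bound \eqref{eq:mod_cont_sum}, and \eqref{eq:omega<dini} for $\omega_A$ and for the (localized) modulus of $g$, the resulting geometric series converges; one obtains $\sum_j\phi_u(x_0,r_j)<\infty$ and, simultaneously, a uniform bound for $\avint_{B(x_0,r)}|\nabla u|\, dx$, hence $\phi_u(x_0,r)\to0$ as $r\to0$, uniformly in $x_0\in B(0,1)$. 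By Campanato's lemma $\nabla u\in C^0\bigl(\overline{B(0,1)}\bigr)$, i.e.\ $u\in C^1\bigl(\overline{B(0,1)};\R^m\bigr)$, and \eqref{eq:DK217} follows; tracking the rate of decay of $\phi_u(x_0,\cdot)$ gives \eqref{eq:DK219}, the $|x-y|^\beta$ term being the De Giorgi--Nash--Moser H\"older exponent of $\nabla w$ (constant coefficients), and the integrals $\int_0^{|x-y|}\mathring\omega^{0,N}_A$, $\int_0^{|x-y|}\mathring\omega^{0,N}_g$ collecting the accumulated oscillation errors (the passage from sums $\sum_j\omega(2^{-j}t)$ to $\mathfrak I_\omega(t)$, cf.\ \eqref{eq:def_mathfrak_I}, being \eqref{eq:mod_cont_sum}). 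Only balls $B(x_0,r)$ with $x_0\in B(0,1)$ and $r$ in the finitely many iteration steps descending from a fixed radius down to $\eta$ times it play a role; since invoking the doubling property of a mean-oscillation modulus enlarges the admissible set of centers by the radius in play, the value $N=3(4/3)^{N_\eta}$ is precisely the dilation for which all these balls stay centered in $B(0,N)$ and contained in $B(0,N+1)$. This is why the hypotheses may be imposed only on the localized moduli $\mathring\omega^{0,N}_g$, and $\mathring\omega^{0,N}_A<\infty$ holds because $A\in\DMO_s$.

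\textbf{The main obstacle}, and the reason this is an \emph{adaptation} of \cite{DK17} rather than a direct quotation, is that $\omega_A$ (and likewise the modulus of $g$) is \emph{not assumed monotone}, so one cannot bound $\omega_A(\rho)$ by $\omega_A(r)$ for $\rho\le r$ as one would with a classical modulus of continuity. The iteration must therefore be organized in its summed form: isolated occurrences of $\omega_A(r)$ are absorbed through the doubling inequality \eqref{eq:omega<dini}, $\omega_A(r)\le\kappa\,\mathfrak I_{\omega_A}(r)$, while the cumulative error $\sum_{j\ge0}\omega_A(\eta^jr)$ is controlled by $\mathfrak I_{\omega_A}(r)$ via \eqref{eq:mod_cont_sum}. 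This is exactly the refinement of the Campanato scheme already carried out in \cite{DK17}; granted it, the localization described above and the stated dependence of the implicit constants on $n$ (and, through ellipticity, on $\Lambda$) are routine.
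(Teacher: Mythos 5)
Your proposal follows, at a high level, the same route as the paper: freeze coefficients on $B(x_0,r)$, compare $u$ to the constant-coefficient solution $w$, set up a Campanato-style one-step decay for a quantity $\phi(x_0,\cdot)$, iterate along $r_j=\eta^j r$, and control the accumulated error by the Dini integrals of $\mathring\omega^{0,N}_A$ and $\mathring\omega^{0,N}_g$ using the doubling property and the summation identity \eqref{eq:mod_cont_sum}. You also correctly identify the two places where this is an adaptation of \cite{DK17} rather than a quotation: the moduli are localized to $B(0,N)$, and they are $\kappa$-doubling rather than monotone, forcing the error sums to be handled in aggregate (the paper does the Fubini-type rearrangement \eqref{eq:doublesumdk}).

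There is, however, a genuine gap in the way you set up the Campanato quantity. You define
\[
\phi_u(x_0,s)\coloneqq\avint_{B(x_0,s)}\bigl|\nabla u-(\nabla u)_{x_0,s}\bigr|\, dx
\]
and assert that the perturbation estimates of \cite{DK17} bound $\avint_{B(x_0,\rho)}|\nabla v|\,dx$. This is not what \cite{DK17} proves: the corrector $v=u-w$ solves $\div(\bar A_{x_0,r}\nabla v)=\div h$ with $h\in L^1$, and the only estimate available for $\nabla v=\nabla L_{\bar A}^{-1}\div h$ from $L^1$ data is the weak-type $(1,1)$ bound. That estimate does not give $L^1$ control of $\nabla v$, so the single-step decay inequality you write cannot be closed with the $L^1$ Campanato seminorm. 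The paper avoids this by defining, following \cite{DK17}, the seminorm
\[
\phi(\bar x, r)\coloneqq \inf_{c\in \Rn1}\Biggl(\avint_{B(\bar x, r)}|\nabla u(z)-c|^p\, dz\Biggr)^{1/p},\qquad 0<p<1,
\]
precisely so that Kolmogorov's inequality converts the weak-$(1,1)$ bound on $\nabla v$ into an $L^p$ bound. The choice $p<1$ is not cosmetic; with $p=1$ the perturbation step fails, and the choice is the single technical device that makes the whole scheme run. Your proposal should be revised to use this $L^p$-quasi-norm.

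A smaller inaccuracy: you attribute the $|x-y|^\beta$ term in \eqref{eq:DK219} to "the De Giorgi--Nash--Moser H\"older exponent of $\nabla w$." Since $w$ solves a constant-coefficient equation, $\nabla w$ is smooth and satisfies interior Lipschitz (in fact $C^\infty$) estimates; there is no De Giorgi--Nash--Moser H\"older exponent in play. The exponent $\beta$ is the algebraic contraction rate produced by iterating the one-step decay of $\phi$ (the factor $2^{-j}$, or equivalently $\eta^{\gamma j}$, in the recursion \eqref{eq:estim_phi_omega_g}), not a regularity exponent of the frozen problem.
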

\begin{proof}
	For the proof of the fact that $u$ belongs to $C^1\bigl(\overline{B(0,1)}\bigr)$, we refer to \cite[Theorem 1.5]{DK17}. 	The inequality \eqref{eq:DK217} is a variant of \cite[(2.17)]{DK17}, which is formulated in terms of a slightly different modulus of oscillation {(see \cite[(2.15)]{DK17}).} In order to prove \eqref{eq:DK217}, 	we fix an exponent $0<p<1$ and  define
	\[
	\phi(\bar x, r)\coloneqq \inf_{c\in \Rn1}\Biggl(\avint_{B(\bar x, r)}|\nabla u(z)-c|^p\, dz\Biggr)^{1/p}, \qquad \bar x\in \Rn1, \, 0<r<1/3.
	\]
	
	It was proved in \cite[p. 424]{DK17} that, for $0<\eta<1/2$ sufficiently small (depending on $p$ and an absolute constant) and $j=1,2, \ldots$,
	\begin{equation}\label{eq:estim_phi_omega_g}
		\phi\bigl(\bar x, \eta^j r\bigr)\lesssim 2^{-j}\phi(\bar x, r) + \|\nabla u\|_{L^\infty(B(x,r))} \sum_{i=1}^j 2^{1-i}\mathring\omega^{0,3}_A\bigl(\eta^{j-i}r\bigr) + \sum_{i=1}^j 2^{1-i}\mathring\omega^{0,3}_g\bigl(\eta^{j-i}r\bigr).
	\end{equation}
	Now notice that	
		\begin{align}\label{eq:doublesumdk}
			\sum_{j=1}^\infty	\sum_{i=1}^j 2^{1-i}\mathring\omega^{0,3}_g\bigl(\eta^{j-i}r\bigr) &= \sum_{i=1}^\infty	\sum_{j=i}^\infty 2^{1-i}\mathring\omega^{0,3}_g\bigl(\eta^{j-i}r\bigr)=2 \sum_{j=0}^\infty\mathring\omega^{0,3}_g\bigl(\eta^{j}r\bigr).
		\end{align}

		{By \cite[p. 495]{Li17}, there exists a constant $\kappa>0$ depending only on  $n$ such that
		\begin{equation}
			\mathring \omega^{0,3}_g(s)\leq \kappa \mathring \omega^{0,4}_g(t), \qquad \text{ for }\frac{s}{2}\leq t<s.
		\end{equation}	
		Moreover, by definition of $N$ and a minor variant of \eqref{eq:mod_cont_sum_iteration} and \eqref{eq:mod_cont_sum}, we have
		\begin{equation}\label{eq:doubling_localized}
			\sum_{j=0}^\infty\mathring{\omega}^{0,3}_g(\eta^jr)\lesssim \int^r_0 \mathring \omega^{0,N}_g(t)\, \frac{dt}{t}.
		\end{equation}
		Analogous properties hold for $\mathring{\omega}^{0,3}_A.$}
		Thus, 
		\begin{equation*}
			\begin{split}
				\sum_{j=1}^\infty\sum_{i=1}^j 2^{1-i}\mathring\omega^{0,3}_g\bigl(\eta^{j-i}r\bigr)\overset{\eqref{eq:doublesumdk}}{\leq} 2 \sum_{j=0}^\infty\mathring \omega^{0,3}_g\bigl(\eta^{j}r\bigr)
				\overset{\eqref{eq:doubling_localized}}{\lesssim} \int_0^r \mathring\omega^{0,N}_g(t)\, \frac{dt}{t}.
			\end{split}
		\end{equation*}
	
	The same computations can be repeated to handle the second summand on the right hand side of \eqref{eq:estim_phi_omega_g}. Finally, the inequalities \eqref{eq:DK217} and \eqref{eq:DK219} follow as in \cite[(2.17) and (2.19)]{DK17}.
\end{proof}

\vv

\begin{remark}\label{rem:pwbound1}
Let $0<r<R_0$, $x_0\in \Rn1$,  and $g\colon B(x_0, (N +1) R_0)\to \R$, where {$N\coloneqq 3 (\tfrac{4}{3})^{N_\eta}$ for $\eta$ as in \eqref{eq:estim_phi_omega_g} and $N_\eta$ such that $2^{-N_\eta-1}\leq \eta< 2^{-N_\eta}$}. For $0<t<r$ and $1\leq k\leq N$, we define  
	\begin{equation*}
		\begin{split}
			 \mathring\omega^{x_0, k r}_g(t) &= \sup_{w\in B(x_0, k  r)} \avint_{B(w, t r)}\bigl|g(x) - \bar g_{w,tr}\bigr|\, dx.
		\end{split}
	\end{equation*}

	Let  $A$ be a uniformly elliptic such that  $A\in \DMO_s$ and assume that, for $N$ as above,
	\[
		\int_0^1 \mathring\omega^{x_0, N R_0}_g(t)\, \frac{dt}{t}<\infty.
	\] 
	Then by the proof of Theorem \ref{theorem:dong_kim}, it holds that  if  $u$ is a weak solution to $-\div (A(x)\nabla u)=-\div g$ in $B(x_0, (N+1) R_0)$, we obtain that $u\in C^1(\overline{B(x_0, r)})$ and satisfies the following estimates:
		\begin{equation}\label{eq:Linftyest-r}
		\|\nabla u\|_{L^\infty(B(x_0,2r))}\lesssim_{R_0} \avint_{B(x_0, 4 r)} |\nabla u(x)|\, dx+ \int_0^1 \mathring\omega^{x_0, N r}_g(t)\, \frac{dt}{t},
\end{equation}
	and, for $x,y\in B(x_0,r)$ such that $|x-y|<r/2$,
	\begin{equation}\label{eq:ModulusCont-r}
		\begin{split}
			&|\nabla u(x)-\nabla u(y)|\lesssim_{R_0}  \frac{|x-y|^\beta}{r^\beta}\avint_{B(x_0,4 r)}|\nabla u(z)|\, dz \\
			&\qquad\qquad \qquad  + \Biggl(\avint_{B(x_0,4r)}|\nabla u(z)|\, dz  + \int_0^1\mathring\omega^{x_0,  Nr}_g(t)\, \frac{dt}{t}\Biggr)\int_0^{\frac{|x-y|}{r}}\mathring\omega^{x_0,  N r}_A(t)\, \frac{dt}{t} \\
			&\qquad\qquad \qquad + \int_0^{\frac{|x-y|}{r}}\mathring\omega^{x_0,  N r}_g(t)\, \frac{dt}{t}.
		\end{split}
	\end{equation}
	{Furthermore, we have that the} implicit constants  blow up {logarithmically} as $R_0\to \infty$. If $0<R_0<1$, they only depend on ellipticity, dimension, and the Dini Mean Oscillation condition.
\end{remark}

	\vv

\vv

An important consequence of the pointwise bounds of Theorem \ref{theorem:dong_kim}, are the following estimates for the fundamental solution and its derivatives.
\begin{lemma}\label{lem:estim_fund_sol}
	Let $A(\cdot)=(a_{ij})$	be a uniformly elliptic matrix in $\Rn1$, $n\geq 2$, satisfying $A\in \DMO_s$. Let $R>0$, and let $\beta>0$ be as in Theorem \ref{theorem:dong_kim}.
	Then there exists $C=C(n,\Lambda,R)>0$ such that the fundamental solution $\Gamma_A$ satisfies the following pointwise bounds:
	\begin{enumerate}
		\item \(|\Gamma_A(x,y)|\leq C |x-y|^{-(n-1)}\) for $x,y\in \Rn1$, $0<|x-y|<R$.
		\item \(|\nabla_1\Gamma_A(x,y)| + |\nabla_2\Gamma_A(x,y)|\leq C {|x-y|^{-n}}\)  for $x,y\in \Rn1$, $0<|x-y|<R$.
		\item \(|\nabla_1\nabla_2\Gamma_A(x,y)|\leq C |x-y|^{-(n+1)}\)  for $x,y\in \Rn1$, $0<|x-y|<R$.
		\item {We have
		\begin{align}\label{eq:continuityGamma}
		|\nabla_1 \Gamma_A(x,y) - \nabla_1\Gamma_A(x,z)| &+ |\nabla_1\Gamma_A(y,x) - \nabla_1\Gamma_A(z,x)| \notag\\
		& \leq C \left( \frac{|y-z|^\beta}{|x-y|^\beta}+ \int_0^{\frac{|y-z|}{|x-y|}} \omega_A(t)\,\frac{dt}{t}\right)  |x-y|^{-n},
		\end{align}
		for  $2|y-z|\leq |x-y|<R$.}
	\end{enumerate}
\end{lemma}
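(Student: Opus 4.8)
The plan is to bootstrap from the size estimate for $\Gamma_A$ supplied by the construction of \cite{HK07} to the gradient and second--order bounds by means of the interior regularity estimates of Theorem \ref{theorem:dong_kim}, in the rescaled form recorded in Remark \ref{rem:pwbound1}. Two structural facts will be used repeatedly: first, the transpose identity $\Gamma_A(x,y)=\Gamma_{A^T}(y,x)$ together with $\omega_{A^T}=\omega_A$ (so that $A^T\in\DMO_s$ as well), which lets us deduce every statement about $\nabla_2$ from the corresponding statement about $\nabla_1$ applied to $A^T$; and second, that for $y$ fixed the function $\Gamma_A(\cdot,y)$ is a weak solution of $\div(A\nabla\cdot)=0$ in $\Rn1\setminus\{y\}$, so that the interior estimates are available on balls that do not meet the pole. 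Item $(1)$ is the classical bound $|\Gamma_A(x,y)|\lesssim_{n,\Lambda}|x-y|^{-(n-1)}$, which holds for bounded measurable coefficients in ambient dimension $n+1\geq 3$ (this is where $n\geq 2$ is used) and is proved in \cite{HK07} via De Giorgi--Nash--Moser theory; the restriction $|x-y|<R$ is irrelevant for it.

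For $(2)$ fix $y$, write $\rho=|x-y|$, and note that $u:=\Gamma_A(\cdot,y)$ solves the homogeneous equation in $B(x,\rho)$. Caccioppoli's inequality together with $(1)$ gives $\avint_{B(x,r')}|\nabla u|^2\lesssim (r')^{-2}\avint_{B(x,2r')}|u|^2\lesssim\rho^{-2n}$ for $r'$ comparable to $\rho$, and feeding this into \eqref{eq:Linftyest-r} with $g\equiv0$, at the base point $x$ and with $R_0,r$ comparable to $\rho$ but small enough that $B(x,(N+1)R_0)$ avoids $y$, yields $\|\nabla u\|_{L^\infty(B(x,\rho/4))}\lesssim_R\rho^{-n}$; applying the same to $A^T$ gives the $\nabla_2$ bound. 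For $(3)$ we first check that, for fixed $x$, each $w\mapsto\partial_{x_i}\Gamma_A(x,w)$ is a weak solution of $L_{A^T}^w(\cdot)=0$ on $\Rn1\setminus\{x\}$: the difference quotients $h^{-1}\bigl(\Gamma_A(x+he_i,\cdot)-\Gamma_A(x,\cdot)\bigr)$ are differences of fundamental solutions with poles outside a fixed neighbourhood of any $w\neq x$, hence solve the homogeneous adjoint equation there, are uniformly bounded by $(2)$, and therefore converge in $C^1_{\mathrm{loc}}$ (Theorem \ref{theorem:dong_kim}) along a subsequence to $\partial_{x_i}\Gamma_A(x,\cdot)$, which is thus a solution. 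Applying to this solution the Caccioppoli inequality and \eqref{eq:Linftyest-r} on a ball $B(y,r')$ with $r'$ comparable to $\rho$ and avoiding $x$, and using the bound $|\nabla_1\Gamma_A(x,\cdot)|\lesssim\rho^{-n}$ from $(2)$, we obtain $|\nabla_w\partial_{x_i}\Gamma_A(x,y)|\lesssim_R\rho^{-1}\cdot\rho^{-n}$, which is $(3)$.

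For $(4)$ we may assume $|y-z|<|x-y|/32$, since otherwise $|x-z|\geq|x-y|/2$ and the bound is immediate from $(2)$ together with $(|y-z|/|x-y|)^\beta\gtrsim1$. For the first difference, $w\mapsto\nabla_1\Gamma_A(x,w)$ is, by $(3)$, Lipschitz on $B(y,|x-y|/4)$ with $\|\nabla_w\nabla_1\Gamma_A(x,\cdot)\|_{L^\infty(B(y,|x-y|/4))}\lesssim_R|x-y|^{-(n+1)}$, so that $|\nabla_1\Gamma_A(x,y)-\nabla_1\Gamma_A(x,z)|\lesssim_R|y-z|\,|x-y|^{-(n+1)}\leq \tfrac{|y-z|^\beta}{|x-y|^\beta}\,|x-y|^{-n}$, using $|y-z|\leq|x-y|$ and $\beta\leq1$. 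For the second difference we apply the Dini modulus of continuity estimate \eqref{eq:ModulusCont-r} with $g\equiv0$ to the solution $v:=\Gamma_A(\cdot,x)$ of $L_Av=0$, at base point $y$ and scale $r$ comparable to $|x-y|$, small enough that the dilated balls in Remark \ref{rem:pwbound1} avoid $x$, with $R_0\sim|x-y|$, so that the implicit constant depends at worst logarithmically on $R$. Bounding $\avint_{B(y,4r)}|\nabla v|\lesssim|x-y|^{-n}$ via $(2)$, estimating $\mathring\omega^{y,Nr}_A(t)\leq\omega_A(tr)$, and changing variables $s=tr$ turns the resulting integral into $\int_0^{|y-z|}\omega_A(s)\,\frac{ds}{s}$; a short argument using the doubling of $\omega_A$ (trivial when $|x-y|\leq 1$ and otherwise costing a factor depending on $R$) replaces this by $\int_0^{|y-z|/|x-y|}\omega_A(t)\,\frac{dt}{t}$, and together with the $\beta$--H\"older term this gives the asserted bound.

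I expect the main obstacle to be the justification, needed for $(3)$ and for the first half of $(4)$, that the $x$--gradient of $\Gamma_A$ is itself a weak solution of the adjoint equation in the remaining variable off the diagonal: once this is granted, everything else is an iteration of Caccioppoli's inequality and the interior estimates. A secondary difficulty, absent in the H\"older or periodic settings, is that Theorem \ref{theorem:dong_kim} is not scale invariant, so one must keep track of how the constants depend on $R$ and carefully compare the localized oscillation functions $\mathring\omega^{x_0,kr}_A$ occurring in Remark \ref{rem:pwbound1} with the global modulus $\omega_A$.
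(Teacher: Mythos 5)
Your proof is correct and takes essentially the same route as the paper: cite \cite{HK07} for item $(1)$, then run the rescaled interior $C^1$ estimates of Remark \ref{rem:pwbound1} on $\Gamma_A(\cdot,y)$ and on the differentiated kernel, in combination with Caccioppoli's inequality, the size bound from $(1)$, and the transpose identity $\Gamma_A(x,y)=\Gamma_{A^T}(y,x)$. The paper's own proof is terser and simply points to these ingredients; you have supplied the two details it glosses over --- the difference-quotient argument showing $\partial_{x_i}\Gamma_A(x,\cdot)$ solves the adjoint equation (for item $(3)$, where the paper instead phrases this for $\nabla_2\Gamma_A(\cdot,y)$ solving $L_A=0$), and the treatment of the first difference in $(4)$ via $(3)$ and the mean value theorem together with the doubling argument that trades $\int_0^{|y-z|}\omega_A\,dt/t$ for $\int_0^{|y-z|/|x-y|}\omega_A\,dt/t$ at the cost of an $R$-dependent constant.
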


\begin{proof}
{For the proof of (1) we refer to \cite[Section 5]{HK07}. The bounds (2) and (4) follow directly from (1), the fact that the function $u_y(\cdot)\coloneqq \Gamma_A(\cdot,y)$ satisfies $L_A u_y=0$ in $B(x,|x-y|/8)$, and Remark \ref{rem:pwbound1}. The bound for $\nabla_1\nabla_2\Gamma_A$ can be proved analogously, observing that $w_y(\cdot)\coloneqq \nabla_2 \Gamma_A(\cdot,y)$ satisfies $L_A w_y=0$ in $B(x,|x-y|/8)$.	
}
\end{proof}

\vvv

{	 
		
		If $\Omega\subset \Rn1$, $n\geq 2$, is an open set and $2^*\coloneqq \frac{2(n+1)}{n-1}$, we define $Y^{1,2}(\Omega)$ as the space of all weakly differentiable functions $u\in L^{2^*}(\Omega)$,  whose weak derivatives belong to $L^2(\Omega)$. We endow $Y^{1,2}(\Omega)$ with the norm
	\[
	\|u\|_{Y^{1,2}(\Omega)}\coloneqq \|u\|_{L^{2^*}(\Omega)} + \|\nabla u\|_{L^2(\Omega)},\qquad u\in Y^{1,2}(\Omega).
	\]
	We denote by $Y^{1,2}_0(\Omega)$ the closure of $C^\infty_c(\Omega)$ in $Y^{1,2}(\Omega)$ and remark that $Y^{1,2}(\Rn1)=Y^{1,2}_0(\Rn1)$ (see for instance \cite[p. 46]{MZ97}).
	
	Let $A$ be a uniformly elliptic matrix in $\Rn1$, $n\geq 2$, with coefficients in $L^\infty(\Rn1)$. For $g \in L^{2}(\Rn1;\Rn1)$, we denote by $L_A^{-1}\nabla \cdot  g$ the unique solution $u$ of the variational Dirichlet problem $L_A u= -\dv g$ and $ u \in Y_0^{1,2}(\Rn1)$. By a modification of the argument that proves \cite[(3.47)]{HK07} (for the details see, for instance, the proof of \cite[(6.3)]{Mo19}), for $g \in L^2(\Rn1)$, we have
	\begin{equation}\label{eq:def_L_A_inverse}
		L_A^{-1} \nabla \cdot g(x)=\int \nabla_2 \Gamma_A(x,y) \cdot g(y)\, dy,
	\end{equation}
	and as in \cite[(3.10)-(3.11)]{HK07}, one can prove  that 
	\begin{equation}\label{eq:L2_boundedness_nabla_L_inv}
		\|\nabla L_A^{-1} \nabla \cdot g \|_{L^2(\R^{n+1})} \lesssim \|g \|_{L^2(\R^{n+1})}.
	\end{equation}

	\vv
	
	The proof of the next lemma is a standard adaptation of the one of \cite[Corollary 3.5]{HK07}, which we present below for the reader's convenience.
	\begin{lemma}\label{lem:G-tildeG}
		If $A$ and $\tilde A$	are uniformly elliptic matrices in $\Rn1$, $n\geq 2$, so that  $A,\tilde A\in \DMO_s$, then for $R>0$ and all $x, y \in \Rn1$ such tha $0<|x-y|<R$, it holds that
		\begin{equation}\label{eq:representation-perturbation}
				\Gamma_{\tilde A}(x,y)-\Gamma_{A}(x,y) =\int \nabla_2\Gamma_A(x,z)\cdot\bigl(A(z)-\tilde   A(z)\bigr)\nabla_1\Gamma_{\tilde A}(z,y)\, dz.
		\end{equation}
	\end{lemma}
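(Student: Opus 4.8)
The plan is to prove the identity \eqref{eq:representation-perturbation} by the standard ``resolvent-type'' manipulation, justifying each step through the a priori estimates already collected. Write $u_y(\cdot)\coloneqq \Gamma_A(\cdot,y)$ and $\tilde u_y(\cdot)\coloneqq \Gamma_{\tilde A}(\cdot, y)$; both belong to $Y^{1,2}_0(\Rn1\setminus B(y,\rho))$ for every $\rho>0$, and by Lemma \ref{lem:estim_fund_sol} they satisfy the pointwise decay $|\nabla u_y(z)|,|\nabla \tilde u_y(z)|\lesssim |z-y|^{-n}$ near $y$ and (using the global bounds, together with the fact that $A$ is a bounded perturbation at infinity) suitable decay at infinity. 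The point is that $v\coloneqq \tilde u_y - u_y$ solves $L_A v = \dv\bigl((A-\tilde A)\nabla \tilde u_y\bigr)$ in the sense of distributions on $\Rn1$, with $v\in Y^{1,2}_0(\Rn1)$ away from the pole; hence by the representation formula \eqref{eq:def_L_A_inverse} applied with $g\coloneqq (A-\tilde A)\nabla\tilde u_y = (A-\tilde A)\nabla_1\Gamma_{\tilde A}(\cdot,y)$ one gets, for $x\neq y$,
\[
	v(x) = L_A^{-1}\nabla\cdot g(x) = \int \nabla_2\Gamma_A(x,z)\cdot\bigl(A(z)-\tilde A(z)\bigr)\nabla_1\Gamma_{\tilde A}(z,y)\, dz,
\]
which is exactly \eqref{eq:representation-perturbation}. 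This mirrors verbatim the argument of \cite[Corollary 3.5]{HK07}; the only thing to check is that it goes through in the present $\DMO_s$ setting rather than under the stronger hypotheses of \cite{HK07}.

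First I would record the integrability of the right-hand side. Split the $z$-integral into the regions $|z-y|<|x-y|/2$, $|z-x|<|x-y|/2$, and the complement. On the first region $\nabla_2\Gamma_A(x,\cdot)$ is bounded (by Lemma \ref{lem:estim_fund_sol}(2), since $z$ stays away from $x$) while $|\nabla_1\Gamma_{\tilde A}(z,y)|\lesssim |z-y|^{-n}$ is integrable in $n+1$ dimensions near $y$; symmetrically on the second region; on the complement both kernels are $\lesssim |x-y|^{-n}$ locally and decay fast enough at infinity because $A,\tilde A$ agree with (or are $L^\infty$-close to, and asymptotically equal to) the identity outside a compact set, so $|\nabla_2\Gamma_A(x,z)|\lesssim |z-x|^{-n}$ and likewise for the other factor, giving an absolutely convergent integral. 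Thus $g\in L^2(\Rn1)$ (indeed $g$ is compactly supported or decays like $|z|^{-n}$ at infinity, which is $L^2$ for $n\geq 2$), so \eqref{eq:def_L_A_inverse} and \eqref{eq:L2_boundedness_nabla_L_inv} apply literally.

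Next I would verify the PDE that $v$ solves. For a test function $\varphi\in C^\infty_c(\Rn1)$ one has, using $L_{\tilde A}\tilde u_y = \delta_y = L_A u_y$,
\[
	\int A\nabla v\cdot\nabla\varphi = \int A\nabla\tilde u_y\cdot\nabla\varphi - \int A\nabla u_y\cdot\nabla\varphi = \int (A-\tilde A)\nabla\tilde u_y\cdot\nabla\varphi + \bigl(\langle\delta_y,\varphi\rangle-\langle\delta_y,\varphi\rangle\bigr),
\]
so $L_A v = \dv\bigl((\tilde A - A)\nabla\tilde u_y\bigr) = -\dv g$ distributionally, and $v\in Y^{1,2}_0$ because $\tilde u_y,u_y$ have the same singularity at $y$ (their difference is bounded near $y$ by Lemma \ref{lem:estim_fund_sol}(1), or rather its gradient is in $L^2$ near $y$ by a Cauchy-type cancellation — this is where I would be a little careful) and both decay at infinity. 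Uniqueness of the $Y^{1,2}_0$-solution of $L_A w = -\dv g$ then forces $v = L_A^{-1}\nabla\cdot g$, and \eqref{eq:def_L_A_inverse} gives the claimed formula.

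The main obstacle I anticipate is the justification near the pole $y$: $\nabla\tilde u_y$ and $\nabla u_y$ are each only $L^p$ for $p<\tfrac{n+1}{n}$ near $y$, not $L^2$, so $v=\tilde u_y-u_y$ is not obviously in $Y^{1,2}_0(\Rn1)$ on the nose, and one cannot directly pair $L_A v$ against $v$ itself. The standard fix — used in \cite{HK07} — is to work first on $\Rn1\setminus B(y,\rho)$, show the identity holds with an error term supported on $\partial B(y,\rho)$ that vanishes as $\rho\to 0$ thanks to the bounds $|\Gamma_A|\lesssim|x-y|^{-(n-1)}$ and $|\nabla\Gamma_A|\lesssim|x-y|^{-n}$ of Lemma \ref{lem:estim_fund_sol}, or alternatively to exploit the cancellation $\tilde u_y - u_y = O(1)$ with $L^2$ gradient coming from the difference structure. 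I would follow \cite[Corollary 3.5]{HK07} closely here, merely remarking that every estimate it invokes has now been reproved under the $\DMO_s$ hypothesis in Lemma \ref{lem:estim_fund_sol}, so the argument transfers without change; the $\DMO_s$ regularity is needed precisely to know $\nabla_1\Gamma$, $\nabla_2\Gamma$ and $\nabla_1\nabla_2\Gamma$ are genuine functions with the stated pointwise control, which is what makes the interchange of integration and the limit $\rho\to0$ legitimate.
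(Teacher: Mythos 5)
Your overall plan — write $v\coloneqq \Gamma_{\tilde A}(\cdot,y)-\Gamma_A(\cdot,y)$, observe $L_A v=\dv\bigl((\tilde A-A)\nabla_1\Gamma_{\tilde A}(\cdot,y)\bigr)$, and then invoke the representation formula \eqref{eq:def_L_A_inverse} — is the right heuristic, but the step where you claim it applies \emph{literally} has two genuine gaps.

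\textbf{First gap: $g\notin L^2(\Rn1)$.} You write that $g\coloneqq(A-\tilde A)\nabla_1\Gamma_{\tilde A}(\cdot,y)\in L^2$ and discuss only the behaviour at infinity, but the obstruction is at the pole: $|A-\tilde A|$ is merely bounded, and $|\nabla_1\Gamma_{\tilde A}(z,y)|\approx |z-y|^{-n}$, so $|g(z)|^2\gtrsim|z-y|^{-2n}$ near $y$, whose integral over $B(y,\rho)$ behaves like $\int_0^\rho r^{-n}\,dr$ and diverges for all $n\geq 1$. Indeed $\nabla_1\Gamma_{\tilde A}(\cdot,y)$ is only in $L^p_{\loc}$ for $p<\frac{n+1}{n}$, which is strictly below $2$ when $n\geq 2$. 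Hence \eqref{eq:def_L_A_inverse} and \eqref{eq:L2_boundedness_nabla_L_inv} do not apply to this $g$, and the core of your argument breaks before uniqueness even enters.

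\textbf{Second gap: $v\notin Y^{1,2}_0(\Rn1)$.} You flag this as ``the main obstacle'' and then suggest a cancellation ``$\tilde u_y-u_y=O(1)$ with $L^2$ gradient coming from the difference structure.'' This cancellation does not occur in general: the leading $|z-y|^{-n}$ singularities of $\nabla_1\Gamma_A(\cdot,y)$ and $\nabla_1\Gamma_{\tilde A}(\cdot,y)$ are governed (up to lower order) by the respective frozen matrices near $y$ and differ whenever $A$ and $\tilde A$ differ at $y$ in the relevant averaged sense; they do not cancel. So $\nabla v\notin L^2$ near $y$, and your appeal to uniqueness of the $Y^{1,2}_0$-solution is not available.

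\textbf{What the paper actually does.} The proof in the text regularizes both poles using the \emph{averaged} fundamental solutions $\Gamma^\rho$ of \cite{HK07} (characterized by \eqref{eq:av_matrix_def}), establishes the intermediate identity \eqref{eq:3.29} by first letting an inner averaging parameter $\rho'\to 0$, and then passes $\rho\to 0$ by splitting the $z$-integral into $B(x,r)$, $B(y,r)$, and the complement, using three different modes of convergence: uniform convergence of $\nabla_1\Gamma^\rho(\cdot,y)$ on $B(x,r)$ (from the interior $C^1$-regularity of Lemma \ref{lem:estim_fund_sol}), weak $L^p$-convergence for $p<\frac{n+1}{n}$ on $B(y,r)$ (from \eqref{eq:bound_Lp_ball}), and weak $L^2$-convergence on the complement (from \eqref{eq:bound_L2_away}). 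It then runs the symmetric computation to get $\tilde\Gamma(x,y)=\int\nabla_2\tilde\Gamma(x,\cdot)\cdot\tilde A\,\nabla_1\Gamma(\cdot,y)$ and subtracts. The $\DMO_s$ hypothesis enters through Lemma \ref{lem:estim_fund_sol}, exactly where you anticipated, but the limiting procedure is essential; one cannot substitute it by the $Y^{1,2}_0$-uniqueness argument. If you want to carry out the plan in your sketch you must introduce the regularized objects and justify the three limits separately — that is precisely what ``following \cite[Corollary 3.5]{HK07} closely'' involves, and it is not a verbatim transfer since the original is stated under different (e.g.\ H\"older or VMO) hypotheses.
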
	
	\begin{proof}
		Set $\Gamma\coloneqq \Gamma_A$, $\tilde \Gamma\coloneqq \Gamma_{\tilde A}$, $\Gamma_*\coloneqq \Gamma_{A^T}$, $\tilde \Gamma_*\coloneqq \Gamma_{\tilde A^T}$,  and $r\coloneqq |x-y|/4$. For $0<\rho<r$ we denote as $\Gamma^\rho$ the \textit{averaged fundamental solution} (see \cite[Section 3.1]{HK07}) which can be defined, via Lax-Milgram theorem, as the unique function in $Y^{1,2}_0(\Rn1)$ such that
		\begin{equation}\label{eq:av_matrix_def}
			\int A\nabla_1 \Gamma^\rho(\cdot,y)\cdot \nabla u = \avint_{B(y,\rho)} u = \int f_{\rho,y} u  \qquad \text{for all } u\in Y^{1,2}_0(\Rn1),
		\end{equation}
		for $f_{\rho,y}(z)\coloneqq |B(y,\rho)|^{-1}\chi_{B(y,\rho)}(z)$. Note that, if $B$ is a ball such that $B \cap B(y,\rho)=\varnothing$ and $u\in  C^\infty_c(B)$, then the right hand side of \eqref{eq:av_matrix_def} vanishes, i.e.,  $L_A\Gamma^\rho(\cdot, y)=0$ in $B$. Thus, the De Giorgi-Nash-Moser theorem implies that $\Gamma^\rho(\cdot, y)$ is locally H\"older continuous in $\Rn1\setminus \overline{B(y,\rho)}$. Moreover, by \cite[(3.45)]{HK07}, $\Gamma^\rho$ admits the representation
		\begin{equation}\label{eq:def_gamma_rho_av}
			\Gamma^\rho(x,y)\coloneqq \avint_{B(y,\rho)}\Gamma (x,z)\, dz.
		\end{equation}
		We  define $\tilde \Gamma^\rho ,  \Gamma_*^\rho , \tilde \Gamma_*^\rho \in  Y^{1,2}_0(\Rn1)$ analogously and it holds 
		\begin{equation}\label{eq:av_matrix_tilde_transp}
			\int \nabla_1 \tilde \Gamma_*^\rho(\cdot,x)\cdot \tilde A \nabla u = \avint_{B(x,\rho)} u \qquad \text{for all } u\in Y^{1,2}_0(\Rn1).
		\end{equation}
	
		We claim that 
		\begin{equation}\label{eq:claim_wt_Gamma_rho}
			 \Gamma^\rho(x,y)=\int \nabla_2 \tilde \Gamma(x,\cdot)\cdot \tilde A(\cdot)\nabla_1  \Gamma^\rho(\cdot, y).
		\end{equation}
		Let  $0<\rho'<\rho<r$. If we apply \eqref{eq:av_matrix_tilde_transp} for $\tilde \Gamma^{\rho'}_*$ and $u=\Gamma^\rho(\cdot, y) \in Y^{1,2}_0(\Rn1)$, then it holds that
		\begin{equation}\label{eq:avint_rho_prime_Gamma_rho}
			\int \nabla_1 \tilde \Gamma^{\rho'}_*(z, x)\cdot  \tilde A\nabla_1 \Gamma^{\rho}(z, y)\, dz = \avint_{B(x,\rho')}\Gamma^\rho(z, y)\, dz.	
		\end{equation}
		Since $\Gamma^\rho(\cdot, y)$ is continuous away from $y$, by Lebesgue's differentiation theorem,
		\begin{equation}\label{eq:ldt_Gamma_rho}
			\lim_{\rho'\to 0}\avint_{B(x,\rho')}\Gamma^\rho(z,y)\, dz=\Gamma^\rho(x,y). 
		\end{equation}
		Moreover, by \cite[(3.22) and (3.24)]{HK07}, there exist $C_1, C_2 >0$ such that
		\begin{equation}\label{eq:bound_L2_away}
			\int_{\Rn1\setminus B(x,r)}\bigl|\nabla_1 \Gamma^{\rho'}_{\tilde A^T}(z,x)\bigr|^2\, dz\leq C_1 r^{1-n}
		\end{equation}
		and
		\begin{equation}\label{eq:bound_Lp_ball}
			\bigl\|\nabla_1 \Gamma^{\rho'}_{\tilde A^T}(\cdot, x)\bigr\|_{L^p(B(x,r))}\leq C_2 r^{\frac{n+1}{p}-n}\qquad \text{ for }p\in [1,(n+1)/n),
		\end{equation}
		where $C_1$ and $C_2$ do not depend on $\rho'$.

	We now split 
		\begin{equation}\label{eq:lim_rho_1}
			\begin{split}
			\int \nabla_1 & \tilde \Gamma^{\rho'}_*(z, x)\cdot  \tilde A\nabla_1 \Gamma^{\rho}(z, y)\, dz \\				&=\int_{B(x,r)} + \int_{\Rn1\setminus B(x,r)} \nabla_1\tilde \Gamma^{\rho'}_*(z, x)\cdot  \tilde A\nabla_1 \Gamma^{\rho}(z, y)\, dz\eqqcolon I_{\rho'} + II_{\rho''}.
			\end{split}
		\end{equation}
	Since $A\in \DMO_s$ and $L_A\Gamma^\rho(\cdot, y)=0$ in $B(x,r)$, by Lemma \ref{lem:estim_fund_sol} and \eqref{eq:def_gamma_rho_av}, for any $z\in B(x,r)$, it holds that 
		\[
			|\nabla_1 \Gamma^\rho_A(z,y)| \leq  \avint_{B(y,\rho)}| \nabla_1 \Gamma (z,w)|\, dw \lesssim \avint_{B(y,\rho)}\frac{1}{|w-z|^{n}}\, dw\approx r^{-n}\approx |x-y|^{-n},
		\]
		where in the penultimate inequality we used that $|w-z|\approx r$ for $w\in B(y,\rho)$, and so  $\nabla_1 \Gamma^\rho(\cdot, y)\in L^\infty(B(x,r)).$
		
		Fix  $p\in \left(1,\frac{n+1}{n}\right)$ and  consider a sequence $\rho'_j\to 0$ such that $\rho_j'< \rho <r$ for all $j$. As in the proof of \cite[Theorem 3.1]{HK07}, by \eqref{eq:bound_Lp_ball} and weak compactness of $W^{1,p}( B(x,r))$, we may pass to a subsequence, which we still denote by $\rho'_{j}$, such that $\nabla_1 \tilde \Gamma_*^{\rho'_j}(\cdot, x)\rightharpoonup \nabla_1 \tilde \Gamma_*(\cdot, x) $ in $L^p(B(x,r))$. Hence
		\begin{equation}\label{eq:limit_I_rho}
			\lim_{j\to \infty} I_{\rho'_j}=\int_{B(x,r)}\nabla_1 \tilde \Gamma_*(z, x)\cdot \tilde A \nabla_1 \Gamma^{\rho}(z, y)\,dz.
		\end{equation}
	Furthermore, once again as in the proof of \cite[Theorem 3.1]{HK07}, the bound \eqref{eq:bound_L2_away} implies that,  by passing to another subsequence if necessary, $\nabla_1 \tilde \Gamma_*^{\rho'_j}(\cdot, x)\rightharpoonup \nabla_1 \tilde \Gamma_*(\cdot, x) $ in $L^2(\Rn1 \setminus B(x,r))$. Thus, since $\Gamma^{\rho}(\cdot, y)\in Y_0^{1,2}(\Rn1),$
		\begin{equation}\label{eq:limit_II_rho}
			\lim_{j\to \infty} II_{\rho'_{j}}=\int_{\Rn1\setminus B(x,r)}\nabla_1 \tilde \Gamma_*(z, x)\cdot \tilde A \nabla_1 \Gamma^{\rho}(z, y)\,dz.
		\end{equation}
	Therefore,  \eqref{eq:limit_I_rho}, \eqref{eq:limit_II_rho}, and \eqref{eq:ldt_Gamma_rho}, imply
	\begin{equation}\label{eq:3.29}
			\Gamma^\rho(x,y)=\int \nabla_1  \tilde \Gamma_*(z, x)\cdot \tilde A \nabla_1 \Gamma^{\rho}(z, y)\,dz= \int \nabla_2  \tilde \Gamma(x, z)\cdot \tilde A \nabla_1 \Gamma^{\rho}(z, y)\,dz,
		\end{equation}
		where in the last equality we used that $\Gamma_{\tilde A^T}(w,z)=\Gamma_{\tilde A} (z,w)$ for all $z,w\in \Rn1$, $z\neq w$ (see \cite[(3.43)]{HK07}). This concludes the proof of  \eqref{eq:claim_wt_Gamma_rho}.
		
Let us now  split
		\[
			\int \nabla_2 \tilde \Gamma(x,\cdot )\cdot A(\cdot)\nabla_1  \Gamma^{\rho_j}(\cdot, y)=\int_{B(x,r)} + \int_{B(y,r)} + \int_{\Rn1\setminus (B(x,r)\cup B(y,r))}\eqqcolon I^1_{\rho_j}+I^2_{\rho_j}+I^3_{\rho_j}.
		\]
Since  $\tilde \Gamma(x, \cdot )\in Y^{1,2}(\Rn1 \setminus B(x,r))$, by the weak convergence $\nabla_1  \Gamma^{\rho_j}(\cdot, y) \rightharpoonup \nabla_1  \Gamma(\cdot, y)$ in $L^2(\Rn1 \setminus B(y,r))$, we have that 
$$
\lim_{j \to \infty} I^3_{\rho_j} = \int_{\Rn1\setminus (B(x,r)\cup B(y,r))} \nabla_2 \tilde \Gamma(x, z )\cdot A(z)\nabla_1  \Gamma(z, y)\,dz.
$$ 
By Lemma \ref{lem:estim_fund_sol}, it holds that $|\nabla_2 \tilde \Gamma(x, z)| \lesssim |x-y|^{-n}$, for all $z \in B(y,r)$, and since $\nabla_1  \Gamma^{\rho_j}(\cdot, y) \rightharpoonup \nabla_1  \Gamma(\cdot, y) $ in $L^p(B(y,r))$, we get that 
$$
\lim_{j \to \infty} I^2_{\rho_j} = \int_{ B(y,r)} \nabla_2 \tilde \Gamma(x, z )\cdot A(z)\nabla_1  \Gamma(z, y)\,dz.
$$ 
Lastly,  by Lemma \ref{lem:estim_fund_sol}, $\nabla_1  \Gamma^{\rho_j}(\cdot,y)$ is a uniformly bounded equicontinuous family of functions  in  $B(x,r)$, and so, after passing to a subsequence, we get that $\nabla_1  \Gamma^{\rho_j}(\cdot,y) \to \nabla_1  \Gamma(\cdot,y)$ uniformly in $B(x,r)$. By \cite[(3.52)]{HK07}, it holds that 
$$
\|\nabla_2 \tilde \Gamma(x, \cdot)\|_{L^1(B(x,r))} \lesssim r,
$$ 
and thus, we deduce that 
$$
\lim_{j \to \infty} I^1_{\rho_j} = \int_{ B(x,r)} \nabla_2 \tilde \Gamma(x, z )\cdot A(z)\nabla_1  \Gamma(z, y)\,dz.
$$ 
Hence, as $\lim_{j \to \infty}  \Gamma^{\rho_j}(x,y) =   \Gamma(x,y)$, by \eqref{eq:3.29}, we conclude that
\begin{equation} \label{eq:tilde_gamma_limit_1}
			 \Gamma(x,y)=\int \nabla_2 \tilde  \Gamma(x, \cdot)\cdot  A\nabla_1  \Gamma(\cdot, y).
\end{equation}
		Similarly, we can be prove  that
		\begin{align*}
			\tilde \Gamma(x,y)=		\tilde \Gamma_*(y,x)&=\int \nabla_2   \Gamma_*(y, \cdot)\cdot  \tilde A^T\nabla_1 \tilde \Gamma_*(\cdot,x)= \int \nabla_2 \tilde \Gamma(x,\cdot)\cdot \wt A\nabla_1  \Gamma(\cdot, y),
		\end{align*}
		which, together with \eqref{eq:tilde_gamma_limit_1}, concludes the proof of the lemma.
	\end{proof}
	
}

\vv

 For a real elliptic $(n+1)\times (n+1)$-matrix $A_0$ with constant coefficients, we use the notation $\Theta(x,y; A_0)=\Gamma_{A_0}(x,y)$ to indicate the fundamental solution of $L_{A_0}$. In particular, we recall that $\Theta(x,y;A_0)=\Theta(x-y,0;A_0)$ and
	\begin{equation}\label{eq:fund_sol_const_matrix}
		\Theta(z,0;A_0)=\Theta(z,0;A_{0,s})=
		\begin{cases}\displaystyle
			\frac{-1}{(n-1)\omega_n\sqrt{\det A_{0,s}}}\frac{1}{\langle A_{0,s}^{-1}z, z\rangle^{(n-1)/2}} \;\;\;\text{ for }n\geq 2, \\\\
			\displaystyle
			\frac{1}{4\pi\sqrt{\det A_{0,s}}}\log\big(\langle A_{0,s}^{-1}z, z\rangle\big)\;\;\text{ for }n=1,
		\end{cases}
	\end{equation}
where $A_{0,s}\coloneqq\frac12(A_0+A_0^T)$ is the symmetric part of $A_0$. Moreover it holds
	\begin{equation}\label{eq:gradient_fund_sol_const_matrix}
		\nabla_1 \Theta(z,0;A_0)=\frac{\omega_n^{-1}}{\sqrt{\det A_{0,s}}}\frac{A_{0,s}^{-1}\,z}{\langle A_{0,s}^{-1}z, z\rangle^{(n+1)/2}}, \qquad \text{ for }z\in \mathbb R^{n+1}\setminus\{0\}.
	\end{equation}

Finally we observe that, for any integer $k\geq 0$, we have
\begin{equation}\label{eq:estim_deriv_fund_sol}
	\bigl|\nabla^k_1 \Theta (z,0;A_0)\bigr|\lesssim \frac{1}{|z|^{n+k-1}}\qquad \text{ for } z\in \mathbb R^{n+1}\setminus \{0\},
\end{equation}
where the implicit constant depends on dimension, the ellipticity constants of $A_{0,s}$,   and the order of differentiation $k$.

\begin{lemma}
	Let $A=(a_{ij})_{1\leq i,j\leq n+1}$ be a  matrix  such that $a_{ij} \in L^\infty(\Rn1)$ for $1\leq i,j \leq n+1$ and $A\in \DMO_s$. Then, for $r>0$ and $p\geq 1$,
		\begin{equation}\label{eq:sup_aux_1}
			\sup_{0< \rho\leq r}\sup_{x\in \Rn1}	\biggl(\avint_{B(x,\rho)}|A(z)-\bar A_{x,\rho}|^p\, dz\biggr)^{1/p}\lesssim p\,\mathfrak I_{\omega_A}(r).
		\end{equation}
		Moreover, if  $\mathcal C_{j}\coloneqq A(x,2^jr, 2^{j+1}r),   j \in \mathbb N$,  we have
		\begin{equation}\label{eq:sup_aux_2}
			\biggl(\avint_{\mathcal C_j}|A(z)-\bar A_{x,r}|^p\, dz\biggr)^{1/p}\lesssim p \,\mathfrak I_{\omega_A}(2^jr).
		\end{equation}
\end{lemma}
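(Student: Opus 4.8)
The plan is to deduce both estimates from the single \emph{pointwise} bound
\[
\|A-\bar A_{w,R}\|_{L^\infty(B(w,R))}\lesssim_{n,\kappa}\mathfrak I_{\omega_A}(R)\qquad\text{for every ball }B(w,R).
\]
Indeed, for any set $E$ of positive finite measure and any $p\ge1$ one has $\big(\avint_E|f|^p\big)^{1/p}\le\|f\|_{L^\infty(E)}$, so the bound above immediately yields \eqref{eq:sup_aux_1} (the factor $p\ge1$ being generous), and \eqref{eq:sup_aux_2} follows from it after one triangle inequality that compares the average of $A$ over $B(x,r)$ with its average over the much larger ball $B(x,2^{j+2}r)$. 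Throughout, the only properties of $\omega_A$ I use are that it is $\kappa$-doubling in the sense of \eqref{eq:dini1_new}, that $\omega_A(\rho)\le\kappa\,\mathfrak I_{\omega_A}(\rho)$ by \eqref{eq:omega<dini}, and that $\sum_{k\ge0}\omega_A(2^{-k}\rho)\lesssim_\kappa\mathfrak I_{\omega_A}(\rho)$ by \eqref{eq:mod_cont_sum}; from the first two one also gets, for every $\rho>0$, that $\mathfrak I_{\omega_A}(2\rho)=\mathfrak I_{\omega_A}(\rho)+\int_\rho^{2\rho}\omega_A(t)\,\tfrac{dt}{t}\le(1+\kappa^2\log2)\,\mathfrak I_{\omega_A}(\rho)$, i.e. $\mathfrak I_{\omega_A}$ is $C_\kappa$-doubling.

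First I would establish the oscillation estimate at a point. If $z\in\Rn1$ is a Lebesgue point of $A$ (a.e. $z$ is one, since $a_{ij}\in L^1_{\loc}$) and $\rho>0$, then $\bar A_{z,2^{-k}\rho}\to A(z)$, so telescoping gives
\[
A(z)-\bar A_{z,\rho}=\sum_{k\ge0}\big(\bar A_{z,2^{-k-1}\rho}-\bar A_{z,2^{-k}\rho}\big),
\]
and since each increment is bounded by $2^{n+1}\avint_{B(z,2^{-k}\rho)}|A-\bar A_{z,2^{-k}\rho}|\le 2^{n+1}\omega_A(2^{-k}\rho)$, we obtain $|A(z)-\bar A_{z,\rho}|\le2^{n+1}\sum_{k\ge0}\omega_A(2^{-k}\rho)\lesssim_{n,\kappa}\mathfrak I_{\omega_A}(\rho)$. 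Next I would prove a comparison of averages over nested balls: if $B(x_1,r_1)\subset B(x_2,r_2)$ with $r_1\le r_2$, then, setting $B'=B(x_1,2r_2)\supset B(x_2,r_2)$ and telescoping the averages over $B(x_1,2^ir_1)$ up to scale $2r_2$ (each increment bounded by $2^{n+1}$ times $\omega_A$ at the larger radius), the resulting sum of $\omega_A$'s collapses into $\mathfrak I_{\omega_A}(2r_2)\lesssim_\kappa\mathfrak I_{\omega_A}(r_2)$ by \eqref{eq:mod_cont_sum}, while $|\bar A_{x_1,2r_2}-\bar A_{x_2,r_2}|\le\tfrac{|B(x_1,2r_2)|}{|B(x_2,r_2)|}\,\omega_A(2r_2)=2^{n+1}\omega_A(2r_2)\lesssim_{n,\kappa}\mathfrak I_{\omega_A}(r_2)$; hence $|\bar A_{x_1,r_1}-\bar A_{x_2,r_2}|\lesssim_{n,\kappa}\mathfrak I_{\omega_A}(r_2)$.

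Combining the two: for a.e. $z\in B(w,R)$, writing $A(z)-\bar A_{w,R}=(A(z)-\bar A_{z,R})+(\bar A_{z,R}-\bar A_{w,R})$ and using that $B(z,R),B(w,R)\subset B(w,2R)$, the first step bounds the first term by $\mathfrak I_{\omega_A}(R)$, and the second step applied to $B(z,R)\subset B(w,2R)$ and to $B(w,R)\subset B(w,2R)$ bounds $|\bar A_{z,R}-\bar A_{w,R}|$ by $\mathfrak I_{\omega_A}(2R)\lesssim_\kappa\mathfrak I_{\omega_A}(R)$; thus $|A(z)-\bar A_{w,R}|\lesssim_{n,\kappa}\mathfrak I_{\omega_A}(R)$, which is the displayed pointwise bound. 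Then $\big(\avint_{B(x,\rho)}|A-\bar A_{x,\rho}|^p\big)^{1/p}\le\|A-\bar A_{x,\rho}\|_{L^\infty(B(x,\rho))}\lesssim_{n,\kappa}\mathfrak I_{\omega_A}(\rho)\le\mathfrak I_{\omega_A}(r)\le p\,\mathfrak I_{\omega_A}(r)$, and taking the supremum over $0<\rho\le r$ and $x$ gives \eqref{eq:sup_aux_1}. For \eqref{eq:sup_aux_2}, if $z\in\mathcal C_j$ then $z\in B(x,2^{j+2}r)$ and $B(x,r)\subset B(x,2^{j+2}r)$, so the pointwise bound applied in $B(x,2^{j+2}r)$ together with the nested–average comparison gives $|A(z)-\bar A_{x,r}|\le|A(z)-\bar A_{x,2^{j+2}r}|+|\bar A_{x,2^{j+2}r}-\bar A_{x,r}|\lesssim_{n,\kappa}\mathfrak I_{\omega_A}(2^{j+2}r)\lesssim_\kappa\mathfrak I_{\omega_A}(2^jr)$ by the $C_\kappa$-doubling of $\mathfrak I_{\omega_A}$; the normalized $L^p$-average over $\mathcal C_j$ then yields \eqref{eq:sup_aux_2}.

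The only genuine difficulty is the bookkeeping forced by the fact that $\omega_A$ need not be monotone: a sum of $\omega_A$'s over a long dyadic range of scales cannot be estimated by the largest term times the number of scales, so every such sum has to be collapsed into the Dini integral $\mathfrak I_{\omega_A}$ via \eqref{eq:mod_cont_sum}; and in \eqref{eq:sup_aux_2} one must replace $\mathfrak I_{\omega_A}(2^{j+2}r)$ by $\mathfrak I_{\omega_A}(2^jr)$ using the $j$-independent doubling of $\mathfrak I_{\omega_A}$, so that the implicit constants do not deteriorate as $j\to\infty$. Everything else is routine volume-ratio estimates together with the Lebesgue differentiation theorem.
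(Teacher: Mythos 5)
Your argument is correct, and it takes a genuinely different route from the paper's. The paper first invokes the John--Nirenberg inequality to reduce \eqref{eq:sup_aux_1} to the $L^1$ oscillation bound $\sup_{\rho\le r}\sup_x\avint_{B(x,\rho)}|A-\bar A_{x,\rho}|$ (which is where the factor $p$ comes from), and then telescopes that $L^1$ quantity over dyadic scales; for \eqref{eq:sup_aux_2} it covers the annulus $\mathcal C_j$ with a bounded family of balls of radius $\sim 2^jr$, applies \eqref{eq:sup_aux_1} on each, and chains the averages back to $\bar A_{x,r}$ along a sequence of overlapping balls on a segment. You instead prove the stronger \emph{pointwise} estimate $\|A-\bar A_{w,R}\|_{L^\infty(B(w,R))}\lesssim_{n,\kappa}\mathfrak I_{\omega_A}(R)$ via Lebesgue differentiation and telescoping, which is a self-contained derivation of the fact (which the paper cites to \cite{HwK20}) that a $\DMO_s$ function agrees a.e.\ with a continuous representative of modulus $\mathfrak I_{\omega_A}$; once this $L^\infty$ bound is in hand both \eqref{eq:sup_aux_1} and \eqref{eq:sup_aux_2} are immediate, the factor $p$ is vacuous, and the covering-and-chaining for the annulus collapses to a single nested-average comparison $|\bar A_{x,2^{j+2}r}-\bar A_{x,r}|$. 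Both arguments deal with the non-monotonicity of $\omega_A$ the same way — collapsing dyadic sums into $\mathfrak I_{\omega_A}$ via \eqref{eq:mod_cont_sum} — and you are right to flag the $j$-independent doubling of $\mathfrak I_{\omega_A}$ as the point that keeps the constants in \eqref{eq:sup_aux_2} uniform. The only thing the paper's route buys is that John--Nirenberg operates directly from the local BMO bound without selecting a pointwise representative; your route yields a stronger conclusion ($L^\infty$ rather than $L^p$) with a shorter proof of part (2), at the modest cost of invoking the Lebesgue differentiation theorem.
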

\begin{proof}
	By the John-Nirenberg inequality (see for instance \cite[p. 144]{St93}), it holds
	\[
		\sup_{0< \rho\leq r}\sup_{x\in \Rn1}	\biggl(\avint_{B(x,\rho)}|A(z)-\bar A_{x,\rho}|^p\, dz\biggr)^{1/p}\lesssim p \sup_{0< \rho\leq r}\sup_{x\in \Rn1}\avint_{B(x,\rho)}|A(z)-\bar A_{x,\rho}|\, dz.
	\]
	Let $x\in \Rn1$, $r>0$, and $\rho\in [0,r]$. We denote by $N$ the positive integer such that $2^N\rho\leq r<2^{N+1}\rho$.
	Then
	\begin{equation*}
		\begin{split}
			&\avint_{B(x,\rho)}|A(z)-\bar A_{x,\rho}|\, dz\leq 2 \,\avint_{B(x,\rho)}|A(z)-\bar A_{x,r}|\, dz\\
			&\qquad\qquad\lesssim 2 \,\avint_{B(x,\rho)}|A(z)-\bar A_{x,2\rho}|\, dz + \sum_{j=2}^{N+1} |\bar A_{x,2^j\rho}-\bar A_{x,2^{j-1}\rho}| + |\bar A_{x,2^{N+1}\rho}-\bar A_{x,r}|\\
			&\qquad\qquad\lesssim \sum_{j=1}^{N+1}\omega_A(2^j\rho)	\lesssim \int_0^{2^{N+1}\rho}\omega_A(t)\, \frac{dt}{t}\leq \int_0^{2r}\omega_A(t)\, \frac{dt}{t}\lesssim \mathfrak I_{\omega_A}(r),
		\end{split}
	\end{equation*}
	where the last bound is a consequence of the doubling property of $\omega_A$.
	Thus, taking the supremum for $\rho\in [0,r)$, we obtain \eqref{eq:sup_aux_1}.
	
	Let us now prove \eqref{eq:sup_aux_2}. Let $B^j_k$, $k=1,\ldots, M_n$, be a collection of balls of radius $\tfrac{5}{4}2^j r$ which covers $\mathcal C_j$, for a dimensional constant $M_n>1$. Fix a ball $\widetilde B$ in this family, denote by $\tilde x$ its center, and define $L\coloneqq \bigl\{t x+ (1-t)\tilde x:t\in [0,1]\bigr\}$.
	There exists a sequence of balls $B_0,\ldots, B_j$ centered at $L$ such that $B_0=B(x,r)$, $B_j=\widetilde B$, $r(B_k)\approx 2^k r$, and $ B_{k+1}\cap  B_k\neq \varnothing$.
	
	Moreover, for every $k=1,\ldots, M_n$, there exists a ball $B'_k$ centered at $L\cap  B_k\cap  B_{k+1}$ such that $r(B'_k)\approx 2^kr$ and $B_k\cup B_{k+1}\subset B'_k$. Hence, if we denote $\bar A_{B_k}\coloneqq \avint_{B_k}A$, it holds
	\begin{equation}\label{eq:bound_chain}
		\begin{split}
			\bigl|\bar A_{B_k}- \bar A_{B_{k+1}}\bigr|& \leq \bigl|\bar A_{B_k}- \bar A_{B'_{k}}\bigr| +  \bigl|\bar A_{B_{k+1}}- \bar A_{B'_{k}}\bigr| \\
			&\leq \avint_{B_k}|A(z)-\bar A_{B'_{k}}|\, dz + \avint_{B_{k+1}}|A(z)-\bar A_{B'_{k}}|\, dz\\
			&\lesssim \avint_{B'_k}|A(z)-\bar A_{B'_k}|\, dz \lesssim \omega_A(r(B'_k))\lesssim \omega_A(2^kr),
		\end{split}
	\end{equation}
	where the last bound follows from the doubling property of $\omega_A$.
	Thus, 
	\begin{equation}\label{eq:diff_av_triangle_chain}
		\bigl|\bar A_{B(x,r)}-\bar A_{\widetilde B}\bigr|\leq \sum_{k=0}^j\bigl|\bar A_{B_{k+1}}-\bar A_{B_k}\bigr|\overset{\eqref{eq:bound_chain}}{\lesssim} \sum_{k=0}^j\omega_A(2^kr)\overset{\eqref{eq:mod_cont_sum}}{\lesssim} \mathfrak I_{\omega_A}(2^j r),
	\end{equation}
and so
	\begin{equation}\label{eq:lemma_estim_avint_annuli}
		\begin{split}
			&\biggl(\avint_{\mathcal C_j}|A(z)-\bar A_{x,r}|^p\, dz\biggr)^{1/p}\\
			&\qquad \leq	 \sum_{k=0}^{M_n}\biggl(\avint_{B^j_k}|A(z)-\bar A_{B^j_k}|^p\, dz\biggr)^{1/p} +  \sum_{k=0}^{M_n} \bigl|\bar A_{B(x,r)}-\bar A_{B^j_k}\bigr|\\
			&\qquad \overset{\eqref{eq:diff_av_triangle_chain}}{\lesssim_n} \sum_{k=0}^{M_n}\biggl(\avint_{B^j_k}|A(z)-\bar A_{B^j_k}|^p\, dz\biggr)^{1/p} +  \mathfrak I_{\omega_A}(2^j r).
		\end{split}
	\end{equation}
	Observe that, for $k=1,\ldots, M_n$, if we apply \eqref{eq:sup_aux_1} we have
	\begin{align}
		\biggl(\avint_{B^j_k}\bigl|A(z)-\bar A_{B^j_k}\bigr|^p\, dz\biggr)^{1/p} &\lesssim p\,\mathfrak I_{\omega_A}\bigl( r(B^j_k)\bigr) \lesssim p\, \mathfrak I_{\omega_A}(2^j r),\label{eq:lemma_estim_avint_annuli2}
	\end{align}
where in the last inequality we used the doubling property of $\mathfrak I_{\omega_A}$.	Thus, by \eqref{eq:lemma_estim_avint_annuli} and \eqref{eq:lemma_estim_avint_annuli2}  we obtain \eqref{eq:sup_aux_2} and conclude the proof of the lemma.
\end{proof}
\vvv

\subsection{The three-step perturbations}\label{subsec:three-steps}
{An important component of our method  is the comparison of $\nabla_1\Gamma_A$ to the gradient of the fundamental solution associated with the averaged matrix. This is what we  prove in the next lemma.}
\begin{lemma}\label{lem:main_pw_estimate}
Let  $A$ be a uniformly elliptic matrix in $\Rn1$, $n\geq 2$, satisfying $A\in\DMO_s\cap \DMO_\ell$.
For $R_0>0$,  there exists $C=C(n, \Lambda, R_0)>0$ such that, for $x,  y \in \Rn1$ such that $0<|x-y|< R<R_0 $ , and
\[
	\mathcal K^1_{\Theta}(x,y) \coloneqq \nabla_1\Gamma_A(x,y)- \nabla_1\Theta\bigl(x,y; \bar A_{x,|x-y|/2}\bigr),
\]
we have
\begin{equation}\label{eq:estimate_fund_sol_average}
	|\mathcal K^1_\Theta(x,y)|\leq C \, \frac{\tau_A(r)}{r^{n}} + C \frac{\widehat \tau_A(R)}{R^n}\qquad \text{ for } r\coloneqq|x-y|/2,
\end{equation}
where  
\begin{equation*}
	\tau_A(r) \coloneqq  \mathfrak I_{\omega_A}(r) + \mathfrak L^n_{\omega_A}(r)=\int_0^r \omega_A(t)\, \frac{dt}{t} +  r^{n}\int_r^\infty \omega_A(t)\, \frac{dt}{t^{n+1}}
\end{equation*}
and
\[
	\widehat \tau_A(R)= \mathfrak I_{\omega_A}(R) + \mathfrak L^{n-1}_{\omega_A}(R) =  \int_0^R\omega_A(t)\, \frac{dt}{t} + R^{n-1}\int_R^\infty \omega_A(t) \, \frac{dt}{t^{n}}.
\]
{ In particular, if $R_0=1$, the constant $C$ only depends on ellipticity, dimension, and the Dini Mean Oscillation condition.}
\end{lemma}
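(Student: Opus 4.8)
The plan is to freeze the coefficients at the single scale $r\coloneqq|x-y|/2$, reduce $\mathcal K^1_{\Theta}(x,y)$ to the gradient at $x$ of a solution of a divergence‑form equation, and then use the perturbation identity of Lemma~\ref{lem:G-tildeG} together with the Dong--Kim regularity theory of Remark~\ref{rem:pwbound1} to control everything by multi‑scale integrals of $\omega_A$.

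First I would fix $x,y\in\Rn1$ with $0<|x-y|<R<R_0$, set $r\coloneqq|x-y|/2$ and $\bar A\coloneqq\bar A_{x,r}$, a constant uniformly elliptic matrix with $\Gamma_{\bar A}(\cdot,\cdot)=\Theta(\cdot,\cdot;\bar A)$ and $\bar A\in\DMO_s$ trivially. Then $\mathcal K^1_{\Theta}(x,y)=\nabla w(x)$, where $w(\cdot)\coloneqq\Gamma_A(\cdot,y)-\Theta(\cdot,y;\bar A)$ is, away from the pole $y$, a weak solution of $\div(A\nabla w)=\div g$ with $g\coloneqq(\bar A-A)\,\nabla_1\Theta(\cdot,y;\bar A)$; by Lemma~\ref{lem:estim_fund_sol} and Theorem~\ref{theorem:dong_kim} it is $C^1$ near $x$. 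Applying the localized Dong--Kim estimate of Remark~\ref{rem:pwbound1} at a scale comparable to $r$ (small enough, in terms of the dimensional constant $N$ there, that the enlarged ball stays inside $B(x,3r/2)$) gives
\[
|\nabla w(x)|\ \lesssim_{R_0}\ \avint_{B(x,cr)}|\nabla w|\;+\;\int_0^1\mathring\omega^{x,cr}_{g}(t)\,\frac{dt}{t},
\]
with the constant having exactly the logarithmic dependence on $R_0$ recorded in that remark. The oscillation term is handled via the product structure of $g$: on a ball $B=B(w,tr)\subset B(x,Cr)$ one writes $A-\bar A=(A-\bar A_{B})+(\bar A_{B}-\bar A)$, uses $|\nabla_1\Theta(\cdot,y;\bar A)|\lesssim r^{-n}$ and $|\nabla^2_1\Theta(\cdot,y;\bar A)|\lesssim r^{-n-1}$ near $x$ (from \eqref{eq:gradient_fund_sol_const_matrix} and \eqref{eq:estim_deriv_fund_sol}), the bound $\avint_B|A-\bar A_B|\le\omega_A(tr)$, and the chain estimate $|\bar A_B-\bar A|\lesssim\mathfrak I_{\omega_A}(r)$; integrating in $t$ and using \eqref{eq:mod_cont_sum} yields $\int_0^1\mathring\omega^{x,cr}_g(t)\,\tfrac{dt}{t}\lesssim r^{-n}\mathfrak I_{\omega_A}(r)\le\tau_A(r)/r^{n}$.

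It then remains to bound $\avint_{B(x,cr)}|\nabla w|$, and this is where Lemma~\ref{lem:G-tildeG}, applied to the pair $(A,\bar A)$, enters: for $z\in B(x,2cr)$,
\[
w(z)\;=\;-\int\nabla_2\Gamma_A(z,\zeta)\cdot\bigl(A(\zeta)-\bar A\bigr)\,\nabla_1\Theta(\zeta,y;\bar A)\,d\zeta .
\]
I would split the $\zeta$‑integral into a near‑poles part ($|\zeta-z|<r/8$ or $|\zeta-y|<r/8$), a medium part ($r/8\le|\zeta-x|<R$), and a far part ($|\zeta-x|\ge R$). On the medium part one differentiates the identity in $z$ and sets $z=x$, which is legitimate since the integrand is then bounded away from both poles; using $|\nabla_1\nabla_2\Gamma_A(x,\zeta)|\lesssim|x-\zeta|^{-n-1}$ and $|\nabla_1\Theta(\zeta,y;\bar A)|\lesssim|\zeta-y|^{-n}$ (Lemma~\ref{lem:estim_fund_sol}, \eqref{eq:estim_deriv_fund_sol}), decomposing dyadically, freezing the coefficient scale by scale, and summing with \eqref{eq:mod_cont_sum}, \eqref{eq:mod_cont_sum_2} and the annular bound \eqref{eq:sup_aux_2}, one obtains a contribution $\lesssim_{R_0}\tau_A(r)/r^n$. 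On the near‑poles part the same direct differentiation fails because $|x-\zeta|^{-n-1}$ is not locally integrable at $\zeta=x$; this is one of the delicate points, and I would remove the singular part by writing $\nabla_1\Theta(\zeta,y;\bar A)=\bigl(\nabla_1\Theta(\zeta,y;\bar A)-\nabla_1\Theta(x,y;\bar A)\bigr)+\nabla_1\Theta(x,y;\bar A)$, estimating the (regular) first summand by the dyadic sum as above, and handling the constant second summand by a principal‑value / Abel‑summation cancellation for the annular integrals of $\nabla_1\nabla_2\Gamma_A(x,\cdot)$ paired against the slowly varying matrix $\bar A_{x,|x-\zeta|}-\bar A$; this contributes $\lesssim r^{-n}\mathfrak I_{\omega_A}(r)$.

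The far part $|\zeta-x|\ge R$ is where the genuine difficulty lies, and where the term $R^{-n}\widehat\tau_A(R)$ is born: in contrast with the H\"older/periodic setting there are no scale‑invariant pointwise gradient estimates, and Lemma~\ref{lem:estim_fund_sol} controls $\nabla_2\Gamma_A$ pointwise only for arguments at distance $<R_0$. The plan is to observe that $x'\mapsto\int_{|\zeta-x|\ge R}\nabla_2\Gamma_A(x',\zeta)\cdot(A(\zeta)-\bar A)\,\nabla_1\Theta(\zeta,y;\bar A)\,d\zeta$ is a weak solution of $L_A(\cdot)=0$ in $B(x,R/2)$, so its gradient at $x$ is controlled by an interior estimate (Remark~\ref{rem:pwbound1} with $g=0$) in terms of its $L^1$‑mean on $B(x,cR)$, which a Caccioppoli inequality bounds by $R^{-1}\sup_{B(x,2cR)}|\cdot|$; the supremum is then estimated by combining the pointwise bounds of Lemma~\ref{lem:estim_fund_sol} on the portion $R\le|\zeta-x|<R_0$ (whose dyadic sum, after scale‑by‑scale freezing, gives $\lesssim R^{1-n}\mathfrak I_{\omega_A}(R)+\int_R^\infty\omega_A(t)\,\tfrac{dt}{t^n}$, i.e.\ $\lesssim\widehat\tau_A(R)/R^{n-1}$) with the scale‑invariant $L^2$‑decay estimates for $\nabla_2\Gamma_A(x',\cdot)$ away from $x'$ borrowed from \cite{HK07} on the tail $|\zeta-x|\ge R_0$, via Cauchy--Schwarz against the explicit decay of $\nabla_1\Theta(\cdot,y;\bar A)$ and the annular bounds \eqref{eq:sup_aux_2} for $A-\bar A$. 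Altogether the far part contributes $\lesssim_{R_0}\widehat\tau_A(R)/R^n$, and combining the three ranges with the previous step yields \eqref{eq:estimate_fund_sol_average}; the sharpening of the constants when $R_0=1$ follows since every estimate used reduces to ellipticity, dimension and the Dini mean oscillation condition in that regime. I expect the main obstacles to be (i) making the differentiation of the perturbation identity rigorous across the diagonal by a cancellation argument, and (ii) the large‑scale bookkeeping, where one must substitute the unavailable scale‑invariant gradient bounds with the $L^2$ estimates of \cite{HK07} — precisely the point left unjustified in the earlier H\"older‑coefficient treatments, and for which one pays the harmless price $R^{-n}\widehat\tau_A(R)$ that becomes small when $R=\diam(\supp\mu)$ is small.
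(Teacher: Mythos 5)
Your proposal follows essentially the same route as the paper's proof: both rest on the perturbation identity of Lemma~\ref{lem:G-tildeG}, a decomposition of the $\zeta$-integral into near / medium / far scales, the Dong--Kim regularity theory (Remark~\ref{rem:pwbound1}) near the diagonal, and interior estimates for the far part, which is exactly what produces the extra term $C\widehat\tau_A(R)R^{-n}$. You correctly isolate the far regime, where scale-invariant pointwise gradient estimates are unavailable and one must fall back on the $L^2$ Green-function bounds of \cite{HK07}, as the genuinely new difficulty and the source of this term. The organizational difference is where Dong--Kim is invoked: you apply it once to the full difference $w(\cdot)=\Gamma_A(\cdot,y)-\Theta(\cdot,y;\bar A)$, trading $|\nabla w(x)|$ for $\avint_{B(x,cr)}|\nabla w|$ plus a Dini integral of $\mathring\omega_g$, whereas the paper differentiates the representation at the outset, splits it into $I_r+II_r+III_r$, and applies Dong--Kim only to the single piece $I_{r,2}=\nabla L_A^{-1}\div\bigl(\varepsilon_{x,r}\nabla_1\Theta(x,y)\bigr)(x)$, closing its $L^1$-average via the $L^2$-boundedness of $\nabla L_A^{-1}\div$ together with \eqref{eq:sup_aux_1}. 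Your oscillation estimate for $g$ and the far-part bookkeeping are both sound and match the paper's conclusions.

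The one unresolved step is how you close $\avint_{B(x,cr)}|\nabla w|$. You propose to differentiate the representation in $z$, which reintroduces $|x-\zeta|^{-n-1}$, and then to handle the resulting near-pole singularity by subtracting $\nabla_1\Theta(x,y;\bar A)$ and appealing to a ``principal-value / Abel-summation cancellation'' against $\bar A_{x,|x-\zeta|}-\bar A$. As stated this is not carried through: for coefficients that are merely of Dini mean oscillation, making such a cancellation rigorous amounts to re-deriving the Spanne-type modulus-of-continuity bound for $\nabla L_A^{-1}\div$ that the Dong--Kim theorem already encapsulates, so you would be re-proving the very estimate you invoked in your first step. A cleaner way to close the loop, fully consistent with your own setup, is Caccioppoli: since $\div(A\nabla w)=\div g$ in $B(x,2cr)$, one has $\avint_{B(x,cr)}|\nabla w|\lesssim r^{-1}\bigl(\avint_{B(x,2cr)}|w|^2\bigr)^{1/2}+\bigl(\avint_{B(x,2cr)}|g|^2\bigr)^{1/2}$; the $g$-term is controlled by \eqref{eq:sup_aux_1}, and the $w$-term requires only pointwise bounds on $w$ itself from the \emph{undifferentiated} representation formula, which involves $\nabla_2\Gamma_A$ (locally integrable) rather than $\nabla_1\nabla_2\Gamma_A$. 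With that substitution the near-pole cancellation disappears and the argument reaches the same strength as the paper's.
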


\begin{proof}
	Let $x,y\in B(0,R)$. Let us denote by $\bar a_{ij}$ the coefficients of $\bar A_{x,r}$ and, for brevity, write $\Theta(z,y)\coloneqq \Theta(z,y;\bar A_{x,r}).$
	{ By \eqref{eq:representation-perturbation}, it holds}
	\begin{equation}\label{eq:lem_pointwise_111}
		\begin{split}
			&\nabla_1\Gamma(\cdot ,y)-\nabla_1\Theta(\cdot ,y) =\int \nabla_1\nabla_2\Gamma(\cdot ,z)\bigl(\bar A_{x,r}-  A(z)\bigr)\nabla_1\Theta(z,y)\, dz\\
			&\qquad \qquad  \eqqcolon \int \Phi(\cdot,y,z)\, dz\\
			& \qquad \qquad = \int_{B(x,r)} \Phi(\cdot,y,z)\, dz + \int_{B(y,r)}\Phi(\cdot,y,z)\, dz \\
			&\qquad \qquad \qquad+ \int_{\Rn1\setminus(B(x,r)\cup B(y,r))} \Phi(\cdot ,y,z)\, dz \eqqcolon I_r (\cdot)+ II_r (\cdot )+ III_r(\cdot).
		\end{split}
	\end{equation}	
	First, let us estimate $I_r$. Setting $\varepsilon_{x,r}(z)\coloneqq (\bar A_{x,r}- A (z))\,\chi_{B(x,r)}(z),$
we have
	\begin{equation*}
		\begin{split}
	 		I_r(w)&=\int \nabla_1\nabla_2 \Gamma (w,z)\cdot\varepsilon_{x,r}(z) \nabla_1 \Theta (z,y)\, dz \\
	 		&=	\int \nabla_1\nabla_2 \Gamma (w,z)\cdot\varepsilon_{x,r}(z) \bigl(\nabla_1 \Theta(z, y)- \nabla_1\Theta(x,y)\bigr)\bigr)\, dz\\
	 		&\qquad\quad + \int \nabla_1\nabla_2 \Gamma (w,z)\cdot\varepsilon_{x,r}(z) \nabla_1\Theta(x,y)\, dz\\
	 		&\eqqcolon I_{r,1}(w)+I_{r,2}(w).
		\end{split}
	\end{equation*}
	For $w=x$, in order to estimate $I_{r,1}(x)$, we use Lemma \ref{lem:estim_fund_sol} and write
	\begin{equation*}
		\begin{split}			
		|I_{r,1}(x)|&\lesssim \int|\varepsilon_{x,r}(z)|\frac{\bigl|\nabla_1\Theta(z,y)- \nabla_1\Theta(x,y)|}{|x-z|^{n+1}}\, dz\\
		&\lesssim \int  \frac{|\varepsilon_{x,r}(z)|}{|x-z|^{n} |x-y|^{n+1}}\, dz\approx \frac{1}{r^{n+1}}\int_{B(x,r)}\frac{|A(z)-\bar A_{x,r}|}{|x-z|^{n}}\, dz.
			\end{split}
	\end{equation*}
	Then, we estimate the last integral by splitting the domain of integration into dyadic annuli $A(x,2^{-j-1}r, 2^{-j}r)$:
	\begin{equation}\label{eq:estimIr1}
		\begin{split}
			|I_{r,1}(x)|&\lesssim \frac{1}{r^{n+1}}\sum_{j=0}^\infty \int_{A(x,2^{-j-1}r, 2^{-j}r)} \frac{\bigl|A(z)-\bar A_{x,r}\bigr|}{|x-z|^{n}}\, dz\\
			&\lesssim \frac{1}{r^{n+1}}\sum_{j=0}^\infty \frac{r}{2^{j}}\avint_{B(x,2^{-j}r)}\bigl|A(z)-\bar A_{x,r}\bigr|\, dz \\
			&\leq \frac{1}{r^{n}}\sum_{j=0}^\infty \frac{1}{2^j} \avint_{B(x,2^{-j} r)}\Bigl(\bigl|A(z)-\bar A_{x,2^{-j}r}\bigr| + |\bar A_{x,2^{-j}r} - \bar A_{x,r}\bigr|\Bigr)\, dz \\
			&\lesssim \frac{1}{r^{n}}\sum_{j=0}^\infty \frac{1}{2^j} \Biggl(\omega_A(2^{-j} r) + \sum_{k=0}^{j-1}\avint_{B(x,2^{-k}r)}\bigl|A(z)-\bar A_{x,2^{-k}r}\bigr|\, dz\Biggr)\\
			&\lesssim \frac{1}{r^{n}}\sum_{j=0}^\infty \frac{1}{2^j} \Biggl(\omega_A(2^{-j} r) + \sum_{k=0}^{j-1}\omega_A(2^{-k}r)\Biggr)
			\\
			&\lesssim \frac{1}{r^{n}}\sum_{j=0}^\infty \frac{j+1}{2^j}\omega_A(2^{-j} r)\lesssim \frac{1}{r^n}\int_0^r \omega_A(t) \frac{dt}{t},
		\end{split}
	\end{equation}
	{ where we used that $j+1\leq 2^j$ for any $j\geq 0$ integer.}
	
	The estimate of $I_{r,2}(x)$ is slightly more delicate.
	Let us denote $g\coloneqq \varepsilon_{x,r} (\cdot)\nabla_1 \Theta(x,y)$
	and observe that $L^{-1}_{A}\div g$ is a weak solution to
	\(
		L_A \bigl(L^{-1}_{A}\div g\bigr)= \div g
	\)
	in $B(x,r/3)$.
	We claim that, {for $N$ as in Remark \ref{rem:pwbound1} and  $\rho\coloneqq r/(3N+3)$,}
	\begin{equation}\label{eq:claim_lem_pw_123}
		\mathring\omega^{x,N\rho}_g(t)\lesssim \frac{\omega_A(t \rho)}{r^{n}}, \qquad 0<t<1,
	\end{equation}
	with the implicit constant depending on $n$ and the doubling parameter $C_1$.
	In order to prove \eqref{eq:claim_lem_pw_123} we first observe that, for $0<t<1$ and $\varepsilon(\cdot)\coloneqq\bar A_{x,r}-A(\cdot)$, we have
	\begin{equation*}
		\begin{split}
			&\mathring\omega^{x,N\rho}_g(t)=\sup_{w\in B(x,N\rho)}\avint_{B(w,t\rho)} \Bigl|g(z)-\avint_{B(w,t\rho)}g(u)\, du\Bigr|\, dz\\
			&=\sup_{w\in B(x,N\rho)}    \avint_{B(w,t\rho)} \left| \left(\chi_{B(x,r)}(z)\varepsilon(z) - \avint_{B(w,t\rho)}\varepsilon(u)\chi_{B(x,r)}(u) \, du \right)\cdot \nabla_1\Theta(x,y)\right|\, dz\\
			&\overset{\eqref{eq:estim_deriv_fund_sol}}{\lesssim} r^{-n}\sup_{w\in B(x,N\rho)} \avint_{B(w,t\rho)} \Bigl|\chi_{B(x,r)}(z)\varepsilon(z) -  \avint_{B(w,t\rho)}\varepsilon(u)\chi_{B(x,r)}(u)\, du\Bigr|\, dz\\
			&\eqqcolon r^{-n}\sup_{w\in B(x,N\rho)}\mathcal I(w,t\rho).
		\end{split}
	\end{equation*}
	For $w\in B(x,N\rho)$ and $t <1$ by triangle inequality we have $ B(w,t \rho)\subset B(x,r)$.
	Thus, for such $w$ and $t$ we can write
	\[
		\mathcal I(w,t\rho)= \avint_{B(w,t\rho)} \Bigl|\varepsilon(z) -  \avint_{B(w,t\rho)}\varepsilon\Bigl|\, dz=\avint_{B(w,t\rho)} \bigl|A(z) -  \bar A_{w,t\rho}\bigr|\, dz\leq \omega_A(t\rho),
	\]
	which implies \eqref{eq:claim_lem_pw_123}. Hence, by Theorem \ref{theorem:dong_kim} and {\eqref{eq:Linftyest-r} (see the end of Remark \ref{rem:pwbound1})},{
	\begin{equation}\label{eq_estim_I2r}
		\begin{split}
			|I_{r,2}(x)|&\leq \sup_{w\in B(x,2\rho)}\Bigl|\int\nabla_1\nabla_2 \Gamma(w,z)\varepsilon_{x,r}(z)\nabla_1 \Theta(x,y)\, dz\Bigr|\\
			&\lesssim \biggl(\frac{1}{\rho^{n+1}}\int_{B(x,4\rho)}\bigl|\nabla L^{-1}_{A}\div g(w)\bigr|^2 \, dw\biggr)^{1/2} + \int_0^{1}\mathring\omega^{x,N\rho}_g(t)\, \frac{dt}{t}\\
			&\overset{\eqref{eq:claim_lem_pw_123}}{\lesssim}_N \biggl(\frac{1}{\rho^{n+1}}\int_{B(x,4\rho)}\bigl|\nabla L^{-1}_{A}\div g(w)\bigr|^2 \, dw\biggr)^{1/2} + \frac{1}{r^{n}}\int_0^{r}\omega_A(t)\, \frac{dt}{t}.
		\end{split}
	\end{equation}}
	
The operator $\nabla L^{-1}_{A}\div$ is bounded from  $L^2(\mathcal L^{n+1})$ to $L^2(\mathcal L^{n+1})$ and satisfies
	\begin{equation}\label{eq:lempw21}
		\begin{split}
			\bigl\|\nabla L^{-1}_{A}\div g\bigr\|_{L^2(\mathcal L^{n+1})}&\overset{\eqref{eq:L2_boundedness_nabla_L_inv}}{\lesssim}\|g\|_{L^2(\mathcal L^{n+1})}.
		\end{split}	
	\end{equation}
By Lemma \ref{lem:estim_fund_sol} and the fact that $|x-y|=2 r$ we have that
	\begin{align}
 			\|g\|_{L^2(\mathcal L^{n+1})}&\leq\biggl(\int_{B(x,r)} |\varepsilon(z)|^2|\nabla_1 \Theta(x,y)|^2\, dz\biggr)^{1/2} \notag\\
 			&= |\nabla_1 \Theta(x,y)|\biggl(\int_{B(x,r)}\bigl|A(z)-\bar A_{x,r}\bigr|^2\, dz\biggr)^{1/2} \overset{\eqref{eq:sup_aux_1}}{\lesssim}r^{-n} r^{\frac{n+1}{2}}\, \mathfrak I_{\omega_A}(r).\label{eq:estim:F_x_L2}
 	\end{align}
 Hence,  combining \eqref{eq:estimIr1}, \eqref{eq_estim_I2r}, \eqref{eq:lempw21}, and \eqref{eq:estim:F_x_L2}, we obtain
 	\[
 		|I_{r}(x)|\lesssim r^{-n} \mathfrak I_{ \omega_A}(r).
 	\]

	\vv
	
	Let us bound $II_r(x)$. For $z\in B(y,r)$, we have $|x-z|\geq|x-y|-|z-y|>r.$ Hence, by Lemma \ref{lem:estim_fund_sol}, triangle inequality and analogous calculations to those that proved \eqref{eq:estimIr1}, we have that
	\begin{equation*}
		\begin{split}
			|II_r(x)|&\lesssim \int_{B(y,r)}\frac{|A(z)-\bar A_{x,r}|}{|x-z|^{n+1}|y-z|^{n}}\, dz\leq \frac{1}{r^{n+1}}\int_{B(y,r)}\frac{|A(z)-\bar A_{x,r}|}{|y-z|^{n}}\, dz\\
			&\leq {\frac{1}{r^{n+1}}\int_{B(y,r)}\frac{|\bar A_{x,r}-\bar A_{y,4r}|}{|y-z|^{n}} \, dz}+ \frac{1}{r^{n+1}}\int_{B(y,4r)}\frac{|A(z)-\bar A_{y,4r}|}{|y-z|^{n}}\, dz\\
				&\overset{\eqref{eq:estimIr1}}{\lesssim} \frac{1}{r^{n+1}}\Biggl(\int_{B(y,r)}\frac{1}{|z-y|^{n}}\,\avint_{B(x,r)}|A(w)-\bar A_{y,4r}|\,  dw\, dz \Biggr) + \frac{1}{r^{n}}\int_0^{4r}\omega_A(t)\, \frac{dt}{t},\\
				&\lesssim  \frac{1}{r^{n}}\int_0^{4r}\omega_A(t)\, \frac{dt}{t} + \frac{\omega_A(4r)}{r^{n}}\lesssim  \frac{1}{r^{n}}\int_0^{r}\omega_A(t)\, \frac{dt}{t} + \frac{\omega_A(r)}{r^{n}} \overset{\eqref{eq:omega<dini}}{\lesssim} r^{-n}\, \mathfrak I_{\omega_A}(r),
		\end{split}
	\end{equation*}
	where in the penultimate  inequality we used the doubling property of $\omega_A$ and $ \mathfrak I_{\omega_A}$.

	\vv
	{
	We are left with the estimate of $III_r$\footnote{This is exactly the part we mentioned in the introduction that was missing  in the justification of \cite[Lemma 2.2]{KS11} when the coefficients are not periodic (even in the H\"older continuous case).}. Let us observe that $B(x,r)\cup B(y,r)\subset B(x,4r)$. Given $j\geq 0$, we denote $\mathcal C_j\coloneqq A(x,2^j r, 2^{j+1}r)$ and we split
	\begin{equation}\label{eq:3rlem}
		\begin{split}
			III_r (x)=\int_{B(x,8r)\setminus \bigl(B(x,r)\cup B(y,r)\bigr)} \Phi(x,y,z)\, dz + \sum_{j\geq 3}\int_{\mathcal C_j}\Phi(x,y,z)\, dz
				\eqqcolon \mathcal I^0_r + \mathcal I^1_r,
		\end{split}
	\end{equation}
	where $\Phi$ is the function defined in \eqref{eq:lem_pointwise_111}.
	The term $\mathcal I^0_r$ can be readily estimated using Lemma \ref{lem:estim_fund_sol} and the fact that, for $z\in B(x,4r)\setminus \bigl(B(x,r)\cup B(y,r)\bigr)$, $|x-z|>r$ and $|y-z|>r$. In particular,
	\begin{equation}\label{eq:estim_I_0_final}
		\begin{split}
			|\mathcal I^0_r|&\lesssim \int_{B(x,8r)\setminus \bigl(B(x,r)\cup B(y,r)\bigr)}\frac{\bigl|A(z)- \bar A_{x,r}\bigr|}{|x-z|^{n+1}|y-z|^{n}}\, dz\\
		  &\leq r^{-2n-1}\int_{B(x,8r)}\bigl|A(z)- \bar A_{x,r}\bigr|\, dz\\
			&\lesssim \frac{1}{r^{n}}\avint_{B(x,8r)}\bigl|A(z)- \bar A_{x,8r}\bigr|\, dz + \frac{1}{r^{n}}\bigl|\bar A_{x,8r}- \bar A_{x,r}\bigr|\, dz\\
			&\lesssim \frac{\omega_A(8r)}{r^{n}}\lesssim \frac{\omega_A(r)}{r^{n}}\overset{\eqref{eq:omega<dini}}{\lesssim} r^{-n}\, \mathfrak I_{\omega_A}(r).
		\end{split}
	\end{equation}
	
	{
		For $w\in \Rn1$ we denote
		\[
			v_j(w)\coloneqq \int_{\mathcal C_j} \nabla_2\Gamma(w,z)\bigl(\bar A_{x,r}-A(z)\bigr)\nabla_1 \Theta(z,y)\, dz
		\]
		so that
		\[
			\mathcal I^1_r=\sum_{j\geq 3}\nabla v_j(x)=\sum_{j=3}^{j_0}\nabla v_j + \sum_{j\geq j_0+1}\nabla v_j\eqqcolon \mathcal I^{1,1}_r + \mathcal I^{1,2}_r,
		\]
		where $j_0$ is such that $2^{j_0-3}r\leq R< 2^{j_0-2}r$. Remark that $v_j$ is a weak solution to $L_A v_j=0$ in $B(x,2^jr)$ for all $j\geq 3.$
		
		Let us estimate $\mathcal I^{1,1}_r$. For $j\in \{3,\ldots,j_0\}$\footnote{The method for $j\in \{3,\ldots,j_0 \}$ could be significantly  simplified using  the pointwise estimates for $\nabla_1 \nabla_2 \Gamma_A$, but as we will repeat this argument to handle the case $j >j_0$, we decided to use it  for both. Here we only use the $\DMO_s$ condition to bound $|\nabla v_j(x)|$ by its $L^2$-average on the ball, while	{the last inequality of \eqref{eq:nabla_v_j_1}} holds for solutions of elliptic equations with $L^\infty$ coefficients without any additional regularity assumption.},
		 by {\eqref{eq:Linftyest-r} we have}
		\begin{equation}\label{eq:nabla_v_j_1}
			\begin{split}
				|\nabla v_j(x)|&\leq \sup_{w\in B(x,2^{j-4}r)}|\nabla v_j(w)|\lesssim_R \Bigl(\avint_{B(x,2^{j-3}r)} |\nabla v_j|^2\Bigr)^{1/2}\\
				&\lesssim \frac{1}{2^jr}\Bigl(\avint_{B(x,2^{j-2}r)} | v_j|^2\Bigr)^{1/2},
			\end{split}
		\end{equation}
		where the last bound follows from Caccioppoli inequality.
		

		For $z\in \mathcal C_j$, we have $|x-z|\approx|y-z|\approx 2^j r$ and so \eqref{eq:estim_deriv_fund_sol} implies that $|\nabla_1\Theta(z,y)|\lesssim |z-y|^{-n}\approx (2^jr)^{-n}$. If $w\in B(x,2^{j-2}r)$, by {Cauchy-Schwarz} inequality and the latter estimate, we have
		\begin{equation}\label{eq:vj1}
			\begin{split}
				|v_j(w)|&=\Bigl|\int_{\mathcal C_j}\nabla_2 \Gamma (w,z)\bigl(\bar A_{x,r}-A(z)\bigr)\nabla_1\Theta (z,y)\, dz\Bigr|\\
					&\lesssim \frac{1}{(2^j r)^n}\Bigl(\int_{\mathcal C_j}|\nabla_2\Gamma(w,z)|^2\, dz\Bigr)^{1/2}\Bigl(\int_{\mathcal C_j}\bigl|\bar A(z)-A_{x,r}\bigr|^{2}\, dz\Bigr)^{1/2}.
			\end{split}
		\end{equation}

		
		Thus, since $|z-w|<2^{j+2}r$ for $z\in \mathcal C_j$, {by Caccioppoli inequality and Lemma \ref{lem:estim_fund_sol} we have}
		\begin{equation}\label{eq:vj2}
			\begin{split}
				\Bigl(\int_{\mathcal C_j}|\nabla_2\Gamma(w,z)|^2\, dz\Bigr)^{1/2}&\leq\Bigl(\int_{B(w,2^{j+2}r)}|\nabla_2\Gamma(w,z)|^2\, dz\Bigr)^{1/2}\\
				&\lesssim \frac{1}{2^j r}\Bigl(\int_{A(w,2^{j+2}r,2^{j+3}r)}|\Gamma(w,z)|^2\, dz\Bigr)^{1/2}\\
				&\overset{}{\lesssim} \frac{|B(w,2^{j+3}r)|^{1/2}}{(2^j r)^n}\lesssim(2^{j}r)^{\frac{(1-n)}{2}}.
			\end{split}
		\end{equation}
{	Inequality \eqref{eq:sup_aux_2} yields
		\begin{equation}\label{eq:telescoping_an_1}
			\begin{split}
				\Bigl(\int_{\mathcal C_j}\bigl|A(z)-\bar A_{x,r}\bigr|^{2}\, dz\Bigr)^{1/2}&=|\mathcal C_j|^{1/2}\Bigl(\avint_{\mathcal C_j}\bigl|A(z)-\bar A_{x,r}\bigr|^{2}\, dz\Bigr)^{1/2}\\
				& \\
				&\lesssim (2^{j+1}r)^{\frac{(n+1)}{2}}\mathfrak I_{\omega_A}(2^jr).
			\end{split}
		\end{equation}
		}
	{ In view of \eqref{eq:vj1}, \eqref{eq:vj2}, and \eqref{eq:telescoping_an_1} we obtain
		\begin{equation}\label{eq:vj}
			|v_j(x)|\lesssim_n(2^j r)^{1-n}\mathfrak I_{\omega_A}(2^jr),
		\end{equation}
		which, by \eqref{eq:nabla_v_j_1}, implies
		\begin{equation}
			|\nabla v_j(x)|\lesssim_{R, n}(2^j r)^{-n}\mathfrak I_{\omega_A}(2^jr),
		\end{equation}
		and hence
		\begin{equation}\label{eq:estim_I_1_final}
			\begin{split}
				|\mathcal I^{1,1}_r|&\leq \sum_{j=3}^{j_0} |\nabla v_j(x)|\lesssim_{R, n} \sum_{j=3}^{j_0} (2^j r)^{-n}\mathfrak I_{\omega_A}(2^jr)\leq \sum_{j=1}^{\infty} (2^j r)^{-n}\mathfrak I_{\omega_A}(2^jr)\\
				&\overset{\eqref{eq:mod_cont_sum_2}}{\lesssim} \int_r^{\infty} \mathfrak I_{\omega_A}(t)\, \frac{dt}{t^{n+1}}\overset{\eqref{eq:L_I_theta}}{=}\frac{1}{nr^n}\int_0^r \omega_A(t)\, \frac{dt}{t} + \frac{1}{n}\int_r^\infty \omega_A(t)\, \frac{dt}{t^{n+1}}\\
				&= \frac{1}{n r^n} \, \left( \mathfrak I_{\omega_A}(r)+\mathfrak L^{n}_{\omega_A}(r) \right).
			\end{split}
		\end{equation}
	}
	
		We are left with the estimate of $\mathcal I^{1,2}_r$. For  $j\geq j_0 + 1$, the rescaled version of \eqref{eq:DK217} and Cacciopoli inequality give
		\begin{equation}\label{eq:estim_nabla_v_j_second_case}
			\begin{split}
				|\nabla v_j(x)|&\lesssim_R \Bigl(\avint_{B(x,R)}|\nabla v_j|^2\Bigr)^{1/2}\lesssim \frac{1}{R}\Bigl(\avint_{B(x,2R)}|v_j(w)|^2\, dw\Bigr)^{1/2}.
			\end{split}
		\end{equation}
		If $z\in \mathcal C_j$ and $w\in B(x,2R)$, since $j\geq j_0+1$ and $R \leq 2^{j_0-2}r$, it holds that $|z-w|\approx 2^jr$, and, 
		  arguing as above, we can prove \eqref{eq:vj}. Therefore,
		\begin{equation*}
			\begin{split}
				|\nabla v_j(x)|\overset{\eqref{eq:vj}}{\lesssim} \frac{(2^j r)^{1-n}}{R}\mathfrak I_{\omega_A}(2^jr),
			\end{split}
		\end{equation*}
		which infers that
		{
		\begin{equation}\label{eq:estim_mathcal_J_0}
			\begin{split}
				\sum_{j\geq j_0+1}|\nabla v_j(x)|&\lesssim \sum_{j\geq j_0+1}\frac{(2^j r)^{1-n}}{R} \mathfrak I_{\omega_A}(2^jr)\overset{\eqref{eq:mod_cont_sum_2}}{\lesssim} \frac{1}{R}\int_R^{\infty}\mathfrak I_{\omega_A}(t)\, \frac{dt}{t^n}\\
				&\overset{\eqref{eq:L_I_theta}}{\approx_n}\frac{1}{R^n}\int_0^R\omega_A(t)\, \frac{dt}{t} + \frac{1}{R}\int^\infty_R \omega_A(t)\, \frac{dt}{t^n}\\
				&\approx_n R^{-n}\bigl(\mathfrak I_{\omega_A}(R) + \mathfrak L^{n-1}_{\omega_A}(R)\bigr).
			\end{split}
		\end{equation}
	}
}
	\vv
Therefore, combining \eqref{eq:estim_I_0_final}, \eqref{eq:estim_I_1_final}, and \eqref{eq:estim_mathcal_J_0}, we obtain 
	\begin{equation*}
		\begin{split} 
			|III_r(x)|\leq |\mathcal I^0_r| +  |\mathcal I^1_r| &
			\lesssim r^{-n}\, \left( \mathfrak I_{\omega_A}(r)+\mathfrak L^{n}_{\omega_A}(r) \right)+ R^{-n} \left(\mathfrak I_{\omega_A}(R) + \mathfrak L^{n-1}_{\omega_A}(R) \right).
		\end{split}
	\end{equation*}
Gathering the bounds for  $I_r(x)$, $II_r(x)$, and $III_r(x)$, we conclude \eqref{eq:estimate_fund_sol_average}.
}
\end{proof}

\vv

\begin{lemma}\label{lem:lem_1_prep_main_lemma}
	Let $A$ be a uniformly elliptic matrix in $\Rn1$, $n\geq 2$, satisfying $A\in \DMO_s$. Let $0<\delta< r<1$ and $x \in \Rn1$, and assume that  $\Omega_{x,\delta} \subset \Rn1$ is a Borel set such that for some constant $M\geq 1$,
	$$
	B(x,\delta) \subset \Omega_{x,\delta} \subset B(x,M \delta).
	$$ If we denote  $\bar A_{\Omega_{x,\delta}}\coloneqq\avint_{\Omega_{x,\delta}} A$, then
	\begin{equation}\label{eq:estim_diff_averages}
		\bigl|\bar A_{x,r/2}-\bar A_{\Omega_{x,\delta}}\bigr|\lesssim_{M}  \int_{\delta}^{r}\omega_A(t) \,\frac{dt}{t}\leq \tau_A(r)
	\end{equation}
	{
	and 
	\begin{equation}\label{eq:estim_diff_averages_2}
		\bigl|(\bar A_{x,r/2})_s-(\bar A_{\Omega_{x,\delta}})_s\bigr|\lesssim_{M}  \int_{\delta}^{r}\omega_A(t) \,\frac{dt}{t}\leq \tau_A(r).
	\end{equation}
	}
	Moreover, for 
	\begin{align*}
		\mathcal K^2_{\Theta}(x,y) &\coloneqq \nabla_1 \Theta (x-y,0;\bar A_{x,r/2})-\nabla_1 \Theta(x-y,0;\bar A_{x,\delta/2} )
	\end{align*}
	and all $z\in \mathbb R^{n+1}\setminus\{0\}$, it holds
	\begin{equation}\label{eq:estim_diff_kernels}
		\bigl|\mathcal K^2_{\Theta}(z,0)\bigr|\lesssim_{n,\Lambda, M} \frac{1}{|z|^{n}}\int_{\delta}^{r} \omega_A(t) \,\frac{dt}{t}.
	\end{equation}
\end{lemma}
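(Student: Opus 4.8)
The plan is to prove \eqref{eq:estim_diff_averages} by a telescoping argument over concentric balls, to obtain \eqref{eq:estim_diff_averages_2} for free by symmetrization, and to derive \eqref{eq:estim_diff_kernels} from the explicit formula \eqref{eq:gradient_fund_sol_const_matrix} together with the mean value theorem.

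For \eqref{eq:estim_diff_averages}, I would first compare $\bar A_{\Omega_{x,\delta}}$ with $\bar A_{x,M\delta}$: since $B(x,\delta)\subset\Omega_{x,\delta}\subset B(x,M\delta)$, the volume ratio $|B(x,M\delta)|/|\Omega_{x,\delta}|$ is $\lesssim M^{n+1}$, so
\[
\bigl|\bar A_{\Omega_{x,\delta}}-\bar A_{x,M\delta}\bigr|\leq\avint_{\Omega_{x,\delta}}\bigl|A-\bar A_{x,M\delta}\bigr|\lesssim_M\avint_{B(x,M\delta)}\bigl|A-\bar A_{x,M\delta}\bigr|\leq\omega_A(M\delta)\lesssim_M\omega_A(\delta),
\]
the last step using the doubling property \eqref{eq:dini1_new} of $\omega_A$. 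Then, picking $N$ with $2^N\delta\leq r<2^{N+1}\delta$ and running the chain of balls $B(x,\delta)$, $B(x,2\delta)$, $\dots$, $B(x,2^N\delta)$, $B(x,r/2)$ — where, for two balls of comparable radii, one controls the difference of the averages through the larger ball, giving $|\bar A_{x,2^j\delta}-\bar A_{x,2^{j+1}\delta}|\lesssim\omega_A(2^{j+1}\delta)$ — I would sum over $j$ and invoke the comparison between dyadic sums of a doubling function and its Dini integral (as in \eqref{eq:mod_cont_sum_iteration}, \eqref{eq:mod_cont_sum}, and \eqref{eq:omega<dini}) to conclude
\[
\bigl|\bar A_{\Omega_{x,\delta}}-\bar A_{x,r/2}\bigr|\lesssim_M\sum_{j=0}^{N+1}\omega_A(2^j\delta)\lesssim\int_\delta^r\omega_A(t)\,\frac{dt}{t}\leq\int_0^r\omega_A(t)\,\frac{dt}{t}=\mathfrak I_{\omega_A}(r)\leq\tau_A(r).
\]

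Estimate \eqref{eq:estim_diff_averages_2} then follows immediately, since the symmetrization $M\mapsto M_s$ commutes with averaging and is a contraction for the norm $|A|=\max_{i,j}|a_{ij}|$ fixed in Section \ref{section:preliminaries_and_notation}, so that $(\bar A_{x,r/2})_s-(\bar A_{\Omega_{x,\delta}})_s=(\bar A_{x,r/2}-\bar A_{\Omega_{x,\delta}})_s$ has norm at most $|\bar A_{x,r/2}-\bar A_{\Omega_{x,\delta}}|$. For \eqref{eq:estim_diff_kernels}, I would recall from \eqref{eq:theta_sym} that $\nabla_1\Theta(z,0;A_0)$ depends on $A_0$ only through $A_{0,s}$, and use the explicit expression \eqref{eq:gradient_fund_sol_const_matrix}. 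Since averaging preserves the ellipticity bounds \eqref{eq:ellip1_scalar}--\eqref{eq:ellip2_scalar}, every $(\bar A_{x,\rho})_s$ is symmetric with $\langle(\bar A_{x,\rho})_s\xi,\xi\rangle\geq\Lambda^{-1}|\xi|^2$ and $|(\bar A_{x,\rho})_s|\leq\Lambda$; hence the segment joining $(\bar A_{x,r/2})_s$ and $(\bar A_{x,\delta/2})_s$ stays inside a fixed compact set $\mathcal E_\Lambda$ of such matrices, on which $\det B$ is bounded below, $B^{-1}$ is bounded, and $\langle B^{-1}z,z\rangle\gtrsim_\Lambda|z|^2$. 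Consequently $B\mapsto\nabla_1\Theta(z,0;B)$ is $C^1$ on $\mathcal E_\Lambda$ with $\bigl|\partial_B\nabla_1\Theta(z,0;B)\bigr|\lesssim_{n,\Lambda}|z|^{-n}$, the degree $-n$ homogeneity in $z$ being unaffected by differentiation in $B$. The mean value theorem, combined with the bound $|(\bar A_{x,r/2})_s-(\bar A_{x,\delta/2})_s|\lesssim\int_\delta^r\omega_A(t)\,\frac{dt}{t}$ coming from the same chaining as above (now the chain $B(x,\delta/2),B(x,\delta),\dots,B(x,r/2)$), then gives
\[
\bigl|\mathcal K^2_\Theta(z,0)\bigr|\lesssim_{n,\Lambda}\frac{1}{|z|^n}\,\bigl|(\bar A_{x,r/2})_s-(\bar A_{x,\delta/2})_s\bigr|\lesssim_{n,\Lambda,M}\frac{1}{|z|^n}\int_\delta^r\omega_A(t)\,\frac{dt}{t}.
\]

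I do not expect a genuine obstacle here. The only points requiring a little care are the endpoint bookkeeping in the telescoping sum — handled entirely by the doubling of $\omega_A$ and the sum/integral comparisons already recorded in Section \ref{section:preliminaries_and_notation} — and the verification that the segment of averaged symmetrized matrices remains uniformly elliptic with constants depending only on $\Lambda$, so that the mean value theorem produces implicit constants depending only on $n$ and $\Lambda$.
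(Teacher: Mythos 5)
Your proof of \eqref{eq:estim_diff_averages} and \eqref{eq:estim_diff_averages_2} follows essentially the paper's own argument: a dyadic telescoping chain of concentric balls controlled by $\omega_A$ and doubling, with the dyadic sum converted to the Dini integral via \eqref{eq:mod_cont_sum_iteration}--\eqref{eq:mod_cont_sum}. Your one-line derivation of \eqref{eq:estim_diff_averages_2} from \eqref{eq:estim_diff_averages} (symmetrization commutes with averaging and contracts the max-entry norm) is exactly what the paper asserts without spelling out, and it is correct.

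For \eqref{eq:estim_diff_kernels} the underlying idea is the same — the kernel is a Lipschitz function of the (symmetrized, constant) matrix on the set of elliptic matrices with bound $\Lambda$, so the difference of kernels is controlled by the difference of matrices, which in turn is controlled by the Dini integral via the same chaining — but you package it differently. The paper carries out the Lipschitz estimate component by component: it bounds $|\mathcal A_r^{-1}z-\mathcal A_\delta^{-1}z|$, then $|\sqrt{\det\mathcal A_r}-\sqrt{\det\mathcal A_\delta}|$ via Jacobi's formula, then $|\langle\mathcal A_r^{-1}z,z\rangle^{(n+1)/2}-\langle\mathcal A_\delta^{-1}z,z\rangle^{(n+1)/2}|$ via the scalar mean value theorem, and assembles these by triangle inequality. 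You instead note that the set of symmetric matrices satisfying the two-sided ellipticity bound is convex and compact, that $B\mapsto\nabla_1\Theta(z,0;B)$ is $C^1$ there with $|\partial_B\nabla_1\Theta(z,0;B)|\lesssim_{n,\Lambda}|z|^{-n}$ by homogeneity, and apply the matrix mean value inequality directly. Both are correct; yours is more compact and makes the convexity point (needed so the segment stays in the elliptic class) explicit, while the paper's version exposes exactly where each bound on $\Lambda$ is used. You also correctly note that the chain for the third estimate must run from $B(x,\delta/2)$ to $B(x,r/2)$, which matches the statement as written (the paper's proof actually uses $\bar A_{\Omega_{x,\delta}}$ in place of $\bar A_{x,\delta/2}$ — an inconsequential mismatch with its own statement that the same argument resolves).
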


\begin{proof}
	Let $N_0 \geq 1$ be such that $2^{-N_0} r \leq 	\delta < 2^{-N_0+1}r$ and  $2^{N_1-1} \leq M < 2^{N_1}$. Therefore, if $N=N_0-N_1$, we have that 
	\begin{equation*}
		\begin{split}
			\bigl|\bar A_{x,r/2} &-\bar A_{\Omega_{x,\delta}}\bigr|\leq \bigl|\bar A_{x,r}-\bar A_{x,2^{-N+1} r}\bigr| + \bigl|\bar A_{\Omega_{x,\delta}}-\bar A_{x,2^{-N+1} r} \bigr|\\
			&\lesssim_M  \sum_{j=1}^{N-2} \bigl|\bar A_{x,2^{-j}r}-\bar A_{x,2^{-j-1}r}\bigr| +\omega_A(2^{-N+1}r) \lesssim \sum_{j=1}^{N-2} \omega_A(2^{-j}r) + \omega_A(2^{-N+1}r)\\
			&= \sum_{j=1}^{N-1} \omega_A(2^{-j}r)\lesssim \int_{\delta}^{r} \omega_A(t)\, \frac{dt}{t}.
		\end{split}
	\end{equation*}
	{The bound \eqref{eq:estim_diff_averages_2} follows directly from \eqref{eq:estim_diff_averages}.}
	
	Since $A$ is uniformly elliptic with constant $\Lambda$, it is invertible and its inverse is uniformly elliptic as well with the same constant.	Moreover,  as all its eigenvalues are bounded from above by $\Lambda$ and below by $\Lambda^{-1}$, and so is its determinant $\det(A)$ (as the product of its $n+1$ eigenvalues).  The same considerations apply to $\mathcal A_r\coloneqq (\bar A_{x,r/2})_s$ and $\mathcal A_\delta\coloneqq (\bar A_{\Omega_{x,\delta}})_s$. By standard calculations we can write
	\begin{equation}\label{eq:first_estim_diff_kernels}
		\begin{split}
			&{\bigl|\nabla_1 \Theta(z,0;\bar A_{x,r/2})-\nabla_1 \Theta(z,0;{\bar A_{\Omega_{x,\delta}}})\bigr|\overset{\eqref{eq:fund_sol_const_matrix}}{=} }\bigl|\nabla_1 \Theta(z,0;\mathcal A_r)-\nabla_1 \Theta(z,0;\mathcal A_\delta)\bigr|\\
			&=\frac{1}{\omega_n}\Bigl|\frac{\mathcal A_r^{-1}z}{\sqrt{\det \mathcal A_r}\langle\mathcal A_r^{-1}z,z\rangle^{(n+1)/2}}-\frac{\mathcal A_\delta^{-1}z}{\sqrt{\det \mathcal A_\delta}\langle\mathcal A_\delta^{-1}z,z\rangle^{(n+1)/2}}\Bigr|\\
			&\lesssim_{\Lambda,n} \frac{1}{|z|^{2n+2}}\Bigl|\sqrt{\det \mathcal A_\delta}\langle\mathcal A_\delta^{-1}z,z\rangle^{(n+1)/2}\mathcal A_r^{-1}z - \sqrt{\det \mathcal A_r}\langle\mathcal A_r^{-1}z,z\rangle^{(n+1)/2}\mathcal A_\delta^{-1}z\Bigr|.
		\end{split}
	\end{equation}
	We remark that by elementary calculations and using the ellipticity of $\mathcal A_\delta$ and $\mathcal A_r$ we have
	\begin{equation}\label{eq:diff_inv_rQ}
		\begin{split}
			|\mathcal A^{-1}_\delta z-\mathcal A^{-1}_r z|& =|\mathcal A_\delta^{-1}\mathcal A_r\mathcal A_r^{-1}z-\mathcal A_\delta^{-1}\mathcal A_\delta\mathcal A_r^{-1}z|\\
			&=\bigl|\mathcal A_\delta^{-1}(\mathcal A_r-\mathcal A_\delta)\mathcal A_r^{-1}z\bigr|\lesssim_\Lambda |\mathcal A_r-\mathcal A_\delta||z|.
		\end{split}
	\end{equation}
	The mean value theorem implies that, for $0<a<b$  we have
	\begin{equation*}
		\begin{split}
			|a^{(n+1)/2}-b^{(n+1)/2}|&\leq \frac{n+1}{2}\max_{t\in [a,b]} t^{(n-1)/2} (b-a)= \frac{n+1}{2} b^{(n-1)/2} (b-a).
		\end{split}
	\end{equation*}
	The symmetric inequality holds for $0<b\leq a$ and, for the choices $a= \langle\mathcal A_r^{-1}z,z\rangle^{(n+1)/2}$ and $b= \langle\mathcal A_\delta^{-1}z,z\rangle^{(n+1)/2}$, gives
	\begin{equation}\label{eq:mvt_bound_diff_lem1}
		\begin{split}
			&\bigl| \langle\mathcal A_r^{-1}z,z\rangle^{(n+1)/2} - \langle\mathcal A_\delta^{-1}z,z\rangle^{(n+1)/2}\bigr|\\
			&\qquad\lesssim_n \bigl|\langle\mathcal A_r^{-1}z,z\rangle- \langle\mathcal A_\delta^{-1}z,z\rangle\bigr|\Bigl|\langle\mathcal A_r^{-1}z,z\rangle^{(n-1)/2} + \langle\mathcal A_\delta^{-1}z,z\rangle^{(n-1)/2}\Bigr|	\\
			&\qquad\overset{\eqref{eq:diff_inv_rQ}}{\lesssim}|\mathcal A_r-\mathcal A_\delta|\,|z|^2 \, |z|^{n-1}= |z|^{n+1}|\mathcal A_r-\mathcal A_\delta|,
		\end{split}
	\end{equation}
	by the ellipticity  of $\mathcal A_r^{-1}$ and $\mathcal A_\delta^{-1}$.
	
	Let us recall that the map $\det\colon \mathbb R^{(n+1)\times (n+1)}\to \R$ is a polynomial in the entries of the matrix and, more specifically, that Jacobi's formula gives
	\[
	\frac{\partial}{\partial \tilde a_{ij}}\det(\tilde{\mathcal A})=(\adj \tilde{\mathcal A})_{ij} \qquad\text{ for }\quad  \tilde{\mathcal A}=(\tilde a_{ij})_{i,j}\in\mathbb R^{(n+1)\times (n+1)},
	\]
	where $\adj \tilde{\mathcal A}=\det(\tilde{\mathcal A}) \tilde{\mathcal A}^{-1}$ is the adjugate matrix of $\tilde{\mathcal A}$. 
	In particular, the map $\det(\cdot)$ is locally Lipschitz continuous and, for $|\tilde{\mathcal A}|\leq \Lambda$, its Lipschitz constant depends only on $\Lambda$ and $n$. Moreover, 
{	\[
	|\sqrt a - \sqrt b|=\bigl(a^{1/2} + b^{1/2}\bigr)^{-1}|a-b| \qquad \text{ for }a,b>0,
	\]}
	which implies
	\begin{equation}\label{eq:diff_sq_rt_r_Q}
		\begin{split}
			|\sqrt{\det \mathcal A_r} - \sqrt{\det \mathcal A_\delta}|&= \bigl((\det \mathcal A_r)^{1/2} + (\det \mathcal A_\delta)^{1/2}\bigr)^{-1}|\det \mathcal A_r - \det \mathcal A_\delta|\\
			&\lesssim_{\Lambda, n} |\det \mathcal A_r - \det \mathcal A_\delta|\lesssim_{\Lambda,n} |\mathcal A_r - \mathcal A_\delta|.	
		\end{split}
	\end{equation}
	Finally, \eqref{eq:first_estim_diff_kernels}, triangle inequality, the bounds \eqref{eq:mvt_bound_diff_lem1}, \eqref{eq:diff_sq_rt_r_Q}, \eqref{eq:diff_inv_rQ}, and the uniform ellipticity of $\mathcal A_r$ and $\mathcal A_\delta$ yield
	\begin{equation*}
		\begin{split}
			&\bigl|\nabla_1 \Theta(z,0;\mathcal A_r)-\nabla_1 \Theta(z,0;\mathcal A_\delta)\bigr|\\
			&\qquad {\lesssim}_{\Lambda, n} \frac{1}{|z|^{2n+2}}\Bigl(|\sqrt{\det \mathcal A_\delta} - \sqrt{\det\mathcal A_r}|\bigl|\langle\mathcal A_\delta^{-1}z,z \rangle^{(n+1)/2}\mathcal A_r^{-1}z\bigr|\\
			&\qquad\qquad + |\sqrt{\det \mathcal A_r}|\bigl|\langle \mathcal A_\delta^{-1}z,z\rangle^{(n+1)/2}(\mathcal A_r^{-1}z-\mathcal A_\delta^{-1}z)\bigr|\\
			&\qquad\qquad + |\sqrt{\det\mathcal A_r}|\bigl|\langle \mathcal A_\delta^{-1}z,z\rangle^{(n+1)/2} - \langle \mathcal A_r^{-1}z,z\rangle^{(n+1)/2}\bigr| |\mathcal A_\delta^{-1}z|\Bigr)\\
			&\qquad\lesssim_{n,\Lambda} \frac{|\mathcal A_r-\mathcal A_\delta|}{|z|^{n}}\overset{\eqref{eq:estim_diff_averages_2}}{\lesssim} \frac{1}{|z|^n}\int_{\delta}^{r} \omega_A(t) \,\frac{dt}{t}.
		\end{split}
	\end{equation*}
	This concludes the proof of the lemma.
\end{proof}

We can also demonstrate the following lemma.
\begin{lemma}\label{lem:lem_2_prep_main_lemma}
	Let $A$ be a uniformly elliptic matrix in $\Rn1$, $n\geq 2$, satisfying $A\in \DMO_s$. Assume that  $Q \subset \Rn1$ is a dyadic cube and $\Omega_Q \subset \Rn1$ is a Borel set such that for some constant $M\geq 1$,
	$$
	B(x_Q,\ell(Q)) \subset \Omega_Q \subset B(x_Q, M\ell(Q)).
	$$ If we denote  $\bar A_{\Omega_Q}\coloneqq\avint_{\Omega_Q} A$, then if $x \in Q$,
	\begin{equation}\label{eq:estim_diff_averages2}
		\bigl|\bar A_{\Omega_{Q}} - \bar A_{x, \delta/2}\bigr|\lesssim_{M}  \int_{0}^{\ell(Q)} \omega_A(t) \,\frac{dt}{t}\leq \tau_A(\ell(Q)), \quad \text{for}\,\, \delta< \sqrt{n+1} \ell(Q),
	\end{equation}
	and {for all } $z\in \mathbb R^{n+1}\setminus\{0\}$ {and $\delta< \sqrt{n+1} \ell(Q)$},
	\begin{equation}\label{eq:estim_diff_kernels2}
		\bigl|\nabla_1 \Theta(z,0;\bar A_{\Omega_Q})-\nabla_1 \Theta(z,0;\bar A_{x,\delta/2})\bigr|\lesssim_{n,\Lambda, M} \frac{1}{|z|^{n}} \int_{0}^{\ell(Q)}\omega_A(t) \,\frac{dt}{t}.
	\end{equation}
\end{lemma}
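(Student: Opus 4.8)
The idea is that this statement is just the scale-$\ell(Q)$ analogue of Lemma~\ref{lem:lem_1_prep_main_lemma}, with the ball $\Omega_{x,\delta}$ replaced by $\Omega_Q$, so I would reduce to the computations already carried out there. The only genuinely new point is a geometric one: since $x\in Q$ and $\delta<\sqrt{n+1}\,\ell(Q)$, both $B(x,\delta/2)$ and $\Omega_Q$ lie inside a single ball centered at $x$ whose radius is comparable to $\ell(Q)$ (with constant depending on $M$ and $n$), while $\Omega_Q$ has volume $\approx_{M,n}\ell(Q)^{n+1}$. Concretely, set $C_M\coloneqq\sqrt{n+1}\,(M+1)$; since $|x-x_Q|\le\tfrac{\sqrt{n+1}}{2}\ell(Q)$ one checks $B(x,\delta/2)\subset B(x,C_M\ell(Q))=:B_*$ and $\Omega_Q\subset B(x_Q,M\ell(Q))\subset B_*$, and from $B(x_Q,\ell(Q))\subset\Omega_Q\subset B(x_Q,M\ell(Q))$ one gets $|\Omega_Q|\approx_{M,n}\ell(Q)^{n+1}\approx_{M,n}|B_*|$.

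\textbf{Step 1: the averaging bound \eqref{eq:estim_diff_averages2}.} I would split
\[
\bigl|\bar A_{\Omega_Q}-\bar A_{x,\delta/2}\bigr|\le\bigl|\bar A_{\Omega_Q}-\bar A_{B_*}\bigr|+\bigl|\bar A_{B_*}-\bar A_{x,\delta/2}\bigr|.
\]
For the first term, $\bigl|\bar A_{\Omega_Q}-\bar A_{B_*}\bigr|\le\frac{1}{|\Omega_Q|}\int_{B_*}|A-\bar A_{B_*}|\lesssim_{M,n}\avint_{B_*}|A-\bar A_{B_*}|\le\omega_A(C_M\ell(Q))$, and the $\kappa$-doubling property of $\omega_A$ together with \eqref{eq:omega<dini} gives $\omega_A(C_M\ell(Q))\lesssim_{M,n}\omega_A(\ell(Q))\lesssim\mathfrak I_{\omega_A}(\ell(Q))$. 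For the second term, a dyadic telescoping over the balls $B(x,2^{-j}C_M\ell(Q))$, exactly as in the proof of \eqref{eq:estim_diff_averages} in Lemma~\ref{lem:lem_1_prep_main_lemma}, bounds it by $\sum_{j\ge0}\omega_A(2^{-j}C_M\ell(Q))\lesssim\int_0^{C_M\ell(Q)}\omega_A(t)\,\frac{dt}{t}\lesssim_M\int_0^{\ell(Q)}\omega_A(t)\,\frac{dt}{t}$, where the last step absorbs the finitely many scales between $\ell(Q)$ and $C_M\ell(Q)$ via the doubling of $\omega_A$ and \eqref{eq:omega<dini}. Adding the two pieces yields \eqref{eq:estim_diff_averages2}, and $\mathfrak I_{\omega_A}(\ell(Q))\le\tau_A(\ell(Q))$ is immediate since $\mathfrak L^n_{\omega_A}\ge0$.

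\textbf{Step 2: the kernel bound \eqref{eq:estim_diff_kernels2}.} Here I would reuse verbatim the purely algebraic estimate contained in the proof of Lemma~\ref{lem:lem_1_prep_main_lemma}, namely the chain \eqref{eq:first_estim_diff_kernels}--\eqref{eq:diff_sq_rt_r_Q} and its concluding display, which shows that for any two symmetric, uniformly elliptic constant matrices $\mathcal A_1,\mathcal A_2$ with ellipticity constant $\Lambda$ one has $|\nabla_1\Theta(z,0;\mathcal A_1)-\nabla_1\Theta(z,0;\mathcal A_2)|\lesssim_{n,\Lambda}|\mathcal A_1-\mathcal A_2|\,|z|^{-n}$. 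I apply this with $\mathcal A_1=(\bar A_{\Omega_Q})_s$ and $\mathcal A_2=(\bar A_{x,\delta/2})_s$; by \eqref{eq:theta_sym} (equivalently \eqref{eq:fund_sol_const_matrix}) the gradients $\nabla_1\Theta(z,0;\bar A_{\Omega_Q})$ and $\nabla_1\Theta(z,0;\bar A_{x,\delta/2})$ coincide with those of the symmetric parts, and since passing to the symmetric part is non-expansive for the norm $|\cdot|=\max_{i,j}|a_{ij}|$ we have $|\mathcal A_1-\mathcal A_2|=\bigl|(\bar A_{\Omega_Q}-\bar A_{x,\delta/2})_s\bigr|\le\bigl|\bar A_{\Omega_Q}-\bar A_{x,\delta/2}\bigr|$. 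Combining with \eqref{eq:estim_diff_averages2} gives \eqref{eq:estim_diff_kernels2}.

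\textbf{Main obstacle.} There is essentially no hard part: the estimate is a routine cousin of Lemma~\ref{lem:lem_1_prep_main_lemma}. The only point deserving care is verifying that the hypotheses on $\Omega_Q$ together with $\delta<\sqrt{n+1}\,\ell(Q)$ really do confine both $B(x,\delta/2)$ and $\Omega_Q$ to a common ball of radius $\approx_{M,n}\ell(Q)$ with $|\Omega_Q|\approx_{M,n}\ell(Q)^{n+1}$, so that the volume comparison and the telescoping in Step 1 proceed with constants depending only on $M$ and $n$.
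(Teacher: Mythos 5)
The paper omits the proof of this lemma, stating only that it is ``a routine adaptation of the one of Lemma~\ref{lem:lem_1_prep_main_lemma} and is left as an exercise to the interested reader''; your argument carries out exactly that adaptation correctly. You resolve the only nontrivial point — that $B(x,\delta/2)$ and $\Omega_Q$ are no longer concentric — by inserting the intermediate ball $B_*$ centered at $x$, and then the telescoping and the algebraic kernel estimate transfer without change, so this matches the intended proof.
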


\begin{proof}
The proof is a routine adaptation of the one of Lemma \ref{lem:lem_1_prep_main_lemma} and is left as an exercise to the interested reader.
\end{proof}

\vv

Let $\delta>0$ and, for an affine function $L$, define the truncated integral operator
\[
	\widetilde T^{L,j}_{\mu, \delta}f(x)\coloneqq \int_{|L x- Ly|>\delta} \mathcal K^j_{\Theta}(x,y)f(y)\,d\mu(y), \quad j=1,2.
\]

\begin{lemma}\label{lem:truncatedL2}
Let $A$ be a uniformly elliptic matrix in $\Rn1$, $n\geq 2$, satisfying  $A\in \widetilde \DMO$.  Let $R_0>0$ and $\mu \in M^n_+(\Rn1)$ with compact support so that $\diam (\supp \mu) =R\leq R_0$. Then $\widetilde T^{L,j}_{\mu,\delta}\colon L^2(\mu) \to L^2(\mu)$,  $j=1,2$, satisfying
\begin{align*}
	\sup_{\delta>0}\| \widetilde T^{L,1}_{\mu, \delta} f \|_{{L^2(\mu)}\to{L^2(\mu)} }& \lesssim \mathfrak I_{\tau_A}(R) +\widehat \tau_A(R)\\
\sup_{\delta>0}\| \widetilde T^{L,2}_{\mu, \delta} f \|_{{{L^2(\mu)}\to{L^2(\mu)} }} 	& \lesssim \mathfrak I_{\tau_A}(R),
\end{align*}
where the implicit constants depend on $\|L\|_{\textup{op}}$, $\Lambda$, $n$, and $R_0$. {In particular, if $R_0=1$, the implicit constants only depend on ellipticity, dimension, and the Dini Mean Oscillation condition.}
\end{lemma}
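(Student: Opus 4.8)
\emph{Proof proposal.} The plan is to reduce both estimates to the Schur-test bound of Lemma~\ref{lemma:lemma_int_op_bd} by showing that, for every $\delta>0$, the truncated kernel $\chi_{\{|Lx-Ly|>\delta\}}(x,y)\,\mathcal K^j_\Theta(x,y)$ is dominated, with a $\delta$-independent constant, by a single $(\tau_A,n)$-kernel --- plus, only when $j=1$, a bounded kernel accounting for the large-scale term $R^{-n}\widehat\tau_A(R)$ in Lemma~\ref{lem:main_pw_estimate}. The preliminary facts are: $\mathfrak I_{\omega_A}$ is doubling (it is non-decreasing and $\int_{t/2}^t\omega_A(u)\,\frac{du}{u}\lesssim_\kappa\mathfrak I_{\omega_A}(t/2)$ by $\kappa$-doubling of $\omega_A$), $\mathfrak L^n_{\omega_A}$ is $2^n$-doubling by Lemma~\ref{lem:mod_cont_large_DS1}, hence $\tau_A=\mathfrak I_{\omega_A}+\mathfrak L^n_{\omega_A}$ is doubling; and $A\in\widetilde\DMO$ gives $\mathfrak I_{\tau_A}(1)<\infty$ (cf.\ the discussion after \eqref{eq:defin_tau_A} together with Lemma~\ref{lem:mod_cont_large_DS1}\,(1)--(2)). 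Since $\omega_A$ is bounded and $\tau_A(t)\geq\mathfrak I_{\omega_A}(1)>0$ for $t\geq1$ (the case $\omega_A\equiv0$ being trivial), we get $\tau_A\in\DS(\kappa')$ for a suitable $\kappa'$ and $\mathfrak I_{\tau_A}(R)<\infty$ for all $R\leq R_0$.

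For $j=1$ the kernel $\mathcal K^1_\Theta$ is independent of $\delta$, so the truncation only decreases the modulus of the operator: $|\widetilde T^{L,1}_{\mu,\delta}f(x)|\leq\int|\mathcal K^1_\Theta(x,y)|\,|f(y)|\,d\mu(y)$. For $x,y\in\supp\mu$ one has $|x-y|\leq R\leq R_0$, so Lemma~\ref{lem:main_pw_estimate} (applied at scale $R$) and the doubling of $\tau_A$ give $|\mathcal K^1_\Theta(x,y)|\lesssim \tau_A(|x-y|)\,|x-y|^{-n}+\widehat\tau_A(R)\,R^{-n}$. The first summand is a $(\tau_A,n)$-kernel, so by Lemma~\ref{lemma:lemma_int_op_bd} it contributes $\lesssim\mathfrak I_{\tau_A}(R)$ to the $L^2(\mu)$ operator norm; the second summand is the constant $\widehat\tau_A(R)\,R^{-n}$, and since $\mu(\supp\mu)\leq c_0R^n$, Cauchy--Schwarz (equivalently Schur's test) bounds the corresponding operator by $\lesssim\widehat\tau_A(R)$. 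Adding the two contributions gives the first inequality, uniformly in $\delta$.

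For $j=2$ put $\lambda:=\|L\|_{\textup{op}}$, so that $|Lx-Ly|\leq\lambda|x-y|$ and hence $|Lx-Ly|>\delta$ forces $|x-y|>\delta/\lambda$. I claim that $\chi_{\{|Lx-Ly|>\delta\}}(x,y)\,|\mathcal K^2_\Theta(x,y)|\lesssim_\lambda\tau_A(|x-y|)\,|x-y|^{-n}$ for all $x\neq y$ in $\supp\mu$, with a constant independent of $\delta$; given this, Lemma~\ref{lemma:lemma_int_op_bd} (whose proof uses only the pointwise kernel bound, not continuity) yields the second inequality uniformly in $\delta$. Writing $\mathcal K^2_\Theta(x,y)=\nabla_1\Theta(x-y,0;\bar A_{x,|x-y|/2})-\nabla_1\Theta(x-y,0;\bar A_{x,\delta/2})$, distinguish three cases. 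If $|x-y|\leq\delta/\lambda$ the characteristic function vanishes. If $|x-y|>\delta$, the estimate \eqref{eq:estim_diff_kernels} of Lemma~\ref{lem:lem_1_prep_main_lemma}, whose proof (via $|\mathcal K^2_\Theta(z,0)|\lesssim_{n,\Lambda}|z|^{-n}\,|(\bar A_{x,|x-y|/2})_s-(\bar A_{x,\delta/2})_s|$ together with \eqref{eq:estim_diff_averages}) extends to $|x-y|\leq R_0$, gives $|\mathcal K^2_\Theta(x,y)|\lesssim |x-y|^{-n}\int_\delta^{|x-y|}\omega_A(t)\,\frac{dt}{t}\leq\mathfrak I_{\omega_A}(|x-y|)\,|x-y|^{-n}\leq\tau_A(|x-y|)\,|x-y|^{-n}$. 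In the transition range $\delta/\lambda<|x-y|\leq\delta$ (non-empty only if $\lambda>1$) the averaging radii $|x-y|/2$ and $\delta/2$ differ by at most the factor $\lambda$, so a telescoping chain of $O(\log\lambda)$ concentric balls and the $\kappa$-doubling of $\omega_A$ give $|(\bar A_{x,|x-y|/2})_s-(\bar A_{x,\delta/2})_s|\leq|\bar A_{x,|x-y|/2}-\bar A_{x,\delta/2}|\lesssim_{\lambda,\kappa}\omega_A(|x-y|)$, whence, by the same intermediate bound and \eqref{eq:omega<dini}, $|\mathcal K^2_\Theta(x,y)|\lesssim_{n,\Lambda,\lambda,\kappa}\omega_A(|x-y|)\,|x-y|^{-n}\lesssim\mathfrak I_{\omega_A}(|x-y|)\,|x-y|^{-n}\leq\tau_A(|x-y|)\,|x-y|^{-n}$. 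This proves the claim. Finally, if $R_0=1$, the constants drawn from Lemmas~\ref{lem:main_pw_estimate} and~\ref{lem:lem_1_prep_main_lemma} and from Schur's test depend only on $n$, $\Lambda$, $\|L\|_{\textup{op}}$ and the Dini data.

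The main obstacle is the mismatch between the twisted truncation $|Lx-Ly|>\delta$ and the $\delta$ inside $\mathcal K^2_\Theta$: one must check that the truncation removes exactly the region where the untruncated $\mathcal K^2_\Theta$ fails to be a $\delta$-free $(\tau_A,n)$-kernel, and that on the leftover transition annulus $\delta/\lambda<|x-y|\leq\delta$ the $\kappa^{O(\log\lambda)}$ loss from telescoping is absorbed into the $\lambda$-dependence of the constant. Everything else is routine Dini calculus and an application of Schur's test.
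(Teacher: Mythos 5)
Your proposal is correct and takes the same route as the paper: decompose each truncated kernel, apply the pointwise bounds from Lemma \ref{lem:main_pw_estimate} (for $j=1$) and Lemma \ref{lem:lem_1_prep_main_lemma} (for $j=2$), and conclude by the Schur test of Lemma \ref{lemma:lemma_int_op_bd}. The paper's proof is a single sentence that cites exactly these three lemmas, so you have reconstructed the intended argument.

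Where you go beyond the paper is in the two places where the one-line proof is actually incomplete at the stated level of generality. First, the kernel bound for $\mathcal K^1_\Theta$ carries the $\delta$-free constant term $\widehat\tau_A(R)\,R^{-n}$, which does not fit the $(\theta,n)$-kernel pattern; your remark that this is absorbed because $\mu(\Rn1)\lesssim c_0 R^n$ (Cauchy--Schwarz or Schur with the constant kernel) is the needed observation, and it is precisely where the term $\widehat\tau_A(R)$ in the conclusion comes from. Second, and more substantively, the paper's argument for $j=2$ invokes \eqref{eq:estim_diff_kernels}, which is only available under $\delta<|x-y|$; the truncation $|Lx-Ly|>\delta$ guarantees only $|x-y|>\delta/\|L\|_{\textup{op}}$, leaving the annulus $\delta/\|L\|_{\textup{op}}<|x-y|\leq\delta$ unaccounted for when $\|L\|_{\textup{op}}>1$. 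Your telescoping argument over $O(\log\|L\|_{\textup{op}})$ concentric balls, together with $\kappa$-doubling of $\omega_A$ and $\omega_A\lesssim\kappa\,\mathfrak I_{\omega_A}$, fills this gap and correctly produces the $\|L\|_{\textup{op}}$-dependence of the implicit constant advertised in the statement. (In the paper's only application, Main Lemma I, one has $L=\mathrm{Id}$ so the transition annulus is empty; your treatment is what justifies the lemma for a general affine $L$.) The rest --- the doubling of $\tau_A$, the finiteness of $\mathfrak I_{\tau_A}(R)$ under $A\in\widetilde\DMO$, and that Schur's test needs only the pointwise bound and not continuity of the truncated kernel --- is correct routine bookkeeping.
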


\begin{proof}
In view of Lemmas \ref{lem:main_pw_estimate} and  \ref{lem:lem_1_prep_main_lemma}, we can apply Lemma \ref{lemma:lemma_int_op_bd} to the integral operators with kernel $\mathcal K^j_{\Theta}(x,y) \chi_{B(L x,\delta/2)^c}(Ly)$  and deduce the result.
\end{proof}
\vv

{
	
	\begin{lemma}\label{lem:estimate_norm_K3}
		Let $A$ be a uniformly elliptic matrix in $\Rn1$, $n\geq 2$, satisfying  $A\in \widetilde \DMO$.
		Let $Q$ be a cube in $\Rn1$ with center $x_Q$ and side-length $\ell(Q) \lesssim 1$,
		and let $\nu\in M^n_+(\Rn1)$ be supported on $Q$ and have $n$-growth constant $c_0>0$.
		Assume also that there exist a Borel set $\Omega_Q$ and a constant $M\geq 1$ such that \[B(x_Q,\ell(Q))\subset \Omega_Q\subset B(x_Q,M\ell(Q)).\]
			 For $\delta>0$ let us define
			 \begin{equation}\label{eq:definition_K_3}
			 	\mathcal K^3_\Theta(x,z)\coloneqq\nabla_1 \Theta (z,0;\bar A_{x,\delta/2}) - \nabla_1 \Theta\bigl(z,0;\bar A_{\Omega_Q}\bigr)
			 \end{equation}
			 and
			 \[
			 	T_{\mathcal K^3_\Theta, \nu,\delta}f(x)\coloneqq \int_{|x-y|>\delta} \mathcal K^3_\Theta(x,x-y)f(y)\, d\nu(y),\qquad f\in L^1_{\loc}(\nu).
			 \]
		Then, there exist a positive constant $C''=C''(n,\Lambda, M,c_0)$ such that we have
		\begin{align}\label{eq:main_lemma_estim2}
			\|T_{\mathcal K^3_\Theta, \nu,\delta}\|_{L^2(\nu)\to L^2(\nu)}&\leq  C'' \mathfrak I_{\omega_A}(\ell(Q))^{1/2}\|\mathcal R_{\nu,\delta}\|_{L^2(\nu)\to L^2(\nu)}.
		\end{align}	
		
		\end{lemma}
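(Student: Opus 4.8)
The plan is to expand the difference kernel $\mathcal K^3_\Theta(x,\cdot)$ in spherical harmonics with $x$ playing the role of a parameter, and then bound each resulting term using the PDE estimates already proved together with the transference result of \cite{To21}. First, fix $x \in Q$ and set $A_\delta \coloneqq (\bar A_{x,\delta/2})_s$ and $A_Q \coloneqq (\bar A_{\Omega_Q})_s$, both symmetric uniformly elliptic matrices; by \eqref{eq:theta_sym} and \eqref{eq:gradient_fund_sol_const_matrix} the kernel $z \mapsto \nabla_1\Theta(z,0;A_0)$ is homogeneous of degree $-n$, smooth away from the origin and odd for any constant symmetric elliptic $A_0$, and by \eqref{eq:estim_deriv_fund_sol} it has $|\nabla^j_1\Theta(z,0;A_0)| \lesssim |z|^{-n-j}$ uniformly for $A_0$ in the compact ellipticity range. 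Hence for each $x$ the kernel $\mathcal K^3_\Theta(x,z) = K_{A_\delta}(z) - K_{A_Q}(z)$ (writing $K_{A_0}(z) \coloneqq \nabla_1\Theta(z,0;A_0)$) restricted to the sphere $\bS^n$ decomposes into a series $\sum_{k} P_k^x(z/|z|)$ of spherical harmonics of degree $k$; since the kernel is odd only odd $k$ appear, and the associated convolution operator with kernel $P_k^x(z/|z|)|z|^{-n}$ is, by the classical Calderón–Zygmund theory of singular integrals with spherical-harmonic kernels, a bounded operator whose norm on $L^2(\nu)$ is controlled by the norm of the Riesz transform $\mathcal R_{\nu,\delta}$ times a factor polynomial in $k$ — this is exactly where \cite[Corollary 1.4]{To21} enters, applied to the antisymmetric kernels $K(z) = P_k(z/|z|)|z|^{-n}$ which satisfy the required derivative bounds $|\nabla^j K| \lesssim |z|^{-n-j}$, $0\le j\le 2$, with constants depending polynomially on $k$.

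The second step is to estimate the size of the spherical-harmonic coefficients. The map $A_0 \mapsto K_{A_0}|_{\bS^n}$ is smooth (indeed real-analytic) on the ellipticity range, so by \eqref{eq:estim_diff_averages2} of Lemma \ref{lem:lem_2_prep_main_lemma}, $\| K_{A_\delta} - K_{A_Q}\|_{C^m(\bS^n)} \lesssim_{m,n,\Lambda,M} |A_\delta - A_Q| \lesssim \mathfrak I_{\omega_A}(\ell(Q))$ for every fixed $m$, uniformly in $x \in Q$ and $\delta>0$ (one only needs $\delta < \sqrt{n+1}\,\ell(Q)$; the case $\delta \geq \sqrt{n+1}\,\ell(Q)$ is trivial since then the truncation kills the kernel on $\supp\nu\times\supp\nu$ as $\nu$ is supported on $Q$, so WLOG $\delta$ is small). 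Expanding in spherical harmonics and using that derivatives of order $m$ control the decay of the $k$-th coefficient like $k^{-m}$ (Sobolev embedding / integration by parts with the Laplace–Beltrami operator on $\bS^n$), one gets $\|P_k^x\|_{L^\infty(\bS^n)} \lesssim_{m} k^{-m}\,\mathfrak I_{\omega_A}(\ell(Q))$ for every $m$, with constants uniform in $x$ and $\delta$. Combining with the $k$-polynomial bound on the operator norms from the previous step, the series $\sum_{k \text{ odd}} \|P_k^x\|_{L^\infty} \cdot (\text{poly}(k)) \|\mathcal R_{\nu,\delta}\|$ converges and is bounded by $C\,\mathfrak I_{\omega_A}(\ell(Q))\,\|\mathcal R_{\nu,\delta}\|_{L^2(\nu)\to L^2(\nu)}$.

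The one subtlety is that the coefficients $P_k^x$ depend on $x$, the variable with respect to which we do \emph{not} integrate, so $T_{\mathcal K^3_\Theta,\nu,\delta}$ is not literally a sum of convolution operators. The fix is standard: we bound $\|T_{\mathcal K^3_\Theta,\nu,\delta}f\|_{L^2(\nu)}$ by writing, for each $x$, $T_{\mathcal K^3_\Theta,\nu,\delta}f(x) = \sum_{k\text{ odd}} \int_{|x-y|>\delta} P_k^x\bigl(\tfrac{x-y}{|x-y|}\bigr)|x-y|^{-n} f(y)\,d\nu(y)$, and then using a further (finite-dimensional, $k$-dependent) expansion of $P_k^x(\theta)$ in a fixed orthonormal basis $\{Y_{k,\ell}\}_\ell$ of degree-$k$ spherical harmonics: $P_k^x(\theta) = \sum_\ell c_{k,\ell}(x) Y_{k,\ell}(\theta)$ with $\sup_x |c_{k,\ell}(x)| \lesssim_m k^{-m}\mathfrak I_{\omega_A}(\ell(Q))$ and the number of indices $\ell$ growing only polynomially in $k$. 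Pulling the bounded functions $c_{k,\ell}(x)$ out by $|c_{k,\ell}(x)| \leq \sup_x|c_{k,\ell}(x)|$, the operator becomes a (doubly indexed) sum of genuine convolution operators with kernels $Y_{k,\ell}(z/|z|)|z|^{-n}$, each of which is handled by \cite[Corollary 1.4]{To21} with polynomial-in-$k$ norm; summing the resulting absolutely convergent series yields \eqref{eq:main_lemma_estim2}. The main obstacle is making the dependence on $k$ explicit and uniform in $x$: tracking that the $C^m$-norm bound on $K_{A_\delta}-K_{A_Q}$ is genuinely uniform over $x\in Q$ and over the truncation parameter $\delta$, and that the operator-norm growth from \cite{To21} applied to degree-$k$ harmonics is only polynomial, so that it is defeated by the super-polynomial decay $k^{-m}$ of the coefficients. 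This is precisely the scheme of \cite[Section 1]{MiT99} adapted to the present non-translation-invariant setting, and the quantitative input from \cite{To21} is what makes the bound come out in terms of $\|\mathcal R_{\nu,\delta}\|$ rather than a fixed constant.
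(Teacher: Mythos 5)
Your proposal follows the same strategy as the paper: restrict the homogeneous degree-$(-n)$ odd kernel $\mathcal K^3_\Theta(x,\cdot)$ to $\mathbb S^n$, expand in spherical harmonics with $x$ as a parameter, control the operator norm of each fixed-degree piece via \cite[Corollary 1.4]{To21} (whose constant is polynomial in the degree), and handle the $x$-dependence by a triangle inequality over the harmonics after taking $\sup_x$ of each coefficient (your phrase ``pulling the bounded functions $c_{k,\ell}(x)$ out by $|c_{k,\ell}(x)|\leq\sup_x|c_{k,\ell}(x)|$'' should really be a triangle-inequality bound on the $L^2(\nu)$-norm, but the intent is clear and correct). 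The one substantive difference is in the coefficient estimate, and there your version is actually \emph{sharper} than the paper's. The paper bounds the coefficients two ways — $|k_{j,\ell}(x)|\lesssim\mathfrak I_{\omega_A}(\ell(Q))$ from the $L^\infty$ bound \eqref{eq:estim_diff_kernels2}, and $|k_{j,\ell}(x)|\lesssim_m j^{-2m}$ from a \emph{uniform} Laplace--Beltrami bound \eqref{eq:bound_laplace_beltrami} that does not see $\omega_A$ — and then takes a geometric mean, which is why the lemma is stated with the exponent $1/2$. You observe instead that $A_0\mapsto\nabla_1\Theta(\cdot,0;A_0)|_{\mathbb S^n}$ is Lipschitz into $C^m(\mathbb S^n)$ on the compact ellipticity range, so $\|\Delta^m_{\mathbb S^n}\mathcal K(x,\cdot)\|_{L^\infty}\lesssim_m|(\bar A_{x,\delta/2})_s-(\bar A_{\Omega_Q})_s|\lesssim\mathfrak I_{\omega_A}(\ell(Q))$ directly, giving $|k_{j,\ell}(x)|\lesssim_m j^{-2m}\mathfrak I_{\omega_A}(\ell(Q))$ with no need to interpolate. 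This yields a bound linear in $\mathfrak I_{\omega_A}(\ell(Q))$ rather than square-root, which is strictly stronger than \eqref{eq:main_lemma_estim2} and hence suffices; it would also slightly simplify the dependence tracking later in the paper. So the proposal is correct and, modulo minor wording, proves a marginally better estimate by a more efficient use of the same PDE input.
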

	\begin{proof}
		
		For brevity, let us denote $\mathcal K\coloneqq \mathcal K^3_\Theta$.
		Observe that the function $\mathcal K(x,\cdot)$ is homogeneous of degree $-n$ for any $x\in \Rn1$, namely
		\[
		\mathcal K(x,z)=\frac{1}{|z|^{n}}\mathcal K\Bigl(x,\frac{z}{|z|}\Bigr) \quad \text{ for all }z\in \Rn1,
		\]
		and satisfies $\|\mathcal K(x,\cdot)\|_{L^2(\mathbb S^{n})}\lesssim_{n,\Lambda} 1$. Indeed, by \eqref{eq:gradient_fund_sol_const_matrix} and ellipticity of $(\bar A_{x,\delta/2})_s$ we have
		\begin{equation*}
			\begin{split}
				\|\mathcal K(x, \cdot)\|^2_{L^2(\mathbb S^{n})}&\lesssim \int_{\mathbb S^n}\biggl|\frac{\omega_{n}^{-1}}{\sqrt{\det (\bar A_{x,\delta/2})_s}}\frac{(\bar A_{x,\delta/2})_s^{-1}\zeta}{\langle(\bar A_{x,\delta/2})^{-1}_s\zeta,\zeta\rangle^{(n+1)/2}}\biggr|^2\, d\sigma(\zeta)\\
				&\qquad \qquad \qquad +\int_{\mathbb S^n}\Bigl|\omega_n^{-1}\frac{\zeta}{|\zeta|^{n+1}}\Bigr|^2\, d\sigma(\zeta) \lesssim_{n,\Lambda}\int_{\mathbb S^n}|\zeta|^{-2n}\, d\sigma(\zeta) = \omega_{n}.
			\end{split}
		\end{equation*}	
		
		Let $\{\varphi_{j,\ell}\}_{j\geq 1, 1\leq \ell\leq N_j}$ be an orthonormal basis of $L^2(\mathbb S^{n})$ of spherical harmonics of degree $j$. In particular $N_j$ satisfies the asymptotic estimate
		\begin{equation}\label{eq:bound_Nj}
			N_j=O(j^{n-1}) \qquad \text{ for }j\gg 1,
		\end{equation}
		for which we refer, for instance, to \cite[display (2.12)]{AH12}.
		Hence, we decompose $\mathcal K$ into spherical harmonics in the $L^2$-sense and write
		\begin{equation*}
			\begin{split}
				\mathcal K(x,z)&=\frac{1}{|z|^{n}}\mathcal K\Bigl(x,\frac{z}{|z|}\Bigr)=\frac{1}{|z|^{n}}\sum_{j\geq 1}\sum_{\ell=1}^{N_j}\bigl\langle\mathcal K(x,\cdot), \varphi_{j,\ell}\bigr\rangle_{L^2(\mathbb S^{n})}\varphi_{j,\ell}\Bigl(\frac{z}{|z|}\Bigr)\\
				&\eqqcolon \frac{1}{|z|^{n}}\sum_{j\geq 1}\sum_{\ell=1}^{N_j} k_{j,\ell}(x)\varphi_{j,\ell}\Bigl(\frac{z}{|z|}\Bigr).
			\end{split}
		\end{equation*}
		We observe that, as $\mathcal K(x,\cdot)$ is an odd function, $k_{j,\ell}(x)=0$ if $j$ is even.
		Furthermore, since $\mathcal K(x,\cdot)$ is smooth on $\mathbb S^{n}$, then by \cite[Thorem 2.36]{AH12}, the series $\sum_{j\geq 1}\sum_{\ell=1}^{N_j} k_{j,\ell}(x)\varphi_{j,\ell}(\cdot)$ converges uniformly on $\mathbb S^n$. 
		
		We claim that
		\begin{equation}\label{eq:bound_spherical_coefficients_I}
			|k_{j,\ell}(x)|\lesssim_{n,\Lambda} \mathfrak I_{\omega_A}(\ell(Q)) \qquad\text{ for all }x\in \Rn1.
		\end{equation}
		Indeed, the bound \eqref{eq:estim_diff_kernels2} readily implies
		\begin{equation*}
			\begin{split}
				|k_{j,\ell}(x)|&\leq \int_{\mathbb S^{n}}|\mathcal K(x,\zeta)||\varphi_{j,\ell}(\zeta)|\, d\sigma(\zeta)
				\lesssim_{n,\Lambda} \int_{0}^{\ell(Q)}\omega_A(t) \,\frac{dt}{t} \int_{\mathbb S^{n}} \frac{|\varphi_{j,\ell}(\zeta)|}{|\zeta|^{n}}\,d\sigma(\zeta)\\
				&\lesssim_n  \int_{0}^{\ell(Q)} \omega_A(t) \,\frac{dt}{t},
			\end{split}
		\end{equation*}
		where the last inequality holds because of the normalization $\|\varphi_{j,\ell}\|_{L^2(\mathbb S^{n})}=1$ and Cauchy-Schwarz inequality.
		On the other hand we observe that, for $m\geq 1$, the bound \eqref{eq:estim_deriv_fund_sol} and the definition of $\mathcal K$ yield
		\[
		\bigl|\nabla^{2m}_z\mathcal K\bigl(x,z\bigr)\bigr|\lesssim_{n,\Lambda, m} \frac{1}{|z|^{n+2m}}, \qquad \text{ for } z\in \Rn1\setminus \{0\}.
		\]
		Thus, denoting by $\Delta_{\mathbb S^{n}}^m$ the $m$-th iteration of the Laplace-Beltrami operator on $\mathbb S^{n}$, we have
		\begin{equation}\label{eq:bound_laplace_beltrami}
			\bigl|\Delta^m_{\mathbb S^{n},\zeta}\mathcal K\bigl(x,\zeta\bigr)\bigr|\lesssim_{n,\Lambda,m}1,
		\end{equation}
		where the subscript in $\Delta^m_{\mathbb S^{n},\zeta}$ denotes that the operator is applied with respect to the $\zeta$-variable.
		Moreover, as $\mathcal K(x,\cdot)$ is infinitely differentiable on $\mathbb S^{n}$, \cite[III 3.1.5]{St70} gives
		\[
		\int_{\mathbb S^n} \Delta^m_{\mathbb S^{n}}\mathcal K(x,\zeta) \,\varphi_{j,\ell}(\zeta)\, d\sigma(\zeta)= k_{j,\ell}(x)[-j(j+n-1)]^m	\qquad \text{ for all }m\geq 1.
		\]
		Hence, Cauchy-Schwarz inequality implies that
		\begin{equation}
			\begin{split}
				[j(j+n-1)]^m\bigl|k_{j,\ell}(x)\bigr|&\lesssim \Bigl(\int_{\mathbb S^{n}}|\Delta^m_{\mathbb S^{n}}\mathcal K(x,\cdot)|^2\, d\sigma \Bigr)^{\frac{1}{2}}\Bigl(\int_{\mathbb S^{n}}|\varphi_{j,\ell}|^2\, d\sigma \Bigr)^{\frac{1}{2}}\\
				&= \Bigl(\int_{\mathbb S^{n}}|\Delta^m_{\mathbb S^{n}}\mathcal K(x,\cdot)|^2\, d\sigma \Bigr)^{\frac{1}{2}}\overset{\eqref{eq:bound_laplace_beltrami}}{\lesssim}_{n,\Lambda,m} 1,
			\end{split}
		\end{equation}
		where we remark that the bound is uniform on $x$ and $y$.
		Hence, for every $m>0$ we have
		\begin{equation*}
			\bigl|k_{j,\ell}(x)\bigr|\lesssim_{n,\Lambda, m} j^{-2m}.
		\end{equation*}
		In particular, the choice {$m= (n+5)(n-1)/2$} yields
		\begin{equation}\label{eq:bound_spherical_coefficients_II}
			\bigl|k_{j,\ell}(x)\bigr|\lesssim_{n,\Lambda} j^{-(n+5)(n-1)}.
		\end{equation}
		Taking the geometric mean of \eqref{eq:bound_spherical_coefficients_I} and \eqref{eq:bound_spherical_coefficients_II}, we  obtain
		\begin{equation}\label{eq:bound_spherical_coefficients_final}
			\bigl| k_{j,\ell}(x)\bigr|\lesssim_{n,\Lambda} \frac{\bigl(\mathfrak I_{\omega_A}(\ell(Q))\bigr)^{1/2}}{j^{(n+5)(n-1)/2}}.
		\end{equation}
		Now, let us define the kernel
		\[
		K_{j,\ell}(z)\coloneqq \frac{1}{|z|^{n }}\varphi_{j,\ell}\Bigl(\frac{z}{|z|}\Bigr), \qquad \text{ for } z\in \Rn1\setminus \{0\}.
		\]
		By \cite[p. 276]{St70} we have
		\[
		\sup_{|x|=1}\Bigl|\frac{\partial^\alpha\varphi_{j,\ell}(x)}{\partial x^\alpha}\Bigr|\lesssim_{\alpha} j^{\frac{n+1}{2}+ |\alpha|}, \quad \text{ for }\alpha\in \mathbb N^{n+1}.
		\]
		so $K_{j,\ell}$ satisfies the estimate
		\begin{equation}                                                        
			|\partial^\alpha_zK_{j,\ell}(z)|\lesssim_{\alpha} \frac{j^{\frac{n+1}{2}+|\alpha|}}{|z|^{n+|\alpha|}}\quad \text{ for } \alpha\in \mathbb N^{n+1}, z\in \Rn1.
		\end{equation}
		Thus, for $j$ odd and by \cite[Corollary 1.4]{To21}, its associated singular integral operator
		\[
		T_{K_{j,\ell},\nu}f(x)\coloneqq \int K_{j,\ell}(x-y)f(y)\, d\nu(y)
		\]
		is bounded on $L^2(\nu)$ with norm
		\begin{equation}
			\|T_{K_{j,\ell},\nu}\|_{L^2(\nu)\to L^2(\nu)}\lesssim j^{\frac{n+5}{2}} \|\mathcal R_\nu\|_{L^2(\nu)\to L^2(\nu)}.
		\end{equation}
		
		Let $T_{K_{j,\ell},\nu,\delta}$ be the associated $\delta$-truncated operator.
		For $\tilde N>1$ and  $\psi \in  L^\infty(\nu)$, we have
		\begin{equation}\label{eq:dct_ml}
			\begin{split}
				&\int \biggl|\int_{|x-y|>\delta}\sum^{\tilde N}_{j\geq 1}\sum_{\ell=1}^{N_j}k_{j,\ell}(x) K_{j,\ell}(x,x-y)\psi(y)\, d\nu(y)\biggr|^2\, d\nu(x)\\
				&\qquad=\int \biggl|\sum^{\tilde N}_{j\geq 1}\sum_{\ell=1}^{N_j}\int_{|x-y|>\delta}k_{j,\ell}(x) K_{j,\ell}(x,x-y)\psi(y)\, d\nu(y)\biggr|^2\, d\nu(x)\\
				&\qquad=\int \biggl|\sum^{\tilde N}_{j\geq 1}\sum_{\ell=1}^{N_j}k_{j,\ell}(x)T_{K_{j,\ell},\nu,\delta}\psi(x)\biggr|^2\, d\nu(x).
			\end{split}
		\end{equation}

		Hence by \eqref{eq:dct_ml}, \eqref{eq:bound_spherical_coefficients_final} and \eqref{eq:bound_Nj} we  get
		\begin{equation*}
			\begin{split}
				&\biggl(\int \biggl|\sum^{\tilde N}_{j\geq 1}\sum_{\ell=1}^{N_j}k_{j,\ell}(x)T_{K_{j,\ell},\nu,\delta}\psi(x)\biggr|^2\, d\nu(x)\biggr)^{1/2}\\
				&\qquad\qquad\leq \sum^{\tilde N}_{j\geq 1}\sum_{\ell=1}^{N_j}\|k_{j,\ell}\|_{L^\infty(\Rn1\times\Rn1)}\|T_{K_{j,\ell},\nu,\delta}\psi\|_{L^2(\nu)}\\
				&\qquad\qquad\lesssim \biggl(\int_{0}^{\ell(Q)}\omega_A(t) \,\frac{dt}{t} \biggr)^{1/2}\|\mathcal R_\nu\|_{L^2(\nu)\to L^2(\nu)} \sum_{j\geq 1} \frac{N_j}{j^{n+1}}\|\psi\|_{L^2(\nu)}\\
				&\qquad\qquad\lesssim \biggl(\int_{0}^{\ell(Q)}\omega_A(t) \,\frac{dt}{t} \biggr)^{1/2} \|\mathcal R_\nu\|_{L^2(\nu)\to L^2(\nu)}\|\psi\|_{L^2(\nu)}.
			\end{split}
		\end{equation*}
		By the uniform convergence of the decomposition in spherical harmonics and dominated convergence theorem, the estimate above implies that 
		\[
		\|T_{\mathcal K,\nu,\delta}\psi\|_{L^2(\nu)}\lesssim \biggl(\int_{0}^{\ell(Q)}\omega_A(t) \,\frac{dt}{t} \biggr)^{1/2} \|\mathcal R_\nu\|_{L^2(\nu)\to L^2(\nu)}\|\psi\|_{L^2(\nu)}.
		\]
		Arguing by density, this proves the lemma.
	\end{proof}
}

\vv

\begin{lemma}\label{lem:T_bounded_polynomial_growth}
If $A$ is a uniformly elliptic matrix in $\Rn1$, $n\geq 2$, satisfying  $A\in \widetilde \DMO$ and if  $T_{\mu}\colon L^2(\mu) \to L^2(\mu)$ for some $\mu \in M_+(\Rn1)$ without atoms {is bounded}, then there exist $r_0={r_0(n, \Lambda, \diam (\supp\mu))}\in (0,\diam(\supp \mu))$ small enough and $c_0>0$ so  that $\mu(B(x,r)) \leq c_0 r^n$, for all $x \in \Rn1$ and $r<r_0$.
\end{lemma}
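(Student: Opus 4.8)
The plan is to run the classical argument --- see \cite[p.~56]{Da92} --- showing that an $n$-dimensional singular integral operator bounded on $L^2(\mu)$ forces $\mu$ to have $n$-growth; the two points that need attention in our setting are (i) that $\nabla_1\Gamma_A(\cdot,\cdot)$ is, at scales below some $r_0$, a Calder\'on--Zygmund kernel, and (ii) that at those scales it is a lower-order perturbation of an \emph{antisymmetric} kernel that is non-degenerate in the radial direction. Throughout set $B_*\coloneqq\|T_\mu\|_{L^2(\mu)\to L^2(\mu)}$; since $\mu$ is a compactly supported Radon measure it is finite, and we fix a ball $B_\mu\supset\supp\mu$ of radius comparable to $\diam(\supp\mu)$ together with $R>0$ such that $B_\mu\subset B(0,R)$.

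The first step is to record the kernel bounds. By Lemma~\ref{lem:estim_fund_sol}(2), $|\nabla_1\Gamma_A(x,y)|\lesssim_{n,\Lambda,R}|x-y|^{-n}$ for $0<|x-y|<R$. Next, with $r\coloneqq|x-y|/2$ and using the uniform ellipticity (constant $\Lambda$) of $(\bar A_{x,r})_s$ together with \eqref{eq:gradient_fund_sol_const_matrix},
\[
\langle\nabla_1\Theta(x-y,0;\bar A_{x,r}),\,x-y\rangle=\frac{\omega_n^{-1}}{\sqrt{\det (\bar A_{x,r})_s}}\,\langle(\bar A_{x,r})_s^{-1}(x-y),x-y\rangle^{-(n-1)/2}\gtrsim_{n,\Lambda}|x-y|^{1-n}>0,
\]
and likewise with $y$ in place of $x$. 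Since $\nabla_1\Theta(\cdot,0;A_0)$ is odd and depends on $A_0$ only through $A_{0,s}$, the elementary matrix perturbation estimate in the proof of Lemma~\ref{lem:lem_1_prep_main_lemma} gives
\[
\bigl|\nabla_1\Theta(x-y,0;\bar A_{x,r})-\nabla_1\Theta(x-y,0;\bar A_{y,r})\bigr|\lesssim_{n,\Lambda}\bigl|\bar A_{x,r}-\bar A_{y,r}\bigr|\,|x-y|^{-n}\lesssim_{n,\Lambda,\kappa}\omega_A(|x-y|)\,|x-y|^{-n}.
\]
Combining these with the frozen-coefficient estimate \eqref{eq:estimate_fund_sol_average} of Lemma~\ref{lem:main_pw_estimate} (taken with the fixed scale $R$ above, so that the term $\widehat\tau_A(R)/R^n$ there is a constant), and using the doubling of $\tau_A$ and $\omega_A$, there is a function $\eta$ with $\eta(t)\to0$ as $t\to0^+$ (implicit constants allowed to depend on $\diam(\supp\mu)$) such that for all $x,y\in B_\mu$ with $0<|x-y|<1$,
\begin{align*}
\langle\nabla_1\Gamma_A(x,y)-\nabla_1\Gamma_A(y,x),\,x-y\rangle&\gtrsim_{n,\Lambda}|x-y|^{1-n}-\eta(|x-y|)\,|x-y|^{1-n},\\
\bigl|\nabla_1\Gamma_A(x,y)+\nabla_1\Gamma_A(y,x)\bigr|&\leq\eta(|x-y|)\,|x-y|^{-n}.
\end{align*}
The crucial point is the second line: the \emph{sum} $\nabla_1\Gamma_A(x,y)+\nabla_1\Gamma_A(y,x)$, which quantifies the failure of antisymmetry of the kernel, equals $\nabla_1\Theta(x-y,0;\bar A_{x,r})-\nabla_1\Theta(x-y,0;\bar A_{y,r})$ up to the Dini error above, hence is lower order. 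Shrinking $r_0=r_0(n,\Lambda,\diam(\supp\mu))$ so that $\eta(t)$ is small enough for $t\leq 2r_0$, we obtain $c_1=c_1(n,\Lambda)>0$ such that, for every $0<r\leq r_0$ and all $x,y\in B_\mu$ with $r/5\leq|x-y|<2r$,
\[
\langle\nabla_1\Gamma_A(x,y)-\nabla_1\Gamma_A(y,x),\,x-y\rangle\geq c_1|x-y|^{1-n}\quad\text{and}\quad r\,\bigl|\nabla_1\Gamma_A(x,y)+\nabla_1\Gamma_A(y,x)\bigr|\leq\tfrac{c_1}{4}|x-y|^{1-n}.
\]

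The second step is the reductio. Let $B=B(x_0,r)$ with $x_0\in\Rn1$, $0<r<r_0$, put $m\coloneqq\mu(B)$, and consider $J\coloneqq\int_B\bigl\langle T_{\mu,r/5}\chi_B(x),\,x-x_0\bigr\rangle\,d\mu(x)$. On one hand, by Cauchy--Schwarz and the $L^2(\mu)$-boundedness of $T_{\mu,r/5}$,
\[
|J|\leq r\int_B|T_{\mu,r/5}\chi_B|\,d\mu\leq r\,\mu(B)^{1/2}\,\|T_{\mu,r/5}\chi_B\|_{L^2(\mu)}\leq r\,B_*\,m.
\]
On the other hand, the set $\{(x,y)\in B\times B:|x-y|>r/5\}$ is symmetric in $x\leftrightarrow y$, so we may symmetrize and use
\[
\langle\nabla_1\Gamma_A(x,y),x-x_0\rangle+\langle\nabla_1\Gamma_A(y,x),y-x_0\rangle=\tfrac12\langle\nabla_1\Gamma_A(x,y)-\nabla_1\Gamma_A(y,x),x-y\rangle+\tfrac12\langle\nabla_1\Gamma_A(x,y)+\nabla_1\Gamma_A(y,x),x+y-2x_0\rangle;
\]
since $|x+y-2x_0|<2r$ and $r/5\leq|x-y|<2r$ on the domain of integration, the first step shows this quantity is $\geq\tfrac{c_1}{4}|x-y|^{1-n}\geq\tfrac{c_1}{4}(2r)^{1-n}$. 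Therefore
\[
r\,B_*\,m\geq|J|=\tfrac12\!\!\int\!\!\int_{\substack{x,y\in B\\|x-y|>r/5}}\!\!\bigl(\cdots\bigr)\,d\mu(y)\,d\mu(x)\gtrsim_{n,\Lambda}r^{1-n}\,(\mu\times\mu)\bigl(\{(x,y)\in B\times B:|x-y|>r/5\}\bigr).
\]
Now either $(\mu\times\mu)(\{(x,y)\in B^2:|x-y|>r/5\})\geq\tfrac12m^2$, and then the last display gives $m\lesssim_{n,\Lambda,B_*}r^n$; or $(\mu\times\mu)(\{(x,y)\in B^2:|x-y|\leq r/5\})>\tfrac12m^2$, and then, covering $B$ by $N_0=N_0(n)$ balls of radius $r/10$ centred in $B$ and disjointifying, one of them --- after enlargement --- yields a ball $B'=B(\xi,3r/10)$ with $\xi\in B$ and $\mu(B')>m/2$.

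Iterating this dichotomy exactly as in \cite[p.~56]{Da92} yields the claim: if the first alternative ever occurs the iteration stops and, unwinding, $\mu(B(x_0,r))\lesssim_{n,\Lambda,B_*}r^n$; otherwise one builds a nested sequence of balls with radii $r(3/10)^k$ whose masses decrease by at most a factor $2$ at each step, so that $\mu$ would have infinite upper $n$-dimensional density at the common limit point, which is in turn ruled out by applying the reductio once more along the scales where that density blows up. Either way $\mu(B(x,r))\leq c_0r^n$ for all $x\in\Rn1$ and $r<r_0$, with $c_0$ depending only on $n$, $\Lambda$ and $B_*$. The only essentially new difficulty with respect to \cite{Da92} is the lack of antisymmetry of $\nabla_1\Gamma_A(\cdot,\cdot)$, which the symmetrization above would otherwise require; it is handled by the second estimate of the first step, i.e.\ by the fact that $\nabla_1\Gamma_A(x,y)+\nabla_1\Gamma_A(y,x)$ is --- up to the Dini error coming from Lemma~\ref{lem:main_pw_estimate} --- the difference of the gradients of two constant-coefficient fundamental solutions whose matrices differ by $O(\omega_A(|x-y|))$, hence $o(|x-y|^{-n})$, so that it is absorbed by the non-degenerate radial component $\langle\nabla_1\Gamma_A(x,y)-\nabla_1\Gamma_A(y,x),x-y\rangle\approx|x-y|^{1-n}$.
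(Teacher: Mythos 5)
Your argument takes a genuinely different route from the paper's. The paper establishes only the pointwise lower bound $|\nabla_1\Gamma_A(x,y)|\gtrsim_{n,\Lambda}|x-y|^{-n}$ at small scales, by combining Lemma~\ref{lem:main_pw_estimate} with the explicit formula \eqref{eq:gradient_fund_sol_const_matrix} and the ellipticity of $\bar A_{x,|x-y|/2}$, and then invokes \cite[Proposition~1.4, p.~56]{Da92} as a black box. You instead reprove the David argument from scratch, and the preparatory estimates in your first step are correct and indeed finer than what the paper records: the radial non-degeneracy of the antisymmetric part $\nabla_1\Gamma_A(x,y)-\nabla_1\Gamma_A(y,x)$ follows from \eqref{eq:gradient_fund_sol_const_matrix} and ellipticity as you say; the smallness of the symmetric part is correctly traced to $|\bar A_{x,r}-\bar A_{y,r}|\lesssim\omega_A(|x-y|)$ plus the frozen-coefficient error of Lemma~\ref{lem:main_pw_estimate}; and the polarization identity in your second step is exact. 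Singling out the failure of antisymmetry as a lower-order perturbation is the right thing to worry about if one runs the David-type reductio by hand, since that argument needs a \emph{signed} lower bound on $\langle K(x,y)-K(y,x),\,x-y\rangle$, not merely $|K|\gtrsim|x-y|^{-n}$. What you buy is independence from the exact hypotheses of \cite[Prop.~1.4]{Da92}; what the paper buys is a one-paragraph proof.

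The closing iteration, however, is not correct as written. If, as you state, the masses $m_k$ ``decrease by at most a factor $2$ at each step'' while the radii shrink by the fixed factor $3/10$, then $m_k\to 0$, the balls collapse to a point $\xi^*$ with $\mu(\{\xi^*\})=0$, and there is no contradiction with the non-atomicity of $\mu$. Your attempt to salvage this --- the limit point would have infinite upper $n$-density, ``which is in turn ruled out by applying the reductio once more'' --- is circular: boundedness of the upper density is precisely the conclusion being sought, and nothing forces the first alternative of your dichotomy to ever occur at any scale around $\xi^*$. The way the iteration actually terminates, and what \cite{Da92} does, is to track the concentration ratio $\lambda_k\coloneqq C B_* r_k^n/m_k$: when $\lambda_k<1/2$ the first alternative is excluded, and the Fubini/pigeonhole argument in your second alternative then returns a ball carrying a fraction at least $1-O(\lambda_k)$ (not merely $1/2$) of the mass $m_k$, with $\lambda_{k+1}\leq 2(3/10)^n\lambda_k$. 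Because $\lambda_k$ decays geometrically, $\prod_k\bigl(1-O(\lambda_k)\bigr)$ converges to a positive number, the $m_k$ are bounded below, and $\xi^*$ is an atom --- a genuine contradiction. Without this $\lambda$-bookkeeping the dichotomy never closes.
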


\begin{proof}
Let $K(x,y)\coloneqq \nabla_1 \Gamma_A(x,y)$ for $x,y\in\Rn1$, $x\neq y$. The lemma follows from  \cite[Proposition 1.4, p.~56]{Da92} once we show that there exists $r_0 \in (0,\diam(\supp \mu))$ such that for any fixed cube $Q$ of side length  $\ell(Q)< r_0$, it holds $|K(x,y)| \gtrsim |x-y|^{-n}$ for all $x \neq y \in Q$.  To this end, fix a cube  $Q$, and for $x\neq y\in Q$ we have that the matrix $\mathcal A\coloneqq\bar A_{x,|x-y|/2}$ is elliptic with constant $\Lambda$, which yields
\begin{equation}\label{eq:bd_below_nabla_theta}
	|\nabla_1 \Theta (x-y,0;\mathcal A)|\overset{\eqref{eq:gradient_fund_sol_const_matrix}}{=}\frac{\omega^{-1}_n}{\sqrt{\det \mathcal A_s}}\frac{|\mathcal A_s^{-1}(x-y)|}{\langle\mathcal A_s^{-1}(x-y),x-y\rangle^{(n+1)/2}}\geq\frac{C(n,\Lambda)}{|x-y|^n}
\end{equation}
for some $C(n,\Lambda)>0$. Let $C>0$ be as Lemma \ref{lem:main_pw_estimate} for $R=\diam(\supp\mu)$ {and $R_0=2R$}. Then, {for $|x-y|<R$,}
\begin{equation}\label{eq:bd_above_diff_kernels}
	\begin{split}
			|K(x,y)-\nabla_1 \Theta(x-y,0;\mathcal A)|&\leq C\frac{\tau_A(r)}{r^n} + C\frac{\widehat \tau_A(R)}{R^n}\\
			&\leq \frac{C\tau_A(r) + C \widehat \tau_A(R)R^{-n} r^n}{r^n}, \qquad r=\frac{|x-y|}{2}.
	\end{split}
\end{equation}
Thus,  \eqref{eq:bd_below_nabla_theta}, \eqref{eq:bd_above_diff_kernels}, and the triangle inequality imply
\begin{equation*}
	\begin{split}
			|K(x,y)|&\geq |\nabla_1 \Theta(x-y,0;\mathcal A)| - |K(x,y)-\nabla_1 \Theta(x-y,0;\mathcal A)|\\
			&\geq \frac{C(n,\Lambda)-C\tau_A(r) - C \widehat \tau_A(R)R^{-n}r^n}{r^n}.
	\end{split}
\end{equation*}
Since $\tau_A(2^{-j})\to 0$ as $j \to \infty$ and also $\tau_A$ is $c_{db}$-doubling for some constant $c_{db}> 1$, there exists $j_0 \in \mathbb{N}$ such that  for every $j > j_0$,
\[
	C\tau_A(2^{-j}) + C \widehat \tau_A(R)R^{-n} 2^{-jn}   < \frac{C(n,\Lambda)}{2\max(c_{db},{ 2^n})}.
\]
 Therefore, if  $2^{-N} \leq |x-y|< 2^{-N+1}$ for some $N \in \mathbb{N}$ so that $N>j_0$,  it holds that 
 $$
 C\tau_A(|x-y|/2) + C \widehat \tau_A(R)R^{-n} {\frac{|x-y|^n}{2^n}}< \max(c_{db},2^n)\frac{C(n,\Lambda)}{2\max(c_{db},2^n)}=C(n,\Lambda)/2.
 $$
 Therefore, for $|x-y|/2< 2^{-j_0}\eqqcolon r_0$,
\[
	|K(x,y)| \gtrsim |x-y|^{-n},
\]
which proves  the lemma.
\end{proof}

\vvv

\section{The Main Lemmas}\label{section:main_lemma}

Let $A$ be a uniformly elliptic matrix as in Lemma \ref{lem:main_pw_estimate}. In particular, we recall that we introduced the function $\tau_A$ in \eqref{eq:defin_tau_A}.

\begin{lemma}[Main Lemma $\I$]\label{lem:main_lemma}
	Let $A$ be a uniformly elliptic matrix in $\Rn1$, $n\geq 2$, satisfying  $A\in \widetilde \DMO$.
	Let $Q$ be a cube in $\Rn1$ with center $x_Q$ and side-length $\ell(Q) \lesssim 1$,
	and let $\nu\in M^n_+(\Rn1)$ be supported on $Q$ and have $n$-growth constant $c_0>0$.
	{Assume also that there exist a Borel set $\Omega_Q$ and a constant $M\geq 1$ such that \[B(x_Q,\ell(Q))\subset \Omega_Q\subset B(x_Q,M\ell(Q))\] and $({\bar{A}}_{\Omega_Q})_s\coloneqq\avint_{\Omega_Q} A_s=Id$.} Let $T_\nu$ denote the gradient of the single layer potential associated with the matrix $A$.
	Then, there exist positive constants $C'=C'(n,c_0)$ and $C''=C''(n,\Lambda, M,c_0)$ such that for $\delta>0$ we have
	\begin{align}\label{eq:main lemma}
		\|T_{\nu,\delta}-\omega_n^{-1}\mathcal R_{\nu,\delta}\|_{L^2(\nu)\to L^2(\nu)}&\leq C' \mathfrak I_{\tau_A}(\ell(Q)) + C' \widehat \tau_A(\ell(Q))\\
		&\qquad\qquad +  C'' \mathfrak I_{\omega_A}(\ell(Q))^{1/2}\|\mathcal R_{\nu,\delta}\|_{L^2(\nu)\to L^2(\nu)}.\notag
	\end{align}
\end{lemma}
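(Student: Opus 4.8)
The plan is to write the kernel of $T_{\nu,\delta}-\omega_n^{-1}\mathcal R_{\nu,\delta}$, on the truncation region $\{|x-y|>\delta\}$, as a telescoping sum of the three frozen--coefficient kernels $\mathcal K^1_\Theta$, $\mathcal K^2_\Theta$, $\mathcal K^3_\Theta$ introduced in Subsection \ref{subsec:three-steps}, and then to bound the three resulting truncated operators one at a time. Recall $\Theta(x,y;A_0)=\Theta(x-y,0;A_0)$ for constant matrices $A_0$, and that by hypothesis $(\bar A_{\Omega_Q})_s=Id$, so \eqref{eq:theta_sym} and \eqref{eq:gradient_fund_sol_const_matrix} give $\nabla_1\Theta(x-y,0;\bar A_{\Omega_Q})=\nabla_1\Theta(x-y,0;Id)=\omega_n^{-1}\frac{x-y}{|x-y|^{n+1}}$, that is, $\omega_n^{-1}$ times the $n$-dimensional Riesz kernel. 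Taking the averaging scale in $\mathcal K^2_\Theta$ to be $r\coloneqq|x-y|$, one then has, for $x\neq y$ with $|x-y|>\delta$,
\[
\nabla_1\Gamma_A(x,y)-\omega_n^{-1}\,\frac{x-y}{|x-y|^{n+1}}
=\mathcal K^1_\Theta(x,y)+\mathcal K^2_\Theta(x,y)+\mathcal K^3_\Theta(x,x-y),
\]
the interior kernels $\nabla_1\Theta(x-y,0;\bar A_{x,|x-y|/2})$ and $\nabla_1\Theta(x-y,0;\bar A_{x,\delta/2})$ cancelling in pairs. Multiplying by $\chi_{\{|x-y|>\delta\}}$ and integrating against $f\,d\nu$ yields the operator identity
\[
T_{\nu,\delta}-\omega_n^{-1}\mathcal R_{\nu,\delta}=\widetilde T^{\textup{Id},1}_{\nu,\delta}+\widetilde T^{\textup{Id},2}_{\nu,\delta}+T_{\mathcal K^3_\Theta,\nu,\delta},
\]
so that \eqref{eq:main lemma} will follow from the triangle inequality once each summand is estimated.

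For the first two summands I would apply Lemma \ref{lem:truncatedL2} with $L=\textup{Id}$ and $R\coloneqq\diam(\supp\nu)\le\sqrt{n+1}\,\ell(Q)$; since $\|\textup{Id}\|_{\textup{op}}=1$ and $\ell(Q)\lesssim 1$ (so $R_0$ may be fixed as an absolute constant), this gives $\sup_{\delta>0}\|\widetilde T^{\textup{Id},1}_{\nu,\delta}\|_{L^2(\nu)\to L^2(\nu)}\lesssim\mathfrak I_{\tau_A}(R)+\widehat\tau_A(R)$ and $\sup_{\delta>0}\|\widetilde T^{\textup{Id},2}_{\nu,\delta}\|_{L^2(\nu)\to L^2(\nu)}\lesssim\mathfrak I_{\tau_A}(R)$. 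Since $\omega_A$ is $\kappa$-doubling, so are $\tau_A=\mathfrak I_{\omega_A}+\mathfrak L^n_{\omega_A}$ and $\widehat\tau_A=\mathfrak I_{\omega_A}+\mathfrak L^{n-1}_{\omega_A}$ (by Lemma \ref{lem:mod_cont_large_DS1} and \eqref{eq:omega<dini}); hence $\mathfrak I_{\tau_A}$ and $\widehat\tau_A$ are doubling, and $R\le\sqrt{n+1}\,\ell(Q)$ lets me replace $R$ by $\ell(Q)$ up to a dimensional factor, giving the terms $C'\mathfrak I_{\tau_A}(\ell(Q))+C'\widehat\tau_A(\ell(Q))$. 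For the third summand I would invoke Lemma \ref{lem:estimate_norm_K3} directly --- its hypotheses ($\nu$ supported on $Q$, $n$-growth constant $c_0$, and $B(x_Q,\ell(Q))\subset\Omega_Q\subset B(x_Q,M\ell(Q))$) are precisely those assumed here --- to obtain
\[
\|T_{\mathcal K^3_\Theta,\nu,\delta}\|_{L^2(\nu)\to L^2(\nu)}\le C''\,\mathfrak I_{\omega_A}(\ell(Q))^{1/2}\,\|\mathcal R_{\nu,\delta}\|_{L^2(\nu)\to L^2(\nu)}.
\]
Adding the three bounds yields \eqref{eq:main lemma}.

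At this point there is no genuinely hard step remaining: the substantive work lives in Lemma \ref{lem:main_pw_estimate} (the frozen--coefficients pointwise estimate), in Lemmas \ref{lem:lem_1_prep_main_lemma}--\ref{lem:lem_2_prep_main_lemma} (the change--of--averaging--scale bounds), and in Lemma \ref{lem:estimate_norm_K3} (the spherical--harmonics comparison with the Riesz transform). What does require attention is the bookkeeping: one must perform the telescoping with a single common truncation $\{|x-y|>\delta\}$, which is why everything is done with $L=\textup{Id}$ and no change of truncation is needed; and one must take the averaging scale in $\mathcal K^2_\Theta$ equal to $|x-y|$, so that on the truncation region the constraint $\delta<r$ of Lemma \ref{lem:lem_1_prep_main_lemma} holds automatically and the truncated kernels $\mathcal K^1_\Theta\,\chi_{\{|x-y|>\delta\}}$ and $\mathcal K^2_\Theta\,\chi_{\{|x-y|>\delta\}}$ are bona fide $(\tau_A,n)$- and $(\mathfrak I_{\omega_A},n)$-kernels with $\delta$-independent constants, as needed to apply Lemma \ref{lemma:lemma_int_op_bd}. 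The only mildly delicate verification is the string of doubling and monotonicity facts used to pass from $\diam(\supp\nu)$ to $\ell(Q)$ (and, if one insists on the stated constant dependencies, to absorb the $\Lambda$-dependence using $\ell(Q)\lesssim 1$), which follows from \eqref{eq:mod_cont_sum}, \eqref{eq:omega<dini} and Lemma \ref{lem:mod_cont_large_DS1}.
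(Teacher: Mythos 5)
Your decomposition into $\mathcal K^1_\Theta+\mathcal K^2_\Theta+\mathcal K^3_\Theta$ (with the averaging scale in $\mathcal K^2_\Theta$ set to $|x-y|$ and a common truncation $\{|x-y|>\delta\}$), followed by Lemma \ref{lem:truncatedL2} for the first two pieces and Lemma \ref{lem:estimate_norm_K3} for the third, is exactly the argument in the paper; the paper merely packages the first two pieces as $T_{\nu,\delta}-\bar T_{\nu,\delta}$ before invoking Lemma \ref{lem:truncatedL2}, which amounts to the same telescoping. The bookkeeping you flag (passing from $\diam(\supp\nu)$ to $\ell(Q)$ via doubling, and identifying $\nabla_1\Theta(\cdot,0;\bar A_{\Omega_Q})$ with the Riesz kernel via $(\bar A_{\Omega_Q})_s=Id$) is correct and matches the paper's treatment.
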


\begin{proof}
	{ Let $\delta>0$ and }
	\[
	\bar T_{\nu,\delta} f(x)\coloneqq \int_{|x-y|>\delta} \nabla_1 \Theta(x-y,0; \bar A_{x,\delta/2})f(y)\, d\nu(y).
	\]
By Lemma \ref{lem:truncatedL2} we have
	\begin{equation}\label{eq:main_lemma_estim1}
		\|T_{\nu,\delta} - \bar T_{\nu,\delta}\|_{L^2(\nu)\to L^2(\nu)}\lesssim_{n,\Lambda} \mathfrak I_{\tau_A}(\ell(Q)) +  \widehat \tau_A(\ell(Q)).
	\end{equation}
	Moreover,
	\[
		\mathcal K^3_\Theta(x,z)\overset{\eqref{eq:definition_K_3}}{=} \nabla_1 \Theta (z,0;\bar A_{x,\delta/2}) - \nabla_1 \Theta\bigl(z,0;\bar A_{\Omega_Q}\bigr)=\nabla_1 \Theta (z,0;\bar A_{x,\delta/2}) - \omega^{-1}_n\frac{z}{|z|^{n+1}},
	\]
	where the second equality holds because of the assumption $(\bar A_{\Omega_Q})_s=Id$. Hence $\bar T_{\nu,\delta}-\omega_n^{-1}\mathcal R_{\nu,\delta}=T_{\mathcal K^3_\Theta, \nu, \delta},$ so Lemma \ref{lem:estimate_norm_K3} concludes the proof of \eqref{eq:main lemma}.
\end{proof}

\vv

In the next lemma we denote by $\mathcal R_{\mu}$ and $T_{\mu}$ the principal values of the corresponding singular integral operators.

{
	\begin{lemma}[Main Lemma $\II$]\label{lem:main2}
		Let $A$ be a uniformly elliptic matrix in $\Rn1$, $n\geq 2$, satisfying  $A\in \widetilde \DMO$, and let $Q$ be a cube in $\Rn1$ with center $x_Q$ and side-length $\ell(Q) \lesssim 1$. 	Let also  $\Omega_Q$  be a Borel set and $M\geq 1$ a constant  such that \[B(x_Q,\ell(Q))\subseteq \Omega_Q\subseteq B(x_Q,M\ell(Q))\] and ${\bar{A}}_{\Omega_Q}\coloneqq\avint_{\Omega_Q} A_s=Id$. {Let $\mu$ be a non-negative Radon measure on $\Rn1$ with compact support.} Assume  that for some integer $N>0$ we have $2^N\ell(Q) \leq \diam(\supp \mu)$  and for a constant $C_0>0$, the measure $\mu$ is such that
		\begin{align}\label{eq:main2-upperADR}
			&\Theta_\mu\bigl(B(x,r)\cap 2^N Q \bigr)\leq C_0 \Theta_\mu(2^NQ),\,\,\qquad\quad\textup{ for all }\, x\in 2^N Q, \,\,0< r\leq 2^N \ell(Q),\\
			&   \mathcal P^N_{\omega,\mu}(Q)  \leq C_0 \mathfrak I_{\alpha_A}\bigl( 2^{-N}\bigr)\Theta_\mu(2^NQ)\label{eq:main2-Poissondoubling}.
		\end{align}
		If $T_\mu$ denotes the gradient of the single layer potential associated with the matrix $A$ and there exists $\tau \in(0, 1)$ such that
		\begin{equation}\label{eq:small_av_T_1}
			\Bigl(\avint_Q \Bigl| T_{\mu}1(x)  -  \avint_QT_{\mu}1 \Bigr|^2 \,d\mu(x)\Bigr)^{1/2} \leq \tau^{1/2}\Theta_\mu(2^NQ),
		\end{equation}
		then it holds that
		\begin{equation*}
			\begin{split}
				\Big(\avint_Q &\Bigl| \mathcal R_{\mu}1(x)  - \avint_Q \mathcal R_{\mu}1 \Bigr|^2 \,d\mu(x)\Big)^{1/2} \leq C_1 \Theta_\mu(2^NQ)\Bigl(\tau^{1/2} + \mathfrak I_{\alpha_A}\bigl(2^{-N}\bigr)\Bigr)\\
				&+  C_1\Theta_\mu(2^NQ)\Bigl(\vartheta\bigl(2^N \ell(Q)\bigr)  + \mathfrak I_{\omega_A}(2^N\ell(Q))^{1/2} \|\mathcal R_{\mu}\|_{L^2(\mu|_{2^NQ})\to L^2(\mu|_{2^N Q})  }\Bigr),
			\end{split}
		\end{equation*}
		where $\vartheta(8 \ell(Q) )\coloneqq\mathfrak I_{\tau_A}(8 \ell(Q)) + \widehat \tau_A(8\ell(Q))$, and $C_1$ depends on $n, \Lambda, c_0, C_0, M$, and $\diam(\supp \mu)$.
	\end{lemma}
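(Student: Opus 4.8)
The plan is to transfer the smallness of the oscillation of $T_\mu 1$ on $Q$ to the analogous oscillation of $\mathcal R_\mu 1$, using the three-step perturbation machinery (in particular Main Lemma~I) together with the growth hypotheses \eqref{eq:main2-upperADR} and \eqref{eq:main2-Poissondoubling}. First I would fix the normalization: since $(\bar A_{\Omega_Q})_s = Id$, Main Lemma~I compares $T_{\nu,\delta}$ with $\omega_n^{-1}\mathcal R_{\nu,\delta}$, so the quantity we really want to control is the oscillation of $\omega_n \mathcal R_\mu 1$; the factor $\omega_n$ is harmless and I will suppress it in this sketch. The natural localization is to split $\mu = \mu|_{2^N Q} + \mu|_{(2^N Q)^c}$, and correspondingly $T_\mu 1 = T_{\mu|_{2^N Q}} 1 + T_{\mu|_{(2^N Q)^c}} 1$ and similarly for $\mathcal R$. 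For the far part, the key point is that for $x \in Q$ the kernel $\nabla_1\Gamma_A(x,y)$ with $y \notin 2^N Q$ is smooth in $x$ on the scale $\ell(Q)$, and Lemma~\ref{lem:estim_fund_sol}(4) gives a Calderón--Zygmund-type modulus of continuity estimate; combining this with the $\alpha_A$-Poisson-type sum bound \eqref{eq:main2-Poissondoubling} (note $\alpha_A(t) = t + t^\beta + \omega_A(t)$ is exactly the sum of the three error terms appearing in \eqref{eq:continuityGamma} plus the $t^\beta$ Hölder term) shows that
\[
	\avint_Q \Bigl| T_{\mu|_{(2^N Q)^c}}1(x) - \avint_Q T_{\mu|_{(2^N Q)^c}}1 \Bigr|^2\, d\mu(x) \lesssim C_0^2\, \mathfrak I_{\alpha_A}(2^{-N})^2\, \Theta_\mu(2^N Q)^2,
\]
and the same estimate holds verbatim with $T$ replaced by $\mathcal R$ (the Riesz kernel is even smoother). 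Hence the difference of the far parts of $T_\mu 1$ and $\mathcal R_\mu 1$, measured in oscillation over $Q$, is $\lesssim \mathfrak I_{\alpha_A}(2^{-N})\,\Theta_\mu(2^N Q)$.

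Next I would handle the local part. Write $\nu \coloneqq \mu|_{2^N Q}$, which by \eqref{eq:main2-upperADR} has $n$-growth with constant $\lesssim C_0 \Theta_\mu(2^N Q)$ on balls of radius up to $2^N\ell(Q)$; after this rescaling one may regard $\nu$ as a measure with $n$-growth constant $c_0' \lesssim C_0$ times $\Theta_\mu(2^N Q)$. Main Lemma~I, applied with the cube $2^N Q$ in place of $Q$ (so $\ell$ there is $2^N\ell(Q)$) and with $\delta$ chosen small (then let $\delta \to 0$ to pass to principal values, which exist by Proposition~\ref{theorem_pv_layer_pot}), gives
\[
	\| T_{\nu} - \omega_n^{-1}\mathcal R_{\nu} \|_{L^2(\nu)\to L^2(\nu)} \lesssim \Theta_\mu(2^N Q)\Bigl( \mathfrak I_{\tau_A}(2^N\ell(Q)) + \widehat\tau_A(2^N\ell(Q)) + \mathfrak I_{\omega_A}(2^N\ell(Q))^{1/2} \|\mathcal R_\nu\|_{L^2(\nu)\to L^2(\nu)}\Bigr).
\]
Applying this operator norm bound to the function $f \equiv 1$ (or rather $f = \chi_{2^N Q}$, which equals $1$ on the support of $\nu$) and then averaging its restriction to $Q$ controls $\bigl(\avint_Q |T_\nu 1 - \mathcal R_\nu 1|^2 d\mu\bigr)^{1/2}$ — up to the additive constants — by the right-hand side above, modulo recentering (subtracting the $Q$-average of each, which only improves things by Jensen). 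Here $\vartheta(2^N\ell(Q)) = \mathfrak I_{\tau_A}(2^N\ell(Q)) + \widehat\tau_A(2^N\ell(Q))$ is exactly the term appearing in the statement, and the $\mathfrak I_{\omega_A}(2^N\ell(Q))^{1/2}\|\mathcal R_\mu\|_{L^2(\mu|_{2^N Q})}$ term comes from the $\mathcal K^3$ contribution via Lemma~\ref{lem:estimate_norm_K3}.

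Finally I would assemble the pieces by the triangle inequality. We have
\[
	\Bigl(\avint_Q |\mathcal R_\mu 1 - \avint_Q \mathcal R_\mu 1|^2 d\mu\Bigr)^{1/2}
	\le \omega_n\Bigl(\avint_Q |T_\mu 1 - \avint_Q T_\mu 1|^2 d\mu\Bigr)^{1/2} + (\text{local error}) + (\text{far error}),
\]
where the first term is $\le \omega_n\tau^{1/2}\Theta_\mu(2^N Q)$ by the hypothesis \eqref{eq:small_av_T_1}, the far error is $\lesssim \mathfrak I_{\alpha_A}(2^{-N})\Theta_\mu(2^N Q)$ from the first step, and the local error is bounded by the Main Lemma~I output from the second step; collecting constants into a single $C_1 = C_1(n,\Lambda,c_0,C_0,M,\diam(\supp\mu))$ yields the claimed inequality. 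I expect the main obstacle to be the bookkeeping in the far-part estimate: one must verify carefully that the exponential decay $2^{-j}$ coming from the $\dist$-type factor in \eqref{eq:continuityGamma}, the polynomial Hölder factor, and the $\omega_A$ factor all combine with the dyadic densities $\Theta_\mu(2^j Q)$ to reproduce precisely $\mathfrak I_{\alpha_A}(2^{-N})\Theta_\mu(2^N Q)$ — this uses the definition \eqref{eq:N-poisson} of $\mathcal P^N_{\omega,\mu}$ and the doubling of $\alpha_A$, and the telescoping/reindexing identity displayed in the discussion after Corollary~\ref{cor:elliptic_GSTo}. A secondary technical point is the justification of passing to principal values and of the recentering (replacing $m_B$-type averages by cube averages), which is routine given Proposition~\ref{theorem_pv_layer_pot} and the compact support of $\mu$.
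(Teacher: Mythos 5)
Your proof takes a genuinely different route from the paper's, though the two are really dual presentations of the same estimate. Where you split $\omega_n T_\mu 1 - \mathcal R_\mu 1$ directly and bound the oscillation over $Q$ of its near and far pieces, the paper works by duality: using the representation $\|f - m_Q(f,\mu)\|_{L^2(\mu|_Q)} \approx \sup\{|\int f\,\vec g\, d\mu| : \vec g \in L^2_0(\mu,Q;\Rn1),\ \|\vec g\|_{L^2(\mu)}=1\}$, they reduce everything to the bilinear form $\int \mathcal R^*_\mu\vec g\, d\mu$, split off the $T_\mu$-pairing (giving the $\tau^{1/2}$ term via \eqref{eq:small_av_T_1}), and then split $\int\mathcal S_\mu\cdot\vec g\, d\mu$ over $x\in 2^N Q$ versus its complement. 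The ingredients are identical in both cases: Main Lemma I applied to $\mu|_{2^N Q}$ on the cube $2^N Q$ (with growth constant $\lesssim C_0\Theta_\mu(2^N Q)$ supplied by \eqref{eq:main2-upperADR}) for the local piece, the Calder\'on--Zygmund modulus of continuity together with the Poisson sum $\mathcal P^N_{\omega,\mu}$ for the far piece, and Jensen's inequality for recentering. The only structural difference is which variable of $\mathfrak K$ the cancellation is run through: the paper subtracts $\mathfrak K(x,x_Q)$ in the $y$-variable (enabled by $\int\vec g\,d\mu=0$), you subtract $T_{\mu|_{(2^N Q)^c}}1(x_Q)$ in the $x$-variable (the oscillation seminorm); both use \eqref{eq:continuityGamma}. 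Your formulation has the slight advantage of applying the hypothesis \eqref{eq:small_av_T_1} to $T_\mu 1$ directly, without passing through adjoints.

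One step that you gloss over deserves scrutiny. In the near-part estimate you apply the Main Lemma~I operator bound to $f\equiv 1$ and then restrict the $L^2$-norm to $Q$: this produces $\|\mathcal S_\nu 1\|_{L^2(\nu)} \le \|\mathcal S_\nu\|_{L^2(\nu)\to L^2(\nu)}\,\nu(2^N Q)^{1/2}$, and after dividing by $\mu(Q)^{1/2}$ to form the $\avint_Q$, a factor $\bigl(\mu(2^N Q)/\mu(Q)\bigr)^{1/2}$ survives. This ratio is not bounded by the stated hypotheses \eqref{eq:main2-upperADR}--\eqref{eq:main2-Poissondoubling}, which give only upper density control and no lower bound on $\mu(Q)$. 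The issue is confined to the $\mathcal K^3_\Theta$ contribution from Lemma~\ref{lem:estimate_norm_K3}, which is a genuine Calder\'on--Zygmund kernel and so only admits an $L^2$ operator bound; the $\mathcal K^1_\Theta$ and $\mathcal K^2_\Theta$ pieces are $(\theta,n)$-kernels and Lemma~\ref{lem:lem_estim_dini_integr_growth} gives a pointwise bound there, so they do produce $\mu(Q)^{1/2}$. The paper's $II_1$ line exhibits exactly the same feature but reports $\mu(Q)^{1/2}$; when the lemma is invoked inside Corollary~\ref{cor:elliptic_GSTo} the additional $P_{\beta,\mu}$-doubling in the ambient hypotheses does control this ratio (with a constant depending on $N$), so the application is sound, but a self-contained proof of the lemma as stated should either carry the factor $\bigl(\mu(2^N Q)/\mu(Q)\bigr)^{1/2}$ explicitly or add a hypothesis taming it. You should note this rather than absorbing it silently into ``up to the additive constants.''
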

\begin{proof}
	Note that  \eqref{eq:main lemma} still holds if  we replace the truncated singular integrals on its left hand-side by  their principal values. This is an easy application of Fatou's lemma and the existence of principal values {given by Proposition \ref{theorem_pv_layer_pot}}.

	For brevity, we write 
	\[
	m_Q(f, \mu)=\avint_Q f\,d\mu. 
	\]
	We define
	\[
		L^2_0(\mu,Q)\coloneqq \bigl\{f\in L^2(\mu):\supp f\subset Q,\, \quad m_Q(f,\mu)=0\bigr\}
	\]
	and by $L^2_0(\mu,Q;\Rn1)$ its vector-valued analogue.
	The space $L^2_0(\mu,Q)$ endowed with the norm $\|\cdot\|_{L^2(\mu)}$ is a Hilbert space whose Banach dual is the space of functions in $L^2(\mu)$ 	modulo an additive constant and equipped with the norm $\|\cdot\|_{L^2(\mu)}$
	(see e.g. \cite[1.2.2, p. 143]{St93}). Moreover, for $f\in L^2(\mu,Q)$ it holds
	\begin{equation}\label{eq:duality_L_0}
		\begin{split}
			\|f-m_Q(f,\mu)\|_{L^2(\mu,Q)}&\approx \sup_{g\in L^2_0(\mu,Q),\atop \|g\|_{L^2(\mu)}=1}\int (f-m_Q(f,\mu))g\, d\mu\\
			&= 	\sup_{g\in L^2_0(\mu,Q),\atop\|g\|_{L^2(\mu)}=1}\int fg\, d\mu,		
		\end{split}
	\end{equation}
	where the second identity follows from $m_Q(g,\mu)=0$.
	
	Then we have that
	\begin{equation*}
		\begin{split}
			\biggl(\int_Q \bigl|\mathcal R_\mu 1(x)- m_Q(\mathcal R_\mu1, \mu(x))\bigr|^2\, d\mu\biggr)^{1/2}&\approx \sup_{\vec g\in L^2_0(\mu,Q;\Rn1),\atop\|\vec g\|_{L^2(\mu;\Rn1)}=1} \biggl|\int \mathcal R_\mu 1\cdot \vec{g}\, d\mu\biggr|\\
			&=\sup_{\vec g\in L^2_0(\mu,Q;\Rn1),\atop\|\vec g\|_{L^2(\mu;\Rn1)}=1} \biggl|\int \mathcal R_\mu^*\vec g\, d\mu\biggr|,
		\end{split}
	\end{equation*}
	where $\mathcal R^*_\mu \vec g(x)\coloneqq \int \tfrac{y-x}{|y-x|^{n+1}}\cdot \vec g(y)\, d\mu(y)$. We also denote
	$\mathcal R_\mu\cdot  \vec g(x)\coloneqq \int \tfrac{x-y}{|x-y|^{n+1}}\cdot \vec g(y)\, d\mu(y)$, so that $\mathcal R^*_\mu g(x)=-\mathcal R_\mu \cdot \vec g(x)$ for all $x\in \Rn1$.

	Then, for $\vec g \in L^2(\mu,Q;\Rn1)$ with $\|\vec g\|_{L^2(\mu,Q;\Rn1)}=1$ and $\mathcal S_\mu \cdot \vec g\coloneqq \omega_n T_\mu\cdot \vec g - \mathcal R_\mu\cdot \vec g$, triangle inequality yields
	\begin{equation*}
		\begin{split}
			\biggl|\int \mathcal R_\mu^*\vec g\, d\mu\biggr|&=\biggl|\int \mathcal R_\mu\cdot \vec g\, d\mu\biggr|\\
			&\lesssim \biggl|\int  T_\mu\cdot \vec g\, d\mu\biggr| + \biggl|\int \mathcal S_\mu\cdot \vec g\, d\mu\biggr| \eqqcolon I + II.
		\end{split}
	\end{equation*}
	By \eqref{eq:duality_L_0} and the hypothesis \eqref{eq:small_av_T_1} we have
	\begin{equation}\label{eq:estim_lemma42_I}
		\begin{split}
			I \lesssim \|T_\mu1 - m_{Q}(T_\mu1, \mu)\|_{L^2(\mu)}\leq \tau^{1/2} \Theta_\mu(2^NQ)\mu(Q)^{1/2}.
		\end{split}
	\end{equation}
	We denote
	\[
		\mathfrak K(x,y)\coloneqq \omega_n \nabla_1\Gamma_A(x,y) - \frac{x-y}{|x-y|^{n+1}}, \qquad x,y\in \Rn1, x\neq y.
	\]
	In order to estimate $I$, we first observe that Lemma \ref{lem:estim_fund_sol} and the standard Calder\'on-Zygmund properties of the Riesz kernel imply that
	\begin{equation}\label{eq:CZ_mathfrak_K}
		|\mathfrak K(x,y)-\mathfrak K(x,z)|\lesssim_{n,\Lambda, R} \alpha_A\Bigl(\frac{|y-z|}{|x-y|}\Bigr)|x-y|^{-n}
	\end{equation}
	for $2|y-z|\leq |x-y|\leq R$, where
	\begin{equation}
		 \alpha_A(t)\coloneqq t^\beta + t + \omega_A(t), \qquad t>0.
	\end{equation}
	Moreover, $A\in \widetilde\DMO$ implies $\alpha_A \in \DS(\kappa)$.
	
	Now, we write
	\begin{equation*}
		\begin{split}
			II\leq  \biggl|\int_{2^NQ} \mathcal S_\mu\cdot \vec g\, d\mu \biggr|+ \biggl|\int_{\Rn1\setminus 2^NQ} \mathcal S_\mu\cdot \vec g\, d\mu \biggr|\eqqcolon II_1 + II_2.
		\end{split}
	\end{equation*}
	In order to estimate $II_1$ we apply Lemma \ref{lem:main_lemma}, Cauchy-Schwarz inequality and the assumption $\|\vec g\|_{L^2(\mu;\Rn1)}=1$, which give
	\begin{equation*}
		\begin{split}
			II_1 &\leq  C' \Theta_\mu(2^NQ)\vartheta\bigl(2^N\ell(Q)\bigr) \mu(Q)^{1/2} \\
			&\qquad + C''\Theta_\mu(2^NQ)\mathfrak I_{\omega_A}\bigl(2^N\ell(Q)\bigr)^{1/2}\|\mathcal R_\mu\|_{L^2(\mu|_{2^NQ})\to L^2(\mu|_{2^NQ})}\mu(Q)^{1/2},
		\end{split}
	\end{equation*}
	where the multiplicative factor $\Theta_\mu(2^NQ)$ on the right hand side is a consequence of \eqref{eq:main2-upperADR}.
	
	Denote by $x_Q$ the center of the cube $Q$. To estimate $II_2$, observe that $\vec g\in L^2_0(\mu,Q;\Rn1)$ and \eqref{eq:CZ_mathfrak_K} imply that
	\begin{equation}\label{eq:estimate_lemma42_II2}
		\begin{split}
			II_2 &\leq  \int_{\Rn1 \setminus 2^NQ} \int_Q |\mathfrak K(x,y) - \mathfrak K(x,x_Q)||\vec g(y)|\, d\mu(y)\, d\mu(x)\\
				&\lesssim \int_{\Rn1 \setminus 2^NQ} \int_Q \alpha_A\Bigl(\frac{|y-x_Q|}{|x-y|}\Bigr)\frac{1}{|x-y|^n}|\vec g(y)|\, d\mu(y)\, d\mu(x)\\
				&\leq \sum_{j\geq N} \int_{2^{j+1}Q\setminus 2^j Q}\int_Q \alpha_A\Bigl(\frac{|y-x_Q|}{|x-y|}\Bigr)\frac{1}{|x-y|^n}|\vec g(y)|\, d\mu(y)\, d\mu(x)\lesssim \mathcal P^N_{\omega,\mu}(Q)\mu(Q)^{1/2},
		\end{split}
	\end{equation}
	where the last inequality follows from the definition of $\mathcal P^N_{\omega,\mu}(Q)$, the doubling property of $\alpha_A$, and the assumption $\|\vec g\|_{L^2(\mu;\Rn1)}= 1$.
	
	The bounds \eqref{eq:estim_lemma42_I}, \eqref{eq:estimate_lemma42_II2}, \eqref{eq:estimate_lemma42_II2}, and the assumption \eqref{eq:main2-Poissondoubling} conclude the proof of the lemma.
\end{proof}
}

\vvv

\section{The approximating measures}\label{section:approxmeasure}
Let $Q$ be a cube in $\Rn1$ and let $\nu\in M^n_+(Q)$. We fix a function $\varphi \in C^\infty_c(\Rn1)$ such that $\supp \varphi\subset B(0,1)$, {$0\leq \varphi\leq 2$}, and $\|\varphi\|_{L^1(\Rn1)}=1$. Given $\varepsilon>0$, we denote
\[
\varphi_{\varepsilon}(z)\coloneqq \frac{1}{\varepsilon^{n+1}}\varphi\Bigl(\frac{z}{\varepsilon}\Bigr)\qquad \text{ for }z\in \Rn1,
\]
and we define
\begin{equation}\label{eq:definition_measure_nu_varepsilon}
	\nu_\varepsilon\coloneqq \nu*\varphi_\varepsilon.
\end{equation}

The measure $\nu_\varepsilon$ here introduced is absolutely continuous with respect to the Lebesgue measure $\mathcal L^{n+1}$, its support is contained in the  set $\{x\in\Rn1:\dist(x,\supp\nu)\leq \varepsilon\}$ and it satisfies $\|\nu_\varepsilon\|=\|\nu\|$ for all $\varepsilon>0$. 
The following lemma shows that, under our hypotheses on the matrix $A$, the $L^2(\nu)$-boundedness of $T_\nu$ controls the $L^2(\nu_\varepsilon)$-boundedness of $T_{\nu_\varepsilon}$. 
\begin{lemma}\label{lemma:approx1}
	Let $A$ be a uniformly elliptic matrix in $\Rn1$, $n\geq 2$, satisfying  $A\in \widetilde \DMO$. Let $\nu\in M^n_+(Q)$ with growth constant $c_0>0$, and let $\nu_\varepsilon$ be as in \eqref{eq:definition_measure_nu_varepsilon} for $\varepsilon>0$. Then
	\begin{equation}\label{eq:L2nu_eps_toL2_nu}
		\|T_{\nu_\varepsilon}\|_{L^2(\nu_\varepsilon)\to L^2(\nu_\varepsilon)}\lesssim 1 + \|T_\nu\|_{L^2(\nu)\to L^2(\nu)},
	\end{equation}
	where the implicit constant depends on $n$, $c_0$, $\Lambda$ and $\diam(\supp \nu)$.
\end{lemma}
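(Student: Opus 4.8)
The goal is to transfer the $L^2(\nu)$-boundedness of $T_\nu$ to its mollification $T_{\nu_\varepsilon}$, with constants independent of $\varepsilon$. The plan is to expand $T_{\nu_\varepsilon}$ against a test function $f\in L^2(\nu_\varepsilon)$, rewrite everything in terms of $\nu$ via Fubini, and then reduce to $T_\nu$ acting on an auxiliary function modulo a Calder\'on--Zygmund-type error controlled by the maximal function and by the local growth of $\nu$. The first step is to note that $d\nu_\varepsilon = h_\varepsilon\, d\mathcal L^{n+1}$ with $h_\varepsilon(x)=\int \varphi_\varepsilon(x-z)\,d\nu(z)$, and that $\nu_\varepsilon$ has $n$-growth with a constant comparable to $c_0$ (uniformly in $\varepsilon$), since $\nu_\varepsilon(B(x,r))\le \nu(B(x,r+\varepsilon))\|\varphi\|_\infty/\varepsilon^{n+1}\cdot\varepsilon^{n+1}$ when $r\le \varepsilon$ and $\le c_0 (r+\varepsilon)^n \lesssim c_0 r^n$ when $r\ge\varepsilon$; in particular, by Lemma~\ref{lem:estim_fund_sol}, $T_{\nu_\varepsilon,\delta}$ is a bona fide operator with a locally Calder\'on--Zygmund kernel.

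Next I would fix $\delta>\varepsilon$ (the range $\delta\le\varepsilon$ is handled separately, see below) and write, for $f\in L^2(\nu_\varepsilon)$ and $g=f h_\varepsilon$ (so $f\,d\nu_\varepsilon = g\,d\mathcal L^{n+1}$),
\[
T_{\nu_\varepsilon,\delta}f(x)=\int_{|x-y|>\delta}\nabla_1\Gamma_A(x,y)\,g(y)\,dy.
\]
The idea is to discretize $g\,d\mathcal L^{n+1}$ at scale $\varepsilon$: cover $\Rn1$ by a grid of cubes $\{R_i\}$ of side-length $\sim\varepsilon$, set $c_i=\int_{R_i} g$, and compare $T_{\nu_\varepsilon,\delta}f(x)$ with $\sum_i \nabla_1\Gamma_A(x,x_{R_i})\,c_i$ (sum over $R_i$ not meeting $B(x,\delta)$). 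Using the smoothness estimate \eqref{eq:continuityGamma} of Lemma~\ref{lem:estim_fund_sol} together with $\ell(R_i)\sim\varepsilon\le\delta$, each local error is bounded by $\alpha_A(\varepsilon/|x-x_{R_i}|)|x-x_{R_i}|^{-n}\int_{R_i}|g|$, and summing over dyadic annuli and using the $n$-growth of $\nu_\varepsilon$ together with the Dini property of $\alpha_A$ (via Lemma~\ref{lem:lem_estim_dini_integr_growth}) bounds the total error pointwise by $C\,\mathcal M_{\nu_\varepsilon}f(x)$; a similar and easier estimate controls the contribution of the cubes straddling $\partial B(x,\delta)$. The discretized sum $\sum_i \nabla_1\Gamma_A(x,x_{R_i})c_i$ is, up to a harmless truncation adjustment, $T_{\mu_\varepsilon,\delta'}(\tilde f)(x)$ for a discrete measure $\mu_\varepsilon=\sum_i c_i'\delta_{x_{R_i}}$ that is a good approximation of $\nu$; comparing $\nabla_1\Gamma_A(\cdot,x_{R_i})$ with its average over $R_i$ against $d\nu$ — again by \eqref{eq:continuityGamma} — one replaces $\mu_\varepsilon$ by $\nu$ itself at the cost of another $\mathcal M_{\nu}$-type error. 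One then invokes the hypothesis that $T_\nu$ (hence $T_{\nu,\delta'}$ uniformly, by standard Cotlar-type truncation comparison which also costs only a maximal function) is bounded on $L^2(\nu)$, and finally transfers the resulting $L^2(\nu)$ bound back to $L^2(\nu_\varepsilon)$ using that $\|\mathcal M_\nu\|_{L^2(\nu)\to L^2(\nu)}\lesssim 1$, $\|\mathcal M_{\nu_\varepsilon}\|_{L^2(\nu_\varepsilon)\to L^2(\nu_\varepsilon)}\lesssim 1$, and that mollification nearly intertwines the two $L^2$ spaces (e.g.\ $\int (\mathcal M_\nu u)^2\,d\nu \lesssim \int u^2 \,d\nu_\varepsilon$-type comparisons for the relevant $u$). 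For the remaining range $0<\delta\le\varepsilon$, the difference $T_{\nu_\varepsilon,\delta}f - T_{\nu_\varepsilon,\varepsilon}f$ is supported on $|x-y|\le\varepsilon$, where $|\nabla_1\Gamma_A(x,y)|\lesssim|x-y|^{-n}$ and the $n$-growth of $\nu_\varepsilon$ gives a pointwise bound by $C\mathcal M_{\nu_\varepsilon}f(x)$, so this case is absorbed immediately.

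The main obstacle is the bookkeeping in the two discretization steps: one must check that replacing the absolutely continuous mass of $\nu_\varepsilon$ at scale $\varepsilon$ by point masses, and then relating that discrete measure to $\nu$, really does produce only errors of the form "constant times a Hardy--Littlewood maximal function", uniformly in $\varepsilon$ and $\delta$, and that the truncation parameters can be matched up (the truncation $|x-y|>\delta$ for $\nu_\varepsilon$ versus $|x-x_{R_i}|>\delta'$ for the discrete measure) without generating terms that are not controlled. This is exactly the place where the $\widetilde\DMO$ hypothesis enters in an essential way through the modulus-of-continuity estimate \eqref{eq:continuityGamma} and the fact that $\alpha_A$, and hence $\omega_A$, is a Dini function so that $\int_0^1\alpha_A(t)\,dt/t<\infty$; without a quantitative modulus of continuity for $\nabla_1\Gamma_A$ the discretization errors could not be summed. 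Everything else is routine: the uniform $n$-growth of $\nu_\varepsilon$, the $L^2$-boundedness of the maximal operators, and the standard comparison between truncated and principal-value operators.
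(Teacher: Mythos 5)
Your plan is essentially the same as the paper's: discretize the data at scale $\varepsilon$ against a cube grid, control the discretization error using the Calder\'on--Zygmund smoothness estimate \eqref{eq:continuityGamma} (which is exactly where $A\in\widetilde\DMO$ enters, via the Dini property of $\alpha_A$), dominate the resulting errors by maximal functions, and then invoke the $L^2(\nu)$-boundedness of $T_\nu$ on a suitably constructed function $\tilde f_\varepsilon$ satisfying $\|\tilde f_\varepsilon\|_{L^2(\nu)}\lesssim\|f\|_{L^2(\nu_\varepsilon)}$.

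The one structural deviation is your intermediate passage through an atomic measure $\mu_\varepsilon=\sum_i c_i'\delta_{x_{R_i}}$ before replacing it by a measure absolutely continuous with respect to $\nu$. The paper avoids that detour: it defines $\tilde\sigma_\varepsilon$ directly as the redistribution of the block masses $\sigma_\varepsilon(Q_i)$ onto $\nu|_{v(Q_i)}$, where $v(Q_i)=3Q_i\cap 3Q$. This one-step construction handles, automatically, a point your sketch glosses over: $\nu$ can very well vanish on the grid cube $R_i$ even when $\nu_\varepsilon(R_i)>0$, so the redistribution must be onto a \emph{dilated} region; the geometric fact that makes the density $\tilde f_\varepsilon=\sum_i\frac{\sigma_\varepsilon(Q_i)}{\nu(v(Q_i))}\chi_{v(Q_i)}$ well-defined and $L^2(\nu)$-bounded is precisely $\nu_\varepsilon(Q_i)\leq\nu(v(Q_i))$ (paper's \eqref{eq:nuep_leqnuv}, a direct consequence of $\supp\varphi_\varepsilon\subset B(0,\varepsilon)$ and $\ell(Q_i)>\varepsilon$). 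Also, your phrase ``mollification nearly intertwines the two $L^2$ spaces'' corresponds to a concrete step in the paper: showing via \eqref{eq:continuityGamma} that $T_\varepsilon\tilde\sigma_\varepsilon$ oscillates by at most $\mathcal{M}_\varepsilon\tilde\sigma_\varepsilon$ on each $3Q_i$ (\eqref{eq:estimTTlem62}), which, combined with $\nu_\varepsilon(Q_i)\leq\nu(3Q_i)$, lets one pass from $L^2(\nu_\varepsilon)$ to $L^2(\nu)$ in \eqref{eq:estimateIIIdeltaep}. With those two points filled in, your argument goes through and is not genuinely different from the paper's.
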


\begin{proof}
	Given $f\in L^2(\nu_\varepsilon)$ and $\varepsilon>0$, we define $\sigma_\varepsilon\coloneqq f\nu_\varepsilon$. Let $N\in \mathbb N$ be such that $ 2^{-N-1}\ell(Q)\leq \varepsilon<2^{-N}\ell(Q)$. Let $\{Q_i\}_i$ be a family of $\widetilde N\coloneqq 3^{n+1}2^{N(n+1)}$ cubes with disjoint interior and side-length $2^{-N}\ell(Q)$ that cover $3Q$. We denote by $x_{Q_i}$ the center of the cube $Q_i$.	For any $i=1,\ldots,\widetilde N$ we define the set
	\(
	v(Q_i)\coloneqq 3Q_i\cap 3Q,
	\)
	which consists of the union of at most $3^{n+1}$ cubes of side-length $2^{-N}\ell(Q)$.
	We also define
	\begin{equation}\label{eq:def_sigma_ep_i}
		\tilde \sigma_{\varepsilon, i}\coloneqq \frac{\sigma_{\varepsilon}(Q_i)}{\nu(v(Q_i))}\nu|_{v(Q_i)}=\Bigl(\frac{\sigma_{\varepsilon}(Q_i)}{\nu(v(Q_i))}\chi|_{v(Q_i)}\Bigr)\nu\eqqcolon \tilde f_{\varepsilon,i}\,\nu,
	\end{equation}
	and also
	\begin{equation}\label{eq:def_sigma_ep}
		\tilde \sigma_\varepsilon\coloneqq \sum^{\widetilde{N}}_{i=1} \tilde \sigma_{\varepsilon,i} = \sum^{\widetilde{N}}_{i=1} \tilde f_{\varepsilon,i} \,\nu\eqqcolon\tilde f_\varepsilon\,\nu.
	\end{equation}
	We claim that $\tilde f_\varepsilon \in L^2(\nu)$ satisfying
	\begin{equation}\label{eq:claim_norm_f_varepsilon}
		\|\tilde f_\varepsilon\|_{L^2(\nu)}\lesssim_n\|f\|_{L^2(\nu_\varepsilon)}.
	\end{equation}
	Observe that the choices of $\varepsilon$ and $N$ yield \[\{z\in\Rn1:\dist(z,Q_i)<\varepsilon\}\subset v(Q_i).\]
	Thus, an application of Fubini's theorem and the choice of the cut-off function $\varphi$ give
	\begin{equation}\label{eq:nuep_leqnuv}
		\begin{split}
			\nu_\varepsilon(Q_i)&=\int_{Q_i}\int \varphi_\varepsilon(x-y)\, d\nu(y)\, dx\\&=\int_{\{z\in\Rn1:\dist(z,Q_i)<\varepsilon\}}\int_{Q_i}\varphi_\varepsilon(x-y)\,dx \,d\nu(y) \\
			&\leq \nu\bigl(\{z\in\Rn1:\dist(z,Q_i)<\varepsilon\}\bigr)\leq \nu(v(Q_i)).
		\end{split}
	\end{equation}
	
	Note that there exists a dimensional constant $c_n>0$ such that, for $i=1,\ldots,\tilde N$, the set $v(Q_i)$ has non-empty intersection with at most $c_n$ different sets of the form $v(Q_j)$ for $j=1,\ldots,\tilde N$. Analogously, every cube $Q_j$ can be contained in at most $c_n$ different sets of the form $v(Q_i)$. For $i=1,\ldots,\tilde N$ we also define the cube $Q^*_i$ as a cube such that $v(Q_i^*)\cap v(Q_i)\neq \varnothing$ and, for all $j$ such that $v(Q_j)\cap v(Q_i)\neq \varnothing$, it holds
	\begin{equation}\label{eq:definition_Q_j_star}
		\frac{|\sigma_\varepsilon(Q_j)|}{\nu(v(Q_j))}\leq \frac{|\sigma_\varepsilon(Q^*_i)|}{\nu(v(Q^*_i))}.
	\end{equation}
	Hence, by elementary inequalities  and the definitions above we obtain
	\begin{equation}\label{eq:L2boundtildefepsilon}
		\begin{split}
			\|\tilde f_\varepsilon\|^2_{L^2(\nu)}&=\int \biggl|\sum^{\widetilde{N}}_{i=1} \frac{\sigma_\varepsilon(Q_i)}{\nu(v(Q_i))}\chi_{v(Q_i)}(x)	\biggr|^2\, d\nu(x)\\
			&= \sum^{\widetilde{N}}_{i=1} \frac{\sigma_\varepsilon(Q_i)^2}{\nu(v(Q_i))}  + \sum_{i, j =1, i\neq j}^{\widetilde{N}} \int \frac{\sigma_\varepsilon(Q_i)}{\nu(v(Q_i))}\frac{\sigma_\varepsilon(Q_j)}{\nu(v(Q_j))}\chi_{v(Q_i)\cap v(Q_j)}(x)\, d\nu(x)\\
			&\overset{\eqref{eq:definition_Q_j_star}}{\leq} \sum^{\widetilde{N}}_{i=1} \frac{\sigma_\varepsilon(Q_i)^2}{\nu(v(Q_i))}  + c_n\sum_{i=1}^{\widetilde{N}}\int \frac{\sigma_\varepsilon(Q^*_i)^2}{\nu(v(Q^*_i))^2}\chi_{v(Q_i)}(x)\, d\nu(x).
		\end{split}
	\end{equation}
	
	The first sum on the right hand side of \eqref{eq:L2boundtildefepsilon} satisfies
	\begin{equation}\label{eq:L2boundtildefepsilon2}
		\begin{split}
			\sum^{\widetilde{N}}_{i=1} \frac{(\sigma_\varepsilon(Q_i))^2}{\nu(v(Q_i))}&=\sum^{\widetilde{N}}_{i=1} \frac{1}{\nu(v(Q_i))}\biggl(\int_{Q_i}f\, d\nu_\varepsilon\biggr)^2\\
			&\leq \sum^{\widetilde{N}}_{i=1} \frac{\nu_\varepsilon(Q_i)}{\nu(v(Q_i))}\|f\chi_{Q_i}\|^2_{L^2(\nu_\varepsilon)}\overset{\eqref{eq:nuep_leqnuv}}{\leq} \|f\|^2_{L^2(\nu_\varepsilon)}
		\end{split}
	\end{equation}
	and, analogously, we have
	\begin{equation}\label{eq:L2boundtildefepsilon3}
		\sum_{i=1}^{\widetilde{N}}\int \frac{\sigma_\varepsilon(Q^*_i)^2}{\nu(v(Q^*_i))^2}\chi_{v(Q_i)}(x)\, d\nu(x)\lesssim_n \|f\|^2_{L^2(\nu_\varepsilon)}.
	\end{equation}
	Hence, by  \eqref{eq:L2boundtildefepsilon2} and \eqref{eq:L2boundtildefepsilon3}, we get  \eqref{eq:claim_norm_f_varepsilon}.
	
	\vv
	Let $K(x,y)\coloneqq \nabla_1\Gamma(x,y;A)$, for $x,y\in\Rn1$ with $x\neq y$ and, for $\delta>0$, we define $K_\delta(x,y)\coloneqq K(x,y) \chi_{B(0,\delta)^c}(x-y).$
	For $\delta\in (0,\varepsilon/2)$ we write
	\begin{align}
		T_\delta \sigma_\varepsilon(x)&= \int_{|x-y|< \varepsilon}K_\delta(x,y)\, d\sigma_\varepsilon(y) + \int_{|x-y|\geq \varepsilon}K_\delta(x,y)\, d\bigl(\sigma_\varepsilon - \tilde\sigma_\varepsilon\bigr)(y) \notag\\
		&\qquad\qquad  \qquad + \int_{|x-y|\geq \varepsilon}K_\delta(x,y)\, d\tilde\sigma_\varepsilon(y)\eqqcolon I_{\delta,\varepsilon}(x) + II_{\delta,\varepsilon}(x) + III_{\delta,\varepsilon}(x).\label{eq:Tdelta}
	\end{align}
	In order to estimate the first term, we observe that, by the definition of $\varphi$ and the growth of $\nu$, we have
	\begin{equation}\label{eq:measure_nu_ep_dy}
		\begin{split}
			\nu_\varepsilon\bigl(B(x,2^{-k}\varepsilon)\bigr)&=\frac{1}{\varepsilon^{n+1}}\int_{B(x,2^{-k}\varepsilon)}\int\varphi\Bigl(\frac{y-z}{\varepsilon}\Bigr)\, d\nu(z)\, dy\\
			&\leq \frac{2}{\varepsilon^{n+1}}\int_{B(x,2^{-k}\varepsilon)}\nu(B(y,\varepsilon))\, dy\leq \frac{2}{\varepsilon}\bigl|B(x,2^{-k}\varepsilon)\bigr|\lesssim_n \frac{\varepsilon^n}{2^{k(n+1)}}.
		\end{split}
	\end{equation}
	Hence, if $\M_{\nu_\varepsilon}$ stands for the centered Hardy-Littlewood maximal function
	\[
	\M_{\nu_\varepsilon}g(x)\coloneqq \sup_{r>0}\frac{1}{\nu_\varepsilon(B(x,r))}\int_{B(x,r)}|g(y)|\, d\nu_\varepsilon(y),\qquad \text{ for }g\in L^1_{\loc}(\nu_\varepsilon),
	\]
	the decay of $K$, a standard integration over dyadic annuli, and the definition of $\sigma_\varepsilon$ give
	\begin{equation}\label{eq:estimateIdeltaep}
		\begin{split}
			\bigl|I_{\delta,\varepsilon}(x)\bigr|&\lesssim \sum_{k=0}^\infty\int_{A(x,2^{-k-1}\varepsilon, 2^{-k}\varepsilon)}\frac{1}{|x-y|^n}\, d|\sigma_\varepsilon|(y)\\
			&\lesssim \sum_{k=0}^\infty \frac{2^{nk}}{\varepsilon^n}|\sigma_\varepsilon|\bigl(B(x,2^{-k-1}\varepsilon)\bigr)\\
			&\lesssim \sum_{k=0}^\infty \frac{2^{nk}}{\varepsilon^n}\nu_\varepsilon\bigl(B(x,2^{-k}\varepsilon)\bigr)\M_{\nu_\varepsilon}f(x)\overset{\eqref{eq:measure_nu_ep_dy}}{\lesssim}\M_{\nu_\varepsilon}f(x).
		\end{split}
	\end{equation}

	Let us estimate $II_{\delta_\varepsilon}(x)$. {Let $\beta>0$ be as in Lemma \ref{lem:estim_fund_sol}. For $i\in \{1,\ldots, 	\tilde N\}$, the fact that $\sigma_{\varepsilon,i}\coloneqq \sigma_\varepsilon|_{Q_i}$ and $\tilde \sigma_{\varepsilon, i}$ have equal total mass, Lemma \ref{lem:estim_fund_sol}, triangle inequality, and the choice of $N$ in the construction yield
		\begin{equation}\label{eq:estim_II_d_e_i}
			\begin{split}
				\biggl|\int_{|x-y|>\varepsilon} &K_\delta (x,y)\, d\bigl(\sigma_{\varepsilon,i} - \tilde \sigma_{\varepsilon,i}\bigr)(y)\biggr|\\
				&=\biggl|\int_{|x-y|>\varepsilon}\bigl(K_\delta (x,y)- K_\delta(x,x_{Q_i})\bigr)\, d\bigl(\sigma_{\varepsilon,i} - \tilde \sigma_{\varepsilon,i}\bigr)(y)\biggr|\\
				&  \leq \int_{|x-y|>\varepsilon}|K(x,y)- K(x,x_{Q_i})|\, d\bigl(|\sigma_{\varepsilon,i}|+|\tilde\sigma_{\varepsilon,i}|\bigr)(y)\\
				& \lesssim \int_{|x-y|>\varepsilon} \frac{|y-x_{Q_i}|^\beta}{|x-y|^{n+\beta}}\, d\bigl(|\sigma_{\varepsilon,i}|+|\tilde\sigma_{\varepsilon,i}|\bigr)(y) \\
				&\qquad\quad+ \int_{|x-y|>\varepsilon} \frac{1}{|x-y|^{n}}\int_0^{\frac{|y-x_{Q_i}|}{|x-y|}}\omega_A(t)\,\frac{dt}{t}\, d\bigl(|\sigma_{\varepsilon,i}|+|\tilde\sigma_{\varepsilon,i}|\bigr)(y)\\
				&\eqqcolon II'_{\delta,\varepsilon}+II''_{\delta,\varepsilon}.
			\end{split}
		\end{equation}
		Thus	
		\begin{equation}\label{eq:estim_II_prime}
			\begin{split}	
				II'_{\delta,\varepsilon}&\lesssim \varepsilon^\beta\int_{|x-y|>\varepsilon}\frac{1}{|x-y|^{n+\beta}}\, d\bigl(|\sigma_{\varepsilon, i}| + |\tilde\sigma_{\varepsilon, i}|\bigr)(y)\\
				&\lesssim \sum_{k=0}^\infty \frac{\varepsilon^\beta}{(2^k\varepsilon)^{n+\beta}}\Bigl(|\sigma_\varepsilon|\bigl(B(x,2^{k+1})\bigr) + |\tilde\sigma_\varepsilon|\bigl(B(x,2^{k+1})\bigr)\Bigr),
			\end{split}
		\end{equation}
		where the last inequality follows from a standard integration on dyadic annuli.
		Similarly, the second term can be bounded using the monotonicity of the function $\mathfrak I_{\omega_A}$ and integration on dyadic annuli. More precisely, we have
		\begin{equation}\label{eq:estim_II_second}
			\begin{split}
				II''_{\delta,\varepsilon}&\lesssim \int_{|x-y|>\varepsilon}\frac{1}{|x-y|^{n}}\mathfrak I_{\omega_A}\Bigl(\frac{\varepsilon}{|x-y|}\Bigr)\, d\bigl(|\sigma_{\varepsilon, i}| + |\tilde\sigma_{\varepsilon, i}|\bigr)(y)\\
				&\lesssim \sum_{k=0}^\infty \frac{\tau_A(2^{-k})}{(2^k\varepsilon)^{n}}\Bigl(|\sigma_\varepsilon|\bigl(B(x,2^{k+1})\bigr) + |\tilde\sigma_\varepsilon|\bigl(B(x,2^{k+1})\bigr)\Bigr),
			\end{split}
		\end{equation}
		where we also used the fact that $\mathfrak I_{\omega_A}(\cdot)\leq \tau_A(\cdot)$.
		Now, for $\mu\in M(\Rn1)$, we define the $n$-dimensional truncated radial maximal operator
		\[
		\M_{\varepsilon}\mu(x)\coloneqq \sup_{r\geq 2\varepsilon}\frac{|\mu|\bigl(B(x,r)\bigr)}{r^n},
		\]
		and the truncated centered maximal function
		\[
		\mathcal M_{\mu, \varepsilon}g(x)\coloneqq \sup_{r>2\varepsilon}\frac{1}{\mu(B(x,r))}\int_{B(x,r)}|g(y)|\, d\mu(y),\qquad \text{ for }g\in L^1_{\loc}(\mu).
		\]
		
		So  if we gather \eqref{eq:estim_II_d_e_i}, \eqref{eq:estim_II_prime}, and \eqref{eq:estim_II_second}, and sum over $i=1,\ldots,\tilde N$, in view of the fact that $\tau_A\in \DS(\kappa)$ for some $\kappa=\kappa(n)$ and \eqref{eq:mod_cont_sum}, we deduce that
		\begin{equation}\label{eq:estimateIIdeltaep}
			\begin{split}
				|II_{\delta,\varepsilon}(x)|&\lesssim \sum_{k=0}^\infty \bigl(2^{-k\beta} + \tau_A(2^{-k})\bigr)\bigl(\M_{\varepsilon}\sigma_\varepsilon(x) + \M_{\varepsilon}\tilde \sigma_\varepsilon(x)\bigr)\\
				&\lesssim \Bigl(\sum_{k=0}^\infty 2^{-k\beta} + \int_0^1\tau_A(t)\, \frac{dt}{t}\Bigr)\bigl(\M_{\varepsilon}\sigma_\varepsilon(x) + \M_{\varepsilon}\tilde \sigma_\varepsilon(x)\bigr)\\
				&\lesssim \M_{\varepsilon}\sigma_\varepsilon(x) + \M_{\varepsilon}\tilde \sigma_\varepsilon(x).
			\end{split}		
		\end{equation}
		
		{We claim that the operator $\M_{\varepsilon}$ is bounded from $M(\Rn1)$ to $L^{1,\infty}(\nu_{\varepsilon})$, with operator norm independent on $\varepsilon$.
			Indeed let $\mu\in M(\Rn1)$, consider $\lambda,m>0$, and let us denote
			\[
			A_\lambda\coloneqq \bigl\{x\in \Rn1: \M_{\varepsilon}\mu(x)>\lambda\bigr\}\qquad \text{ and }\qquad A_{\lambda,m}\coloneqq A_\lambda\cap B(0,m).
			\]
			Thus, for every $x\in A_{\lambda,m}$ there exists $r_x>2\varepsilon$ such that
			\[
			|\mu|(B(x,r_x))>\lambda r^n_x.
			\]
			Fubini's theorem, the normalization $\|\varphi\|_{L^1(\Rn1)}=1$, and the choice of $r_x$ imply that, for all $x\in A_{\lambda,m}$, we have
			\begin{equation}\label{eq:nuleq_32}
				\begin{split}
					\nu_\varepsilon(B(x,r_x))&=\int_{B(x,r_x)}\int \varphi_\varepsilon(y-z)\, d\nu(z)dy
					\leq \nu(B(x,r_x+\varepsilon))\lesssim \Bigl(\frac{3}{2}r_x\Bigr)^n.
				\end{split}
			\end{equation}
			Besicovitch covering Lemma implies that there exists a countable collection of balls $\{B_i\}_i\subset \bigl\{B(x,r_x)\bigr\}_{x\in A_{\lambda,m}}$ with bounded overlaps that covers $A_{\lambda,m}$. Hence
			\[
			\nu_\varepsilon(A_{\lambda, m})\leq \nu_\varepsilon\Bigl(\bigcup_i B_i\Bigr)\leq \sum_i\nu_\varepsilon(B_i)\overset{\eqref{eq:nuleq_32}}{\lesssim} \frac{3^n}{2^n}\sum_i \frac{|\mu|(B_i)}{\lambda}{\lesssim} \frac{3^n}{2^n\lambda}\|\mu\|.
			\]
			Since the latter estimate holds for every $m$, our claim follows.} 
		
		Moreover, the fact that $\nu$ and $\nu_\varepsilon$ have $n$-polynomial growth implies that $\M_{\nu_\varepsilon, \varepsilon}$ and $\M_{\nu, \varepsilon}$ are bounded from $L^\infty(\nu_\varepsilon)$ to $L^\infty(\nu_\varepsilon)$ and from $L^\infty(\nu)$ to $L^\infty(\nu_\varepsilon)$, respectively. Thus, Marcinkiewicz interpolation theorem implies that 
		\begin{equation}\label{eq:maximalfunctionnue}
			\|\M_{\nu_\varepsilon,\varepsilon} \,g \|_{L^2(\nu_\varepsilon)} \lesssim  \| g \|_{L^2(\nu_\varepsilon)}  \quad \text{and}\quad \|\M_{\nu,\varepsilon} \,g \|_{L^2(\nu_\varepsilon)} \lesssim  \| g \|_{L^2(\nu)}.
	\end{equation}}
	
	Now we turn our attention to  $III_{\delta,\varepsilon}(x)$. Since we assumed $\delta<\varepsilon/2$, we have
	\begin{equation*}
		|III_{\delta,\varepsilon}(x)|\leq \bigl|T_\varepsilon\tilde \sigma_\varepsilon(x)\bigr| + \M_{\varepsilon}\tilde \sigma_\varepsilon(x).
	\end{equation*}
	Therefore, by \eqref{eq:Tdelta}, \eqref{eq:estimateIdeltaep}, \eqref{eq:estimateIIdeltaep}, and the inequality above, we infer that
	\begin{align}
		|T_{\nu_\varepsilon,\delta}f(x)| =	|T_\delta\sigma_\varepsilon(x) | &\lesssim \M_{\varepsilon}\sigma_\varepsilon(x) + \M_{\varepsilon}\tilde \sigma_\varepsilon(x)+\M_{\nu_\varepsilon}f(x) + \bigl|T_\varepsilon\tilde \sigma_\varepsilon(x)\bigr| \notag\\
		& = \M_{\nu, \varepsilon} f_\varepsilon(x)+\M_{\nu, \varepsilon}\tilde f_\varepsilon(x) +\M_{\nu_\varepsilon}f(x) + \bigl|T_\varepsilon\tilde \sigma_\varepsilon(x)\bigr|.\label{eq:T_nuepsilon_delta}
	\end{align}
	We claim that, for $i=1,\ldots,\tilde N$, it holds
	\begin{equation}\label{eq:estimTTlem62}
		|T_\varepsilon \tilde \sigma_\varepsilon(x)- T_\varepsilon \tilde \sigma_\varepsilon(x')|\lesssim \M_{\varepsilon}\tilde \sigma_\varepsilon(z)\qquad \text{ for all }x,x',z\in 3Q_i.
	\end{equation}
	Indeed, for $x,x'\in 3Q_i$, observe that the choice of $\varepsilon$ implies $|x-x'|\leq \diam (3Q_i)= 3\sqrt{n+1}\ell(Q_i)\leq 6 n \varepsilon$. Furthermore, by triangle inequality we have 
	$$
	B(x',\varepsilon)\subset B(x,10n\varepsilon)\subset B(x', 20n\varepsilon).
	$$
	Thus, we can write
	\begin{equation*}
		\begin{split}
			&|T_\varepsilon \tilde \sigma_\varepsilon(x)- T_\varepsilon \tilde \sigma_\varepsilon(x')|=\Bigl|\int_{|x-y|>\varepsilon} K(x,y)\, d\tilde\sigma_\varepsilon(y) - \int_{|x'-y|>\varepsilon} K(x',y)\, d\tilde\sigma_\varepsilon(y)\Bigr|\\		
			&\qquad\leq \int_{|x-y|>10n\varepsilon}|K(x,y)-K(x',y)|\, d|\tilde\sigma_\varepsilon|(y) + \int_{\varepsilon<|x-y|\leq 10n\varepsilon}|K(x,y)|\, d|\tilde \sigma_\varepsilon|(y) \\
			&\qquad \qquad\qquad+ \int_{\varepsilon<|x'-y|\leq 20n\varepsilon}|K(x',y)|\, d|\tilde \sigma_\varepsilon|(y) \eqqcolon \mathcal J_1 + \mathcal J_2 + \mathcal J_3.\
		\end{split}
	\end{equation*}
	The Calder\'on-Zygmund properties of $K$ in Lemma \ref{lem:estim_fund_sol}, a standard integration over dyadic annuli analogous to \eqref{eq:estim_II_d_e_i}, the inequality $\mathfrak I_{\omega_A}(\cdot)\leq \tau_A(\cdot)$, and the assumption $\tau_A\in \DS(\kappa)$ imply
	\begin{equation}\label{eq:estimate_mathfrak_1}
		\begin{split}
			\mathcal J_1 &\lesssim \int_{|x-y|>\varepsilon}\Bigl(\frac{|x-x'|^\beta}{|x-y|^{n+\beta}}  + \frac{\int_{0}^{\frac{|x'-y|}{|x-y|}}\omega_A(t) t^{-1}\, dt}{|x-y|^n}\Bigr)\, d|\tilde \sigma_\varepsilon|(y)\\
			&\lesssim \int_{|x-y|>\varepsilon} \Bigl(\frac{\varepsilon^\beta}{|x-y|^{n+\beta}} + \frac{\mathfrak I_{\omega_A}\bigl(\varepsilon/|x-y|\bigr)}{|x-y|^n}\Bigr)\,d|\tilde \sigma_\varepsilon|(y)\\
			&\lesssim \sum_{k=0}^\infty \Bigl(\frac{\varepsilon^\beta}{(2^k\varepsilon)^{n+\beta}} + \frac{\tau_A(2^{-k})}{(2^k\varepsilon)^n} \Bigr)|\tilde \sigma_\varepsilon|\bigl(B(x,2^k\varepsilon)\bigr)\\
			&\lesssim \sum_{k=0}^\infty \bigl(2^{-k\beta} + \tau_A(2^{-k})\bigr)\M_{\varepsilon}\tilde \sigma_\varepsilon(z)\\
			&\overset{\eqref{eq:mod_cont_sum}}{\lesssim}\Bigl(1+\int_0^1 \tau_A(t)\, \frac{dt}{t}\Bigr)\M_{\varepsilon}\tilde \sigma_\varepsilon(z)\lesssim \M_{\varepsilon}\tilde \sigma_\varepsilon(z).
		\end{split}
	\end{equation}
	Analogously, we can  prove that
	\begin{equation}\label{eq:estimate_mathfrak_2}
		\begin{split}
			\mathcal J_2 + \mathcal J_3 &\lesssim \int_{\varepsilon<|x-y|<10n\varepsilon}\frac{1}{|x-y|^{n}}\, d|\tilde \sigma_\varepsilon|(y) + \int_{\varepsilon<|x'-y|<20n\varepsilon}\frac{1}{|x'-y|^{n}}\, d|\tilde \sigma_\varepsilon|(y)\\
			&\lesssim \M_{\varepsilon}\tilde \sigma_\varepsilon(z).
		\end{split}
		\
	\end{equation}
	Combining \eqref{eq:estimate_mathfrak_1} and \eqref{eq:estimate_mathfrak_2} we get  \eqref{eq:estimTTlem62}.
	
	Finally, in light of  $\nu_\varepsilon(Q_i)\leq \nu(3Q_i)$, the definition of $\nu_\varepsilon$, and  \eqref{eq:estimTTlem62}, we have that
	\begin{equation}\label{eq:estimateIIIdeltaep}
		\begin{split}
			\int |T_\varepsilon\tilde \sigma_\varepsilon(x)|^2\, d\nu_\varepsilon(x)&=\sum_{i=1}^{\tilde N}\int_{Q_i}|T_\varepsilon\tilde \sigma_\varepsilon(x)|^2\, d\nu_\varepsilon(x)\\
			&\leq \Bigl(\sum_{i=1}^{\tilde N}\nu_\varepsilon(Q_i)\Bigr)	\sup_{x\in 3Q_i}|T_\varepsilon \tilde \sigma_\varepsilon (x)|^2	\\
			&\leq \Bigl(\sum_{i=1}^{\tilde N}\nu_\varepsilon(Q_i)\Bigr) \Bigl(\inf_{z\in 3Q_i}|T_\varepsilon \tilde \sigma_\varepsilon (z)|^2 + \inf_{z\in 3Q_i} \bigl(\M_{\varepsilon}\tilde \sigma_\varepsilon(z)\bigr)^2\Bigr)\\
			&\leq \sum_{i=1}^{\tilde N}\int_{3Q_i}|T_\varepsilon \tilde \sigma_\varepsilon(x)|^2\, d\nu(x) +  \sum_{i=1}^{\tilde N} \int_{3Q_i}\bigl(\M_{\varepsilon}\tilde\sigma_\varepsilon(x)\bigr)^2\, d\nu(x)\\
			&\leq \|T_\varepsilon \tilde \sigma_\varepsilon\|^2_{L^2(\nu)} + \|\M_{\varepsilon}\tilde\sigma_\varepsilon\|^2_{L^2(\nu)}\\
			&= \|T_{\nu, \varepsilon} \tilde f_\varepsilon\|^2_{L^2(\nu)} + \|\M_{\nu, \varepsilon}\tilde f_\varepsilon\|^2_{L^2(\nu)}.
		\end{split}
	\end{equation}
	Analogously to what we have previously done, we can prove that the operator $\M_{\nu,\varepsilon}$ is bounded from $L^2(\nu)$ to $L^2(\nu)$. Hence, by \eqref{eq:T_nuepsilon_delta}, \eqref{eq:estimateIIIdeltaep}, and \eqref{eq:maximalfunctionnue}, we infer that 
	\begin{equation*}
		\begin{split}
			\|T_{\nu_\varepsilon,\delta}f\|_{L^2(\nu_\varepsilon)}&\lesssim \|\M_{\nu_\varepsilon}f\|_{L^2(\nu_\varepsilon)} + \|\M_{\nu_\varepsilon,\varepsilon}f\|_{L^2(\nu_\varepsilon)} + \|\M_{\nu,\varepsilon}\tilde f_\varepsilon\|_{L^2(\nu_\varepsilon)} \\
			&\qquad\qquad  + \|T_{\varepsilon}\tilde \sigma_\varepsilon\|_{L^2(\nu_\varepsilon)} + \|\M_{\nu,\varepsilon}\tilde f_\varepsilon\|_{L^2(\nu)}\\
			&\lesssim \|f\|_{L^2(\nu_\varepsilon)} + \|\tilde f_\varepsilon\|_{L^2(\nu)} + \|T_\nu\|_{L^2(\nu)\to L^2(\nu)}\|\tilde f_\varepsilon\|_{L^2(\nu)}\\
			&\overset{\eqref{eq:L2boundtildefepsilon2}}{\lesssim} \bigl(1 + \|T_\nu\|_{L^2(\nu)\to L^2(\nu)}\bigr)\|f\|_{L^2(\nu_\varepsilon)},
		\end{split}
	\end{equation*}
	which concludes the proof of the lemma.
\end{proof}
\vv

Conversely to the previous lemma, we prove that the uniform $L^2(\nu_\varepsilon)$-boundedness of $T_{\nu_\varepsilon}$ with respect to $\varepsilon$ controls the $L^2(\nu)$-boundedness of $T_\nu$ at small scales.
\begin{lemma}\label{lemma:approx2}
	Let $\nu$, $\nu_\varepsilon, A$ and $Q$ be as in Lemma \ref{lemma:approx1}. Let us also assume that there exists $C>0$ such that $\|T_{\nu_\varepsilon}\|_{L^2(\nu_\varepsilon)\to L^2(\nu_\varepsilon)}\leq C$ for all $\varepsilon>0$. Then for any fixed $\delta>0$,
	\begin{equation}\label{eq:claim_lemma62}
		\lim_{\varepsilon\to 0}\bigl\|T_{\nu_\varepsilon,\delta}\bigr\|_{L^2(\nu_\varepsilon)}=\|T_{\nu,\delta}\|_{L^2(\nu)} \quad \text{ for any fixed} \,\, \delta>0.
	\end{equation}
	In particular $\|T_{\nu}\|_{L^2(\nu)\to L^2(\nu)}\leq C$.
\end{lemma}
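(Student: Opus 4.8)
The plan is to deduce \eqref{eq:claim_lemma62} from the two one-sided estimates
\[
\|T_{\nu,\delta}\|_{L^2(\nu)\to L^2(\nu)}\le\liminf_{\varepsilon\to0}\|T_{\nu_\varepsilon,\delta}\|_{L^2(\nu_\varepsilon)\to L^2(\nu_\varepsilon)},\qquad \limsup_{\varepsilon\to0}\|T_{\nu_\varepsilon,\delta}\|_{L^2(\nu_\varepsilon)\to L^2(\nu_\varepsilon)}\le\|T_{\nu,\delta}\|_{L^2(\nu)\to L^2(\nu)}.
\]
The lower bound already yields the last assertion of the lemma: since $\|T_{\nu_\varepsilon,\delta}\|\le\|T_{\nu_\varepsilon}\|\le C$ for every $\varepsilon>0$, it forces $\|T_{\nu,\delta}\|\le C$ for every $\delta>0$, hence $\|T_\nu\|=\sup_{\delta>0}\|T_{\nu,\delta}\|\le C$.

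For the lower bound I would argue by weak (narrow) convergence. By \eqref{eq:definition_measure_nu_varepsilon} the measures $\nu_\varepsilon$ and $\nu$ are all supported in a fixed bounded set, have the same mass, and satisfy $\int\phi\,d\nu_\varepsilon=\int(\phi*\check{\varphi}_\varepsilon)\,d\nu\to\int\phi\,d\nu$ for every $\phi\in C_c(\Rn1)$; hence $\nu_\varepsilon\to\nu$ and $\nu_\varepsilon\otimes\nu_\varepsilon\to\nu\otimes\nu$ narrowly. Fix first a $\delta$ with $(\nu\otimes\nu)(\{(x,y):|x-y|=\delta\})=0$, which rules out only countably many $\delta$ since these slices are pairwise disjoint and $\nu\otimes\nu$ is finite. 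For $f\in C_c^\infty(\Rn1)$ and vector-valued $g\in C_c^\infty(\Rn1;\Rn1)$, the scalar function $\Phi(x,y):=\chi_{\{|x-y|>\delta\}}\,\nabla_1\Gamma_A(x,y)\cdot g(x)\,f(y)$ is bounded on its support (by Lemma \ref{lem:estim_fund_sol}(2), $|\nabla_1\Gamma_A(x,y)|\lesssim|x-y|^{-n}\le\delta^{-n}$ there) and continuous off the $(\nu\otimes\nu)$-null set $\{|x-y|=\delta\}$, so
\[
\langle T_{\nu_\varepsilon,\delta}f,g\rangle_{L^2(\nu_\varepsilon)}=\iint\Phi\,d(\nu_\varepsilon\otimes\nu_\varepsilon)\longrightarrow\iint\Phi\,d(\nu\otimes\nu)=\langle T_{\nu,\delta}f,g\rangle_{L^2(\nu)}.
\]
Since $\|f\|_{L^2(\nu_\varepsilon)}\to\|f\|_{L^2(\nu)}$ and $\|g\|_{L^2(\nu_\varepsilon)}\to\|g\|_{L^2(\nu)}$, $C_c^\infty$ is dense in $L^2(\nu)$, and $T_{\nu,\delta}$ is trivially bounded on $L^2(\nu)$ (Schur's test: $\int\chi_{\{|x-\cdot|>\delta\}}|\nabla_1\Gamma_A(x,\cdot)|\,d\nu\le\delta^{-n}\|\nu\|$), taking the supremum over $f,g$ gives $\|T_{\nu,\delta}\|\le\liminf_\varepsilon\|T_{\nu_\varepsilon,\delta}\|$ for such $\delta$. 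To pass to an arbitrary $\delta_0$ I would use that, for fixed $f,g$, the map $\delta\mapsto\langle T_{\nu,\delta}f,g\rangle_{L^2(\nu)}$ is right-continuous (dominated convergence on $\{|x-y|>\delta\}\uparrow\{|x-y|>\delta_0\}$); choosing good $\delta_k\downarrow\delta_0$ yields $|\langle T_{\nu,\delta_0}f,g\rangle|=\lim_k|\langle T_{\nu,\delta_k}f,g\rangle|\le\bigl(\sup_k\|T_{\nu,\delta_k}\|\bigr)\|f\|_{L^2(\nu)}\|g\|_{L^2(\nu)}$, and a final supremum extends the bound to all $\delta_0$.

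For the reverse bound the key remark is that once $\varepsilon<\delta$ the operator $T_{\nu_\varepsilon,\delta}$ no longer feels scales $\le\delta$, so the short-range terms responsible for the loss of a constant in the proof of Lemma \ref{lemma:approx1} are absent. Using $\int\psi\,d\nu_\varepsilon=\int(\psi*\check{\varphi}_\varepsilon)\,d\nu$ in both variables, for bounded $\phi\in L^2(\nu_\varepsilon)$ and $\psi\in L^2(\nu_\varepsilon;\Rn1)$ one writes
\[
\langle T_{\nu_\varepsilon,\delta}\phi,\psi\rangle_{L^2(\nu_\varepsilon)}=\iint_{|x-y|>\delta}\nabla_1\Gamma_A(x,y)\cdot\psi(x)\,\phi(y)\,d\nu_\varepsilon(x)\,d\nu_\varepsilon(y)=\iint\Psi_\varepsilon(z,w)\,d\nu(z)\,d\nu(w),
\]
where $\Psi_\varepsilon$ is the double convolution of the integrand against $\varphi_\varepsilon(x-z)\varphi_\varepsilon(y-w)$. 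If $|z-w|<\delta-2\varepsilon$ then $\Psi_\varepsilon(z,w)=0$, while if $|z-w|>\delta+2\varepsilon$ the truncation is inactive and $\Psi_\varepsilon(z,w)=\nabla_1\Gamma_A(z,w)\cdot(\psi*\check{\varphi}_\varepsilon)(z)\,(\phi*\check{\varphi}_\varepsilon)(w)$ up to an error which, by the Calder\'on--Zygmund estimate of Lemma \ref{lem:estim_fund_sol}(4) and the uniform $n$-growth of $\nu$, is controlled by $\mathfrak I_{\alpha_A}(\varepsilon/\delta)$-type quantities vanishing as $\varepsilon\to0$ with $\delta$ fixed; the residual spherical collar $\delta-2\varepsilon\le|z-w|\le\delta+2\varepsilon$ contributes $O(\delta^{-n})$ times the $\nu\otimes\nu$-mass of a shrinking neighbourhood of $\{|z-w|=\delta\}$, which is $o(1)$ for good $\delta$. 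Combined with the contractions $\|\phi*\check{\varphi}_\varepsilon\|_{L^2(\nu)}\le\|\phi\|_{L^2(\nu_\varepsilon)}$ and $\|\psi*\check{\varphi}_\varepsilon\|_{L^2(\nu)}\le\|\psi\|_{L^2(\nu_\varepsilon)}$ (Cauchy--Schwarz and $\int\varphi_\varepsilon=1$, using $\varphi_\varepsilon\ge0$), this gives $|\langle T_{\nu_\varepsilon,\delta}\phi,\psi\rangle_{L^2(\nu_\varepsilon)}|\le(\|T_{\nu,\delta}\|+o(1))\|\phi\|_{L^2(\nu_\varepsilon)}\|\psi\|_{L^2(\nu_\varepsilon)}$; a supremum and then $\varepsilon\to0$ yield $\limsup_\varepsilon\|T_{\nu_\varepsilon,\delta}\|\le\|T_{\nu,\delta}\|$ for good $\delta$, and the general $\delta$ follows by the same right-continuity step as before.

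The main obstacle is the sharp, non-smooth truncation at $|x-y|=\delta$: the weak-convergence argument, and likewise the collar estimate in the reverse bound, only pass cleanly for the co-countably many $\delta$ with $(\nu\otimes\nu)(\{|x-y|=\delta\})=0$, which is exactly why the extra right-continuity step is needed to recover the statement for every $\delta$. The other technical burden is the bookkeeping of the convolution errors in the reverse bound, but that is routine once the local Calder\'on--Zygmund bounds of Lemma \ref{lem:estim_fund_sol} and the uniform $n$-growth of the measures $\nu_\varepsilon$ are in hand.
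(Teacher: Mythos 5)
Your proposal takes a genuinely different route from the paper, and the divergence itself is instructive, so let me spell out both the comparison and the place where your argument does not close.

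The paper does \emph{not} work with the sharp truncation $\chi_{\{|x-y|>\delta\}}$ inside this proof. It introduces a smooth cut-off $\psi\in C^\infty$ with $\chi_{B(0,2)^c}\le\psi\le\chi_{B(0,1)^c}$, sets $K_\delta(x,y)=\nabla_1\Gamma_A(x,y)\,\psi_\delta(x-y)$, and works with this mollified truncation throughout. This single substitution dissolves the entire ``good $\delta$ / bad $\delta$'' dichotomy you set up: the kernel is now continuous everywhere off the diagonal, so $|T_{\nu,\delta}f|^2$ is a bounded continuous function for $f\in C^\infty_c$, and the convergence $\int |T_{\nu,\delta}f|^2\,d(\nu_\varepsilon-\nu)\to 0$ is immediate from the weak convergence $\nu_\varepsilon\to\nu$ with no exceptional set to worry about. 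The second ingredient is a pointwise estimate $|T_{\nu_\varepsilon,\delta}f-T_{\nu,\delta}f|\lesssim_\delta \varepsilon\|\nabla f\|_\infty\|\nu\|+\alpha_A(\varepsilon/\delta)\|f\|_\infty\|\nu\|$ obtained by Fubini and the Calder\'on--Zygmund continuity of $K_\delta$. The paper then concludes from the (one-sided) convergence $\|T_{\nu_\varepsilon,\delta}f\|_{L^2(\nu_\varepsilon)}\to\|T_{\nu,\delta}f\|_{L^2(\nu)}$ for each fixed $f\in C^\infty_c$ plus density, which gives $\|T_{\nu,\delta}\|\le\liminf_\varepsilon\|T_{\nu_\varepsilon,\delta}\|\le C$; the $\limsup$ direction is never actually needed for the final assertion $\|T_\nu\|\le C$. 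Your $\liminf$ argument via narrow convergence of $\nu_\varepsilon\otimes\nu_\varepsilon$ is a sound and essentially equivalent re-packaging of this half, and your right-continuity step does correctly extend $\|T_{\nu,\delta}\|\le C$ to every $\delta$; that part is fine.

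The gap is in the $\limsup$ bound, specifically in the ``collar'' estimate. You bound the contribution of $\{\delta-2\varepsilon\le|z-w|\le\delta+2\varepsilon\}$ by $\delta^{-n}$ times the $(\nu\otimes\nu)$-mass of the collar times $\|\phi\|_\infty\|\psi\|_\infty$, and claim this is $o(1)$ for good $\delta$. But to obtain an operator-norm inequality $\limsup_\varepsilon\|T_{\nu_\varepsilon,\delta}\|\le\|T_{\nu,\delta}\|$ you need the error to be $o(1)\,\|\phi\|_{L^2(\nu_\varepsilon)}\|\psi\|_{L^2(\nu_\varepsilon)}$ \emph{uniformly} over the unit ball. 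Replacing the $L^\infty$ control by Cauchy--Schwarz, the collar term is bounded by
\[
\delta^{-n}\Bigl(\sup_z\nu\bigl(\{w:\delta-2\varepsilon\le|z-w|\le\delta+2\varepsilon\}\bigr)\Bigr)\,\|\phi\|_{L^2(\nu_\varepsilon)}\|\psi\|_{L^2(\nu_\varepsilon)},
\]
and the $n$-growth of $\nu$ only gives $\sup_z\nu(\{w:\delta-2\varepsilon\le|z-w|\le\delta+2\varepsilon\})\lesssim\delta^n$, i.e.\ $O(1)$, not $o(1)$. The condition $(\nu\otimes\nu)(\{|x-y|=\delta\})=0$ controls the integral over the collar, not the supremum in $z$, and a measure $\nu\in M^n_+$ may well concentrate on thin annuli around some points even when the product measure gives no mass to the sphere. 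There is a second, related gap: your main term naturally produces $\|T_{\nu,\delta+2\varepsilon}\|$, and the passage $\|T_{\nu,\delta+2\varepsilon}\|\to\|T_{\nu,\delta}\|$ hits exactly the same uniform-annulus-mass obstruction via Schur's test. So as written the $\limsup$ inequality is not established even for good $\delta$. This is precisely the difficulty the paper avoids by (i) smoothing the truncation so the kernel is continuous across $|x-y|=\delta$, and (ii) contenting itself with the $\liminf$ direction, which is all the ``in particular'' conclusion requires.
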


\begin{proof}
	It is clear that $\|T_{\nu}\|_{L^2(\nu)\to L^2(\nu)}\leq C$ follows form \eqref{eq:claim_lemma62} and $\|T_{\nu_\varepsilon}\|_{L^2(\nu_\varepsilon)\to L^2(\nu_\varepsilon)}\leq C$ and so it suffices to prove  \eqref{eq:claim_lemma62}. To this end, if $f\in C^\infty_c(Q)$ 	and  $\delta>0$ is fixed, we have that
	\begin{equation*}
		\begin{split}
			&\biggl|\int \bigl|T_{\nu_{\varepsilon},\delta}f(x)\bigr|^2\, d\nu_\varepsilon (x) -  \int \bigl|T_{\nu,\delta}f(x)\bigr|^2\, d\nu (x)\biggr|\\
			&\qquad \leq \int \bigl|T_{\nu,\delta}f(x)\bigr|^2 \, d\bigl(\nu_\varepsilon -\nu\bigr)(x) + \int  \Bigl|\bigl|T_{\nu_{\varepsilon},\delta}f(x)\bigr|^2 - \bigl|T_{\nu,\delta}f(x)\bigr|^2\Bigr|\, d\nu_\varepsilon (x)\\
			&\qquad \eqqcolon I_{\delta,\varepsilon} + II_{\delta,\varepsilon}.
		\end{split}
	\end{equation*}
	
	The first summand $I_{\delta,\varepsilon}$ vanishes as $\varepsilon\to 0$ because $|T_{\nu,\delta}f(x)|^2$ is a bounded and continuous function, $\nu$ is compactly supported, and $\nu_\varepsilon$ converges weakly to $\nu$.	If $\psi \in C^\infty$ is a non-negative smooth function such that $\chi_{B(0,2)^c} \leq \psi \leq \chi_{B(0,1)^c}$ with $\| \nabla \psi\|_\infty \lesssim 1$. We set $\psi_\delta(\cdot)=\psi(\cdot/\delta)$ and
	$$
	K(x,y)\coloneqq \nabla_1\Gamma_A(x,y), \,\,x,y\in \Rn1\setminus \{0\} \quad 
	\text{and} \quad K_\delta(x,y)\coloneqq K(x,y)\psi_\delta(x-y).
	$$ 
	As Lemma \ref{lem:estim_fund_sol} entails $|K(x,y)|\lesssim |x-y|^{-n}$ for all $x,y\in   B(0,R)$, it follows that
	\begin{equation}\label{eq:decay_K_delta}
		|K_\delta(x,y)|\lesssim \delta^{-n}\qquad \text{ for all }x,y \in B(0,R), x\neq y.
	\end{equation}
	Moreover, if $x, y_1, y_2 \in B(0,R)$ such that  $y_1 \neq y_2$ and $2|y_1-y_2|<\min\{|x-y_1|,|x-y_2|\}$, by the mean value theorem and  \eqref{eq:continuityGamma}, it holds that
	\begin{align}
		| K_\delta(x,y_1) &-  K_\delta(x,y_2) |\notag\\
		&\leq \left| K_\delta(x,y_1) \right| | \psi_\delta(x-y_1)-\psi_\delta(x-y_2)| + \left| K_\delta(x,y_1)-  K_\delta(x,y_2) \right| \notag\\
		&\lesssim \frac{|y_1-y_2|}{\delta^{n+1}} + \left( \frac{|y_1-y_2|^\beta}{\delta^\beta}+ \int_0^{\frac{|y_1-y_2|}{\delta}} \omega_A(t)\,\frac{dt}{t}\right)  \frac{1}{\delta^{n}}\notag\\
		& \overset{\eqref{eq:def_alpha_A}}{\lesssim_\delta} \alpha_A\Bigl(\frac{|y_1-y_2|}{\delta}\Bigr).\notag
	\end{align}
	If $|y_1-y_2|<\ve \ll \delta$, then,  since $\delta<\min\{|x-y_1|,|x-y_2|\}$, we have that
	\begin{equation}\label{eq:continuity_K_delta}
		\left| K_\delta(x,y_1)-  K_\delta(x,y_2) \right| \lesssim_{\Lambda, n, \delta } \alpha_A(\ve/\delta) \to 0, \,\,\,\text{as}\,\,\ve\to 0.
	\end{equation}
	
	In order to estimate $II_{\delta,\varepsilon}$ we first observe that \eqref{eq:decay_K_delta} and the fact that $\nu_\varepsilon(\Rn1)=\nu(\Rn1)$ readily imply
	\begin{equation}\label{eq:lemma_estim_II_plus}
		\bigl|T_{\nu_\varepsilon,\delta}f(x) + T_{\nu,\delta}f(x)\bigr|\lesssim \delta^{-n}\|f\|_{L^\infty(\Rn1)}\nu(\Rn1),
	\end{equation}
	where the implicit constant is independent of $\varepsilon$.
	Also, the definition of $\nu_\varepsilon$ and Fubini's theorem yield
	\begin{equation*}
		\begin{split}
			&\bigl|T_{\nu_\varepsilon,\delta}f(x) - T_{\nu,\delta}f(x)\bigr|=\biggl|\int K_{\delta}(x,y)f(y)\, d\nu_\varepsilon(y) - \int K_\delta (x,z)f(z)\, d\nu(z)\biggr|\\
			&\qquad =\biggl|\int K_\delta(x,y)f(y)\int \varphi_\varepsilon(y-z)\, d\nu(z)\, dy \\
			&\qquad\qquad\qquad\qquad- \int K_{\delta}(x,z)f(z)\int \varphi_\varepsilon(y)\, dy\, d\nu(z)\biggr|\\
			&\qquad=\biggl|\int \biggl[ K_\delta (x,y)f(y)\varphi_\varepsilon(y-z)\, dy - \int K_\delta (x,z)f(z)\varphi_\varepsilon(y)\, dy\biggr]\, d\nu(z)\biggr|\\
			&\qquad\leq \int \int_{B(0,\varepsilon)}\bigl| K_\delta(x,y+z)f(y+z) - K_\delta(x,z)f(z)\bigr|\varphi_\varepsilon(y)\, dy\, d\nu(z).
		\end{split}
	\end{equation*}
	Furthermore, the estimate \eqref{eq:decay_K_delta}, the mean value theorem, and \eqref{eq:continuity_K_delta} imply
	\begin{equation*}
		\begin{split}
			&\bigl|T_{\nu_\varepsilon,\delta}f(x) - T_{\nu,\delta}f(x)\bigr|\\
			&\qquad\qquad \leq \int \int_{B(0,\varepsilon)}\bigl| K_\delta(x,y+z)\bigr||f(y+z) - f(z)|\varphi_\varepsilon(y)\, dy\, d\nu(z)\\
			&\qquad\qquad \qquad \qquad +\int \int_{B(0,\varepsilon)}|f(z)|\bigl| K_\delta(x,y+z)-  K_\delta(x,z)\bigr|\varphi_\varepsilon(y)\, dy\, d\nu(z)\\
			&\qquad\qquad \lesssim \frac{\varepsilon}{\delta^n}\|\nabla f\|_{L^\infty(\Rn1)} \nu(\Rn1) +\alpha_A(\ve/\delta)\|f\|_{L^\infty(\Rn1)} \nu(\Rn1).
		\end{split}
	\end{equation*}
	Hence,  by\eqref{eq:lemma_estim_II_plus} and the latter inequality we infer that
	\begin{equation*}
		\begin{split}
			II_{\delta,\varepsilon}&=\int \bigl|T_{\nu_\varepsilon,\delta}f(x) + T_{\nu,\delta}f(x)\bigr|\bigl|T_{\nu_\varepsilon,\delta}f(x) - T_{\nu,\delta}f(x)\bigr|\, d\nu_\varepsilon(x)\lesssim_{\delta,f,\nu}\varepsilon + \alpha_A(\ve/\delta)
		\end{split}
	\end{equation*}
	and	so $II_{\delta,\varepsilon}\to 0$ as $\varepsilon\to 0$. This proves \eqref{eq:claim_lemma62}, which concludes the proof of the lemma.
\end{proof}

\vvv

\section{The proof of Theorem \ref{theorem:bound_L2_norm_operators}}\label{section:final_section}

\begin{proof}[Proof of Theorem \ref{theorem:bound_L2_norm_operators}]
	Let $R\coloneqq \diam(\supp\mu)$ and assume that $\supp \mu\subset Q_0$ for some cube $Q_0$ such that $\ell(Q_0)=R$. First, we prove that 
	\begin{equation}\label{eq:R_lesssim_T}
		\|\mathcal R_\mu\|_{L^2(\mu)\to L^2(\mu)}\lesssim 1 + \|T_\mu\|_{L^2(\mu)\to L^2(\mu)}.
	\end{equation}

	For $N\in \mathbb N$ we consider a collection of cubes $\{Q_i\}_{1\leq i\leq N^{n+1}}$ with disjoint interior such that $\ell(Q_i)=R/N$ for all $i$ and $Q_0=\bigcup_i Q_i$.
	
	We also denote $\mu_i\coloneqq \mu|_{Q_i}$ and observe that $T_{\mu_i}$ is bounded on $L^2(\mu_i)$ and satisfies
	\begin{equation}\label{eq:lem71boundt}
		\|T_{\mu_i}\|_{L^2(\mu_i)\to L^2(\mu_i)}\leq \|T_{\mu}\|_{L^2(\mu)\to L^2(\mu)}.
	\end{equation}
	We recall that $Q_i\subset B\bigl(x_{{Q_i}},\sqrt{n+1}\ell(Q_i)\bigr)\subset B(x_{{Q_i}},2n\ell(Q_i))$.
	Moreover, we denote $S_i\coloneqq \sqrt{\bar{(A_s)}_{x_{Q_i},4n\Lambda\ell(Q_i)}}$ and we define the change of variables $\psi_i(x)\coloneqq S_ix$ for all $x\in \Rn1$.
	Finally, we consider the measure $\nu_i\coloneqq (\psi_i^{-1})_\sharp \mu_i$, and we denote by $\hat A^i$ the matrix defined  in \eqref{eq:change_var_matrices_oscillation}, namely
	\[
		\hat A^i(x)\coloneqq \frac{|\det S_i| \bigl|S_i^{-1}\bigl(B(x_{Q_i},4n\Lambda\ell(Q_i))\bigr)\bigr|}{|B(x_{Q_i},4n\Lambda\ell(Q_i))|}S_i^{-1}(A\circ S_i)(x)S_i^{-1} \qquad \text{ for all }x\in \Rn1.
	\]
	By Lemma \ref{lemma:change_matrix_average} we have that $\avint_{S_i^{-1}\bigl(B(x_{Q_i}, 4n\Lambda\ell(Q_i))\bigr)}\hat A^i_s = Id$ and, by Lemma \ref{lemma:comparison_moduli_change_of_variables},  the moduli of oscillation {$\omega_{\hat{A}^i}$ and $\tau_{\hat{A}^i}$ belong to $\widetilde \DMO$.} Moreover, by \eqref{eq:inclusion_balls_cov} it holds that
\[
		 \supp \nu_i\subset 	S_i^{-1}(Q_i)\subset S_i^{-1}\bigl(B(x_{Q_i},2n\ell(Q_i))\bigr)\subseteq B\bigl(S_i^{-1}x_{Q_i},4n\Lambda^{1/2}\ell(Q_i)\bigr)
\]
and the ball $B\bigl(S_i^{-1}x_{Q_i},4n\Lambda^{1/2}\ell(Q_i)\bigr)$ is contained in a cube $\tilde Q_i$ with center $x_{\tilde Q_i}=S_i^{-1}x_{Q_i}$ and side-length $\ell(\tilde Q_i)=4n\Lambda^{1/2}\ell(Q_i)$.

	 Given $M\coloneqq 4n\Lambda^{3/2}$, the inclusions \eqref{eq:inclusion_balls_cov} imply
	\[
		B\bigl(x_{\tilde Q_i}, \ell(\tilde Q_i)\bigr)\subset S_i^{-1}\bigl(B(x_{Q_i}, 4n\Lambda \ell( Q_i))\bigr)\subset B\bigl(x_{\tilde Q_i}, M\ell(\tilde Q_i)\bigr).
	\]
	
	The definition of $\nu_i$ and the bilipschitz character of $\psi_i$ yield
	\[
		\nu_i(B(x,r))\leq c_0 M r^n \qquad \text{ for all }x\in \Rn1, r>0.
	\]
	Hence, the measure $\nu_i$ belongs to $M^n_+(\Rn1)$ and is supported on $\tilde Q_i$. For $\varepsilon>0$, let $\nu_{i,\varepsilon}$ be the auxiliary measure defined as in \eqref{eq:definition_measure_nu_varepsilon}. Let $f$ be a compactly supported Lipschitz function in $  L^2(\nu_{i,\ve})$ 
	(this class is clearly dense in $ L^2(\nu_{i,\varepsilon})$).  By triangle inequality, the Main Lemma \ref{lem:main_lemma}, and Lemma \ref{lemma:comparison_moduli_change_of_variables} we have
	\begin{equation}\label{eq:lem71a}
		\begin{split}
			\|\mathcal R_{\nu_{i,\varepsilon}} f \|_{L^2(\nu_{i,\varepsilon})} & \leq \omega_n\|\omega^{-1}_n \mathcal R_{\nu_{i,\varepsilon}} f - T_{\psi_i,\nu_{i,\varepsilon}} f\|_{L^2(\nu_{i,\varepsilon})} + \omega_n\|T_{\psi_i,\nu_{i,\varepsilon}} f\|_{ L^2(\nu_{i,\varepsilon})}\\
			& \leq  C' \left(\mathfrak I_{\tau_A} (\ell(\tilde Q_i)) +  \widehat \tau_A(\ell(\tilde Q_i))\right) \|f\|_{L^2(\nu_{i,\varepsilon})}\\
			&  +\omega_n \|T_{\psi_i, \nu_{i,\varepsilon}} f\|_{L^2(\nu_{i,\varepsilon})}+ C'' \biggl(\int_{0}^{\ell(\tilde Q_i)}\omega_A(t) \,\frac{dt}{t} \biggr)^{1/2}\|\mathcal R_{\nu_{i,\varepsilon}} f\|_{L^2(\nu_{i,\varepsilon})}\\
			&\leq \frac{1}{2} \|\mathcal R_{\nu_{i,\varepsilon}}f\|_{L^2(\nu_{i,\varepsilon})} +\left(\eta  \,\|f\|_{L^2(\nu_{i,\varepsilon})}+ \omega_n  \|T_{\psi_i, \nu_{i,\ve}}f\|_{L^2(\nu_{i.\ve})}\right),
		\end{split}
	\end{equation}
	where in the last inequality we used Lemma \ref{lemma:approx1} and chose  $N$ big enough (see also Remark \ref{rem:rem_right_cont}) so that 
{	\begin{align*}
		&C''  \Bigl(\int_{0}^{\ell(\tilde Q_i)}\omega_A(t) \,\frac{dt}{t} \Bigr)^{1/2} \leq 1/2\\
		&C' \Bigl(\mathfrak I_{\tau_A} \bigl(\ell(\tilde Q_i)\bigr) +  \widehat \tau_A\bigl(\ell(\tilde Q_i)\bigr)\Bigr) \leq \eta,
			\end{align*}
			where $\eta \in (0,1)$ can be taken as  small as we want by adjusting the value of $\ell(\tilde Q_i)$.}
	Hence, since $\|\mathcal R_{\nu_{i,\ve}} f\|_{L^2(\nu_{i,\ve})} <\infty$ by construction of the approximating measures $\nu_{i,\ve}$ and the fact that $f$ is Lipschitz with compact support, the estimate \eqref{eq:lem71a} implies that
\begin{align*}
		\|\mathcal R_{\nu_{i,\varepsilon}} f \|_{L^2(\nu_{i,\varepsilon})}  &\leq 2\eta \,\|f\|_{L^2(\nu_{i,\varepsilon})}+ 2 \omega_n \|T_{\psi_i, \nu_{i,\ve}}f\|_{L^2(\nu_{i,\ve} )}\\
&\overset{\eqref{eq:L2nu_eps_toL2_nu}}{\leq} \left(2\eta  + 2\omega_n \|T_{\psi_i, \nu_{i}}\|_{L^2(\nu_{i})\to L^2(\nu_{i})}\right)\|f\|_{L^2(\nu_{i,\varepsilon})}.
\end{align*}
	So,  if we take the supremum over all compactly supported Lipschitz functions in $  L^2(\nu_{i,\ve})$, by density, we have that for any $\delta>0$,
	\begin{equation*}
		\begin{split}
	\|\mathcal R_{\nu_{i,\ve}, \delta} \|_{L^2(\nu_{i,\ve}) \to L^2(\nu_{i,\varepsilon})} & \leq 2\eta  + 2\omega_n  \|T_{\psi_i, \nu_{i}}\|_{L^2(\nu_{i})\to L^2(\nu_{i})} \overset{\eqref{eq:T_phi_delta_sim_T_delta}}{\leq} 2\eta  + \widetilde C_0 \bigl\|\widehat T^{\psi_i^{-1}}_{\mu_i}\bigr\|_{L^2(\mu_{i})\to L^2(\mu_{i})}\\ &\overset{\eqref{eq:change_of_truncations}}{\leq} 2\eta  + \widetilde C_1 \|T_{\mu_i}\|_{L^2(\mu_i)\to L^2(\mu_i)}.
		\end{split}
	\end{equation*}
Taking limits as $\ve \to 0$,	by Lemma \ref{lemma:approx2},  we infer 
	\begin{equation}\label{eq:final_estimate_L2_norm_Ri}
\|\mathcal R_{\nu_{i}, \delta}\|_{L^2(\nu_{i})\to L^2(\nu_{i})} \leq 2\eta  + \widetilde C_1 \|T_{\mu_i}\|_{L^2(\mu_i)\to L^2(\mu_i)},
	\end{equation}
uniformly in $\delta>0$, and, applying \cite[Corollary 1.3]{To21}, there exists $\widetilde C_2>0$ depending on dimension and $\Lambda$, such that
	\begin{align}\label{eq:7.4}
		\|\mathcal R_{\mu_i}\|_{L^2(\mu_i)\to L^2(\mu_i)} &\leq   \widetilde C_2 \|\mathcal R_{\nu_i}\|_{L^2(\nu_i)\to L^2(\nu_i)} \leq 2\eta \widetilde C_2 +  \widetilde C_1 \widetilde C_2 \|T_{\mu_i}\|_{L^2(\mu_i)\to L^2(\mu_i)}\\
		& \overset{\eqref{eq:lem71boundt}}{\leq} 2\eta \widetilde C_2 +  \widetilde C_1 \widetilde C_2 \|T_{\mu}\|_{L^2(\mu)\to L^2(\mu)}.\notag
	\end{align}
	{Thus, since $0 < \eta <1$,  \cite[Proposition 2.25]{To14} and \cite[Theorem 2.21]{To14} imply that	
	\begin{align*}
	\|\mathcal R_\mu\|_{L^2(\mu)\to L^2(\mu)} &\lesssim \|\mathcal R_{*}\|_{{M}(\mathbb{R}^{n+1}) \to L^{1,\infty}(\mu)} \lesssim_{n, N} \sum_{i=1}^{N^{n+1}} \|\mathcal R_{*}\|_{{M}(\R^{n+1})\to L^{1,\infty}(\mu_i)} \\
	& \lesssim_{n, N}  \sum_{i=1}^{N^{n+1}} ( 1 + \|\mathcal R_{\mu_i}\|_{L^2(\mu_i)\to L^2(\mu_i)} ) \\
	&\overset{\eqref{eq:7.4}}{\lesssim}_{n, N, \Lambda,R}  1+ \|T_\mu\|_{L^2(\mu)\to L^2(\mu)},
	\end{align*}
	which  concludes the proof of \eqref{eq:R_lesssim_T} since $N$ depends on $\Lambda,$ $n$, and $R$. Note that  \eqref{eq:7.4} shows that for every $\epsilon>0$, there exists $R$ small enough  such that if $\diam (\supp \mu) \leq R$ then \eqref{eq:maineq-small_R-T} holds.
	
	{In order to prove the converse inequality it is enough to observe that, for $N$ big enough, \eqref{eq:R_lesssim_T} yields
	\begin{equation*}
		\begin{split}
			\|T_{\psi_i, \nu_{i}}\|_{L^2(\nu_{i,\varepsilon})\to L^2(\nu_{i})} &\leq \|T_{\psi_i,\nu_{i}} - \omega_n^{-1}\mathcal R_{\nu_{i}} \|_{L^2(\nu_{i,\varepsilon})\to L^2(\nu_{i})} + \omega_n^{-1}\|\mathcal R_{\nu_{i}}\|_{L^2(\nu_{i,\varepsilon})\to L^2(\nu_{i})}\\
			&\lesssim 1 + \|\mathcal R_{\nu_{i}}\|_{L^2(\nu_{i})\to L^2(\nu_{i})} \lesssim 1+	\|\mathcal R_{\mu_i}\|_{L^2(\mu_i)\to L^2(\mu_i)}.
		\end{split}
	\end{equation*}
where the last inequality follows	from \cite[Corollary 1.3]{To21}. Then we apply \eqref{eq:T_phi_delta_sim_T_delta} and \eqref{eq:change_of_truncations} and deduce that $\|T_{\mu_i}\|_{L^2(\mu)\to L^2(\mu)} \lesssim 1 + 	\|\mathcal R_{\mu_i}\|_{L^2(\mu_i)\to L^2(\mu_i)}$. Finally, we can  repeat the argument above using \cite[Theorem 2.21]{To14} and  \cite[Proposition 2.25]{To14}, which are still true for the operator $T_\mu$ (the hypothesis that $\tau_A$ is a Dini function makes possible to argue via estimates in terms of the centered maximal function which closely resemble \eqref{eq:estimate_mathfrak_1}}), and conclude the proof of the theorem.}
\end{proof}

\vv

\begin{proof}[Proof of Corollary \ref{theorem:DS_elliptic}]  Let $\mu$ be an $n$-AD-regular measure on $\Rn1$ with compact support such that the gradient of the single layer potential $T_\mu$ is bounded on $L^2(\mu)$. Then, in particular $\mu\in M^n_+(\supp\mu)$ so Proposition \ref{theorem:bound_L2_norm_operators} implies that  $\mathcal R_\mu$ is bounded on $L^2(\mu)$. Hence, the main result of \cite{NToV14} yields that $\mu$ is uniformly $n$-rectifiable. Conversely, it is immediate that Theorem \ref{theorem:bound_L2_norm_operators} and the boundedness of the Riesz transform on uniformly $n$-rectifiable sets imply that the boundedness of $T_\mu$ on uniformly $n$-rectifiable sets. 
\end{proof}

\begin{proof}[Proof of Corollary \ref{theorem:ENV_elliptic}]  Let $\mu$ be a non-trivial totally  irregular measure on $\Rn1$,  i.e.,  it satisfies $0<\Theta^{*,n}(x,\mu)<\infty$ and $\Theta^n_*(x,\mu)=0$ for $\mu$-a.e. $x\in \Rn1$. Arguing by contradiction, we assume that $\|\mathcal T_\mu\|_{L^2(\mu)\to L^2(\mu)}<\infty$ and so, by Lemma \ref{lem:T_bounded_polynomial_growth}, $\mu$ has $n$-polynomial growth. Thus, by Theorem \ref{theorem:bound_L2_norm_operators}, we have that $\|\mathcal R_\mu\|_{L^2(\mu)\to L^2(\mu)}<\infty$, which contradicts the main result of  \cite{ENV12}. 
\end{proof}

\begin{proof}[Proof of Corollary \ref{theorem:NTVpubmat_elliptic}]  
This is a direct  consequence of Theorem \ref{theorem:bound_L2_norm_operators} and the main result of \cite{NToV14b}.
\end{proof}

\vv

{
\begin{proof}[Proof of Corollary \ref{cor:elliptic_GSTo}]
We first apply Lemma \ref{lem:main2} for $N$ large enough (depending on $n$, $\tau, C_1$, and $\diam( \supp \nu)$) so that 
\[
	\mathfrak I_{\alpha_A}(2^{-N}) \leq \tau^{1/2},
\]
where we recall that $\alpha_A(t)=t + t^\beta + \omega_A(t)$, $t>0$.

Then, in view of \eqref{eq:7.4}, we choose $\ell(Q)$ small enough (depending on $N$ and $\tau$)  so that $2^N \ell(Q)$ is as small as required in the proof of Theorem \ref{theorem:bound_L2_norm_operators} in order for the following estimate to hold:
\begin{align*}
\| \mathcal{R}_\mu\|_{L^2(\mu|_{2^{N}Q})  \to L^2(\mu|_{2^{N}Q})} & \leq 1+C_2 \| {T}_\mu\|_{L^2(\mu|_{2^{N}Q}) \to L^2(\mu|_{2^{N}Q})} \leq  1+C_0'\,C_2,
\end{align*}
where $C_2$ depends on $n, \Lambda, M,$ and $C_0$. Finally, we may choose $\ell(Q)$ even smaller (depending on $n$, $N, \tau, C_0', C_1, C_2$, and $\diam(\supp \nu)$) so that
\[
	\mathfrak I_{\tau_A}\bigl(2^N\ell(Q)\bigr) + \widehat\tau_A\bigl(2^N\ell(Q)\bigr) \leq \tau^{1/2}
\]
and also 
\begin{align*}
 \mathfrak I_{\omega_A}(2^N\ell(Q))^{1/2}& \|\mathcal R_{\mu}\|_{L^2(\mu|_{2^NQ})\to L^2(\mu|_{2^N Q})  } \leq   (1+C_0'\, C_2) \mathfrak I_{\omega_A}(2^N\ell(Q))^{1/2}    \leq \tau^{1/2}.
\end{align*}
Collecting all the above estimates and combining them with Lemma \ref{lem:main2}, we deduce that
\[
	\Big(\avint_Q \Big| \mathcal R_{\mu}1(x)  - \avint_Q \mathcal R_{\mu}1 \Big|^2 \,d\mu(x)\Big)^{1/2} \leq 4 C_1\, \tau^{1/2}\Theta_\mu(2^NQ).
\]
If $\tilde \tau=4 C_1 \tau^{1/2}$ is small enough depending on $C_0, C_0', n, \Lambda, M$, and $\diam(\supp\mu)$, in light of Theorem \ref{theorem:bound_L2_norm_operators} applied to the measure $\mu|_{2^N Q}$, we can implement  \cite[Theorem 1.1]{GT18} and conclude the proof of the corollary.
\end{proof}
}

\vvv

\frenchspacing
\bibliographystyle{alpha}

\newcommand{\etalchar}[1]{$^{#1}$}
\def\cprime{$'$}

\end{document}